\theoremstyle{definition}
\newtheorem{thm}{Theorem}[section]
\newtheorem{lem}[thm]{Lemma}
\newtheorem{cor}[thm]{Corollary}
\newtheorem{prop}[thm]{Proposition}
\newtheorem{rem}[thm]{Remark}
\newcommand{\C}{\mathbb{C}}
\newcommand{\R}{\mathbb{R}}
\newcommand{\N}{\mathbb{N}}
\newcommand{\Z}{\mathbb{Z}}
\newcommand{\Hil}{\mathcal{H}}
\renewcommand*\d{\mathop{}\!\mathrm{d}}
\newcommand{\test}{\mathcal{C}_c^{\infty}}
\newcommand{\Wv}{\mathcal{V}}
\newcommand{\Wvo}{\dot{\mathcal{V}}}
\newcommand{\Wvc}{\mathcal{V}_0}
\newcommand{\V}{|V|^{1/2}}
\newcommand{\gradx}{\nabla_{\|}}
\newcommand{\gradV}{\nabla_{\!\mu}}
\newcommand{\gradVt}{\nabla_{\!\mu(t)}}
\newcommand{\gradVp}{\nabla_{\!\mu}^{\|}}
\newcommand{\gradIV}{\nabla_{\!\mu}}
\newcommand{\gradAV}{\ola{A}\nabla_{\!\mu}}
\newcommand{\p}[1][t]{\partial_#1}
\newcommand{\HAaV}{\mathop{H_{A,a,V}}}
\newcommand{\oo}{\mu}
\newcommand{\cA}{\mathcal{A}} 
\newcommand{\App}{A_{\perp \perp}}
\newcommand{\Apv}{A_{\perp \parallel}}
\newcommand{\Avp}{A_{\parallel \perp}}
\newcommand{\Avv}{A_{\parallel \parallel}}
\newcommand{\Neu}{(\mathcal{N})_2}
\newcommand{\Reg}{(\mathcal{R})_2}
\newcommand{\SNeu}{(\overline{\mathcal{N}})_2}
\newcommand{\SReg}{(\overline{\mathcal{R}})_2}
\newcommand{\D}{\mathsf{D}}
\newcommand{\Nul}{\mathsf{N}}
\newcommand{\Ran}{\mathsf{R}}
\newcommand{\bigminus}{\scaleobj{2}{-}}
\let\div\relax
\DeclareMathOperator{\div}{div}
\let\Re\relax
\DeclareMathOperator{\Re}{Re}
\let\supp\relax
\DeclareMathOperator{\supp}{supp}
\let\Loc\relax
\DeclareMathOperator{\Loc}{loc}
\let\dist\relax
\DeclareMathOperator{\dist}{dist}
\let\sgn\relax
\DeclareMathOperator{\sgn}{sgn}
\newcommand\mtiny[1]{\mbox{\tiny\ensuremath{#1}}}
\newcommand{\Nlim}{\mathop{{\mtiny{\textit{N}_*}}\textrm{-lim}}\nolimits}
\DeclarePairedDelimiterXPP\norm[3]{}\lVert\rVert{\ifblank{#2}{}{_#2}\ifblank{#3}{}{^#3}}{#1}
\def\Xint#1{\mathchoice
   {\XXint\displaystyle\textstyle{#1}}%
   {\XXint\textstyle\scriptstyle{#1}}%
   {\XXint\scriptstyle\scriptscriptstyle{#1}}%
   {\XXint\scriptscriptstyle\scriptscriptstyle{#1}}%
   \!\int}
\def\XXint#1#2#3{{\setbox0=\hbox{$#1{#2#3}{\int}$}
     \vcenter{\hbox{$#2#3$}}\kern-.5\wd0}}
\def\dashint{\Xint-}
\def\Xiint#1{\mathchoice
   {\XXiint\displaystyle\textstyle{#1}}%
   {\XXiint\textstyle\scriptstyle{#1}}%
   {\XXiint\scriptstyle\scriptscriptstyle{#1}}%
   {\XXiint\scriptscriptstyle\scriptscriptstyle{#1}}%
   \!\iint}
\def\XXiint#1#2#3{{\setbox0=\hbox{$#1{#2#3}{\iint}$}
     \vcenter{\hbox{$#2#3$}}\kern-.5\wd0}}
\def\dashiint{\Xiint{\bigminus}}
\newcommand*\conj[1]{\overline{#1}}
\newcommand*\hhat[1]{\hspace{4.25pt} \widehat{\hspace{-4.25pt}\widehat{#1}}}
\newcommand*\ola[1]{\hspace{2.5pt}\overline{\hspace{-2.5pt}\mathcal{#1}\hspace{1pt}}\hspace{0.5pt}}
\newcommand*\ula[1]{\hspace{1pt}\underline{\hspace{-1pt}\mathcal{#1}\hspace{-0.5pt}}\hspace{1.5pt}}
\numberwithin{equation}{section}
\title[Solvability for non-smooth Schr\"{o}dinger equations]{Solvability for non-smooth Schr\"{o}dinger equations with singular potentials and square integrable data}
\author{Andrew J. Morris}
\author{Andrew J. Turner}
\address{A. J. Morris, School of Mathematics, University of Birmingham, Edgbaston, B15 2TT, UK}
\email{a.morris.2@bham.ac.uk}
\address{A. J. Turner, School of Mathematics, University of Birmingham, Edgbaston, B15 2TT, UK}
\email{a.j.turner@bham.ac.uk}
\date{\today} 
\subjclass[2020]{35J25 (Primary) 35J10, 42B37, 47D06, 47A60 (Secondary)}
\keywords{Boundary value problems; Schr\"{o}dinger equations; holomorphic functional calculus; Kato square root estimates}
\begin{document}

\begin{abstract}
We develop a holomorphic functional calculus for first-order operators $DB$ to solve boundary value problems for Schr\"odinger equations $- \div A \nabla u + a V u = 0$ in the upper half-space $\R^{n+1}_+$ with $n\in\N$. This relies on  quadratic estimates for $DB$, which are proved for coefficients $A,a,V$ that are independent of the transversal direction to the boundary, and comprised of a complex-elliptic pair $(A,a)$ that are bounded and measurable, and a singular potential $V$ in either $L^{n/2}(\R^n)$ or the reverse H\"older class $B^{q}(\R^n)$ with $q\geq\max\{\tfrac{n}{2},2\}$. In the latter case, square function bounds are also shown to be equivalent to non-tangential maximal function bounds. This allows us to prove that the (Dirichlet) Regularity and Neumann boundary value problems with $L^2(\R^n)$-data are well-posed if and only if certain boundary trace operators defined by the functional calculus are isomorphisms. We prove this property when the principal coefficient matrix $A$ has either a Hermitian or block structure. More generally, the set of all complex coefficients for which the boundary value problems are well-posed is shown to be open.
\end{abstract}

\maketitle

\setcounter{tocdepth}{2}
\tableofcontents


\section{Introduction}\label{sec:intro}


Many exceptional advances in our understanding of elliptic boundary value problems under minimal regularity constraints have been driven by the proof of the Kato square root conjecture obtained by Auscher--Hofmann--Lacey--McIntosh--Tchamitchian in~\cite{AHLMcT02}. The connection between square roots of accretive operators and elliptic boundary value problems has an even longer history, of course, and an outstanding exposition is given by Kenig in~\cite[Section~2.5]{KenigCBMS}. We build on recent progress for free-space Laplace-type equations to solve Neumann and (Dirichlet) Regularity boundary value problems for Schr\"{o}dinger-type equations with singular potentials in certain reverse H\"{o}lder classes. The Kato square root results obtained for Schr\"{o}dinger operators by Gesztesy--Hofmann--Nichols in \cite{GHN16}  is a primary motivation for this work.

A key point in the transference from  Kato square root-type results is their connection with square function bounds which can often be used to derive non-tangential maximal function bounds to solve boundary value problems. This is especially the case when equation coefficients are independent of the transversal direction to the boundary. We consider such a case by treating boundary value problems on the upper half-space $\R_+^{n+1} \coloneqq \{ (t,x) : t \in (0,\infty),\ x\in\R^n\}$, which has the boundary $\partial\R^{n+1}_+\cong\R^n$, and equation coefficients that are $t$-independent. This is a standard prototype with the potential to allow for more general Lipschitz domains and transversal-dependent coefficients. Indeed, a standard pullback transforms the domain above a Lipschitz graph to the upper half-space, whilst transversal regularity is typically introduced as a perturbation of the $t$-independent case, given that further regularity must be imposed transversely in accordance with the results of Caffarelli--Fabes--Kenig in~\cite{CFK}.

Most generally, we consider Schr\"{o}dinger equations on $\R^{n+1}_+$ in dimension $n \in \N$ with a  locally integrable potential $V \in L^1_{\Loc}(\R^n)$ in the divergence form 
\begin{equation} \label{eqn:Schrodinger Equation}
    \HAaV  u (t,x) \coloneqq 
    -\sum\nolimits_{i=0}^n\sum\nolimits_{j=0}^n \partial_i (A_{i,j}\partial_j u)(t,x) + a (x) V (x) u (t,x)
    = 0
\end{equation}
where $\partial_0=\partial_t$ and $(\partial_1,\ldots,\partial_n)=\nabla_x$. A potential $V$ always refer to a measurable function that is either $\C$-valued or $[0,\infty)$-valued. For most of our results, the potential belongs to either $L^p(\R^n) \coloneqq L^p(\R^n;\C)$ for some $p\in[1,\infty)$, or the reverse H\"{o}lder class $B^q(\R^n) \coloneqq B^q(\R^n;[0,\infty))$ for some $q\in(1,\infty)$, so that either
\begin{align}\begin{split}
\label{eq:VLpBqdef}
    \|V\|_p
    \coloneqq  &\left( \int_{\R^n} |V|^p \right)^{1/p}
    <\infty \\
    &\textrm{or } \quad
    \llbracket V\rrbracket_q
    \coloneqq \inf\left\{ C \geq 0 : \left(\dashint_{Q} V^q\right)^{1/q} \leq C \dashint_Q V \text{ for all cubes $Q$ in $\R^n$}\right\}
    < \infty,
\end{split}\end{align}
where a cube $Q$ in $\R^n$ refers to an $n$-dimensional cube with side-length $\ell(Q)$, Lebesgue measure $|Q|$, and $\dashint_Q f \coloneqq  |Q|^{-1} \int_Q f$ for all $f\in L^1_{\Loc}(\R^n)$.

Meanwhile, the equation coefficients $A$ and $a$ are given by $t$-independent, complex-valued measurable functions, so that $A(t,x)=A(x) \in \mathcal{L} (\C^{n+1})$ and $a(t,x)=a(x)\in\C$ for almost every $(t,x)\in\R^{n+1}_+$, where $\mathcal{L} (\C^{n+1})$ is the space of $(n+1)\times(n+1)$ matrices with complex coefficients. We assume that there exist constants $0<\lambda\leq\Lambda<\infty$ such that the bound
\begin{equation}\label{eq:bddcoeff}
\max\left\{\|A\|_{L^{\infty} (\R^n; \mathcal{L} (\C^{n+1}))},
\|a\|_{L^{\infty}(\R^n)}\right\} \leq \Lambda
\end{equation}
and ellipticity
\begin{align}\label{eq:elliptest}\begin{split}
    \Re\int_{\R^n} \bigg(\left(A(x) \begin{bmatrix}  f(x)\\ \nabla g(x) \end{bmatrix} , \begin{bmatrix}  f(x)\\ \nabla g(x) \end{bmatrix}\right) 
    & + a(x)V(x)|g(x)|^2\bigg) \mathrm{d}x \\ 
    &\geq \lambda
    \int_{\R^n} \left(|f(x)|^2 + |\nabla g(x)|^2 + |V(x)||g(x)|^2\right) \mathrm{d}x
\end{split}\end{align}
hold for all $f,\, g \in \test(\R^n)$, where $(\xi,\zeta)\coloneqq\xi\cdot\bar\zeta$ is the Euclidean inner-product for $\xi,\zeta\in\C^{n+1}$.

There is an extensive literature devoted to these equations in the free-space case, that is when $V\equiv0$, so we restrict our attention here to works that consider nonzero singular potentials $V$. The pioneering work of Shen in~\cite{Shen94} used the method of layer potentials to uniquely solve the Neumann problem for $L^p$ boundary data when $p \in (1, 2]$ for the Schr\"{o}dinger equation $-\Delta u + V u = 0$ on the domain above a Lipschitz graph. The result allowed for ($t$-dependent) potentials in the reverse H\"{o}lder class $V \in B^{\infty} (\R^{n+1})$, whereby $\|V\|_{L^\infty(Q)} \lesssim \dashint_Q V$ uniformly for all cubes $Q\subset\R^{n+1}$. Tao--Wang extended those results in~\cite{TW04} by allowing for singular potentials $V \in B^{n+1}(\R^{n+1})$, which permits greater reverse H\"{o}lder singularity, and data in the Hardy space $H^p(\R^n)$ for $p \in (1-\varepsilon , 1]$ and some $\varepsilon \in (0,1/n)$. Tao also solved the corresponding Regularity problem in \cite{Tao12} for data in a Hardy-type space adapted to the potential. In this context, the work of Yang in \cite{Yang18} shows that certain weak reverse H\"older estimates for solutions actually characterises the unique solvability of these boundary problems when $p\in(2,\infty)$.

The study of boundary value problems for equations $-\div (A \nabla u + b \cdot u) + c \cdot \nabla u + d u=0$ that have lower-order terms with Lebesgue integrable singularities has also received considerable recent attention. For real-valued coefficients, Sakellaris proved unique solvability in~\cite{Sakellaris19} for the Regularity problem for $L^2$-data on bounded Lipschitz domains assuming that the principal coefficients are H\"older continuous and the lower-order coefficients satisfy $d\geq \min\{\div b,\div c\}$. These results use the Green function estimates obtained Kim--Sakellaris in~\cite{KS19} (see also the scale-invariant results obtained by Sakellaris in~\cite{sakellaris19b}). In a similar direction, Mourgoglou considers variational Dirichlet problems in~\cite{mourgoglou19} assuming that the lower-order coefficients are in a local Stummel--Kato class, as well as developing a theory for the associated Green functions. We also mention the results for $t$-dependent coefficients obtained by Kenig--Pipher in~\cite{KP01} and Dindos--Petermichl--Pipher in~\cite{DPP07} for equations  $-\div (A \nabla u + b \cdot u)=0$ with singular drift terms, which built on the work by Hofmann--Lewis for parabolic equations with drift terms in~\cite{HL01}.

The most recent developments for $t$-independent, complex-valued coefficients on the upper half-space are related to the Kato square root results obtained for Schr\"{o}dinger operators by Gesztesy--Hofmann--Nichols in \cite{GHN16}. The perturbative approach they developed requires the lower-order coefficients to have sufficiently small norm in Lebesgue spaces that are critical for certain Sobolev embeddings. The results of Bortz--Hofmann--Luna--Mayboroda--Poggi  in~\cite{BHLMP22} show that the Dirichlet, Neumann and Regularity problems for $L^2$-data are well-posed in this context when the coefficients are either Hermitian or in the block form made precise below. There are now also estimates available for fundamental solutions and Green functions in this setting, including the work by Davey--Hill--Mayboroda in \cite{DHM18} under De Giorgi--Nash--Moser-type bounds on solutions, the results for magnetic Schr\"{o}dinger operators obtained by Mayboroda--Poggi in \cite{MP19}, and those obtained by Shen for generalized Schr\"{o}dinger operators in \cite{Shen99}.

The preceding results typically used layer potential representations for solutions and relied on adapting kernel estimates to account for the potential $V$. Instead, we obtain semigroup representations for solutions via the holomorphic functional calculus for a bisectorial operator $DB$, where $D$ is a self-adjoint first-order differential operator and $B$ is a multiplication operator with an accretivity that encodes the ellipticity of the second-order equation. This allows us to obtain representations for solutions even when kernel estimates are not available. This `first-order approach' was developed in earnest by Auscher--Axelsson--McIntosh in \cite{AAMc10-1} and~\cite{AAMc10-2} for $t$-independent free-space equations $\div A \nabla u = 0$. It has since been used widely to treat square integrable data, including $t$-dependent systems by Auscher--Axelsson in~\cite{AA11}, triangular coefficient structures by Auscher--McIntosh--Mourgoglou in~\cite{AMcM13}, degenerate elliptic equations by Auscher--Ros\'{e}n--Rule in~\cite{ARR15}, and parabolic equations by Auscher--Egert--Nystr\"{o}m in~\cite{AEN16-1}. There is also a growing literature devoted to $L^p$-type data.

In the first-order approach, square function bounds manifest as quadratic estimates
\[
\int_0^{\infty} \|t DB (I + (tDB)^2)^{-1} u \|_2^2 \frac{\d t}{t} \eqsim \|u\|_2^2
\]
for all $u$ in the closure of the range $\overline{\Ran(DB)}$. These guarantee that a holomorphic functional calculus for $DB$ is bounded (see Appendix~\ref{sec:app}), which underpins the development of the theory. Our approach to Schr\"{o}dinger operators is motivated by homogeneity considerations (see \eqref{eq:VLn} and \eqref{eq:VBn}) which suggest that such singular potentials $V$ should be encoded in the operator $D$ via the adapted gradient $\gradV\coloneqq (\nabla,\V)$. This leads us to consider a zeroth-order additive perturbation of the free-space operator. When the potential is not in $L^\infty$, then this perturbation is not even a bounded operator in $L^2$, so new ideas were needed to completely rework the proof of the free-space estimates. We obtain the associated quadratic estimates in Theorem~\ref{thm:QE} when $V \in B^{q}(\R^n)$ with $q\geq\max\{\tfrac{n}{2},2\}$ and $n\geq 3$, as well as when $V\in L^{n/2}(\R^n)$ with sufficiently small norm and $n\geq 5$. The dimensional restrictions result from the Sobolev embedding \eqref{eq:Sobolev} that is applied once when $n\geq3$ and twice when $n\geq5$ (see Lemmas~\ref{lem:Riesz Transform Estimates with small norm}--\ref{lem:Interpolation Lemma}). Alternative Sobolev embeddings can be applied to extend the results in this paper to the case when $V\in B^2 (\R^n)$ and $n\in\{1,2\}$, and that case will be treated in a forthcoming paper by Dumont--Morris.

Quadratic estimates for homogeneous $DB$-type operators were introduced in the work by Auscher--McIntosh--Nahmod in~\cite{AMcN97}, prior to the complete resolution of the Kato conjecture, and then developed for Dirac-type operators by Axelsson--Keith--McIntosh in \cite{AKMc06} and more generally by Auscher--Axelsson--McIntosh in \cite{AAMc10-1}. A key difference with the operators treated here is that the coercivity estimate $\|\nabla^2 f\|_2 \lesssim \|\Delta f\|_2$ and cancellation $\int \nabla g= 0$ (for $g$ with compact support) which were essential in the free-space case (see, for instance, \cite[Theorem~3.4]{AAMc10-1} and \cite[(H7)--(H8) and Theorem~3.1(iii)]{AKMc06}) need to be modified to account for the potential. 

If we interpret this coercivity as a bound on the Riesz transform $\nabla^2\Delta^{-1}$, then $L^2$-estimates for the Riesz transforms $V(-\Delta + V)^{-1}$, $\nabla^2(-\Delta + V)^{-1}$ and $\V \nabla(-\Delta + V)^{-1}$ are natural substitutes. These were obtained by Shen in~\cite{Shen95} and Auscher--Ben Ali in~\cite{AB07} when $V\in B^{q}(\R^n)$ with $q\geq\max\{\tfrac{n}{2},2\}$ and follow from Sobolev embedding in the case $V\in L^{n/2}(\R^n)$ with small norm (see Lemma~\ref{lem:Riesz Transform Estimates with small norm}). The reduction to a Carleson measure estimate then needs to be arranged carefully so that when the Poincar\'{e} inequality is applied it does not introduce derivatives on components of $DB$ that contain the potential term $\V$. The idea, introduced by Bailey in~\cite{Bailey18}, of using a separate reduction for such components is essential for this purpose.

We overcome the lack of cancellation in the case $V\in B^{q}(\R^n)$ with $q\geq\max\{\tfrac{n}{2},2\}$ by introducing a dichotomy amongst the Euclidean dyadic cube structure. A cube $Q$ in $\R^n$ is called \textit{homogeneous} when $\ell(Q)^2 \dashint_Q V \leq 1$ and it is called \textit{inhomogeneous} otherwise. This terminology is chosen to reflect the fact that when $V\equiv 0$ and the Schr\"{o}dinger equation reduces to the homogeneous Laplace equation, then all cubes are classified as homogeneous, whereas when $V\equiv 1$ and the equation is inhomogeneous, then cubes of side-length $\ell(Q)> 1$ become classified as inhomogeneous. In the latter case, Axelsson--Keith--McIntosh introduced a local cancellation property suited to quadratic estimates at small ($t\leq1$) scales, and a coercivity estimate suited to quadratic estimates at large ($t>1$) scales (see~\cite[(H7)--(H8)]{AKMc06-2}). This dichotomy of scales also features in the work by Morris on manifolds in~\cite{Morris12}.

We obtain a local cancellation property for homogeneous cubes in the case $V\in B^{q}(\R^n)$ with $q\geq\max\{\tfrac{n}{2},2\}$ which generalises these results (see Lemma~\ref{lem:Interpolation Lemma}). A substitute for coercivity on inhomogeneous cubes is then provided by the local Fefferman--Phong inequality
\[
\min\left\{1,\ell(Q)^2 \dashint_Q V\right\} \int_Q |f|^2
\lesssim \ell(Q)^2 \int_Q |\nabla_\mu f|^2
\]
obtained by Shen in~\cite{Shen95} and Auscher--Ben Ali in~\cite{AB07}. The consideration of homogeneous and inhomogeneous cubes thus generalises the simpler dichotomy of small and large side-length cubes that is relevant in the case $V\equiv1$. The case when $V\in L^{n/2}(\R^n)$ is simpler in this respect, since Sobolev embedding provides a substitute for the cancellation property on all cubes.

As an initial application, we obtain new Kato square root-type estimates, extending results of Bailey in~\cite{Bailey18} by allowing for bounded elliptic coefficients $b$ in multiplicative perturbations $b\mathscr{H}_{A,a,V}$ of operators $\mathscr{H}_{A,a,V}\coloneqq -\div A \nabla + aV$ acting on the boundary $\partial\R^{n+1}_+\cong\R^n$. If $V\in B^{q}(\R^n)$ with $q\geq\max\{\tfrac{n}{2},2\}$ and $n\geq 3$, then we characterise the domain of the accretive square root of $b\mathscr{H}_{A,a,V}$ by establishing the \textit{a priori} estimate
\[
\|(b\mathscr{H}_{A,a,V})^{1/2} f \|_2 \eqsim \|\nabla f\|_2 + \|\V f \|_2
\]
for all $f \in \test(\R^n)$ (see Corollary~\ref{cor:Kato for RH}). This extension is possible because we consider both $DB$ and $BD$-type operators in which the component structure of $B$ is less limited than that permitted by the Dirac-type structure considered by Bailey. We also establish the analogous result when $V\in L^{n/2}(\R^n)$ and $n\geq 5$ without any smallness condition on the norm (see Corollary~\ref{cor:Kato for Ln/2}), extending results of Gesztesy--Hofmann--Nichols in \cite{GHN16}. This is achieved by using a scaling argument to reduce matters to the quadratic estimates obtained for potentials with sufficiently small norm. The implicit constants then depend naturally on the size of the norm $\|V\|_{n/2}$. A more detailed comparison with known results in this regard is given at the start of Section~\ref{ssec:KBE}.

Once the quadratic estimates are established, we show that weak solutions of the Schr\"{o}dinger equation are characterised by weak solutions $F \in L^2_{\Loc} (\R_+ ; L^2 (\R^{n} ; \C^{n+2}))$ of Cauchy--Riemann-type systems $\p F + DB F = 0$ acting on $t$ in $\R_+$ (see Proposition~\ref{prop:Reduction to first-order}). The operator $DB$ is a bisectorial perturbation of the first-order operator $D$, which can have spectrum across the entire real line, so it does not generate an analytic semigroup. Instead, the operator $[DB]\coloneqq\sqrt{(DB)^2}$ defined via the functional calculus for $DB$ on $\Hil\coloneqq\overline{\Ran(D)}$ generates an analytic semigroup $e^{-t[DB]}$, which we can use to solve  initial value problems for the Cauchy--Riemann-type system. The quadratic estimates for $DB$ then provide a crucial topological decomposition $\Hil=\Hil^+\oplus\Hil^-$ into spectral subspaces $\Hil^\pm$ for $DB$. This allows us to show that solutions of $\p F + DB F= 0$ on $\R_+$ have a boundary trace $f$ in $\Hil^+$ such that $F(t)=e^{-t[DB]}f$ for all $t\geq0$ (see Theorem~\ref{Thm:First order solutions are semigroups}).

We follow the approach of Auscher--Axelsson in~\cite{AA11} to obtain these results, but we are forced to avoid their mollification arguments because it is not clear whether $\mathcal{C}^\infty_c(\R^n;\C^{n+1})$ is dense in the domain $\D(\gradV^*)$ of the adjoint operator. We achieve this by enlarging the class of test functions associated with the weak formulation of the Cauchy--Riemann-type system. Instead of the usual distributional space, our test functions belong to $L^2(\mathbb{R}_+;\D(D)) \cap \test(\R_+ ; L^2 (\R^{n} ; \C^{n+2}))$, which resembles the test function space considered in related work by Auscher--Axelsson in \cite{AA2011Remarks}. This weakens the tangential smoothness required on the test functions so that the mollification can be avoided, but the equivalence between first- and second-order solutions is preserved, since $\D(D)=\D(\gradV)\oplus \D(\gradV^*)$. The key insight is that density can be used in the $\D(\gradV)$ component whilst the equivalence in the $\D(\gradV^*)$ component only requires the $t$-independence of the coefficients.

We show that the Neumann and Regularity problems for the Schr\"{o}dinger equation are well-posed precisely when certain isomorphisms exist between the first-order trace space $\Hil^+$ and the $L^2$-spaces of second-order boundary data (see  Proposition~\ref{prop:WP equivalence}). The structure of the principal elliptic coefficient matrix $A$ only becomes a limiting consideration when one attempts to prove that these mappings are isomorphisms. The coefficient matrix $A$ is called \textit{Hermitian} when its conjugate transpose satisfies $A^*(x)=A(x)$ for almost every $x\in\R^n$. It is called \textit{block} when $A_{i,0}(x)=A_{0,j}(x)=0$ for all $i,j\in\{1,\ldots,n\}$ and almost every $x\in\R^n$. We follow the strategy of Auscher--Axelsson--McIntosh in \cite{AAMc10-2} and establish Rellich-type estimates to obtain the required boundary isomorphisms for both such coefficient structures in Section~\ref{sec:ADD}.

The passage from quadratic estimates to the non-tangential maximal function bounds needed to solve these boundary value problems is achieved when $V\in B^{q}(\R^n)$ with $q\geq\max\{\tfrac{n}{2},2\}$ in Theorem~\ref{thm:First-Order NT Control}. We adapt the method developed by Auscher--Axelsson--Hofmann in~\cite[Proposition~2.56]{AAH08} for this purpose. This requires us to establish reverse H\"{o}lder estimates for solutions of the Schr\"{o}dinger equation, and $L^p$-type resolvent bounds for $DB$ when $p$ belongs to a small neighbourhood around $2$. In contrast to the free-space equation, if $c\in\R$ and $H_{A,a,V}u=0$, then $u-c$ is a solution of the inhomogeneous equation $H_{A,a,V}(u-c)=-aVc$. This leads us to derive a Caccioppoli inequality for such solutions, which we combine with a Fefferman--Phong inequality for $\V$ to obtain the required reverse H\"{o}lder estimates for $\gradV u$. Meanwhile, the $L^p$-type resolvent bounds are obtained from interpolation theory for a scale of Sobolev spaces $\Wv^{1,p}(\R^n)$ adapted to the potential. It is important for us to prove that the interpolation constants depend on the potential only in terms of its reverse H\"{o}lder constant $\llbracket V\rrbracket_{q}$, since this quantity is preserved by the scaling of the potential that is used to obtain the resolvent bounds.

We now introduce some notation and  preliminaries to state the main results of the paper. The non-tangential maximal operator ${N}_*$ is defined for $F \in L^2_{\Loc} (\R_+^{n+1})$ and $x\in\R^n$ by
\begin{equation}\label{eq:N*def}
    ({N}_* F) (x) \coloneqq \sup_{t > 0} \left( \dashiint_{W(x,t)} |F (s,y)|^2 \d y \d s \right)^{1/2},
\end{equation}
where $W(x,t) \coloneqq (t , 2t) \times Q(x,t)$ denotes the \textit{Whitney cube} in $\R^{n+1}_+$ of scale $t>0$ above the cube $Q(x,t)\coloneqq\{y\in\R^n: |y-x|_\infty<t\}$ in $\R^n$. We shall write that \textit{the Whitney averages of $F$ converge to $f$ almost everywhere on $\R^n$}, or simply $\Nlim_{t\to 0^+} F(t,\cdot)=f$, if $f \in L^2_{\Loc} (\R^n)$ and
\begin{equation} \label{eqn:convergence on Whitney averages}
    \lim_{t \to 0^+} \dashiint_{W(x,t)} | F(s,y) - f(x) |^2 \d y \d s = 0
\end{equation}
for almost every $x \in \R^n$.

For weakly differentiable functions $u:\R^{n+1}_+\to\C$ and $f:\R^{n}\to\C$, there is also the notation
\[
\gradIV u(t,x) \coloneqq 
    \begin{bmatrix}
    \p u(t,x) \\
    \gradVp u(t,x)\\
    \end{bmatrix},
\quad
\gradVp u(t,x) \coloneqq 
    \begin{bmatrix}
    \nabla_x u(t,x)\\
    |V(x)|^{1/2} u(t,x)
    \end{bmatrix}
\ \text{ and }\ 
\gradV f(x) \coloneqq \begin{bmatrix}
    \nabla f(x)\\ |V(x)|^{1/2} f(x)
    \end{bmatrix}
\]
for $t>0$ and $x\in\R^n$. We adapt the standard Sobolev space $W^{1,2}(\R^{n+1}_+)$ to account for these adapted gradient operators in Section \ref{section:SobSpa}. For an open set $\Omega\subseteq \R^{n+1}_+$, the space $\Wv^{1,2}_{\Loc}(\Omega)$ consists of $u$ in the local Sobolev space $W^{1,2}_{\Loc}(\Omega)$ such that 
\[
\|u\|_{\Wv^{1,2}(\Omega')}^2
    \coloneqq 
    \iint_{\Omega'} (|\p u(t,x)|^2 + |\nabla_x u(t,x)|^2 + |V(x)| |u(t,x)|^2) \d t \d x < \infty
\]
for all open sets $\Omega'$ whose closure $\overline{\Omega'}$ is compact and contained in $\Omega$ (henceforth denoted $\Omega'\Subset\Omega$). Using this space, we shall write that \textit{$u$ is a weak solution of $- \div A \nabla u + a V u = 0$ in $\Omega$}, or simply \textit{$\HAaV  u  = 0$ in $\Omega$}, if $u \in \Wv^{1,2}_{\Loc}(\Omega)$ and 
\begin{equation}\label{eq:weak2nd}
    \int_\Omega (A \nabla u \cdot \overline{\nabla v} + a V u \overline{v}) = 0
\end{equation}
for all smooth compactly supported functions $v \in \test (\Omega)$. A homogeneous Sobolev-type space $\Wvo^{1,2} (\R^n)$ is also defined on the boundary as the completion of $\mathcal{C}_c^{\infty}(\R^n)$ with the norm
\[
\|f\|_{\Wvo^{1,2}(\R^n)}^2
    \coloneqq 
    \int_{\R^n} (|\nabla f(x)|^2 + |V(x)| |f(x)|^2) \d x
\]
and identified as a subspace of $L^{2^*}(\R^n)$, where $2^*\coloneqq 2n/(n-2)$ is the usual Sobolev exponent.

Given $h \in L^2 (\R^n)$ and $g \in \Wvo^{1,2} (\R^n)$, the Neumann and Regularity boundary value problems require functions $u$ in $\mathcal{V}^{1,2}_{\Loc}(\R^{n+1}_+)$ with the following properties:
\begin{equation*}
    \Neu
    \begin{cases}
        & \HAaV  u  = 0 \text{ in } \R_+^{n+1} \\
        & {N}_* (\gradIV u) \in L^2 (\R^n) \\
        & \Nlim_{t \to 0^+} \partial_{\nu_A} u (t, \cdot) = h
    \end{cases}
  \qquad\qquad
    \Reg
    \begin{cases}
        & \HAaV  u  = 0 \text{ in } \R_+^{n+1} \\
        & {N}_* (\gradIV u) \in L^2 (\R^n) \\
        & \Nlim_{t \to 0^+} \gradVp u (t, \cdot) = \gradV g
    \end{cases}
\end{equation*}
The adapted conormal derivative is $\partial_{\nu_A} u \coloneqq (A \nabla u) \cdot e_0 = A_{0,0}\p u + \sum_{j=1}^n A_{1,j}\partial_{x_j} u$ and the limit notation means that the Whitney averages of $\partial_{\nu_A} u$ and $\gradVp u$ converge to $h$ and $\nabla_\mu g$, respectively, almost everywhere on $\R^n$, as in \eqref{eqn:convergence on Whitney averages}. The boundary value problems are called \textit{solvable} if for each boundary datum there exists at least one solution $u$ with the corresponding properties above. In the case $V\not\equiv0$, they are called \textit{uniquely posed} if for each boundary datum there exists at most one such solution $u$ in $\mathcal{V}^{1,2}_{\Loc}(\R^{n+1}_+)$. In the case $V\equiv0$, it is instead required that there exists at most one such solution $u$ in $W^{1,2}_{\Loc}(\R^{n+1}_+)$ modulo constants. The boundary value problems are called \textit{well-posed} if they are solvable and uniquely posed.

We now state our main result for the well-posedness of these boundary value problems and the equivalence between non-tangential maximal function bounds and square function bounds.

\begin{thm} \label{thm:Second-Order WP}
Suppose that $V \in B^{q}(\R^n)$ with $q\geq\max\{\tfrac{n}{2},2\}$, $n\geq 3$ and $\llbracket V\rrbracket_{q}\leq \upsilon<\infty$ whilst $(A,a)$ are $t$-independent, bounded and elliptic coefficients satisfying \eqref{eq:bddcoeff} and \eqref{eq:elliptest} with constants $0<\lambda\leq\Lambda<\infty$. If either $A$ is block, or $a$ is real-valued and $A$ is Hermitian, then the following properties hold:
\begin{enumerate}
    \item The Neumann problem $\Neu$ is well-posed with
    \[
    \int_0^{\infty} \| t \p (\gradIV u) \|_2^2 \frac{\d t}{t} \eqsim \|{N}_* (\gradIV u)\|_2^2 \eqsim \|h\|_2^2
    \]
    and $\lim_{t \to 0^+} \|\partial_{\nu_{A}} u(t,\cdot) - h\|_2 =0$, where $u$ is the unique solution for data $h$ in $L^2(\R^n)$.\vspace{3pt}
\item The Regularity problem $\Reg$ is well-posed with
\[
\int_0^{\infty} \| t \p (\gradIV u) \|_2^2 \frac{\d t}{t}
\eqsim \|{N}_* (\gradIV u)\|_2^2
\eqsim \| \gradV g \|_2^2
\]
and $\lim_{t \to 0^+} \|\gradVp u (t,\cdot)- \gradV g\|_2 = 0$, where $u$ is the unique solution for data $g$ in $\Wvo^{1,2}(\R^n)$.
\end{enumerate}
The implicit constants in each estimate above depend only on $n$, $\lambda$, $\Lambda$ and $\upsilon$.
\end{thm}

We also prove that the functional calculus bounds implied by the quadratic estimates for $DB$ depend analytically on perturbations of the coefficients $B$ with respect to the $L^{\infty}$-norm. This allows us to obtain the perturbation results for well-posedness below.

\begin{thm} \label{thm:Second-Order WP pert}
Suppose that $V \in B^{q}(\R^n)$ with $q\geq\max\{\tfrac{n}{2},2\}$ and $n\geq 3$ whilst $(A_0,a_0)$ are $t$-independent, bounded and elliptic coefficients satisfying \eqref{eq:bddcoeff} and \eqref{eq:elliptest} with $0<\lambda_0\leq\Lambda_0<\infty$. If $A \in L^{\infty} (\R^n; \mathcal{L} (\C^{n+1}))$ and $a\in L^{\infty}(\R^n)$, then the following properties hold:
\begin{enumerate}
    \item If $\Neu$ with coefficients $(A_0,a_0)$ is well-posed, then there exists $\varepsilon\in(0,\lambda_0)$ such that $\Neu$ with coefficients $(A,a)$ is well-posed whenever $\max\{\|A-A_0\|_{\infty},\|a-a_0\|_{\infty}\} < \varepsilon$.
    \item If $\Reg$ with coefficients $(A_0,a_0)$ is well-posed, then there exists $\varepsilon\in(0,\lambda_0)$ such that $\Reg$ with coefficients $(A,a)$  is well-posed whenever $\max\{\|A-A_0\|_{\infty},\|a-a_0\|_{\infty}\}< \varepsilon$.
\end{enumerate} 
\end{thm}

The following Fatou-type result shows that the non-tangential maximal function control of the adapted gradient $\gradV u$ that is required for solvability of $\Neu$ and $\Reg$ is sufficient to guarantee that a weak solution has an $L^2$-boundary trace for each  problem. It also highlights the equivalence between square functions bounds and non-tangential maximal function bounds for such solutions.

\begin{thm} \label{thm:Second-Order Fatou}
Suppose that $V \in B^{q}(\R^n)$ with $q\geq\max\{\tfrac{n}{2},2\}$, $n\geq 3$ and $\llbracket V\rrbracket_{q}\leq \upsilon<\infty$ whilst $(A,a)$ are $t$-independent, bounded and elliptic coefficients satisfying \eqref{eq:bddcoeff} and \eqref{eq:elliptest} with constants $0<\lambda\leq\Lambda<\infty$. There exist subspaces $\Hil^\pm$ of $L^2(\R^{n};\C^{n+2})$ such that \[\Hil^+ \oplus \Hil^{-}=\{(h,\gradV g) : h\in L^2(\R^n),\, g\in\Wvo^{1,2}(\R^n)\}\] and the following property holds: If $\HAaV  u  = 0$ in $\R^{n+1}_+$ and ${N}_* (\gradIV u) \in L^2 (\R^n)$, then there exists $h\in L^2 (\R^n)$ and $g\in \Wvo^{1,2}(\R^n)$ such that $(h,\gradV g)\in \mathcal{H}^+$ and
\[
\int_0^{\infty} \| t \p (\gradIV u) \|_2^2 \frac{\d t}{t} \eqsim \|{N}_* (\gradIV u)\|_2^2 \eqsim \|h\|_2^2 + \|\gradV g\|_2^2,
\]
where the implicit constants depend only on $n$, $\lambda$, $\Lambda$ and $\upsilon$, whilst $\Nlim_{t \to 0^+} \partial_{\nu_{A}} u (\cdot,t) = h$, $\Nlim_{t \to 0^+} \gradV u(t,\cdot) = \gradV g$, $\lim_{t \to 0^+} \|\partial_{\nu_{A}} u (\cdot,t)-h\|_2 = 0$ and $\lim_{t \to 0^+} \|\gradV u(t,\cdot)-\gradV g\|_2=0$.
\end{thm}

We also obtain square function versions of the above results for boundary value problems when $V\in L^{n/2}(\R^n)$ and $n\geq 5$. The following result complements those in the series of papers initiated by Bortz--Hofmann--Luna--Mayboroda--Poggi in~\cite{BHLMP22} for equations with additional first-order coefficients. The non-tangential maximal function bounds needed to solve $\Neu$ and $\Reg$ for such potentials are the subject of the second and forthcoming paper in that series.

\begin{thm} \label{thm:SFBVP}
Suppose that $V\in L^{n/2}(\R^n)$ with $n\geq 5$ and $\|V\|_{n/2}\leq \upsilon<\infty$ whilst $(A,a)$ are $t$-independent, bounded and elliptic coefficients satisfying \eqref{eq:bddcoeff} and \eqref{eq:elliptest} with constants $0<\lambda\leq\Lambda<\infty$. If either $A$ is block, or the product $aV$ is real-valued and $A$ is Hermitian, then given $h \in L^2 (\R^n)$ and $g \in \Wvo^{1,2} (\R^n)$, there exist unique functions $u_h$ and $u_g$ in $\mathcal{V}^{1,2}_{\Loc}(\R^{n+1}_+)$ (when $V\equiv0$ uniqueness is instead in $W^{1,2}_{\Loc}(\R^{n+1}_+)$ modulo constants) with the following properties:
\begin{equation*}
    \SNeu
    \begin{cases}
        & \HAaV  u_h  = 0 \text{ in } \R_+^{n+1} \\
        & \int_0^{\infty} \| t \p (\gradIV u_h) \|_2^2 \frac{\d t}{t} < \infty \\
        & \lim_{t \to 0^+} \|\partial_{\nu_{A}} u_h (\cdot,t)-h\|_2 = 0
    \end{cases}
  \qquad\qquad
    \SReg
    \begin{cases}
        & \HAaV  u_g  = 0 \text{ in } \R_+^{n+1} \\
        & \int_0^{\infty} \| t \p (\gradIV u_g) \|_2^2 \frac{\d t}{t} < \infty \\
        & \lim_{t \to 0^+} \|\gradVp u_g(t,\cdot)-\gradV g\|_2=0
    \end{cases}
\end{equation*}
Moreover, it holds that
\begin{equation}\label{eq:SFEsol}
\int_0^{\infty} \| t \p (\gradIV u_h) \|_2^2 \frac{\d t}{t} \eqsim \|h\|_2^2
\quad\text{and}\quad
\int_0^{\infty} \| t \p (\gradIV u_g) \|_2^2 \frac{\d t}{t}
\eqsim \| \gradV g \|_2^2,
\end{equation}
where the implicit constants depend only on $n$, $\lambda$, $\Lambda$ and $\upsilon$.
\end{thm}

The paper is organised as follows. In Section~\ref{sec:pre}, we consider Sobolev spaces adapted to singular potentials satisfying \eqref{eq:VLpBqdef} and elliptic coefficients satisfying \eqref{eq:bddcoeff} and \eqref{eq:elliptest}. The main quadratic estimates in Theorem~\ref{thm:QE} are proved in Sections~\ref{ssec:CC}--\ref{ssec:CME}, with the application to Kato square root-type estimates in Section~\ref{ssec:KBE} and analytic perturbation results for the functional calculus in Section~\ref{ssec:andep}. The equivalence between weak solutions for the Schr\"{o}dinger equation and weak solutions for the associated Cauchy--Riemann-type system is given by Proposition~\ref{prop:Reduction to first-order}. The semigroup characterisation for solutions to the related first-order initial value problems follows in Theorem~\ref{Thm:First order solutions are semigroups}. The boundary isomorphisms for block and Hermitian coefficients are then established via Rellich-type estimates in Section~\ref{sec:ADD}. The nontangential maximal function estimates are obtained in Section~\ref{sec:NT}, assuming that $V\in B^{q}(\R^n)$ with $q\geq\max\{\tfrac{n}{2},2\}$, and the main equivalence with square function bounds is in
Theorem~\ref{thm:First-Order NT Control}.

The purpose of Section~\ref{sec:con} is to make precise the equivalence that has been established between well-posedness for second-order boundary value problems and isomorphisms for boundary trace mappings associated with first-order initial value problems. This brings together ideas developed throughout the paper and may provide a useful entry point for readers unfamiliar with the first-order approach. It is also used to conclude the proofs of Theorems~\ref{thm:Second-Order WP}--\ref{thm:SFBVP}. Meanwhile, Appendix~\ref{sec:app} contains an overview of the holomorphic functional calculus for bisectorial operators used throughout the paper, including the $\mathcal{F}$-functional calculus, analytic semigroups and quadratic estimates.

The following notation is used throughout the paper. The positive real line is $\R_+ \coloneqq (0,\infty)$ and the negative real line is $\R_- \coloneqq (-\infty,0)$. For $a,\, b \in \R$, writing $a \lesssim b$ means that there exists an implicit constant $C \in (0,\infty)$ such that $a \leq C b$, whilst $a \eqsim b$ means that $a \lesssim b$ and $b \lesssim a$. For $n,\, N\in\N$, the symbols $\|\cdot\|_2$ and $\langle \cdot , \cdot \rangle$ denote the norm and inner-product on $L^2(\R^n;\C^N)$, respectively, whilst $|\cdot|$ and $(\cdot , \cdot)$ denote the Euclidean norm and inner-product on $\C^N$, and $|\cdot|_{\mathcal{L}(\C^N)}$ denotes the associated norm on the space $\mathcal{L}(\C^N)$ of bounded linear operators on $\C^N$.

\subsection*{Acknowledgments}

Morris would especially like to thank the late Alan McIntosh for the inspiration that made this project possible. We would like to sincerely thank Julian Bailey and Pierre Portal for sharing preliminary versions of their recent work with us, which allowed us to remove an unnatural Riesz transform assumption in the proof of the main quadratic estimate. We are also deeply grateful to Moritz Egert for generously sharing his expertise and advice concerning the first-order functional calculus approach to boundary value problems. We would also like to thank Pascal Auscher, Simon Bortz, Arnaud Dumont, Steve Hofmann, Alessio Martini, Andreas Ros\'{e}n and Adam Sikora for helpful discussions and insights that have contributed significantly to this paper. We are also especially thankful for the comprehensive comments provided by the anonymous referee. 

This research was conducted primarily at the University of Birmingham. We also thank the Instituto de Ciencias Matem\'{a}ticas in Madrid for hosting us during the Research Term on Real Harmonic Analysis in 2018. This work was supported by the Engineering and Physical Sciences Research Council [EP/N509590/1] and the Royal Society [IES$\backslash$R3$\backslash$193232]. No data were created or analysed in this study.


\section{Preliminaries}\label{sec:pre}


We collect here the main tools needed to develop the first-order framework to solve boundary value problems for the Schr\"{o}dinger equation \eqref{eqn:Schrodinger Equation}. In the first subsection, we adapt the usual class of Sobolev spaces to account for the presence of singular potentials. This is motivated by Sobolev's inequality, which we show suggests that the square root of the potential $|V|^{1/2}$ should be subject to homogeneity considerations similar to those of a first-order differential operator. For potentials in a reverse H\"{o}lder class, a natural length scale adapted to the potential will also become apparent and for which we will recall a compatible Fefferman--Phong inequality. This will be essential for obtaining square function and non-tangential maximal function estimates. We then consider how the bound~\eqref{eq:bddcoeff} and ellipticity~\eqref{eq:elliptest} of the coefficients compares with related notions of ellipticity.


\subsection{Sobolev Spaces adapted to Singular Potentials}\label{section:SobSpa}


We begin by adapting the usual scale of Sobolev spaces $W^{1,p}(\Omega)$, for open subsets $\Omega$ of $\R^d$, $d\in\N$ and $p\in[1,\infty)$, to account for potentials $V$ satisfying \eqref{eq:VLpBqdef}. Here we use $d$ to denote the dimension for convenience so that the results can be applied with $d=n+1$ to provide Sobolev spaces on subsets of the upper half-space $\R^{n+1}_+$, and also with $d=n$ to provide Sobolev spaces on the boundary $\partial\R^{n+1}_+\cong\R^n$. The following notation will be convenient for this purpose. If $f \in L^1_{\Loc}(\Omega)$ has a weak derivative $\nabla f =  (\partial_1 f,\ldots,\partial_d f) \in L^1_{\Loc}(\Omega;\C^d)$, then $\gradV f:\Omega\to\C^{d+1}$ is the measurable function
\begin{equation}\label{eq:def_nabla_nu}
    \gradV f \coloneqq \begin{bmatrix}
    \nabla f\\|V|^{1/2} f 
    \end{bmatrix},
\end{equation}
where $(|V|^{1/2} f)(x)\coloneqq |V(x)|^{1/2} f(x)$ for all $x\in \Omega$.

A principal motivation for these spaces is a minor variant of a standard Sobolev inequality on $\R^d$, whereby if $f\in L^q(\R^d)$ for some $q\in[1,\infty]$, and $\nabla f \in L^p(\R^d)$ for some $p\in[1,d)$, then
\begin{equation}\label{eq:Sobolev}
    \|f\|_{p^*} \lesssim \|\nabla f\|_p,
\end{equation}
where $p^*\coloneqq pd/(d-p)$ is the Sobolev exponent for $\R^d$ and the implicit constant depends only on $d$. This can be proved directly on $\test(\R^d)$, whilst regularisation provides a sequence $(f_n)_{n\in\N}$ in $\test(\R^d)$ that converges to $f$ in $L^q(\R^d)$ such that $\nabla f_n$ converges to $\nabla f$ in $L^p(\R^d)$, whence $f_n$ also converges to $f$ in $L^{p^*}(\R^d)$ and \eqref{eq:Sobolev} holds (see, for instance, \cite[Chapter~V, Sections 2.1--2.5]{SteinSmall}). If $d\geq 3$, then the case $q=2$ gives the standard embedding of the Sobolev space $W^{1,p}(\R^d)$ in $L^{p^*}(\R^d)$, whilst the case $q=2^*$ and $p=2$ provides a realisation of the homogeneous Sobolev space $\dot{W}^{1,2}(\R^d)$ (see the discussion around ~\eqref{eq:dotVinj}) for use in Lemma~\ref{lem:RanD}.

The potentials we consider can be controlled by the Sobolev inequality~\eqref{eq:Sobolev} as follows: 

\noindent If $V\in L^{d/2}(\R^d)$, then H\"{o}lder's inequality implies that
\begin{equation}\label{eq:VLn}
    \||V|^{1/2} f\|_p
    \leq \|V\|_{d/2}^{1/2} \|f\|_{p^*};
    \end{equation}

\noindent If $V\in B^{d/2}(\R^{d})$, then H\"{o}lder's inequality implies the local variant
\begin{equation}\label{eq:VBn}
    \||V|^{1/2} f\|_{L^p(Q)}
    \leq \|V\|_{L^{d/2}(Q)}^{1/2} \|f\|_{L^{p^*}(Q)}
    \leq \llbracket V\rrbracket_{d/2}^{1/2} \left( \ell(Q)^2\dashint_Q |V| \right)^{1/2} \|f\|_{p^*}
\end{equation}
for all cubes $Q\subset \R^d$.

The following technical lemma provides the basis for the definition of the adapted Sobolev spaces. The proof uses ideas from the work of Badr in ~\cite[Proposition~2.14]{badr09}.

\begin{lem}\label{lem:WVbase}
Suppose that $d\in\N$, $p\in [1,\infty]$, $q\in [1,\infty]$ and $|V|^{1/2} \in L^1_{\Loc}(\Omega)$. If $(f_m)_{m\in\N}$ is a sequence of weakly differentiable functions in $L^q(\Omega)$ that converges to some $f$ in $L^q(\Omega)$, and $(\gradV f_m)_{m\in\N}$ is a Cauchy sequence in $L^p(\Omega;\C^{d+1})$, then $f$ is weakly differentiable and $\gradV f_m$ converges to $\gradV f$ in $L^p(\Omega;\C^{d+1})$.
\end{lem}

\begin{proof}
Suppose that $f_m$ and $f$ satisfy the hypotheses of the lemma, in which case $\nabla f_m$ converges to some $(F_1,\ldots, F_d)$ in $L^p(\Omega;\C^{d+1})$, and $ |V|^{1/2} f_m$ converges to some $F_{d+1}$ in $L^p(\Omega)$. It suffices to prove that $F_j = \partial_j f$ when $j\in\{1,\ldots,d\}$ whilst $F_{d+1}=|V|^{1/2}f$. If $j\in\{1,\ldots,d\}$ and $\phi \in \mathcal{C}_c^\infty(\Omega)$, then the definition of the weak derivative and H\"{o}lder's inequality give
\[
    \left| \left(\int F_j\phi \right)- \left( -\int f\partial_j\phi \right) \right|
    \leq \|F_j-\partial_jf_m\|_p\|\phi\|_{p'} + \|f_m-f\|_q\|\partial_j\phi\|_{q'},
\]
hence $\int F_j \phi = -\int f \partial_j \phi$ and $F_j = \partial_j f$. Next, since $f_m$ converges to $f$ in $L^q(\Omega)$, there exists a subsequence $f'_m$ that converges to $f$ almost everywhere on $\Omega$. It follows that $|V|^{1/2}f'_m$ converges to $|V|^{1/2}f$ almost everywhere on $\Omega$, as $|V(x)|^{1/2}<\infty$ for all $x\in\Omega$. Meanwhile, since $|V|^{1/2}f'_m$ converges to $F_{d+1}$ in $L^p(\Omega)$, there exists a subsequence $(f''_m)_{m\in\N}$ of $(f'_m)_{m\in\N}$ such that $|V|^{1/2}f''_m$ converges to $F_{d+1}$ almost everywhere on $\Omega$, hence $F_{d+1}=|V|^{1/2}f$.
\end{proof}

For each $p\in[1,\infty)$, we define the adapted Sobolev space $\Wv^{1,p}(\Omega)$ to be the set
\[
    \Wv^{1,p}(\Omega) \coloneqq  \{f\in L^p(\Omega) : \gradV f \in L^p(\Omega;\C^{d+1})\}
\]
with the norm
\[
    \|f\|_{\Wv^{1,p}(\Omega)}
    \coloneqq  \left(\|f\|_{L^p(\Omega)}^p + \|\gradV f\|_{L^p(\Omega;\C^{d+1})}^p\right)^{1/p}
\]
for all $f\in \Wv^{1,p}(\Omega)$. Lemma~\ref{lem:WVbase} shows that $\Wv^{1,p}(\Omega)$ is a Banach space whilst $\Wv^{1,2}(\Omega)$ is a  Hilbert space with the inner-product
\[
\langle f,g \rangle_{\Wv^{1,2}(\Omega)}
\coloneqq  \int_\Omega (f\overline{g} + \gradV f \cdot \overline{\gradV g})
\]
for all $f,g\in \Wv^{1,2}(\Omega)$. We also define $\Wv^{1,p}_{\Loc}(\Omega)$ to be the set of all $f\in L^p_{\Loc}(\Omega)$ such that $f\in \Wv^{1,p}(\Omega')$ for all open sets $\Omega'$ with compact closure $\overline{\Omega'}\subset\Omega$ (henceforth denoted by $\Omega'\Subset\Omega$). We then define $\Wvc^{1,p}(\Omega)$ to be the closure of $\mathcal{C}_c^{\infty}(\Omega)$ in $\Wv^{1,p}(\Omega)$.

If $V\in L^{d/2}(\R^d)$ and $p\in[1,d)$, then  $\|f\|_{\Wv^{1,p}}^p \eqsim \|f\|_p^p + \|\nabla f\|_p^p =: \|f\|_{W^{1,p}}^p$ by \eqref{eq:Sobolev} and \eqref{eq:VLn}, so $\Wv^{1,p}(\R^d)$ is equivalent to the usual Sobolev space $W^{1,p}(\R^d)$, for which density of $\mathcal{C}_c^{\infty}(\R^d)$ is well-known. The following lemma shows that $\Wv^{1,p}(\R^d)=\Wvc^{1,p}(\R^d)$ whenever $|V|^{p/2} \in L^1_{\Loc}(\R^d)$. The proof follows the strategy outlined by Davies in \cite[Theorem 1.8.1]{Davies89} but we include the details below for later reference. An alternative proof is given by Auscher--Ben Ali in \cite[Lemma~10.1]{AB07}.

\begin{lem}\label{lem:testdense}
If $d\in\N$, $p\in[1,\infty)$ and $|V|^{p/2}\in L^1_{\Loc}(\R^d)$, then $\test(\mathbb{R}^d)$ is dense in $\mathcal{V}^{1,p}(\R^d)$. Moreover, if also $p_0,\, p_1\in [1,p]$ and $f\in\mathcal{V}^{1,p_0}(\R^d)\cap\mathcal{V}^{1,p_1}(\R^d)$, then there exists a sequence in  $\test(\mathbb{R}^d)$ that converges to $f$ in both $\mathcal{V}^{1,p_0}(\R^d)$ and $\mathcal{V}^{1,p_1}(\R^d)$.
\end{lem}

\begin{proof}
We use $B(x,r)$ to denote the open  ball in $\R^d$ with centre $x\in\R^d$ and radius $r>0$. Suppose that $f\in \mathcal{V}^{1,p}(\R^d)$. For each $N\in\N$, consider the truncated approximant
\[
f_N(x)\coloneqq 
\begin{cases}
N, & \textrm{if } f(x) > N;\\
f(x), &\textrm{if } |f(x)| \leq N;\\
-N, &\textrm{if }  f(x) < -N,
\end{cases}
\]
for all $x\in \R^d$. Observe that $f_N\in \mathcal{V}^{1,p}(\R^d) \cap L^\infty(\R^d)$ with $|f_N(x)| + |\gradV f_N(x)| \leq |f(x)| + |\gradV f(x)|$ for almost every $x\in\R^d$ and all $N\in\N$ (see, for instance, \cite[Lemma 7.6]{GT77}), so $\lim_{N\to\infty}\|f_N-f\|_{\mathcal{V}^{1,p}(\R^d)}=0$ by Lebesgue's dominated convergence theorem. 

Next, fix a function $\eta\in\test(\R^d)$ with support in $B(0,1)$ such that $\eta (x) = 1$ for all $|x|\leq\tfrac{1}{2}$. For each $R\geq 1$, consider the compactly supported approximant 
\[
f_{N,R}(x)\coloneqq  \eta\left(\frac{x}{R}\right)f_N(x)
\]
for all $x\in \R^d$. Observe that $f_{N,R}\in \mathcal{V}^{1,p}(\R^d) \cap L^\infty(\R^d)$ with support in $B(0,R)$. We also have $|f_{N,R}(x)| + |\gradV f_{N,R}(x)| \leq (\|\eta\|_\infty+\|\nabla\eta\|_\infty)|f_N(x)| + \|\eta\|_\infty|\gradV f_N(x)|$ for almost every $x\in\R^d$ and all $R\geq1$, so $\lim_{R\to\infty}\|f_{N,R}-f_N\|_{\mathcal{V}^{1,p}(\R^d)}=0$ for each $N\in\N$ by Lebesgue's dominated convergence theorem.

Finally, fix a non-negative function $\rho\in\test(\R^d)$ with $\int \rho(x) \d x = 1$ and support in $B(0,1)$. For each $\varepsilon\in(0,1)$, consider the mollified approximant
\[
f_{N,R,\varepsilon}(x)\coloneqq  \varepsilon^{-d} \int_{\R^d} \rho\left(\frac{x-y}{\varepsilon}\right)f_{N,R}(y) \d y
=\varepsilon^{-d} \int_{B(x,\varepsilon)\cap B(0,R)} \rho\left(\frac{x-y}{\varepsilon}\right)f_{N,R}(y) \d y
\]
for all $x\in\R^d$. Observe that $f_{N,R,\varepsilon}\in \test(\R^d)$ with  $\lim_{\varepsilon\to0}\|f_{N,R,\varepsilon}-f_{N,R}\|_{W^{1,p}(\R^d)}=0$ and support in $\supp{(f_{N,R})}+B(0,\varepsilon)\subseteq B(0,2R)$ because $f_{N,R}\in W^{1,p}(\R^d)$ with support in $B(0,R)$ (see, for instance, \cite[Chapter~V, Proposition~1]{SteinSmall}). Meanwhile, the truncation afforded by $N$ and $R$ are the key to establishing the required convergence of $|V|^{1/2}f_{N,R,\varepsilon}$, since we have
\[
|f_{N,R,\varepsilon}(x)|
\lesssim_d \|\rho\|_\infty \|f_{N,R}\|_\infty \mathds{1}_{B(0,2R)}(x)
\leq N\|\rho\|_\infty\|\eta\|_\infty \mathds{1}_{B(0,2R)}(x)
\]
for all $x\in\R^d$ and $\varepsilon\in(0,1)$. In particular, we know that $\mathds{1}_{B(0,2R)}|V|^{1/2}\in L^p(\R^d)$ because $V^{p/2}\in L^1_{\Loc}(\R^d)$, and we have
$\lim_{\varepsilon\to0}f_{N,R,\varepsilon}(x)=f_{N,R}(x)$ for almost every $x\in\R^d$ (see, for instance, \cite[Chapter~III, Theorem~2]{SteinSmall}), so $\lim_{\varepsilon\to0}\||V|^{1/2}(f_{N,R,\varepsilon}-f_{N,R})\|_{L^p(\R^d)}=0$  for each $N\in\N$ and $R>1$ by Lebesgue's dominated convergence theorem. These results combined prove that $\lim_{\varepsilon\to0}\|f_{N,R,\varepsilon}-f_{N,R}\|_{\mathcal{V}^{1,p}(\R^d)}=0$ for each $N\in\N$ and $R>1$.

To conclude, we use a limit argument to obtain a sequence $(f_m)_{m\in\N}$ in $\test(\R^d)$ such that $\lim_{m\to
\infty}\|f_m-f\|_{\mathcal{V}^{1,p}(\R^d)}=0$ (for each $m\in \N$, set $f_m=f_{N,R,\varepsilon}$ with  $\varepsilon$, $R$ and $N$ chosen appropriately in turn), as required. The final part of the lemma follows at once, since if also $p_0,\, p_1\in [1,p]$, then both $|V|^{p_0/2}$ and $|V|^{p_1/2}$ are in $L^1_{\Loc}(\R^d)$, so the construction above provides a single approximating sequence that converges in both $\mathcal{V}^{1,p_0}(\R^d)$ and $\mathcal{V}^{1,p_1}(\R^d)$.
\end{proof}

We now define the homogeneous space $\Wvo^{1,2}(\R^d)$ to be the completion of the normed space consisting of the set $\mathcal{C}_c^{\infty}(\R^d)$ with the norm
\[
    \|f\|_{\Wvo^{1,2}(\R^d)}
    \coloneqq  \|\gradV f\|_{L^2(\R^d;\C^{d+1})}.
\]
The precompleted space is a genuine normed space, since if $f
\in L^1_{\Loc}(\R^d)$ and $\|\nabla f\|_{L^2(\R^d)}=0$, then $f$ is a constant function, so when $f$ is also in $\mathcal{C}_c^{\infty}(\R^d)$, it must hold that $f=0$. If $d \geq 3$, then the Sobolev inequality~\eqref{eq:Sobolev}, and Lemma \ref{lem:WVbase} in the case $q=2^*$ and $p=2$, show that there is an injective embedding from the completion into $L^{2^*}(\R^d)$, allowing us to henceforth identify it as the set
\begin{equation}\label{eq:dotVinj}
\Wvo^{1,2}(\R^d) = \{f \in L^{2^*}(\R^d) : \gradV f \in L^2(\R^d)\},
\end{equation}
with the norm equivalence
\[
    \|f\|_{\Wvo^{1,2}(\R^d)}
    = \|\gradV f\|_{L^2(\R^d;\C^{d+1})}
    \eqsim \left(\|f\|_{L^{2^*}(\R^d)}^2 + \|\gradV f\|_{L^2(\R^d;\C^{d+1})}^2\right)^{1/2}.
\]
The set inclusion $\Wvo^{1,2}(\R^d) \supseteq \{f \in L^{2^*}(\R^d) : \gradV f \in L^2(\R^d)\}$ requires the density of $\mathcal{C}_c^{\infty}(\R^d)$, with respect to the norm $\|\gradV f\|_{L^2(\R^d;\C^{d+1})}$, in the latter set, which follows from the arguments used to prove Lemma~\ref{lem:testdense}. If $V\in L^{d/2}(\R^d)$ and $d\geq 3$, then $\|f\|_{\Wvo^{1,2}}^2 \eqsim \|f\|_{2^*}^2 + \|\nabla f\|_2^2 \eqsim \|\nabla f\|_{2}^2$ by \eqref{eq:Sobolev} and \eqref{eq:VLn}, so in that case $\Wvo^{1,2}(\R^d)$ in \eqref{eq:dotVinj} is the realisation of the usual homogeneous Sobolev space $\dot{W}^{1,2}(\R^d)$ in which each equivalence class of locally integrable functions modulo constants $[g]\in L^1_{\Loc}(\R^d) / \C$ is identified with the unique function $f \in L^{2^*}(\R^d)$ such that $f\in[g]$.

We include below a version of the local Fefferman--Phong inequalities obtained by Shen in \cite{Shen95} and Auscher--Ben Ali in \cite{AB07}. Note that we do not use the improvement involving the exponent $\beta$ which appears in the latter reference. We can already see the utility of this estimate in the case $p=2$, since it bounds the integral of the potential that appears in ~\eqref{eq:VBn}. The case $p=2$, obtained by Shen in \cite[Lemma~1.9]{Shen95}, is used to obtain the quadratic estimates in Section~\ref{sec:QE}. The case $p=1$, obtained by Auscher--Ben Ali in \cite[Lemma~2.1]{AB07}, is used to obtain the non-tangential maximal function estimates in Section~\ref{sec:NT}.

\begin{prop}[Fefferman--Phong] \label{prop:Fefferman-Phong}
If $d\in\mathbb{N}$, $p\in[1,\infty)$, $q\in(1,\infty)$ and $V^{p/2}\in B^q(\R^{d})$ with $\llbracket V^{p/2}\rrbracket_{q}\leq \upsilon < \infty$, then
\begin{equation}\label{eq:FePh}
    \min\left\{1,\ell(Q)^p \dashint_Q V^{p/2}\right\} \int_Q |f|^p 
    \lesssim \ell(Q)^p \int_Q |\nabla_\mu f|^p
\end{equation}
for all cubes $Q\Subset\Omega$ and $f\in \mathcal{V}^{1,p}_{\Loc}(\Omega)$, where the implicit constant depends only on $d$ and $\upsilon$.
\end{prop}

\begin{proof}
This follows from \cite[Lemma~2.1]{AB07} when $f \in \mathcal{C}^1(\R^d)$, since $V^{p/2}$ is a  Muckenhoupt weight. More generally, if $f\in\mathcal{V}^{1,p}_{\Loc}(\Omega)$ and $Q\Subset\Omega$, then we can modify the proof of Lemma~\ref{lem:testdense} to obtain a sequence $(f_m)_{m\in\N}$ in $\mathcal{C}^\infty(Q)$ that converges to $f$ in $\mathcal{V}^{1,p}(Q)$ (the local approximation results can be found in \cite[Lemmas~7.2 and 7.3]{GT77}). The estimate \eqref{eq:FePh} holds for each $f_m$, since functions in $\mathcal{C}^\infty(Q)$ have extensions at least in $\mathcal{C}^1(\R^d)$, so the result follows.
\end{proof}


\subsection{The Elliptic Coefficients} 


We consider $a$ in $L^{\infty} (\R^n)$ and $A$ in $L^{\infty} (\R^n ; \mathcal{L} (\C^{n+1}))$ given by $t$-independent, complex-valued and bounded elliptic coefficients of the Schr\"{o}dinger equation as in \eqref{eqn:Schrodinger Equation}--\eqref{eq:elliptest}, so that $a(t,x)=a(x)\in\C$ and $A(t,x)=A(x) \in \mathcal{L} (\C^{n+1})$ for all $(t,x)\in\R^{n+1}_+$. It is convenient to express these coefficients via the matrix $\cA_{A,a,V}$ in $\mathcal{L} (\C^{n+2})$ given by
\begin{equation}\label{eq:AcA}
    A=: \begin{bmatrix}
    \App & \Apv \\
    \Avp & \Avv
    \end{bmatrix}
\quad\text{and}\quad
    \cA_{A,a,V}
    \coloneqq \begin{bmatrix}
    \App & \Apv & 0 \\
    \Avp & \Avv & 0 \\
    0 & 0 & a e^{i\arg{V}}
    \end{bmatrix},
\end{equation}
where $\App (x) \in \mathcal{L} (\C)$, $\Apv (x) \in \mathcal{L} (\C^n ; \C)$, $\Avp (x) \in \mathcal{L} (\C ; \C^n)$ and $\Avv (x) \in \mathcal{L} (\C^n)$ for all $x\in\R^n$, although typically we write $\cA$ instead of $\cA_{A,a,V}$ when the meaning of $(A,a,V)$ is clear from the context. We represent vectors $v \in \C^{n+2}=\C\oplus\C^n\oplus\C$ accordingly by writing 
\[
    v = \begin{bmatrix}
        v_{\perp} \\ 
        v_{\parallel} \\ v_{\oo}
    \end{bmatrix}
\]
where $v_{\perp} \in \C$, $v_{\parallel} \in \C^n$ and $v_{\oo} \in \C$.

The functions in $\mathcal{C}_c^{\infty}(\R^n)$ are dense in $\Wvo^{1,2}(\R^n)$ (see the discussion after Lemma~\ref{lem:testdense}), so the bounds and ellipticity on the coefficients in \eqref{eq:bddcoeff} and \eqref{eq:elliptest}, with $0<\lambda\leq \Lambda<\infty$, imply that
\begin{equation}\label{eq:bddellA}
\|\cA_{A,a,V}\|_\infty \leq \Lambda
\quad\text{ and }\quad
\Re \left\langle \cA_{A,a,V} \begin{bmatrix}  f\\ \gradV g \end{bmatrix} , \begin{bmatrix}  f\\ \gradV g \end{bmatrix}\right\rangle \geq \lambda (\|f\|_2^2+\|\gradV g\|_2^2)
\end{equation}
for all $f\in L^2(\R^n)$ and $g \in \Wvo^{1,2}(\R^n)$. Note that (as in \cite[p.~259]{AAMc10-2}) these conditions imply that
\begin{equation}\label{eq:ellimp}
\Re A_{\perp \perp}(x) \geq \lambda
\ \ \text{and}\ \
\Re \int_{\R^n} \left((A_{\parallel \parallel}(y) \nabla g(y), \nabla g (y)) + a(y) V(y) |g(y)|^2\right) \d y
\geq \lambda \|\gradV g \|_2^2
\end{equation}
for almost every $x \in \R^n$ and all $g \in \Wvo^{1,2} (\R^n)$. To see this, first set $f = \xi \mathds{1}_E$ and $g=0$ in \eqref{eq:bddellA} to obtain $\int_{E} (\Re A_{\perp \perp} (x) - \lambda) |\xi|^2\d x \geq 0$ for all compact sets $E\subset\R^n$, hence $\Re (A_{\perp \perp} (x) \xi, \xi) \geq \lambda |\xi|^2$ for all $\xi \in \C$ and almost every $x \in \R^n$. The second inequality follows by setting $f = 0$ in \eqref{eq:bddellA}. 

The ellipticity in \eqref{eq:bddellA} holds when the pointwise ellipticity $\Re (\cA(x)\xi,\xi) \geq \lambda |\xi|^2$ holds for all $\xi \in \C^{n+2}$ and almost every $x \in \R^n$. It is stronger, however, than the G{\aa}rding-type ellipticity
\[
    \Re \iint_{\R^{n+1}_+} \left(\cA (x) \begin{bmatrix}  \p f(t,x)\\ \gradV f(t,x) \end{bmatrix} , \begin{bmatrix} \p f(t,x)\\ \gradV f(t,x) \end{bmatrix}\right) \d x \d t
    \geq \lambda \iint_{\R^{n+1}_+} |\p f (t,x)|^2+|\gradV f (t,x)|^2\ \d x \d t 
\]
for all $f \in \test(\R^{n+1}_+)$.


\section{Quadratic Estimates for First-Order Operators with Singular Potentials}\label{sec:QE}


The focus of this section is to prove quadratic estimates for first-order operators of the $DB$ and $BD$ type introduced by  Auscher--McIntosh--Nahmod  in~\cite{AMcN97}, and then first developed by Auscher--Axelsson--McIntosh in \cite{AAMc10-1}, which we adapt to incorporate singular potentials. We will see in Section~\ref{sec:BV} that these operators arise naturally in the first-order approach to boundary value problems for the Schr\"{o}dinger equation~\eqref{eqn:Schrodinger Equation}. Throughout this section we adopt the framework from Section~\ref{sec:pre} with $d=n\geq3$, $\Omega=\R^n$ and $V\in L^1_{\Loc}(\R^n)$. This will allow us to solve boundary value problems on the upper half-space $\R^{n+1}_+$ by applying the quadratic estimates obtained here on the domain boundary $\partial\R^{n+1}_+\cong\R^n$.

To begin, the self-adjoint operator $D \colon \D (D) \subset L^2 (\R^n ; \C^{n+2}) \to L^2 (\R^n ; \C^{n+2})$ is defined by the first-order operator
\begin{equation}\label{eq:Ddef}
    D u \coloneqq -\begin{bmatrix}
    0 & \gradV^* \\
    \gradV & 0 
    \end{bmatrix} 
    \begin{bmatrix}
    u_\perp \\ (u_\|, u_\oo)
    \end{bmatrix}
    = -\begin{bmatrix}
    \gradV^*(u_\|, u_\oo) \\ \gradV u_\perp 
    \end{bmatrix}
\end{equation}
for all $u = (u_\perp, u_\|, u_\mu) \in L^2(\R^n) \oplus L^2(\R^n;\C^n) \oplus L^2(\R^n)$ in the domain
\[
    \D(D) \coloneqq \{u\in L^2(\R^n;\C^{n+2}) : u_\perp \in \mathcal{V}^{1,2}(\R^n) \textrm{ and } (u_\|,u_\oo) \in \D(\gradV^*)\},
\]
where $\gradV^*:\D(\gradV^*)\subseteq L^2(\R^n;\C^{n+1})\to L^2(\R^n)$ denotes the (Hermitian) adjoint of the operator $\gradV: \mathcal{V}^{1,2}(\R^n) \subseteq L^2(\R^n)\to L^2(\R^n;\C^{n+1})$ given by \eqref{eq:def_nabla_nu} on the domain $\D(\gradV)\coloneqq \mathcal{V}^{1,2}(\R^n)$. The latter operator is closed and densely defined by Lemmas~\ref{lem:WVbase} and~\ref{lem:testdense}. In particular, if $u_\perp\in \Wv^{1,2}(\R^n)$, $u_\| \in\D(\nabla^*)$ and $\V u_\oo \in L^2(\R^n)$, then setting $\div\coloneqq -\nabla^*$ we have
\begin{equation}\label{eq:Drep}
    Du
    =
    - \begin{bmatrix}
    0 & -\div & \V \\
    \nabla & 0 & 0 \\
    \V & 0 & 0
    \end{bmatrix}
    \begin{bmatrix}
    u_\perp \\
    u_\| \\
    u_\oo 
    \end{bmatrix}
    =-\begin{bmatrix}
    -\div u_\| + \V u_\oo \\
    \nabla u_\perp \\
    \V u_\perp
    \end{bmatrix}.
\end{equation}
For example, this holds when $u\in \test(\R^n;\C^{n+2})$. More generally, however, the representation $\gradV^*(u_\|,u_\oo)=-\div u_\| + \V u_\oo$ may not hold, as it is not clear if $\mathcal{C}^\infty_c(\R^n;\C^{n+1})$ is dense in $\D(\gradV^*)$. The proof of Proposition~\ref{prop:Off diagonal estimates} will show, however, that the domain of $D$ is invariant under multiplication by functions $\eta$ in $\test(\R^n)$ and the commutator $[D,\eta I]u\coloneqq  D (\eta u) - \eta D u$ satisfies the bound $|[D,\eta I]u| \leq |\nabla \eta | |u|$ for all $u\in\D(D)$, which was crucial in the case $V\equiv0$.

We have the orthogonal direct sum decomposition $L^2(\R^n;\C^{n+2}) = \Nul(D)\overset{\perp}{\oplus}\overline{\Ran(D)}$ because $D$ is self-adjoint. The null space of $D$ is given by
\[
    \Nul(D) = \{u\in L^2(\R^n;\C^{n+2}) : u_\perp=0 \textrm{ and } (u_\|,u_\oo) \in \Nul(\gradV^*)\},
\]
whilst the closure of its range is characterised by the following lemma.

\begin{lem}\label{lem:RanD}
If $n\geq3$ and $V\in L^1_{\Loc}(\R^n)$, then the closure of the range of $D$ in $L^2(\R^n;\C^{n+2})$ is
\[
    \overline{\Ran(D)} = \{u\in L^2(\R^n;\C^{n+2}) : u_\perp \in L^2(\R^n) \textrm{ and } (u_\|,u_\oo) = \gradV g \textrm{ for some } g \in \Wvo^{1,2}(\R^n)\}.
\]
\end{lem}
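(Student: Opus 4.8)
I would prove the two set inclusions separately. The containment $\subseteq$ is the easier direction: since $\overline{\Ran(D)}$ is closed, it suffices to show that every $Df$ with $f\in\D(D)$ lies in the right-hand set, and then that the right-hand set is closed (so it contains the closure). For $f = (f_\perp, f_\|, f_\oo)^T \in \D(D)$, we compute
\[
    Df = \begin{bmatrix} \div f_\| - |V|^{1/2} f_\oo \\ -\nabla f_\perp \\ -|V|^{1/2} f_\perp \end{bmatrix},
\]
so the $\perp$-component is in $L^2$ by the domain condition, while $(-\nabla f_\perp, -|V|^{1/2} f_\perp)^T = -\gradV f_\perp = \gradV(-f_\perp)$ with $-f_\perp \in \Wv^{1,2}(\R^n)$. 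This is not quite a member of the right-hand set, which requires $g\in\Wvo^{1,2}(\R^n)$ rather than $\Wv^{1,2}(\R^n)$, so I would pass to the closure: any element of $\overline{\Ran(D)}$ is an $L^2$-limit of such $\gradV(-f_\perp^{(m)})$; the $(\|,\oo)$-components form a Cauchy sequence in $L^2(\R^n,\C^{n+1})$, hence $\{-f_\perp^{(m)}\}$ is Cauchy in the seminorm $\|\gradV\cdot\|_2$, which by the identification of $\Wvo^{1,2}(\R^n)$ established in Section~\ref{section:SobSpa} means it converges to some $g\in\Wvo^{1,2}(\R^n)$ with $\gradV g$ equal to the limit of the $(\|,\oo)$-components; Lemma~\ref{lem:WVbase} (with $p=2^*$) secures that this limit really is $\gradV g$. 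Together with the $\perp$-component converging in $L^2$, this shows the limit lies in the right-hand set, giving $\overline{\Ran(D)}\subseteq\{\ldots\}$.

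**The reverse inclusion.** For $\supseteq$, I would use the Hodge decomposition \eqref{eq:orthHodge}: it is enough to show that any $f$ in the right-hand set is orthogonal to $\Nul(D)$, since then $f \in \Nul(D)^\perp = \overline{\Ran(D)}$. So let $f = (f_\perp, \gradV g)$ with $f_\perp\in L^2$ and $g\in\Wvo^{1,2}(\R^n)$, and let $h = (0, h_\|, h_\oo) \in \Nul(D)$, i.e. $\div h_\| = |V|^{1/2} h_\oo$ in the distributional sense described after \eqref{adjdom}. Then
\[
    \langle f, h \rangle = \langle f_\perp, 0\rangle + \langle \nabla g, h_\|\rangle + \langle |V|^{1/2} g, h_\oo\rangle = \langle \nabla g, h_\|\rangle + \langle |V|^{1/2} g, h_\oo\rangle.
\]
For $g\in\mathcal{C}_c^\infty(\R^n)$ this vanishes exactly by the definition of $h$ being in $\Nul(D)$ (the weak formulation with test function $g$, noting $(\gradV)^* h = 0$). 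For general $g\in\Wvo^{1,2}(\R^n)$ I would approximate by $g_k\in\mathcal{C}_c^\infty(\R^n)$ in the $\|\gradV\cdot\|_2$ norm — this is precisely the definition/density built into $\Wvo^{1,2}(\R^n)$ in Section~\ref{section:SobSpa} — and pass to the limit using Cauchy–Schwarz, since $\|\gradV g_k - \gradV g\|_2 \to 0$ controls both $\|\nabla g_k - \nabla g\|_2$ and $\||V|^{1/2}(g_k - g)\|_2$. Hence $\langle f, h\rangle = 0$ for all $h\in\Nul(D)$, as required.

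**Main obstacle.** The genuinely delicate point is the interplay between $\Wv^{1,2}$ and $\Wvo^{1,2}$ in the $\subseteq$ direction: an arbitrary element of the range has $g$ in the inhomogeneous space, but the \emph{closure} of the range forces $g$ into the homogeneous space, and one must be careful that the identification $\Wvo^{1,2}(\R^n) = \{g\in L^{2^*} : \gradV g\in L^2\}$ — which itself rested on the Sobolev inequality \eqref{eq:Sobolev}, Lemma~\ref{lem:WVbase}, and a density argument — is exactly what makes the limiting $g$ well-defined as an honest function rather than merely a class modulo constants. Once that identification is invoked, the rest is routine: Lemma~\ref{lem:WVbase} handles the compatibility of distributional limits with $\gradV$, and the Hodge decomposition does the bookkeeping for the reverse inclusion. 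A secondary subtlety worth stating explicitly is that the $\perp$-components of a convergent sequence in $\Ran(D)$ automatically converge in $L^2$ (being components of a convergent $L^2$ vector), so no separate argument is needed there.
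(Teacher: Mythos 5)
Your proposal is correct and follows essentially the same route as the paper: the forward inclusion via the Sobolev inequality, convergence in $L^{2^*}$, and Lemma~\ref{lem:WVbase} with $p=2^*$; the reverse inclusion via orthogonality to $\Nul(D)$, density of $\mathcal{C}_c^\infty(\R^n)$ in $\Wvo^{1,2}(\R^n)$, and the Hodge decomposition \eqref{eq:orthHodge}. The only cosmetic difference is that you make explicit the computation of $Df$ identifying the $(\|,\oo)$-components as $\gradV(-f_\perp)$, which the paper leaves implicit.
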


\begin{proof}
First, suppose that $u\in\overline{\Ran(D)}$, so then $u_\perp \in L^2(\R^n)$ and there exists a sequence $\{ g_m \}$ in $\Wv^{1,2}(\R^n)$ such that $\{ \gradV g_m \}$ converges to $(u_\|,u_\oo)$ in $L^2(\R^n;\C^{n+1})$. The Sobolev inequality~\eqref{eq:Sobolev} then implies that $\{ g_m \}$ is Cauchy and hence convergent to some function $g$ in $L^{2^*}(\R^n)$. Therefore, by Lemma \ref{lem:WVbase} in the case $q=2^*$, the sequence $\{ \gradV g_m \}$ must converge to $\gradV g$ in $L^2(\R^n;\C^{n+1})$, hence $(u_\|,u_\oo) = \gradV g$ and $g\in \Wvo^{1,2}(\R^n)$, as required.

For the converse, suppose that $u_\perp \in L^2(\R^n)$ and that $(u_\|,u_\oo) = \gradV g$ for some $g \in \Wvo^{1,2}(\R^n)$. If $v\in \Nul(D)$, then $v_\perp=0$ and $(v_\|,v_\oo)\in\Nul(\gradV^*)$, so $\langle u,v \rangle = \langle \gradV g,(v_\|,v_\oo)\rangle=0$. This allows us to conclude that $u\in [\Nul(D)]^\perp = \overline{\Ran(D)}$.
\end{proof}

Now suppose that $B \colon L^2 (\R^n ; \C^{n+2}) \to L^2 (\R^n ; \C^{n+2})$ is
a multiplication operator, in the sense that $Bu(x)\coloneqq B(x)u(x)$, where $B(x)\in\mathcal{L} (\C^{n+2})$, for all $x\in\R^n$ and $u\in L^2 (\R^n ; \C^{n+2})$. We assume that there exist constants $0<\kappa\leq K<\infty$ such that the multiplication coefficients have the bound and component structure
\begin{equation}\label{eq:bound on R(D)}
\|B\|_{L^{\infty} (\R^n ; \mathcal{L} (\C^{n+2}))} \leq K
\quad\text{ and }\quad
B = \begin{bmatrix}
B_{\perp \perp} & B_{\perp \parallel} & 0 \\
B_{\parallel \perp} & B_{\parallel \parallel} & 0 \\
0 & 0 & b
\end{bmatrix}
\end{equation}
whilst the multiplication operator is strictly accretive on $R(D)$ in the sense that
\begin{equation} \label{eq:elliptic on R(D)}
    \Re \langle B v , v \rangle \geq \kappa \|v\|_2^2
\end{equation}
for all $v\in\Ran(D)$. We denote $\kappa(B) \coloneqq  \inf_{v \in \Ran(D)\setminus\{0\}} \Re \langle B v , v \rangle / \|v\|_2^{2} \in (0,\infty)$ and the angle of accretivity of $B$ is then $\omega(B) \coloneqq \sup_{v \in \Ran (D) \setminus \{ 0 \} } |\arg\langle B v,v\rangle| \leq \cos^{-1}(\kappa(B)/\|B\|_\infty)$.

The results obtained by Auscher--Axelsson--McIntosh in~\cite[Propositions~3.3]{AAMc10-2} and \cite[Propositions~3.1]{AAMc10-1} now apply, since $D$ is self-adjoint whilst $B$ is bounded and strictly accretive, to show that $DB$ and $BD$
are closed and densely defined bisectorial operators of type $S_{\omega(B)}$ on $L^2(\R^n;\C^{n+2})$ with the resolvent bounds
\begin{equation}\label{eq:DBresbdd}
\norm{(\lambda I - DB)^{-1}}{}{} \lesssim \dist(\lambda , S_{\omega})^{-1}
\quad\text{ and }\quad
\norm{(\lambda I - BD)^{-1}}{}{} \lesssim \dist(\lambda , S_{\omega})^{-1} 
\end{equation}
for all $\lambda \in \C\setminus S_{\omega}$, where the implicit constants depend only on $\kappa$ and $K$. Moreover, we have the topological direct sum decompositions
\begin{equation}\label{eq:Hodge}
L^2 (\R^n ; \C^{n+2} ) = \overline{\Ran(DB)} \oplus \Nul(DB)
= \overline{\Ran(BD)} \oplus \Nul(BD)
\end{equation}
with $\overline{\Ran(DB)}= \overline{\Ran(D)}$, $B\Nul(DB)=\Nul(D)$, $\overline{\Ran(BD)} = B\overline{\Ran(D)}$ and $\Nul(BD)=\Nul(D)$.

The following quadratic estimates are the main result of this section. 

\begin{thm} \label{thm:QE}
If $n\geq 3$, $q\geq\max\{\tfrac{n}{2},2\}$ and $V \in B^{q}(\R^n)$ with $\llbracket V\rrbracket_{q}\leq \upsilon<\infty$, then
\[
\int_0^{\infty} \|t DB (I + (tDB)^2)^{-1} u \|_2^2 \frac{\d t}{t}
\lesssim \|u\|_2^2
\quad\text{ and }\quad
\int_0^{\infty} \|t BD (I + (tBD)^2)^{-1} u \|_2^2 \frac{\d t}{t}
\lesssim \|u\|_2^2
\]
for all $u \in L^2 (\R^n ; \C^{n+2})$, where the implicit constants depend only on $n$, $\kappa$, $K$ and $\upsilon$.

If $n\geq 5$ and $\|V\|_{n/2}<\varepsilon_n$, where $\varepsilon_n\in(0,1)$ is from Lemma~\ref{lem:Riesz Transform Estimates with small norm}, then these estimates hold with implicit constants that depend only on $n$, $\kappa$ and $K$.
\end{thm}

\begin{proof}
If $n\geq 3$, $q\geq\max\{\tfrac{n}{2},2\}$ and $V \in B^{q}(\R^n)$, then the estimate for $DB$ is obtained from Propositions~\ref{prop:Principle Part Approximation} and~\ref{prop:CMEprop}, which are the main results in Sections~\ref{ssec:PPR} and \ref{ssec:CME} below. The estimate for $BD$ follows, since $BD=B(DB)B^{-1}$ on $B\overline{\Ran(D)} = \overline{\Ran(BD)}$, hence  $(I + itBD)^{-1} u = B (I + itDB)^{-1} B^{-1} u$ for all $u \in \overline{\Ran(BD)}$, whilst $(I + itBD)^{-1} u=u$ for all $u \in N(BD)$, and $t BD (I + (tBD)^2)^{-1}$ is a linear combination of such resolvents as in~\eqref{eq:RPQdef}.

If $n\geq 5$ and $\|V\|_{n/2}<\varepsilon_n$, then the proof follows similarly by using Propositions~\ref{prop:VLpPPR} and~\ref{prop:VLpCMEpropaux}, since the implicit constants therein depend only on $n$, $\kappa$ and $K$.
\end{proof}


\subsection{Coercivity and Cancellation}\label{ssec:CC}


To complete the proof of Theorem~\ref{thm:QE}, it remains to prove the reduction to a Carleson measure estimate in Proposition~\ref{prop:Principle Part Approximation} and the Carleson measure estimate in Proposition~\ref{prop:CMEprop}. These are proved in Sections~\ref{ssec:PPR} and \ref{ssec:CME} below. The main obstruction to obtaining these results in the presence of a singular potential is the fact that the coercivity estimate ($\|\nabla \otimes u\|_2 \lesssim \|Du\|_2$ for all $u\in\Ran(D)\cap\D(D)$) and cancellation  ($\int Du = 0$ for all $u\in\D(D)$ with compact support) which are instrumental to the proof in the case when $V\equiv 0$ (see, for instance, \cite[Theorem~3.4]{AAMc10-1} and \cite[(H7)--(H8) and Theorem~3.1(iii)]{AKMc06}) can fail to hold. In this section, we use Riesz transform bounds obtained by Shen~\cite{Shen95} and Auscher--Ben Ali~\cite{AB07}, as well as the Fefferman--Phong inequality~\eqref{eq:FePh}, to establish suitable substitutes for these properties.

We start by defining projections $\mathbb{P}_{\perp}$, $\mathbb{P}_{\parallel}$, $\mathbb{P}_\oo$ and $\mathbb{P}_{\perp\|}$ on $L^2 (\R^n; \C^{n+2})$ according to
\begin{equation}\label{eq:projdef}
    \mathbb{P}_{\perp} u \coloneqq \begin{bmatrix} u_{\perp} \\ 0 \\ 0 \end{bmatrix}, \quad
    \mathbb{P}_{\parallel} u \coloneqq \begin{bmatrix} 0 \\ u_{\parallel} \\ 0 \end{bmatrix}, \quad
    \mathbb{P}_\oo u \coloneqq \begin{bmatrix} 0 \\ 0 \\ u_\oo \end{bmatrix}
    \quad \text{ and } \quad
    \mathbb{P}_{\perp\|}u \coloneqq \begin{bmatrix} u_{\perp} \\ u_{\parallel} \\ 0 \end{bmatrix}
\end{equation}
for all $u = (u_{\perp}, u_{\parallel}, u_\oo) \in L^2 (\R^n) \oplus L^2 (\R^n ; \C^n) \oplus L^2 (\R^n)$. It is also convenient to introduce the notation $\nabla_\mu \otimes (u_1,\ldots,u_N) \coloneqq (\nabla_\mu u_1 , \ldots, \nabla_\mu u_N)$ for functions $u_1, \ldots, u_N$ in $\D(\nabla_\mu)$. We now use the Riesz transform bounds to obtain the following modified coercivity estimates for potentials in a reverse H\"{o}lder class.

\begin{lem}[Coercivity] \label{lem:Riesz Transform Estimates}
If $n\in\N$, $q\geq\max\{\tfrac{n}{2},2\}$ and $V \in B^{q}(\R^n)$ with $\llbracket V\rrbracket_{q}\leq \upsilon<\infty$, then
\[
\| DB (\mathbb{P}_\oo u)\|_2 \leq \|B\|_\infty \|D u\|_2
\quad\text{ and }\quad
\| \nabla_\mu \otimes (\mathbb{P}_{\perp\|}u)\|_2 \lesssim \| D u \|_2
\]
for all $u \in \Ran(D) \cap \D (D)$, where the implicit constants depend only on $n$ and $\upsilon$.
\end{lem}

\begin{proof}
Suppose that $u \in \Ran(D) \cap \D(D)$, so $u = (u_{\perp}, -\gradV v_{\perp})$ for some $v \in \D (D)$, in which case $u_{\perp} \in \Wv^{1,2}(\R^n)$ and $v_{\perp} \in \D (\gradV^* \gradV)$. We have $\D(\gradV^* \gradV) = \D(\Delta) \cap \D(V)$ by \cite[Corollary 1.3]{AB07}, where $\Delta\coloneqq -\nabla^*\nabla$, hence $\nabla v_{\perp} \in \D(\nabla^*)$ and $\V (\V v_{\perp})\in L^2(\R^n)$, so~\eqref{eq:Drep} shows that
\[
    D u = \begin{bmatrix} \gradV^* \gradV v_\perp \\ -\gradV u_{\perp}  \end{bmatrix}
    = \begin{bmatrix} (-\Delta + V) v_{\perp} \\ -\gradV u_{\perp} \end{bmatrix}.
\]
We also have $\mathbb{P}_\oo u = (0,\V v_\perp) \in \D(DB)$ with $\|DB \mathbb{P}_\oo u\|_2= \|(b V v_{\perp}, 0)\|_2 \leq \|B\|_\infty \|V v_{\perp}\|_2$ whilst (setting $\nabla^2 v_{\perp}:=\nabla\otimes(\nabla v_{\perp})$)
\[
    \|\nabla_\mu \otimes (\mathbb{P}_{\perp\|} u)\|_2
    =\|\nabla_\mu \otimes (u_\perp,-\nabla v_\perp)\|_2
    \leq \|\nabla_\mu u_{\perp}\|_2 + \|\nabla^2 v_{\perp}\|_2 + \|\V \nabla v_{\perp}\|_2.
\]
The Riesz transform bounds obtained in~\cite{AB07} and~\cite{Shen95} show that
\begin{equation}\label{eq:RTbdd}
    \|V v_{\perp}\|_2 + \|\nabla^2 v_{\perp}\|_2 + \|\V \nabla v_{\perp}\|_2 \lesssim \|(-\Delta + V) v_{\perp}\|_2,
\end{equation}
where the implicit constant depends only on $n$ and $\upsilon$. In all dimensions, the first two bounds follow from \cite[Theorem 1.1]{AB07} (and only require $V\in B^2(\R^n)$) whilst the final bound follows from the comments beneath \cite[Corollary~1.5]{AB07} (or \cite[Theorem~1.6]{BAII}). For $n\geq 3$, these were obtained earlier by Shen for $V\in B^{q}(\R^n)$ with $q\geq\max\{\tfrac{n}{2},2\}$ in \cite[Theorems 3.1, 0.3 and 4.13]{Shen95}.
\end{proof}

\begin{rem}\label{rem:Riesz}
Observe that $V\in B^{n/2}(\R^n)$ is only required in the above result to bound the mixed Riesz transform $\V \nabla(-\Delta + V)^{-1}$. The rest of the proof holds when $V\in B^2(\R^n)$.
\end{rem}

We can also obtain these modified coercivity estimates when $n\geq 5$ and $V \in L^{n/2} (\R^n)$ with sufficiently small norm, as then the Riesz transform bounds are automatic.

\begin{lem}[Coercivity] \label{lem:Riesz Transform Estimates with small norm}
If $n\geq 5$, then there exists $\varepsilon_n\in(0,1)$, depending only on $n$, such that whenever $V \in L^{n/2} (\R^n)$ with $\|V\|_{n/2}<\varepsilon_n$ it holds that
\[
\| DB (\mathbb{P}_\oo u)\|_2 \leq \|B\|_\infty \|D u\|_2
\quad\text{ and }\quad
\| \nabla_\mu \otimes (\mathbb{P}_{\perp\|}u)\|_2 \lesssim \| D u \|_2
\]
for all $u \in \Ran(D) \cap \D (D)$, where the implicit constant depends only on $n$.
\end{lem}

\begin{proof}
The proof follows as in Lemma~\ref{lem:Riesz Transform Estimates} once the Riesz transform bounds in~\eqref{eq:RTbdd} have been established in this case. If $f\in W^{2,2}(\R^n)$, then the Sobolev inequality~\eqref{eq:Sobolev} shows that
\[
    \| V f \|_2 + \| \V \nabla f \|_2 \leq \|V \|_{n/2} (\| f \|_{2^{**}}+\|\nabla f\|_{2^*}) \leq c_n \| V \|_{n/2} \| \nabla^2 f \|_2,
\]
where $2^{**}=2n/(n-4)$ and $c_n\in(0,\infty)$ depends only on $n$, so the classical Riesz transform bounds imply that
\[
    \| \nabla^2 f \|_2 \lesssim \| \Delta f \|_2 \leq \| (-\Delta + V) f \|_2 + \| V f \|_2 \leq \| (-\Delta + V) f \|_2 + c_n \| V \|_{n/2} \| \nabla^2 f \|_2.
\]
This shows that if $\| V \|_{n/2}$ is small enough, depending only on $n$, then $\| \nabla^2 f \|_2 \lesssim \| (-\Delta + V) f \|_2$, where the implicit constant depends only on $n$, so the required Riesz transform bounds hold.
\end{proof}

The collection $\Delta \coloneqq \bigcup_{k\in\Z} \Delta_{2^k}$, where $\Delta_t \coloneqq \{ 2^k (m + (0,1]^n) : m \in \Z^n \}$ for all $t\in(2^{k-1},2^k]$ and $k\in\Z$, is the usual dyadic decomposition of $\R^n$. For potentials $V$ in a reverse H\"{o}lder class, the natural length scale suggested by Sobolev's inequality~\eqref{eq:Sobolev}, estimate~\eqref{eq:VBn} and the Fefferman--Phong inequality~\eqref{eq:FePh}, is encapsulated by the collections
\begin{equation}\label{eq:homogcubes}
\Delta^V \coloneqq  \left\{Q \in \Delta : \ell(Q)^2 \dashint_Q V \leq 1\right\}
\quad\textrm{ and }\quad
\Delta_t^V \coloneqq \Delta^V \cap \Delta_t
\end{equation}
for all $t>0$. The elements of $\Delta^V$ are called \textit{homogeneous cubes} for the potential $V$, whilst those in $\Delta \setminus \Delta^V$ are called \textit{inhomogeneous cubes}. This terminology is chosen to reflect the fact that when $V\equiv 0$, the Schr\"{o}dinger equation~ \eqref{eqn:Schrodinger Equation} is homogeneous and $\Delta^V=\Delta$ consists of all dyadic cubes. Meanwhile, when $V\equiv 1$, the equation is inhomogeneous and $\Delta^V=\{Q \in \Delta : \ell(Q) \leq 1\}$ becomes restricted to the cubes at small length scales on which we expect the properties of solutions to be dominated by the behaviour of solutions to the homogeneous equation.

The first example of this dichotomy amongst dyadic cubes is given by the following analogue of the cancellation property for operators $D$ in the presence of a singular potential. The result is adapted from the work of Axelsson--Keith--McIntosh in
\cite[Lemma~6]{AKMc06-2}, which treats $V\equiv 1$ and is itself based on the case $V\equiv 0$ treated by the same authors in~\cite[Lemma 5.6]{AKMc06}, and by Auscher--Hofmann--Lacey--McIntosh--Tchamitchian in~\cite[Lemma~5.15]{AHLMcT02}. The key idea to obtaining the more general result below is to combine the Sobolev inequality~\eqref{eq:Sobolev} with estimate~\eqref{eq:VLn} to isolate the local $L^{n/2}$-norm on the potential, which can then be controlled as required.

\begin{lem}[Cancellation] \label{lem:Interpolation Lemma}
If $n\geq 3$ and $V\in L^{n/2}_{\Loc}(\R^n)$, then
\begin{equation} \label{eqn:ILEL}
    \left| \dashint_Q D u \right|^2 \lesssim \frac{1+\|\mathds{1}_QV\|_{n/2}}{\ell(Q)}\left( \dashint_Q |u|^2 \right)^{1/2} \left( \dashint_Q |D u|^2 \right)^{1/2}
\end{equation}
for all $Q \in \Delta$ and $u \in \D (D)$, where the implicit constant depends only on $n$. Moreover, if $n\geq 3$ and $V\in B^{n/2} (\R^n)$ with $\llbracket V\rrbracket_{n/2}\leq \upsilon<\infty$, then 
\begin{equation} \label{eqn:ILEB}
    \left| \dashint_Q D u \right|^2 \lesssim \frac{1}{\ell(Q)} \left( \dashint_Q |u|^2 \right)^{1/2} \left( \dashint_Q |D u|^2 \right)^{1/2}
\end{equation}
for all $Q \in \Delta^V$ and $u \in \D (D)$, where the implicit constant depends only on $n$ and $\upsilon$.
\end{lem}

\begin{proof}
Suppose that $u \in \D (D)$ with $t \coloneqq  \|\mathds{1}_Qu\|_2 / \|\mathds{1}_Q D u\|_2\in(0,\infty)$, since otherwise there is nothing to prove. If $t \geq \ell(Q)/2$, then by the Cauchy--Schwarz inequality
\[
\left| \dashint_Q D u \right|^2 \leq \left( \dashint_Q |D u|^2\right)
    \leq \frac{2}{\ell(Q)} \left( \dashint_Q |u|^2 \right)^{1/2}\left( \dashint_Q |D u|^2 \right)^{1/2}
\]
and \eqref{eqn:ILEL} follows.

Now suppose that $t < \ell(Q)/2$ and consider a test function $\eta \in \mathcal{C}_c^{\infty}(\R^n;\C^{n+2})$ with compact support contained in $Q$ such that $\eta(x) = 1$ whenever $\dist(x,\R^n \setminus Q) >t$ and $\|\nabla \otimes \eta \|_{\infty} \leq 2 t^{-1}$. Using that $D$ is self-adjoint, we obtain
\begin{align}\begin{split}\label{eq:coe1}
\left| \int_Q Du \right| 
&= \left|\int_{\R^n} \eta Du + \int_Q (1-\eta) Du \right|\\
&= \left|\langle D\eta, u\rangle + \int_Q (1-\eta) Du \right| \\ 
&\leq \int_Q (|\gradV^*(\eta_\|,\eta_\oo)| + |\gradV\eta_\perp|) |u| + \int_Q |1-\eta||D u|.
\end{split}\end{align}

To estimate these terms, we use $\|\nabla \otimes \eta \|_{\infty} \leq 2t^{-1}$ and $|\supp (\nabla \otimes \eta)| \lesssim \ell(Q)^{n-1} t$ to obtain 
\begin{align*}
    \int_Q |\nabla \otimes \eta | |u|
    \leq \left( \int_Q |\nabla \otimes\eta|^2 \right)^{1/2} \left( \int_Q |u|^2 \right)^{1/2} 
     \lesssim (\ell(Q)^{(n-1)}/t)^{1/2} \left( \int_Q |u|^2 \right)^{1/2},
\end{align*}
where the implicit constant depends only on $n$. Meanwhile, given that $(n/2)' = 2^*/2$, H\"older's inequality and the Sobolev inequality~\eqref{eq:Sobolev}, combined with the preceding estimate, show that
\begin{align*}
    \int_Q |V^{1/2}\eta| |u| & \leq \left( \int_Q |V| |\eta|^2 \right)^{1/2} \left( \int_Q |u|^2 \right)^{1/2} \\
    & \leq \left( \int_Q |V|^{n/2}\right)^{1/n} \left( \int_Q |\eta|^{2^*}\right)^{1/2^*} \left( \int_Q |u|^2 \right)^{1/2} \\
    & \lesssim \|\mathds{1}_QV\|_{n/2}^{1/2} \left( \int_Q | \nabla \otimes \eta |^2 \right)^{1/2} \left( \int_Q |u|^2 \right)^{1/2} \\
    & \lesssim (\ell(Q)^{(n-1)}/t)^{1/2} \|\mathds{1}_QV\|_{n/2}^{1/2} \left( \int_Q |u|^2 \right)^{1/2},
\end{align*}
where the implicit constants depend only on $n$. Next, since $|Q \cap \supp (1-\eta)| \lesssim \ell(Q)^{n-1} t$ and $\|1-\eta\|_\infty \leq 1$, we have
\[
    \int_Q |1-\eta||D u | \leq \left( \int_Q |1-\eta|^2 \right)^{1/2} \left( \int_Q |Du|^2 \right)^{1/2} \leq (\ell(Q)^{(n-1)}t)^{1/2} \left( \int_Q |Du|^2 \right)^{1/2}.
\]

We substitute the three preceding estimates into \eqref{eq:coe1} to obtain
\begin{align*}
    \left| \int_Q D u \right| 
	& \lesssim (\ell(Q)^{(n-1)}/t)^{1/2} (1 + \|\mathds{1}_QV\|_{n/2}^{1/2}) \left( \int_Q |u|^2 \right)^{1/2} + (\ell(Q)^{n-1}t)^{1/2} \left( \int_Q |D u|^2 \right)^{1/2} \\ 
	& \lesssim \left( \frac{|Q|}{{\ell(Q)}}\right)^{1/2} (1 + \|\mathds{1}_QV\|_{n/2}^{1/2}) \left( \int_Q |u|^2 \right)^{1/4} \left( \int_Q |D u|^2 \right)^{1/4},
\end{align*}
as $t = \|\mathds{1}_Qu\|_2 / \|\mathds{1}_Q D u\|_2$, so dividing by $|Q|$ and squaring gives~\eqref{eqn:ILEL}. Finally, if $V\in B^{n/2} (\R^n)$ with $\llbracket V\rrbracket_{n/2}\leq \upsilon<\infty$, and $Q \in \Delta^V$, then
\[
    \|\mathds{1}_QV\|_{n/2} = |Q|^{2/n} \left( \dashint_Q V^{n/2} \right)^{2/n} \lesssim \ell(Q)^2 \dashint_Q V \leq 1,
\]
where the implicit constant depends only on $n$ and $\upsilon$, so~\eqref{eqn:ILEB} follows from~\eqref{eqn:ILEL}.
\end{proof}


\subsection{Reduction to a Carleson Measure Estimate}\label{ssec:PPR}


The purpose of this section is to reduce the quadratic estimates in Theorem~\ref{thm:QE} to a Carleson measure estimate on the homogeneous cubes for the potential $V$ defined in \eqref{eq:homogcubes}. This reduction is the content of Proposition~\ref{prop:Principle Part Approximation} for $V$ in $B^{n/2} (\R^n)$. The version for $V$ in $L^{n/2} (\R^n)$ with sufficiently small norm is in Proposition~\ref{prop:VLpPPR}.

We follow the approach developed by Axelsson--Keith--McIntosh in~\cite{AKMc06}, which in turn is based on the original proof by Auscher--Hofmann--Lacey--McIntosh--Tchamitchian in~\cite{AHLMcT02}, but separate treatments will often be required for estimates on inhomogeneous cubes and homogeneous cubes when the potential is in the reverse H\"{o}lder class. This is because the cancellation in Lemma~\ref{lem:Interpolation Lemma} is only valid on homogeneous cubes. The cancellation is needed here to prove Lemma~\ref{lem:Schur Estimate} and is also an essential part of the proof of the Carleson measure estimate itself in the next section (see Lemma~\ref{lem:Properties of Test Functions} and Proposition~\ref{prop:Final reduction}).

We begin by introducing an averaging operator and obtaining off-diagonal bounds for certain resolvent-type operators. For each $t\in \R_+$, the dyadic averaging operator $A_t$ in $\mathcal{L}(L^2 (\R^n ; \C^{n+2}))$ is given at $x \in \R^n$ by 
\[
    A_t u(x) \coloneqq \dashint_{Q_{t,x}} u (y) \d y
\]
for all $u\in L^2 (\R^n ; \C^{n+2})$, where $Q_{t,x}$ denotes the unique dyadic cube in $\Delta_t$ that contains $x$, and the integral is understood component-wise. For each $t\in \R$, the resolvent-type operators $R_t^B$, $P_t^B$ and $Q_t^B$ in $\mathcal{L}(L^2 (\R^n ; \C^{n+2}))$ are given by
\begin{align}\begin{split}\label{eq:RPQdef}
    R_t^B & \coloneqq (I + i t DB)^{-1}, \\
    P_t^B & \coloneqq (I + (t DB)^2 )^{-1} = \tfrac{1}{2} (R_t^B + R_{-t}^B), \\
    Q_t^B & \coloneqq t DB (I + (t DB)^2 )^{-1} = t DB P_t^B = \tfrac{1}{2i} ( - R_t^B + R_{-t}^B).
\end{split}\end{align}
We abbreviate these operators as $R_t$, $P_t$ and $Q_t$ when $B=I$ is the identity operator. The resolvent bounds in \eqref{eq:DBresbdd} imply that $R_t^B$, $P_t^B$ and $Q_t^B$ are all uniformly bounded in $\mathcal{L}(L^2 (\R^n ; \C^{n+2}))$ with respect to $t\in\R$. They also satisfy the following off-diagonal bounds, where $\langle x \rangle \coloneqq 1 + |x|$ when $x\in\R$, since ultimately the commutators $[D,\eta I]u\coloneqq D(\eta u)-\eta Du$ of $D$ with scalar-valued cut-off functions $\eta$ are not influenced by inhomogeneous contributions from the potential $V$.

\begin{prop}[Off-Diagonal Estimates] \label{prop:Off diagonal estimates}
If $n\in\N$, $V\in L^1_{\Loc}(\R^n)$ and $M \in \N$, then there exists $C_M\in(0,\infty)$, depending only on $\kappa$, $K$ and $M$, such that whenever $U_t \in \{R_t^B,P_t^B,Q_t^B\}$ there is the off-diagonal estimate
\[
    \norm{\mathds{1}_E U_t\mathds{1}_F u}{{2}}{} \leq C_M \left\langle \frac{\dist(E,F)}{t} \right\rangle^{-M} \norm{\mathds{1}_Fu}{2}{}
\]
for all $u \in L^2 (\R^n ; \C^{n+2})$, Borel sets $E,F \subseteq \R^n$ and $t\in\R$.
\end{prop}

\begin{proof}
It suffices to prove that the domain of $D$ is invariant under multiplication by functions $\eta$ in $\test(\R^n)$ and that the commutator $[D,\eta I]u\coloneqq  D (\eta u) - \eta D u$ satisfies $|[D,\eta I]u| \leq |\nabla \eta | |u|$ for all $u\in\D(D)$. The required off-diagonal bounds then follow as in~\cite[Proposition 5.1]{AAMc10-1}.

To this end, suppose that $\eta \in \mathcal{C}_c^{\infty} (\R^n)$ and $u\in\D(D)$. We then have $(u_\|,u_\oo) \in \D (\gradV^*)$, so $\eta \gradV^* (u_\|,u_\oo) - \nabla \eta \cdot u_\|$ is in $L^2 (\R^n)$ with
\begin{align*}
    \langle \eta \gradV^* (u_\|,u_\oo) - \nabla \eta \cdot u_{\parallel} , \varphi \rangle
    & = \langle (u_\|,u_\oo) , \gradV (\conj{\eta} \varphi) \rangle - \langle u_{\parallel} , \varphi \nabla \conj{\eta} \rangle \\
    & = \langle u_{\parallel} , \conj{\eta} \nabla \varphi + \varphi \nabla \conj{\eta}  \rangle + \langle u_{\mu} , \V \conj{\eta} \varphi \rangle - \langle u_{\parallel} , \varphi \nabla \conj{\eta} \rangle \\
    & = \langle \eta (u_\|,u_\oo) , \gradV \varphi \rangle
\end{align*}
for all $\varphi \in \mathcal{C}_c^{\infty} (\R^n)$, and thus $\eta (u_\|,u_\oo) \in \D (\gradV^*)$ with $\gradV^* (\eta (u_\|,u_\oo)) = \eta \gradV^* (u_\|,u_\oo) - \nabla \eta \cdot u_{\parallel}$. Meanwhile, the product rule for weak derivatives ensures that $\eta  u_{\perp} \in \D(\gradV)$, so $\eta u \in \D(D)$ with
\[
[D,\eta I]u =  D (\eta u) - \eta D u
    = -\begin{bmatrix} \gradV^* (\eta (u_\|,u_\oo))-\eta \gradV^* (u_\|,u_\oo) \\ \nabla (\eta u_{\perp}) - \eta \nabla u_{\perp} \\ 0 \end{bmatrix}
    = \begin{bmatrix} \nabla \eta \cdot u_{\parallel} \\ -(\nabla \eta)u_{\perp} \\ 0 \end{bmatrix}
\]
and $|([D,\eta I]u)(x)| \leq |\nabla \eta(x)| |u(x)|$ for almost every $x\in\R^n$, as required.
\end{proof}

The off-diagonal bounds imply that $Q_t^B$ in $\mathcal{L}(L^2 (\R^n ; \C^{n+2}))$ has an extension, which we continue to denote as $Q_t^B$, that maps $L^\infty(\R^n ; \C^{n+2})$ into  $L^2_{\Loc}(\R^n ; \C^{n+2})$. This follows the reasoning of Axelsson--Keith--McIntosh in~\cite[p.~478]{AKMc06}. For each $(t,x)\in \R^{n+1}_+$, the principal part $\gamma_t(x) \in \mathcal{L}(\C^{n+2})$ of the operator $Q_t^B$ is then defined by
\[
\gamma_t (x) w \coloneqq ( Q_t^B (w\mathds{1}_{\mathbb{R}^n})) (x)
\]
for all $x\in\R^n$ and $w \in \C^{n+2}$, where $w\mathds{1}_{\mathbb{R}^n}$ in $L^\infty(\R^n ; \C^{n+2})$ denotes the constant function equal to $w$ on $\R^n$. For each $t\in\R_+$, the multiplication operator $\gamma_t: L^2(\R^n ; \C^{n+2}) \to L^1_{\Loc}(\R^n ; \C^{n+2})$ is given by $(\gamma_tu)(x)\coloneqq \gamma_t(x)(u(x))$ for all $x\in\R^n$ and $u\in L^2(\R^n ; \C^{n+2})$. We also adapt the ideas introduced by Bailey in~\cite{Bailey18} by considering the component operators
\[
\widetilde{\gamma}_t(x)w\coloneqq \gamma_t(x)(\mathbb{P}_{\perp\|}w)
\quad\text{and}\quad \widetilde{\gamma}_tu\coloneqq \gamma_t\mathbb{P}_{\perp\|}u
\]
for all $(t,x)\in \R^{n+1}_+$, $w \in \C^{n+2}$ and $u\in L^2(\R^n ; \C^{n+2})$, where $\mathbb{P}_{\perp\|}$ is the projection from~\eqref{eq:projdef}. The following properties are a corollary of the off-diagonal bounds established above.

\begin{lem} \label{lem:properties of gamma-tilde}
If $n\in\N$ and $V\in L^1_{\Loc}(\R^n)$, then $\dashint_Q |\gamma_t(x)|_{\mathcal{L} (\C^{n+2})}^2 \d x \lesssim 1$ for all $Q \in \Delta_t$ and $t\in\R_+$, and also $\sup_{t > 0} \| \gamma_t A_t \| \lesssim 1$, where the implicit constants depend only on $\kappa$ and $K$.
\end{lem}

\begin{proof}
This follows~\cite[Corollary~5.3]{AKMc06} using the off-diagonal estimates in Proposition~\ref{prop:Off diagonal estimates}.
\end{proof}

We can now state the main result of this section.

\begin{prop} \label{prop:Principle Part Approximation}
If $n\geq 3$, $q\geq\max\{\tfrac{n}{2},2\}$ and $V \in B^{q}(\R^n)$ with $\llbracket V\rrbracket_{q}\leq \upsilon<\infty$, then
\begin{equation} \label{eqn:Carleson Measure Term}
\int_0^{\infty} \|Q_t^B u \|_2^2 \frac{\d t}{t}
\lesssim \|u\|_2^2
+ \int_0^{\infty} \int_{\Omega_t^V} |A_t u(x)|^2 |\widetilde{\gamma}_t(x)|_{\mathcal{L}(\C^{n+2})}^2 \d x\frac{\d t}{t} \end{equation}
for all $u \in L^2(\R^n;\C^{n+2})$, where $\Omega_t^V \coloneqq \bigcup \{Q: Q \in \Delta_t^V\} = \bigcup\{Q : Q\in\Delta_t \text{ and }  \ell(Q)^2\dashint_Q V \leq 1\}$ and the implicit constant depends only on $n$, $\kappa$, $K$ and $\upsilon$.
\end{prop}

\begin{proof}
We will rely on the topological decomposition $L^2 (\R^n ; \C^{n+2} ) = \overline{\Ran(D)} \oplus \Nul(DB)$ from \eqref{eq:Hodge}. If $u \in \Nul(DB)$, then $Q_t^B u=t P_t^B DB u=0$ so there is nothing to prove.

Now suppose that $u \in \Ran(D)$, so $u = D v$ for some $v \in \D (D)$. The quadratic estimates for the self-adjoint operator $D$ then imply that
\begin{align}\begin{split}\label{eq:P_new Estimate}
    \int_0^{\infty} \| Q_t^B (I - P_t) u\|_2^2 \frac{\d t}{t}
    & = \int_0^{\infty} \| t Q_t^B D (I - P_t) v \|_2^2 \frac{\d t}{t^3} \\ 
    & \lesssim \int_0^{\infty} \| (I - P_t) v \|_2^2 \frac{\d t}{t^3} \\
    & = \int_0^{\infty} \|Q_t D v \|_2^2 \frac{\d t}{t} \\
    & \lesssim \|u\|_2^2,
\end{split}\end{align}
where we used the identities $t Q_t^B D = (t Q_t^B DB)B^{-1}\mathbb{P}_{\overline{\Ran(BD)}}$ and $t Q_t^B DB=I-P_t^B$, the uniform bounds for $P_t^B$, the accretivity of $B$ and the bounded projection $\mathbb{P}_{\overline{\Ran(BD)}}$ onto $\overline{\Ran(BD)}$ from~\eqref{eq:Hodge} to obtain the first estimate. Next, since $Q_t^B=DBP_t^B$, the coercivity in Lemma~\ref{lem:Riesz Transform Estimates} shows that
\begin{equation}\label{eq:P_3 Estimate}
    \int_0^{\infty} \| Q_t^B \mathbb{P}_\oo P_t u \|_2^2 \frac{\d t}{t}  
     \lesssim \int_0^{\infty} \|t DB \mathbb{P}_\oo D P_t v \|_2^2 \frac{\d t}{t} 
     \lesssim \int_0^{\infty} \| Q_t D v \|_2^2 \frac{\d t}{t} 
     \lesssim \|u\|_2^2.
\end{equation}
We will see in Lemma~\ref{lem:Large Cubes Estimate} below that the $\mathbb{P}_{\perp\|}$-component on inhomogeneous cubes satisfies
\begin{equation}\label{eq:3.9}
\int_0^{\infty} \|\mathds{1}_{\R^n\setminus \Omega_t^V} Q_t^B \mathbb{P}_{\perp\|} P_t u\|_2^2 \frac{\d t}{t} \lesssim \|u\|_2^2,
\end{equation}
whilst Lemmas~\ref{lem:Poincare Estimate} and~\ref{lem:Schur Estimate} will show that the $\mathbb{P}_{\perp\|}$-component on homogeneous cubes satisfies
\begin{equation}\label{eq:3.1011}
   \int_0^{\infty} \|\mathds{1}_{\Omega_t^V}Q_t^B\mathbb{P}_{\perp\|} P_t u \|_2^2 \frac{\d t}{t}
   \lesssim \|u\|_2^2 + \int_0^{\infty} \|\mathds{1}_{\Omega_t^V} \widetilde{\gamma}_t A_t u\|_2^2 \frac{\d t}{t}.
\end{equation}
These estimates together prove~\eqref{eqn:Carleson Measure Term} when $u \in \Ran(D)$.

Finally, suppose that $u\in\overline{\Ran (D)}$. For each $\delta \in (0,1)$, choose $v_\delta \in \Ran (D)$ so that $\|u - v_\delta\|_2^2 < \delta^3$. The uniform bounds for $Q_t^B$ and $\widetilde{\gamma}_t A_t$ (see Lemma~\ref{lem:properties of gamma-tilde}) then imply that
\[
    \int_{\delta}^{1/\delta} (\| Q_t^B (u - v_\delta) \|_2^2 + \|\widetilde{\gamma}_t A_t (u -  v_\delta)\|_2^2) \frac{\d t}{t} \lesssim \int_{\delta}^{1/\delta} \| u - v_\delta \|_2^2 \frac{\d t}{t} < \delta,
\]
so \eqref{eqn:Carleson Measure Term} applied to $v_\delta$ shows that 
\[
    \int_{\delta}^{1/\delta} \| Q_t^B u \|_2^2 \frac{\d t}{t} \lesssim \|u\|_2^2 + \int_0^{\infty} \|\mathds{1}_{\Omega_t^V} \widetilde{\gamma}_t A_t u\|_2^2 \frac{\d t}{t} + \delta
\]
and the result follows from the monotone convergence theorem.
\end{proof}

The next three lemmas contain the estimates used to prove~\eqref{eq:3.9} and~\eqref{eq:3.1011} above. The first result is analogous to the small scale reduction obtained by Axelsson--Keith--McIntosh in \cite[(4.1)--(4.2)]{AKMc06-2} and Morris in \cite[Proposition~5.1]{Morris12} for the case $V\equiv 1$, since then $\R^n\setminus \Omega_t^V$ is empty when $t\in(0,1]$, whilst $\R^n\setminus \Omega_t^V=\R^n$ when $t>1$. This reduction to homogeneous cubes is what ultimately allows us to apply the cancellation from~\eqref{eqn:ILEB} to prove Lemma~\ref{lem:Schur Estimate} below. In contrast to the case when $V\equiv 1$, the quadratic estimate on inhomogeneous cubes is not immediate, and instead requires combining off-diagonal bounds with the Fefferman--Phong inequality~\eqref{eq:FePh} and the adapted coercivity for $\mathbb{P}_{\perp\|}$-components from Lemma~\ref{lem:Riesz Transform Estimates}.

\begin{lem} \label{lem:Large Cubes Estimate}
If $n\geq 3$, $q\geq\max\{\tfrac{n}{2},2\}$ and $V \in B^{q}(\R^n)$ with $\llbracket V\rrbracket_{q}\leq \upsilon<\infty$, then
\[
\int_0^{\infty} \|\mathds{1}_{\R^n\setminus \Omega_t^V} Q_t^B \mathbb{P}_{\perp\|} P_t u\|_2^2 \frac{\d t}{t} \lesssim \|u\|_2^2,
\]
for all $u \in \Ran(D)$, where $\Omega_t^V \coloneqq \bigcup \{Q: Q \in \Delta_t^V\} = \bigcup\{Q : Q\in\Delta_t \text{ and }  \ell(Q)^2\dashint_Q V \leq 1\}$ and the implicit constant depends only on $n$, $\kappa$, $K$ and $\upsilon$.
\end{lem}

\begin{proof}
Let $f = \mathbb{P}_{\perp\|} P_t u$, consider $M \in \N$ to be chosen later, and suppose that $Q \in \Delta_t \setminus \Delta_t^V$. The off-diagonal estimates in Proposition~\ref{prop:Off diagonal estimates} and the Cauchy--Schwarz inequality show that
\begin{align*}
\| \mathds{1}_Q Q_t^B \mathbb{P}_{\perp\|} P_t u \|_2^2
    &  \lesssim   \left(\|\mathds{1}_{2 Q} f \|_2 + \sum_{k=1}^{\infty} \| \mathds{1}_Q Q_t^B \mathds{1}_{2^{k+1}Q\setminus 2^k Q} \| \|\mathds{1}_{2^{k+1} Q} f \|_2 \right)^2 \lesssim \sum_{k=1}^{\infty} 2^{-k M} \|\mathds{1}_{2^kQ} f \|_2^2.
\end{align*}

We next claim that $\|\mathds{1}_{2^k Q} f \|_2^2 \lesssim t^2 2^{nk} \| \mathds{1}_{2^k Q} \gradV f\|_2^2$. This follows from the Fefferman--Phong inequality in Proposition~\ref{prop:Fefferman-Phong} in the case $p=2$, since if $ \ell(2^k Q)^2 \dashint_{2^k Q} V \geq 1$, then 
\[
\|\mathds{1}_{2^k Q} f \|_2^2 \lesssim 2^{2k}\ell(Q)^2 \| \mathds{1}_{2^k Q} \gradV f\|_2^2,
\]
whilst if $ \ell(2^k Q)^2\dashint_{2^k Q} V < 1$, then because $ \ell(Q)^2 \dashint_{Q} V > 1$, we have
\begin{align*}
\|\mathds{1}_{2^k Q} f \|_2^2 
&\leq \left( \ell(Q)^2 \dashint_Q V \right) \| \mathds{1}_{2^k Q} f \|_2^2 \\
&\lesssim 2^{(n-2)k} \left( \ell(2^k Q)^2 \dashint_{2^k Q} V \right) \|\mathds{1}_{2^k Q} f\|_2^2 \\
&\lesssim 2^{nk} \ell(Q)^2 \| \mathds{1}_{2^k Q} \gradV f\|_2^2.
\end{align*}

We now choose $M > 2n$ and combine the above claim with the preceding estimate and note that $\{2^kQ:Q\in\Delta_t\}$ is a locally finite covering of $\R^n$ to obtain
\begin{align*}
\|\mathds{1}_{\R^n\setminus \Omega_t^V} Q_t^B \mathbb{P}_{\perp\|} P_t u\|_2^2 
&= \sum_{Q \in \Delta_t \setminus \Delta_t^V} \|\mathds{1}_Q Q_t^B \mathbb{P}_{\perp\|} P_t u\|_2^2 \\
&\lesssim \sum_{Q \in \Delta_t \setminus \Delta_t^V} \sum_{k=1}^\infty 2^{-k M} \|\mathds{1}_{2^k Q} f \|_2^2 \\
& \lesssim t^2 \sum_{k=1}^\infty 2^{-k(M-n)} \sum_{Q \in \Delta_t} \|\mathds{1}_{2^k Q} \gradV f\|_2^2  \\
& \lesssim t^2 \sum_{k=1}^{\infty} 2^{-k(M-2n)} \| \gradV f \|_2^2  \\
& \lesssim t^2  \| \gradV f \|_2^2.
\end{align*}
The coercivity in Lemma~\ref{lem:Riesz Transform Estimates} and quadratic estimates for $D$ then imply that
\begin{align*}
    \int_0^{\infty} \|\mathds{1}_{\R^n\setminus \Omega_t^V} Q_t^B \mathbb{P}_{\perp\|} P_t u\|_2^2 \frac{\d t}{t} 
     \lesssim \int_0^{\infty} t^2 \| \gradV \mathbb{P}_{\perp\|} P_t u \|_2^2 \frac{\d t}{t} 
     \lesssim \int_0^{\infty} \| t D P_t u \|_2^2 \frac{\d t}{t} 
     \lesssim \| u \|_2^2,
\end{align*}
as required.
\end{proof}

The next lemma is based on the result by Axelsson--Keith--McIntosh in \cite[Proposition~5.5]{AKMc06}, but after applying the Poincar\'{e} inequality we find that the adapted coercivity in Lemma~\ref{lem:Riesz Transform Estimates} is only compatible with the $\mathbb{P}_{\perp\|}$-component. The $\mathbb{P}_{\mu}$-component was therefore treated separately at \eqref{eq:P_3 Estimate} in the proof of Proposition~\ref{prop:Principle Part Approximation}, avoiding the Poincar\'{e} inequality and using only the coercivity relevant to that component.

\begin{lem} \label{lem:Poincare Estimate}
If $n\geq 3$, $q\geq\max\{\tfrac{n}{2},2\}$ and $V \in B^{q}(\R^n)$ with $\llbracket V\rrbracket_{q}\leq \upsilon<\infty$, then
\[
    \int_0^{\infty} \|(Q_t^B - \gamma_t A_t) \mathbb{P}_{\perp\|} P_t u \|_2^2 \frac{\d t}{t} \lesssim \| u \|_2^2,
\]
for all $u \in \Ran(D)$, where the implicit constant depends only on $n$, $\kappa$, $K$ and $\upsilon$.
\end{lem}

\begin{proof}
Let $f = \mathbb{P}_{\perp\|} P_t u$ and $f_Q \coloneqq \dashint_{Q} f$ for $Q\in\Delta$. The off-diagonal estimates in Proposition~\ref{prop:Off diagonal estimates} with $M>2n+2$, the Poincar\'e inequality and the coercivity in Lemma~\ref{lem:Riesz Transform Estimates} show that
\begin{align*}
    \sum_{Q \in \Delta_t} \| \mathds{1}_Q (Q_t^B - \gamma_t A_t) f \|_2^2 
    &\leq \sum_{Q \in \Delta_t}\! \left(\|\mathds{1}_Q Q_t^B \mathds{1}_{2Q}(f - f_Q)\|_2 + \sum_{k=1}^{\infty} \| \mathds{1}_Q Q_t^B \mathds{1}_{2^{k+1}Q\setminus 2^kQ}(f - f_Q)\|_2\right)^2 \\
    & \lesssim  \sum_{Q \in \Delta_t} \sum_{k=1}^{\infty} 2^{-kM} \|\mathds{1}_{2^k Q} (f - f_{2^kQ} + f_{2^kQ} - f_Q)\|_2^2 \\
    & \lesssim \sum_{Q \in \Delta_t} \sum_{k=1}^{\infty} 2^{-k(M-n)} \ell(2^k Q)^2 \|\mathds{1}_{2^k Q} \nabla f \|_2^2 \\
    & \lesssim t^2 \sum_{k=1}^{\infty} 2^{-k(M-2n-2)} \|\nabla f\|_2^2 \\
    & \lesssim t^2 \|\nabla \mathbb{P}_{\perp\|} P_t u \|_2^2 \\
    & \lesssim t^2 \| D P_t u \|_2^2,
\end{align*}
where we used that $\{2^kQ:Q\in\Delta_t\}$ is a locally finite covering of $\R^n$ in the fourth line. Hence,
\[
    \int_0^{\infty} \|(Q_t^B - \gamma_t A_t) \mathbb{P}_{\perp\|} P_t u \|_2^2 \frac{\d t}{t} \lesssim \int_0^{\infty} \| Q_t u \|_2^2 \frac{\d t}{t} \lesssim \| u \|_2^2,
\]
as required.
\end{proof}

The final lemma is adapted from the result by Axelsson--Keith--McIntosh in \cite[Proposition~ 5]{AKMc06-2} in the case $V\equiv 1$, and since this is where the cancellation from Lemma~\ref{lem:Interpolation Lemma} is required, the result only concerns homogeneous cubes.

\begin{lem} \label{lem:Schur Estimate}
If $n\geq 3$, $q\geq\max\{\tfrac{n}{2},2\}$ and $V \in B^{q}(\R^n)$ with $\llbracket V\rrbracket_{q}\leq \upsilon<\infty$, then
\[
    \int_0^{\infty} \| \mathds{1}_{\Omega_t^V} \gamma_t A_t \mathbb{P}_{\perp\|} (P_t - I) u \|_2^2 \frac{\d t}{t} \lesssim \| u \|_2^2,
\]
for all $u \in \Ran(D)$, where $\Omega_t^V \coloneqq \bigcup \{Q: Q \in \Delta_t^V\} = \bigcup\{Q : Q\in\Delta_t \text{ and }  \ell(Q)^2\dashint_Q V \leq 1\}$ and the implicit constant depends only on $n$, $\kappa$, $K$ and $\upsilon$.
\end{lem}

\begin{proof}
We first prove the bound
\begin{equation} \label{Bound for Schur Estimate}
   \| \mathds{1}_{\Omega_t^V} A_t \mathbb{P}_{\perp\|} (P_t - I) Q_s v \|_2^2 = \sum_{Q \in \Delta_t^V} \|\mathds{1}_Q A_t \mathbb{P}_{\perp\|} (P_t - I) v \|_2^2 \lesssim \min \left\{ \frac{s}{t} , \frac{t}{s} \right\}  \|v\|_2^2
\end{equation}
for all $v\in L^2 (\R^n ; \C^{n+2})$. If $0<t \leq s<\infty$, then since $(P_t - I) Q_s = \frac{t}{s} Q_t (P_s-I)$, we have
\[
   \|A_t \mathbb{P}_{\perp\|} (P_t - I) Q_s v \|_2^2 \lesssim  \| \tfrac{t}{s} Q_t (P_s-I) v \|_2^2 \lesssim \left(\frac{t}{s}\right)^2 \|v\|_2^2 \leq \frac{t}{s} \|v\|_2^2.
\]
If $0<s \leq t<\infty$, then since $P_t Q_s = \frac{s}{t} Q_t P_s$, we have
\begin{align*}
    \sum_{Q \in \Delta_t^V}\| \mathds{1}_Q A_t \mathbb{P}_{\perp\|} (P_t - I) Q_s v \|_2^2 
    &\lesssim \| P_t Q_s v \|_2^2 + \sum_{Q \in \Delta_t^V} \| \mathds{1}_Q A_t \mathbb{P}_{\perp\|} Q_s v \|_2^2 \\
    &\lesssim \frac{s}{t} \|v\|_2^2 + \sum_{Q \in \Delta_t^V} \| \mathds{1}_Q A_t \mathbb{P}_{\perp\|} Q_s v \|_2^2.
\end{align*}
The cancellation from~\eqref{eqn:ILEB} in Lemma~\ref{lem:Interpolation Lemma}, for cubes $Q$ in $\Delta^V$, now shows that
\begin{align*}
    \sum_{Q \in \Delta_t^V} \| \mathds{1}_Q A_t \mathbb{P}_{\perp\|} Q_s v \|_2^2
    & = \sum_{Q \in \Delta_t^V} \int_Q \left| \dashint_Q \mathbb{P}_{\perp\|} Q_s v \right|^2 \\
    & \leq \sum_{Q \in \Delta_t^V} s^2 |Q| \left| \dashint_Q D P_s v \right|^2 \\
    & \lesssim \sum_{Q \in \Delta_t} s^2 \frac{|Q|}{\ell(Q)} \left( \dashint_Q |D P_s v|^2 \right)^{1/2} \left( \dashint_Q |P_s v|^2 \right)^{1/2} \\
    & \leq \frac{s}{t} \| Q_s u \|_2 \|P_t v \|_2 \\
    & \lesssim \frac{s}{t} \|v\|_2^2.
\end{align*}
These estimates together prove~\eqref{Bound for Schur Estimate}.

We now use a Schur-type estimate to complete the proof. Observe that, since $A_t$ is defined component-wise on $L^2 (\R^n ; \C^{n+2})$, whilst both $\gamma_t$ and $\widetilde{\gamma}_t$ are defined pointwise as multiplication operators on $\mathbb{R}^n$, and $A_t^2=A_t$ maps (locally) to constants in $\mathbb{C}^{n+2}$ on each cube in $\Delta_t$, we have
\[
\mathds{1}_Q \gamma_t (A_t \mathbb{P}_{\perp\|}v)
=\mathds{1}_Q \gamma_t (\mathbb{P}_{\perp\|} A_t \mathbb{P}_{\perp\|}v)
=\mathds{1}_Q \widetilde{\gamma}_t (A_t \mathbb{P}_{\perp\|}v)
=\mathds{1}_Q \widetilde{\gamma}_t (\mathds{1}_Q A_t \mathbb{P}_{\perp\|}v)
=\mathds{1}_Q \widetilde{\gamma}_t (A_t \mathds{1}_Q A_t \mathbb{P}_{\perp\|}v)
\]
whenever $Q\in \Delta_t$, $t\in\R_+$ and $v\in L^2 (\R^n ; \C^{n+2})$. Using the notation $m(s,t) \coloneqq \min \left\{ \frac{s}{t} , \frac{t}{s} \right\}^{1/2}$, we now combine this observation with the uniform bounds for $\widetilde{\gamma}_tA_t$ from Lemma~\ref{lem:properties of gamma-tilde}, the Calder\'{o}n--McIntosh reproducing formula $\int_0^{\infty} Q_s^2 u \frac{\d s}{s}\eqsim u$ for $u\in\Ran(D)$ (cf. \cite[Section~7]{Mc88}, \cite[Theorem~5.2.6]{Haase05} or \cite[Theorem~3.3.9]{EgertPhD}), Minkowski's inequality and \eqref{Bound for Schur Estimate} to obtain
\begin{align*}
    \int_0^{\infty} \| \mathds{1}_{\Omega_t^V} \gamma_t A_t \mathbb{P}_{\perp\|} (P_t - I) u \|_2^2 \frac{\d t}{t}
    & = \int_0^{\infty}  \sum_{Q \in \Delta_t^V} \| \mathds{1}_Q \widetilde{\gamma}_t A_t\mathds{1}_Q A_t \mathbb{P}_{\perp\|} (P_t - I) u \|_2^2 \frac{\d t}{t} \\
    &\lesssim \int_0^{\infty} \sum_{Q \in \Delta_t^V} \|\mathds{1}_Q A_t \mathbb{P}_{\perp\|} (P_t - I) u \|_2^2 \frac{\d t}{t} \\
    &= \int_0^{\infty} \|\mathds{1}_{\Omega_t^V} A_t \mathbb{P}_{\perp\|} (P_t - I) u \|_2^2 \frac{\d t}{t} \\
    & \lesssim \int_0^{\infty} \left\|\mathds{1}_{\Omega_t^V} A_t \mathbb{P}_{\perp\|} (P_t - I) \left( \int_0^{\infty} Q_s^2 u \frac{\d s}{s} \right) \right\|_2^2 \frac{\d t}{t} \\
    & \lesssim \int_0^{\infty} \left( \int_0^{\infty} \| \mathds{1}_{\Omega_t^V} A_t \mathbb{P}_{\perp\|} (P_t - I) Q_s (Q_s u)\|_2 \frac{\d s}{s} \right)^2 \frac{\d t}{t} \\
    & \lesssim \int_0^{\infty} \left( \int_0^{\infty}  m(s,t) \| Q_s u \|_2 \frac{\d s}{s} \right)^2 \frac{\d t}{t} \\
    & \lesssim \int_0^{\infty} \sup_{t>0}  \left(\int_0^{\infty} m(s,t) \frac{\d s}{s} \right) \left( \int_0^{\infty} m(s,t) \| Q_s u \|_2^2 \frac{\d s}{s} \right) \frac{\d t}{t} \\
    & \lesssim \sup_{s>0} \left( \int_0^{\infty} m(s,t) \frac{\d t}{t} \right) \int_0^{\infty} \| Q_s u \|_2^2 \frac{\d s}{s} \\
    & \lesssim \| u \|_2^2.
\end{align*}
This completes the proof.
\end{proof}

This concludes the proof of the reduction to a Carleson measure estimate in Proposition~\ref{prop:Principle Part Approximation}. If $V \in L^{n/2} (\R^n)$ with sufficiently small norm, then the coercivity in Lemma~\ref{lem:Riesz Transform Estimates with small norm} and cancellation in~\eqref{eqn:ILEL} can be applied instead on all dyadic cubes to obtain the following analogue.

\begin{prop}\label{prop:VLpPPR}
If $n\geq 5$ and $\|V\|_{n/2}<\varepsilon_n$, where $\varepsilon_n\in(0,1)$ is from Lemma~\ref{lem:Riesz Transform Estimates with small norm}, then
\[
\int_0^{\infty} \|Q_t^B u \|_2^2 \frac{\d t}{t}
\lesssim \|u\|_2^2
+ \int_0^{\infty} \int_{\R^n} |A_t u(x)|^2 |\widetilde{\gamma}_t(x)|_{\mathcal{L}(\C^{n+2})}^2 \d x\frac{\d t}{t}
\]
for all $u \in L^2(\R^n;\C^{n+2})$, where the implicit constant depends only on $n$, $\kappa$ and $K$.
\end{prop}

\begin{proof}
We modify the proof of Proposition~\ref{prop:Principle Part Approximation} by first using the coercivity from Lemma~\ref{lem:Riesz Transform Estimates with small norm}, instead of that from Lemma~\ref{lem:Riesz Transform Estimates}, to obtain the estimates in~\eqref{eq:P_3 Estimate} and Lemma~\ref{lem:Poincare Estimate}. Next, the cancellation from~\eqref{eqn:ILEL} in Lemma~\ref{lem:Interpolation Lemma}, which holds for all dyadic cubes $Q$ in $\Delta$ and not just those in $\Delta^V$, is used to replace \eqref{eqn:ILEB} in the proof of Lemma~\ref{lem:Schur Estimate} to obtain
\begin{equation}\label{eq:SchRep}
\int_0^{\infty} \|\gamma_t A_t \mathbb{P}_{\perp\|} (P_t - I) u \|_2^2 \frac{\d t}{t} \lesssim \| u \|_2^2,
\end{equation}
where the implicit constants in all these estimates depend only $n$, $\kappa$ and $K$, as $\varepsilon_n$ in Lemma~\ref{lem:Riesz Transform Estimates with small norm} depends only on $n$. There is no need for the estimate in Lemma~\ref{lem:Large Cubes Estimate} and the result follows.
\end{proof}


\subsection{Carleson Measure Estimate}\label{ssec:CME}


To complete the proof of the main quadratic estimates in Theorem~\ref{thm:QE}, it remains to bound the final term that appears in the reduction to a Carleson measure estimate in~ \eqref{eqn:Carleson Measure Term}. This will follow by establishing a Carleson measure estimate on the homogeneous cubes for the potential $V$, as defined in \eqref{eq:homogcubes}, adapting the approach of Axelsson--Keith--McIntosh in \cite[Section 5.3]{AKMc06}.

A measure $\mu$ on $\R_{+}^{n+1}$ will be called a \textit{homogeneous Carleson measure} for the potential $V$ if 
\begin{equation}\label{eq:defcmV}
\norm{\mu}{{\mathcal{C}_V}}{} \coloneqq \sup_{Q \in \Delta^V} \frac{1}{|Q|} \mu (\mathcal{C}(Q)) < \infty,
\end{equation}
where $\mathcal{C} (Q) \coloneqq Q \times (0,\ell(Q)]$ is the \textit{Carleson box} over the cube $Q$. The proof of the following proposition, which has been adapted from the work of Morris in~\cite[Theorem~4.2]{Morris12}, also holds with $|\widetilde{\gamma}_t (x)|_{\mathcal{L}(\C^{n+2})}^2 \d x\frac{\d t}{t}$ replaced by $d\mu(x,t)$ for any such homogeneous Carleson measure.

\begin{prop}\label{prop:CMEprop}
If $n\geq 3$, $q\geq\max\{\tfrac{n}{2},2\}$ and $V \in B^{q}(\R^n)$ with $\llbracket V\rrbracket_{q}\leq \upsilon<\infty$, then
\[
\int_0^{\infty} \int_{\Omega_t^V} |A_t u(x)|^2 |\widetilde{\gamma}_t(x)|_{\mathcal{L}(\C^{n+2})}^2 \d x\frac{\d t}{t} 
\leq \|u\|_2^2
\]
for all $u \in L^2(\R^n;\C^{n+2})$, where $\Omega_t^V \coloneqq \bigcup \{Q: Q \in \Delta_t^V\} = \bigcup\{Q : Q\in\Delta_t \text{ and }  \ell(Q)^2\dashint_Q V \leq 1\}$ and the implicit constant depends only on $n$, $\kappa$, $K$ and $\upsilon$.
\end{prop}

\begin{proof}
We consider the measure $\mu$ with $\d\mu(x,t)\coloneqq  |\widetilde{\gamma}_t (x)|_{\mathcal{L}(\C^{n+2})}^2 \d x\frac{\d t}{t}$ for all $(t,x)\in\R^{n+1}_+$. It is ultimately proved in Proposition~\ref{prop:CMEpropaux} below that $\mu(\mathcal{C}(Q)) \lesssim |Q|$ for all $Q\in\Delta^V$, where the implicit constant depends only on $n$, $\kappa$, $K$ and $\upsilon$, so $\mu$ is a homogeneous Carleson measure for the potential $V$, as defined by \eqref{eq:defcmV}.

Suppose that $u \in L^2(\R^n;\C^{n+2})$. For each $k\in\Z$, let $\{ Q_{\alpha}^k : \alpha \in \mathbb{I}_k \}$ denote an enumeration of the cubes in $\Delta_{2^k}^V$ for some index set $\mathbb{I}_k \subseteq \N$. We will use the notation $u_{\alpha,k} \coloneqq  \dashint_{Q_{\alpha}^k} |u(y)| \d y$ and $\mu_{\alpha,k} \coloneqq  \mu (Q_{\alpha}^k \times (2^{k-1} , 2^k])$ for all $\alpha\in\mathbb{I}_k$. Using that $\Delta_t^V = \Delta_{2^k}^V$ when $2^{k-1} < t \leq 2^k$, we obtain
\begin{align}\begin{split}\label{eq:CMP}
    \int_0^{\infty} \int_{\Omega_t^V} |A_t u(x) |^2 \d\mu(x,t) 
    & = \sum_{k\in\Z} \sum_{Q \in \Delta_{2^{k}}^V} \int_{2^{k-1}}^{2^{k}} \int_Q \left|\dashint_Q u(y) \d y \right|^2 \d\mu(x,t) \\
    & \leq \sum_{k\in\Z} \sum_{\alpha \in \mathbb{I}_k}  u_{\alpha,k}^2 \mu_{\alpha,k} \\
    & = \sum_{k\in\Z} \sum_{\alpha \in \mathbb{I}_k}  \mu_{\alpha,k} \int_0^{u_{\alpha,k}} 2r \, \d r \\
    & = \int_0^{\infty} \sum_{k\in\Z} \sum_{\alpha \in \mathbb{I}_k}  \mu_{\alpha,k} \mathds{1}_{\{ |u_{\alpha,k}| >r \}} (r) 2r \, \d r.
\end{split}\end{align}

For each $r>0$, observe that $u_{\alpha,k} \leq r$ whenever $(\|u\|_2/r)^{2/n}\lesssim \ell(Q_\alpha^k)$, where the implicit constant depends only on $n$, since $u_{\alpha,k}\leq |Q_\alpha^k|^{-1/2}\|u\|_2$. This allows us to use a stopping-time argument to construct the collection of maximal dyadic cubes $Q_{\alpha}^k$ in $\Delta^V$ such that $u_{\alpha,k} > r$. Let $\{R_j^{\, r}:j \in \mathbb{J}_r\}$ denote an enumeration of the cubes in this collection for some index set $\mathbb{J}_r \subseteq \mathbb{N}$.

Next, we define $M_*^V u (x) \coloneqq \sup \{ \dashint_Q u : x\in Q \in \Delta^V\}$ and claim that
\begin{equation}\label{eq:setclaim}
\bigcup\nolimits_{j\in\mathbb{J}_r} R_j^{\, r} = \{ x \in \R^n : M_*^V |u| (x) > r \}  
\end{equation}
for all $r>0$. Indeed, if $x \in  R_j^{\, r}$ for some $j \in \mathbb{J}_r$, then 
$r < \dashint_{R_j^{\, r}} |u| \leq M_*^V |u| (x)$. Conversely, if $x \in \R^n$ and $M_*^V |u| (x) > r$, then there exists $Q \in \Delta^V$ such that $x \in Q$ and $\dashint_{Q} |u|>r$, so $Q \subseteq R_j^{\, r}$ for some $j \in \mathbb{J}_r$ by maximality. This proves the equality claimed in \eqref{eq:setclaim}. 
 
Now observe that if $Q_{\alpha}^k \in \Delta^V$ and $u_{\alpha,k}>r$ for some $k\in\mathbb{Z}$, $\alpha\in\mathbb{I}_k$ and $r>0$, then  $Q_{\alpha}^k \subseteq R_j^{\, r}$ for some $j \in \mathbb{J}_r$ by maximality. This shows that \eqref{eq:CMP} is bound by
\begin{align*}
\int_0^{\infty} \sum_{j\in\mathbb{J}_r} \sum_{\substack{R \in \Delta^V;\\ R \subseteq R_j^{\, r}}}  \mu (R \times (\tfrac{1}{2}\ell(R), \ell(R)]) 2r \d r 
    & \leq \int_0^{\infty} 2r \sum_{j\in\mathbb{J}_r} \mu(\mathcal{C} (R_j^{\, r})) \d r \\
    & \lesssim \int_0^{\infty} 2r \sum_{j\in\mathbb{J}_r} |R_j^{\, r}|\d r \\
    & = \int_0^{\infty} 2r | \{ x \in \R^n : M_*^V |u| (x) > r \} | \d r \\
    & = \norm{M_*^V |u|}{2}{2}
    \leq \norm{M_* |u|}{2}{2} 
    \lesssim \norm{u}{2}{2},
\end{align*}
where we used that $\mu$ is a homogeneous Carleson measure for $V$ in the second line, identity \eqref{eq:setclaim} in the third line, and the  comparison $M_*^V |u|(x)\leq M_*|u|(x)$ for all $x\in\R^n$ with the Hardy--Littlewood maximal operator $M_*$ in the final line. This completes the proof.
\end{proof}

A stopping-time construction will ultimately be used in Proposition~\ref{prop:Final reduction} to extract suitable collections of dyadic subcubes from a given homogeneous cube for the potential $V$. Unlike the cases when $V\equiv0$ or $V\equiv1$ treated by Axelsson--Keith--McIntosh in \cite{AKMc06} and \cite{AKMc06-2}, however, a dyadic subcube of a homogeneous cube may not itself be a homogeneous cube for the potential $V$. Therefore, we need to modify the stopping-time construction to ensure that only homogeneous subcubes are selected. This is achieved by using the following lemma, which provides a uniform bound on the number of times a homogeneous cube needs to be subdivided to ensure that all of the subsequent dyadic subdivisions are homogeneous cubes. The proof is motivated by the work of Shen in~\cite[Lemma 1.2]{Shen95}.

\begin{lem} \label{lem:Small Cube Depth Bound}
If $n\geq 3$ and $V\in B^{n/2} (\R^n)$ with $\llbracket V\rrbracket_{n/2}\leq \upsilon<\infty$, then there exists ${c_0} \in (0,1]$, depending only on $n$ and $\upsilon$, such that ${c_0} \ell(Q) < \ell(\widetilde{Q})$ when $Q \in \Delta^V$, $\widetilde{Q} \in \Delta \setminus \Delta^V$ and $\widetilde{Q} \subseteq Q$.
\end{lem}

\begin{proof}
Suppose that $Q \in \Delta^V$, $\widetilde{Q} \in \Delta \setminus \Delta^V$ and $\widetilde{Q} \subset Q$. A well-known self-improvement property of reverse H\"{o}lder weights (see, for instance, \cite[Chapter V, Section 3]{SteinBig}) ensures that there exists $p > n/2$ such that $V \in B^p (\R^n)$, where $p$ and $\llbracket V \rrbracket_p$ depend only on $n$ and $\llbracket V\rrbracket_{n/2}$. Therefore, since $\widetilde{Q} \subseteq Q$, we have 
\[
    \dashint_{\widetilde{Q}} V 
    \leq \bigg( \frac{|Q|}{|\widetilde{Q}|} \dashint_Q V^p \bigg)^{1/p}
    \leq  \llbracket V \rrbracket_p \bigg( \frac{\ell(Q)}{\ell(\widetilde{Q})}\bigg)^{n/p} \dashint_Q V.
\]
Moreover, since $Q \in \Delta^V$ and $\widetilde{Q} \in \Delta \setminus \Delta^V$, we have
\[
    1 < \ell(\widetilde{Q})^2 \dashint_{\widetilde{Q}}V 
    \leq \llbracket V \rrbracket_p \bigg( \frac{\ell(Q)}{\ell(\widetilde{Q})} \bigg)^{(n/p) - 2} \ell(Q)^2 \dashint_Q V 
    \leq \llbracket V \rrbracket_p \bigg( \frac{\ell(\widetilde{Q})}{\ell(Q)} \bigg)^{2-(n/p)}.
\]
Now, since $p > n/2$, we have $2-(n/p) > 0$, hence $\ell(Q) < \llbracket V \rrbracket_p^{p/(2p-n)} \ell(\widetilde{Q})$. This completes the proof with ${c_0}=\llbracket V \rrbracket_p^{-p/(2p-n)}$, since $\llbracket V \rrbracket_p\geq 1$.
\end{proof}

Next, for each $\xi\in\C^{n+2}$ and $Q\in\Delta$, we define the function $\xi_Q \coloneqq \mathds{1}_{2Q} \xi$ in $L^2(\R^n;\C^{n+2})$. Moreover, for each $\varepsilon > 0$, we define the test function
\begin{equation}\label{eq:test}
    f_{Q, \varepsilon}^{\xi} \coloneqq P_{\varepsilon  \ell(Q)}^B \xi_Q = (I + (\varepsilon  \ell(Q))^2 (DB)^2 )^{-1} (\xi_Q)
\end{equation}
by analogy with the test functions introduced by Auscher--Ros\'{e}n--Rule in~\cite[Section 3.6]{ARR15}. The following  properties of these test functions are essential to the proof of the Carleson measure estimate. The proof of property (3), which is based on \cite[Lemma 3.16]{ARR15}, requires the cancellation in \eqref{eqn:ILEB} and is therefore restricted to homogeneous cubes.

\begin{lem} \label{lem:Properties of Test Functions}
If $n\geq 3$, $q\geq\max\{\tfrac{n}{2},2\}$ and $V \in B^{q}(\R^n)$ with $\llbracket V\rrbracket_{q}\leq \upsilon<\infty$, then whenever $\xi\in\C^{n+2}$ and $\varepsilon>0$ the test functions $f_{Q,\varepsilon}^{\xi}$ in~\eqref{eq:test} satisfy the following bounds:
\begin{enumerate}
    \item $ \| f_{Q,\varepsilon}^{\xi }\|_2 \lesssim |Q|^{1/2} $ for all $Q \in \Delta$;
    \item $\|\varepsilon  \ell(Q) DB f_{Q,\varepsilon}^{\xi} \|_2 \lesssim |Q|^{1/2}$ for all $Q \in \Delta$;
    \item $\left| \displaystyle{\dashint_Q} f_{Q,\varepsilon}^{\xi} - \xi \right| \lesssim \varepsilon^{1/2}$ for all $Q \in \Delta^V$,
\end{enumerate}
where the implicit constants depend only on $n$, $\kappa$, $K$ and $\upsilon$.
\end{lem}

\begin{proof}
The uniform resolvent bounds for $DB$ show that $\| f_{Q,\varepsilon}^{\xi }\|_2 = \| P_{\varepsilon  \ell(Q)}^B \xi_Q \|_2 \lesssim \|\xi_Q\|_2 \leq |Q|^{1/2}$ and $\|\varepsilon  \ell(Q) DB f_{Q,\varepsilon}^{\xi} \|_2 = \| Q_{\varepsilon  \ell(Q)}^B \xi_Q \|_2  \lesssim \| \xi_Q \|_2 \leq |Q|^{1/2}$ for all $Q \in \Delta$. If $Q\in\Delta^V$, then the cancellation from~\eqref{eqn:ILEB} in Lemma~\ref{lem:Interpolation Lemma} shows that
\begin{align*}
    \left| \dashint_Q f_{Q,\varepsilon}^{\xi} - \xi \right|^2 
    & = \left| \dashint_Q (f_{Q,\varepsilon}^{\xi} - \xi_Q) \right|^2 \\
    & = \left| \dashint_Q (\varepsilon  \ell(Q))^2 (DB)^2 P_{\varepsilon  \ell(Q)}^B \xi_Q \right|^2 \\
    & \lesssim \frac{(\varepsilon  \ell(Q))^4}{ \ell(Q)} \left( \dashint_Q |(DB)^2 P_{\varepsilon  \ell(Q)}^B \xi_Q |^2 \right)^{1/2} \left( \dashint_Q | B DB P_{\varepsilon  \ell(Q)}^B \xi_Q |^2 \right)^{1/2} \\
    & \leq \frac{\varepsilon \|B\|_{\infty}}{|Q|} \left( \int_Q | (P_{\varepsilon  \ell(Q)}^B - I) \xi_Q |^2 \right)^{1/2} \left( \int_Q | Q_{\varepsilon  \ell(Q)}^B \xi_Q|^2 \right)^{1/2} \\ 
    & \lesssim \frac{\varepsilon}{|Q|} \|\xi_Q\|_2^2 
    \leq \varepsilon.
\end{align*}
This completes the proof.
\end{proof}

We now use Lemma \ref{lem:Properties of Test Functions} to choose $\varepsilon_0 > 0$, depending only on $n$, $\kappa$, $K$ and $\upsilon$, such that $\left| \dashint_Q f_{Q,\varepsilon_0}^{\xi} - \xi\right|^2 \leq \tfrac{1}{2}$ for all $Q\in\Delta^V$, and then define the associated test functions
\begin{equation}\label{eq:test2}
    f_Q^{\xi} \coloneqq f_{Q , \varepsilon_0}^{\xi} = P_{\varepsilon_0 \ell(Q)}^B \xi_Q
\end{equation}
for all $Q\in\Delta$. The polarisation identity now shows that 
\[\textstyle
    4 \Re \left\langle \xi , \dashint_Q f_Q^{\xi} \right\rangle 
    = \left| \xi + \dashint_Q f_Q^{\xi} \right|^2 - \left|\xi - \dashint_Q f_Q^{\xi} \right|^2
    \geq |\xi|^2 + 2\Re \left\langle \xi , \dashint_Q f_Q^{\xi} \right\rangle + \left| \dashint_Q f_Q^{\xi} \right|^2 - \tfrac{1}{2},
\]
so when $|\xi|=1$, it follows that
\begin{equation} \label{eqn:Lower Bound for Stopping Time}\textstyle
    \Re \left\langle \xi , \dashint_Q f_Q^{\xi} \right\rangle \geq \tfrac{1}{4}
\end{equation}
for all $Q\in\Delta^V$. We now show how to arrange the required stopping-time construction for these test functions so that only homogeneous cubes are selected.

\begin{lem} \label{lem:Stopping Time}
If $n\geq 3$, $q\geq\max\{\tfrac{n}{2},2\}$ and $V \in B^{q}(\R^n)$ with $\llbracket V\rrbracket_{q}\leq \upsilon<\infty$, then there exist $\alpha\in(0,1)$ and $\tau \in (0,1)$, both depending only on $n$, $\kappa$, $K$ and $\upsilon$, such that whenever $\xi\in\C^{n+2}$, $|\xi|=1$ and $Q \in \Delta^V$, the test function $f_{Q}^{\xi}$ in \eqref{eq:test2} has the following property: There exists a countable collection $\mathcal{B}=\{Q_k\}$ of pairwise disjoint cubes $Q_k$ in $\Delta^V$ such that $Q_k \subseteq Q$, $\ell(Q_k) \leq {c_0} \ell(Q)$ and $| Q \setminus \bigcup_{k} Q_k| > \tau |Q|$, and for which
\begin{equation} \label{eqn:conditions for good cube in stopping time}
    \dashint_{S} |f_{Q}^{\xi} | \leq \alpha \quad \text{and} \quad \Re \left\langle \xi , \dashint_{S} f_{Q}^{\xi} \right\rangle \geq \frac{1}{\alpha}
\end{equation}
whenever $S\in\Delta$, $S\subseteq Q$, $\ell(S) \leq {c_0} \ell(Q)$ and either $Q_k \subset S$ for some $Q_k \in \mathcal{B}$ or $(\bigcup_k Q_k) \cap S = \emptyset$, where ${c_0}$ is from Lemma~\ref{lem:Small Cube Depth Bound}.
\end{lem}

\begin{proof}
Suppose that $Q \in \Delta^V$, consider $\alpha \in (0,1)$ to be chosen later, and let $\{Q_k^\prime\}$ denote an enumeration of the set $\mathcal{B}_1$ of maximal dyadic subcubes of $Q$ such that
\[
\ell(Q_k^\prime) \leq {c_0} \ell(Q) \quad\text{ and }\quad \dashint_{Q_k^\prime} |f_{Q}^{\xi} | > \frac{1}{\alpha}.
\]
This means that if $S\in\Delta$, $S\subseteq Q$ and $\ell(S) \leq {c_0} \ell(Q)$ with either $Q_k^\prime \subset S$ for some $Q_k^\prime \in \mathcal{B}_1$ or $(\bigcup_k Q_k^\prime) \cap S = \emptyset$, then $\dashint_{S} |f_{Q}^{\xi} | \leq \frac{1}{\alpha}$. Lemma~\ref{lem:Small Cube Depth Bound} ensures that $\mathcal{B}_1 \subseteq \Delta^V$, since $Q \in \Delta^V$ whilst $Q_k^\prime \subseteq Q$ and $\ell(Q_k^\prime) \leq {c_0} \ell(Q)$ for all $Q_k^\prime \in \mathcal{B}_1$. Using (1) in Lemma~\ref{lem:Properties of Test Functions}, we also have
\begin{equation}\label{eq:B1}
        \left| \bigcup \mathcal{B}_1 \right|
    = \sum_{Q_k^\prime \in \mathcal{B}_1} |Q_k^\prime|
    < \alpha \sum_{Q_k^\prime \in \mathcal{B}_1} \int_{Q_k^\prime} |f_{Q}^{\xi} |
    \leq \alpha \int_Q |f_{Q}^{\xi} |
    \leq \alpha C |Q|,
\end{equation}
where $C\in[1,\infty)$ depends only on $n$, $\kappa$, $K$ and $\upsilon$. Next, let $\{Q_k^{\prime\prime}\}$ denote an enumeration of the set $\mathcal{B}_2$ of maximal dyadic subcubes of $Q$ such that
\[
\ell(Q_k^{\prime\prime}) \leq {c_0} \ell(Q) \quad\text{ and }\quad \Re \left\langle \xi , \dashint_{Q_k^{\prime\prime}} f_{Q}^{\xi} \right\rangle < \alpha.
\]
This means that if $S\in\Delta$, $S\subseteq Q$ and $\ell(S) \leq {c_0} \ell(Q)$ with either $Q_k^{\prime\prime} \subset S$ for some $Q_k^{\prime\prime} \in \mathcal{B}_2$ or $(\bigcup_k Q_k^{\prime\prime}) \cap S = \emptyset$, then $\Re \langle \xi , \dashint_{S} f_{Q}^{\xi}\rangle \geq \alpha$. Lemma~\ref{lem:Small Cube Depth Bound} again ensures that $\mathcal{B}_2 \subseteq \Delta^V$ and we combine~\eqref{eqn:Lower Bound for Stopping Time} with (1) in Lemma~\ref{lem:Properties of Test Functions} to obtain  
\begin{align*}
    \tfrac{1}{4} 
    & \leq \Re \left\langle \xi , \dashint_Q f_Q^{\xi} \right\rangle \\
    & = \sum_{Q_k^{\prime\prime} \in \mathcal{B}_2} \frac{|Q_k^{\prime\prime}|}{|Q|} \Re \left\langle \xi , \dashint_{Q_k^{\prime\prime}} f_Q^{\xi} \right\rangle + \Re \left\langle \xi , \frac{1}{|Q|}     \int_{Q \setminus \bigcup \mathcal{B}_2} f_Q^{\xi} \right\rangle \\
    & < \alpha + \frac{1}{|Q|} \int_{Q \setminus \bigcup \mathcal{B}_2} |f_Q^{\xi}| \\
    & \leq \alpha + C \left( \frac{|Q \setminus \bigcup \mathcal{B}_2|}{|Q|}\right)^{1/2},
\end{align*}
where $C\in[1,\infty)$ is from~\eqref{eq:B1}.

We now suppose that $\alpha \in (0,\tfrac{1}{4} )$, so that $(\tfrac{1}{4}  - \alpha)^2 |Q| \leq C^2|Q \setminus \bigcup \mathcal{B}_2|$, and let $\{ Q_k \}$ denote an enumeration of the set $\mathcal{B}$ of maximal cubes in $\mathcal{B}_1 \cup \mathcal{B}_2$, so by~\eqref{eq:B1} we have
\[ 
    | Q \setminus \bigcup\nolimits_{k} Q_k| \geq |Q \setminus \bigcup \mathcal{B}_2 | - | \bigcup \mathcal{B}_1| \geq \left[\left(\frac{1-4\alpha}{4C}\right)^2 - \alpha C\right] |Q|.
\]
This allows us to choose $\alpha \in (0,\tfrac{1}{4})$ sufficiently small so that $| Q \setminus \bigcup\nolimits_{k} Q_k| \geq \tau |Q|$ for some $\tau\in(0,1)$, where both $\alpha$ and $\tau$ depend only on $n$, $\kappa$, $K$ and $\upsilon$. The properties of $\mathcal{B}_1$ and $\mathcal{B}_2$ ensure that $\mathcal{B} \subseteq \Delta^V$ and~\eqref{eqn:conditions for good cube in stopping time} holds. This completes the proof.
\end{proof}

The next step is to prove the Carleson measure estimate on a sawtooth $\mathcal{C} (Q) \setminus \bigcup_k \mathcal{C} (Q_k)$ given by the preceding lemma when $\widetilde{\gamma}_t(x)$ is  restricted to conical regions in $\mathcal{L} (\C^{n+2})$. This requires  estimates used to prove the reduction to a Carleson measure estimate in Section~\ref{ssec:PPR} for which it was necessary to give separate treatment to the $\mathbb{P}_{\perp\|}$- and $\mathbb{P}_{\mu}$-components from \eqref{eq:projdef}. Motivated by the work of Bailey in \cite[Section 4]{Bailey18}, this is achieved here by working in the space 
\[
    \mathcal{L}_{\perp\parallel} \coloneqq \{ \nu \in \mathcal{L} (\C^{n+2}) \setminus \{ 0 \} : \nu \mathbb{P}_{\perp\|} = \nu \}.
\]
We now fix $\sigma \coloneqq  {1}/{2\alpha^2}$, where $\alpha\in(0,1)$ depending only on $n$, $\kappa$, $K$ and $\upsilon$ is from Lemma~\ref{lem:Stopping Time}. For each $\nu\in \mathcal{L}_{\perp\parallel}$ with $|\nu| = 1$, we consider the cone $K_{\nu, \sigma}$ in $\mathcal{L}_{\perp\parallel}$ of aperture $\sigma$ given by
\[
    K_{\nu, \sigma} \coloneqq \{ \mu \in \mathcal{L}_{\perp\parallel} : | \mu / |\mu|_{\mathcal{L}(\C^{n+2})} - \nu |_{\mathcal{L}(\C^{n+2})} \leq \sigma \}.
\]

\begin{prop} \label{prop:Final reduction}
If $n\geq 3$, $q\geq\max\{\tfrac{n}{2},2\}$ and $V \in B^{q}(\R^n)$ with $\llbracket V\rrbracket_{q}\leq \upsilon<\infty$, then there exists $\tau\in(0,1)$ and $T\in(0,\infty)$, depending only on $n$, $\kappa$, $K$ and $\upsilon$, such that whenever $Q \in \Delta^V$ and $\nu \in \mathcal{L}_{\perp\parallel}$ with $|\nu| = 1$, there is a countable collection $\{ Q_k \}$ in $\Delta^V$ of pairwise disjoint subcubes of $Q$ such that
\begin{equation}\label{eq:coneE*est}
    |E_{Q,\nu}| > \tau |Q|
    \quad\text{ and }\quad
    \iint_{\substack{(x,t) \in E_{Q,\nu}^* \\ \widetilde{\gamma}_t (x) \in K_{\nu,\sigma}}} |\widetilde{\gamma}_t (x)|_{\mathcal{L}(\C^{n+2})}^2 \frac{\d x \d t}{t} \leq T |Q|,
\end{equation}
where $E_{Q,\nu} \coloneqq Q \setminus \bigcup_k Q_k$ and $E_{Q,\nu}^* \coloneqq \mathcal{C} (Q) \setminus \bigcup_k \mathcal{C} (Q_k)$.
\end{prop}

\begin{proof}
Suppose that $Q \in \Delta^V$ and $\nu\in \mathcal{L}_{\perp\parallel}$ with $|\nu|_{\mathcal{L}(\C^{n+2})} = 1$. We choose $\xi$ and $\zeta$ in $\C^{n+2}$ such that $|\xi| = |\zeta| = 1$, $\xi = \nu^* (\zeta)$ and $\mathbb{P}_{\perp\|} \xi = \xi$. This is possible as $|\nu|_{\mathcal{L}(\C^{n+2})} = 1$, so there exists $w \in \C^{n+2}$ such that $|w| = 1$ and $|\nu (w)| = 1$, thus we set $\xi \coloneqq \nu^* ( \nu (w))$ and $\zeta \coloneqq \nu (w)$. We have $|\xi| \leq |\nu^*|_{\mathcal{L}(\C^{n+2})} |\nu (w)| = 1 = \langle \nu (w) , \nu (w) \rangle = \langle w , \xi \rangle \leq |\xi|$, so $|\xi|=1$. Also, if $z \in \C^{n+2}$, then $\langle \xi , z \rangle = \langle \nu (w) , \nu (z) \rangle = \langle \nu (w) , \nu (\mathbb{P}_{\perp\|} z) \rangle = \langle \xi , \mathbb{P}_{\perp\|} z \rangle = \langle \mathbb{P}_{\perp\|} \xi , z \rangle$, as $\nu \in \mathcal{L}_{\perp\parallel}$, so $\xi =  \mathbb{P}_{\perp\|} \xi$.

Now consider the corresponding test function $f_{Q}^{\xi}$ as defined in~\eqref{eq:test2}. We use Lemma~\ref{lem:Stopping Time} to obtain a countable collection $\mathcal{B}=\{Q_k\}$ of pairwise disjoint cubes $Q_k$ in $\Delta^V$ such that $Q_k \subseteq Q$, $\ell(Q_k) \leq {c_0} \ell(Q)$, $| Q \setminus \bigcup_{k} Q_k| > \tau |Q|$ and \eqref{eqn:conditions for good cube in stopping time} holds with constants $\alpha\in(0,1)$ and $\tau\in(0,1)$ that depend only on $n$, $\kappa$, $K$ and $\upsilon$.

To prove~\eqref{eq:coneE*est}, first let $c_0$ denote the constant from Lemma~\ref{lem:Small Cube Depth Bound} and use the local uniform bounds for $\widetilde{\gamma}_t$ in Lemma~\ref{lem:properties of gamma-tilde} to obtain 
\begin{align}\label{eq:clcCM}
\int_{{c_0} \ell(Q)}^{\ell(Q)} \int_Q |\widetilde{\gamma}_t (x)|^2 \frac{\d x \d t}{t} 
\lesssim \frac{|Q|}{{c_0} \ell(Q)} \int_{{c_0} \ell(Q)}^{\ell(Q)} \d t
\lesssim |Q|,
\end{align}
where the implicit constants depend only on $n$, $\kappa$, $K$ and $\upsilon$, as $c_0$ depends only on $n$ and $\upsilon$. Next, observe that if $t\in(0,c_0 \ell(Q)]$ and $(t,x) \in E_{Q,\nu}^*\coloneqq \mathcal{C} (Q) \setminus \bigcup_k \mathcal{C} (Q_k)$, then there exists $S \in \Delta_t^V$ such that $x\in S \subseteq Q$, $\ell(S) \leq {c_0} \ell(Q)$ and either $Q_k \subset S$ for some $Q_k \in \mathcal{B}$ or $(\bigcup_k Q_k) \cap S = \emptyset$, so by Lemma \ref{lem:Stopping Time} we have
\[
    |A_t f_Q^{\xi}(x)| \leq \dashint_S |f_Q^{\xi}| \leq \frac{1}{\alpha}
    \quad\text{and}\quad
    | \nu (A_t f_Q^{\xi} (x)) | 
    \geq \Re \langle \zeta , \nu (A_t f_Q^{\xi} (x)) \rangle 
    = \Re \left\langle \xi , \dashint_S f_Q^{\xi} \right\rangle \geq \alpha,
\]
hence when also $\gamma_t (x) \in K_{\nu,\sigma}$ we have
\[
    \left| \frac{\widetilde{\gamma}_t (x)(A_t f_Q^{\xi} (x))}{|\widetilde{\gamma}_t (x)|_{\mathcal{L}(\C^{n+2})}} \right| 
    \geq | \nu (A_t f_Q^{\xi} (x)) | -  \left| \frac{\widetilde{\gamma}_t (x)}{|\widetilde{\gamma}_t (x)|_{\mathcal{L}(\C^{n+2})}} - \nu \right|_{\mathcal{L}(\C^{n+2})} |A_t f_Q^{\xi}(x)|
    \geq \alpha - \frac{\sigma}{\alpha}
    = \tfrac{1}{2}\alpha,
\]
since $\sigma \coloneqq {1}/{2\alpha^2}$. These considerations show that
\begin{align}\begin{split}\label{eq:c0}
    \iint_{\substack{(x,t) \in E_{Q,\nu}^* \\ \widetilde{\gamma}_t (x) \in K_{\nu,\sigma}}} |\widetilde{\gamma}_t (x)|^2 \frac{\d x \d t}{t} 
	& \lesssim \int_0^{c_0\ell(Q)} \int_Q | \widetilde{\gamma}_t (x) (A_t f_Q^{\xi} (x))|^2 \frac{\d x \d t}{t} + |Q|,
\end{split}\end{align}
where the implicit constants depend only on $n$, $\kappa$, $K$ and $\upsilon$.

It remains to control the contribution from the test functions in \eqref{eq:c0}. To this end, observe that the uniform bounds for the test functions from (2) in Lemma~\ref{lem:Properties of Test Functions} show that
\begin{equation}\label{eq:c1}
 \int_0^{c_0\ell(Q)} \int_Q |Q_t^B f_Q^{\xi}|^2 \frac{\d x\d t}{t}
    \lesssim \int_0^{\ell(Q)} \|t DB f_Q^{\xi} \|_2^2 \frac{\d t}{t}
    \lesssim \frac{|Q|}{\varepsilon_0^2\ell(Q)^2} \int_0^{\ell(Q)} t \d t
    \lesssim |Q|,
\end{equation}
where $\varepsilon_0$, and thus the implicit constants, depend only on $n$, $\kappa$, $K$ and $\upsilon$. Next, write
\begin{align}
\begin{split} \label{eq:Introducing Pt}
\int_0^{c_0\ell(Q)} \int_Q |(Q_t^B &- \widetilde{\gamma}_t A_t) (f_Q^{\xi} - \xi_Q)|^2 \frac{\d x \d t}{t} \\
    &\lesssim \int_0^{c_0\ell(Q)} \int_Q | (Q_t^B \mathbb{P}_{\perp\|} P_t - \widetilde{\gamma}_t A_t) (f_Q^{\xi} - \xi_Q)|^2 \frac{\d x \d t}{t} \\
    &\quad +\int_0^\infty \| Q_t^B \mathbb{P}_{\oo} P_t (f_Q^{\xi} - \xi_Q)\|_2^2 \frac{\d t}{t} 
    +\int_0^\infty \| Q_t^B (I - P_t) (f_Q^{\xi} - \xi_Q)\|_2^2 \frac{\d t}{t}.
\end{split}
\end{align}
It follows from~\eqref{eq:P_new Estimate} and~\eqref {eq:P_3 Estimate}, since $f_Q^{\xi} - \xi_Q = (P_{\varepsilon_0 \ell(Q)}^B -I) \xi_Q = (\varepsilon_0 \ell(Q) DB)^2 P_{\varepsilon_0 \ell(Q)} \xi_Q$ is in $\Ran(D)$, that the final two terms in \eqref{eq:Introducing Pt} are each bounded by $\| f_Q^{\xi} - \xi_Q\|_2^2$. Meanwhile, Lemmas~\ref{lem:Poincare Estimate} and~\ref{lem:Schur Estimate} show that the remaining term in \eqref{eq:Introducing Pt} is bounded by
\begin{align*}
& \int_0^{c_0\ell(Q)} \int_Q \left(|(Q_t^B \mathbb{P}_{\perp\|} - \gamma_t \mathbb{P}_{\perp\|} A_t) P_t (f_Q^{\xi} - \xi_Q)|^2
+ | \gamma_t \mathbb{P}_{\perp\|} A_t (P_t-I) (f_Q^{\xi} - \xi_Q)|^2 \right) \frac{\d x \d t}{t} \\
&\quad \lesssim \int_0^{\infty} \|\mathds{1}_{\Omega_t^V} (Q_t^B - \gamma_t A_t) \mathbb{P}_{\perp\|} P_t (f_Q^{\xi} - \xi_Q) \|_2^2 \frac{\d t}{t} + \int_0^{\infty} \|\mathds{1}_{\Omega_t^V} \gamma_t A_t \mathbb{P}_{\perp\|} (P_t-I) (f_Q^{\xi} - \xi_Q) \|_2^2 \frac{\d t}{t} \\
&\quad \lesssim  \| f_Q^{\xi} - \xi_Q\|_2^2,
\end{align*}
where the second line relies on the fact that $Q\in \Delta^V$, thus Lemma~\ref{lem:Small Cube Depth Bound} implies that $\widetilde{Q}\in\Delta^V$ whenever $\widetilde{Q}\subseteq Q$ with $\widetilde{Q}\in\Delta_t$ and $t\in(0,c_0\ell(Q)]$. Together, these estimates show that
\begin{equation}\label{eq:c2}
\int_0^{c_0\ell(Q)} \int_Q |(Q_t^B - \widetilde{\gamma}_t A_t) (f_Q^{\xi} - \xi_Q)|^2 \frac{\d x \d t}{t}
\lesssim \|f_Q^{\xi} - \xi_Q\|_2^2 = \| (P_{\varepsilon_0 \ell(Q)}^B - I) \xi_Q \|_2^2 \lesssim |Q|,
\end{equation}
where the implicit constants depend only on $n$, $\kappa$, $K$ and $\upsilon$.

Finally, observe that if $(x,t) \in \mathcal{C}(Q)$, then $(A_t\xi_Q)(x) = \xi$ as $\xi_Q=\mathds{1}_{2Q}\xi\equiv\xi$ on $2Q$, hence
\[
Q_t^B\xi_Q(x) - \widetilde{\gamma}_t A_t\xi_Q(x)
= Q_t^B\xi_Q(x) - \widetilde{\gamma}_t(x) \xi
= Q_t^B(\mathds{1}_{2Q}\xi)(x) - Q_t^B( \mathbb{P}_{\perp\|}\xi)(x)
=  Q_t^B(\mathds{1}_{\R^n\setminus{2Q}}\xi)(x),
\]
where we used $\mathbb{P}_{\perp\|} \xi = \xi$ and $\mathds{1}_{2Q} - \mathds{1}_{\R^n}=\mathds{1}_{\R^n\setminus{2Q}}$ to obtain the final equality. Now, since $\supp (\mathds{1}_{\R^n\setminus{2Q}} \xi) \cap 2Q = \emptyset$, the off-diagonal estimates in Proposition~\ref{prop:Off diagonal estimates} with $M > n$ imply that
\begin{align*}
    \| \mathds{1}_Q Q_t^B (\mathds{1}_{\R^n\setminus{2Q}} \xi )\|_2^2 
    & \leq \left( \sum_{k=1}^{\infty} \| \mathds{1}_Q Q_t^B \mathds{1}_{2^{k+1}Q\setminus 2^kQ} \xi \|_2 \right)^2 \\
    & \lesssim \left( \frac{t}{\ell(Q)} \right)^M \sum_{k=1}^{\infty} 2^{-k M} \| \mathds{1}_{2^{k+1}Q} \xi \|_2^2 \\
    & \lesssim \left( \frac{t}{\ell(Q)} \right)^M \sum_{k=1}^{\infty} 2^{-k (M - n)} |Q| \\
    & \lesssim \left( \frac{t}{\ell(Q)} \right)^M |Q|.
\end{align*}
Therefore, these considerations show that
\begin{equation}\label{eq:c3}
    \int_0^{c_0\ell(Q)} \int_Q | (Q_t^B - \widetilde{\gamma}_t A_t) \xi_Q |^2 \frac{\d x \d t}{t} \lesssim \frac{|Q|}{\ell(Q)^M} \int_0^{\ell(Q)} t^{M-1} \d t \lesssim |Q|,
\end{equation}
where the implicit constants depend only on $n$, $\kappa$, $K$ and $\upsilon$.

Altogether, estimates \eqref{eq:c1}, \eqref{eq:c2} and \eqref{eq:c3} show that 
\[
\int_0^{c_0\ell(Q)} \int_Q | \widetilde{\gamma}_t (x) (A_t f_Q^{\xi} (x))|^2 \frac{\d x \d t}{t}
=\int_0^{c_0\ell(Q)} \int_Q | \widetilde{\gamma}_t A_t f_Q^{\xi}|^2 \frac{\d x \d t}{t}
\lesssim |Q|,
\]
which combined with~\eqref{eq:c0} proves~\eqref{eq:coneE*est} and completes the proof.
\end{proof} 

We can now prove the main homogeneous Carleson measure estimate of this section.

\begin{prop}\label{prop:CMEpropaux}
If $n\geq 3$, $q\geq\max\{\tfrac{n}{2},2\}$ and $V \in B^{q}(\R^n)$ with $\llbracket V\rrbracket_{q}\leq \upsilon<\infty$, then
\[
\sup_{Q \in \Delta^V} \int_0^{ \ell(Q)} \dashint_Q |\widetilde{\gamma}_t(x)|_{\mathcal{L}(\C^{n+2})}^2 \d x\frac{\d t}{t} 
\lesssim 1,
\]
where the implicit constant depends only on $n$, $\kappa$, $K$ and $\upsilon$.
\end{prop}

\begin{proof}
Suppose that $Q \in \Delta^V$ and $\nu \in \mathcal{L}_{\perp\parallel}$. A compactness argument in $\mathcal{L}(\C^{n+2})$ reduces matters to proving that
\begin{equation} \label{eqn:Carleson measure on cones}
\int_0^{ \ell(Q)} \int_Q \mathds{1}_{\{(t,x)\in\R^{n+1}_+:\widetilde{\gamma}_t (x) \in K_{\nu, \sigma}\}} (x,t) |\widetilde{\gamma}_t (x)|_{\mathcal{L}(\C^{n+2})}^2 \d x \frac{\d t}{t} \lesssim |Q|,
\end{equation}
where the implicit constant depends only on $n$, $\kappa$, $K$ and $\upsilon$. For each $\delta\in(0,1)$, Lemma \ref{lem:properties of gamma-tilde} shows that the measure $\mu_{\delta,\nu}$ given by
\[
d\mu_{\delta,\nu}(x,t) \coloneqq \mathds{1}_{(\delta,\delta^{-1})}(t) \mathds{1}_{\{(t,x)\in\R^{n+1}_+:\widetilde{\gamma}_t (x) \in K_{\nu, \sigma}\}} (x,t)  |\widetilde{\gamma}_t (x)|_{\mathcal{L}(\C^{n+2})}^2 \d x \frac{\d t}{t}
\]
for all $(t,x)\in\R^{n+1}_+$, satisfies
\begin{align*}
\mu_{\delta,\nu}(\mathcal{C} (Q))
\leq \frac{1}{\delta} \int_0^{\delta^{-1}} \int_Q |\widetilde{\gamma}_t (x)|_{\mathcal{L}(\C^{n+2})}^2 \d x \d t
\lesssim \frac{|Q|}{\delta^2},
\end{align*}
where the implicit constant depends only on $\kappa$ and $K$. This shows that $\mu_{\delta,\nu}$ is a Carleson measure for the potential $V$ with $\|\mu_{\delta,\nu}\|_{\mathcal{C}_V} < \infty$ in the sense of \eqref{eq:defcmV}.

To prove that $\|\mu_{\delta,\nu}\|_{\mathcal{C}_V}$ does not depend on $\delta$, and only on permitted constants, consider a countable collection $\{Q_k\}$ in $\Delta^V$ associated with $Q$ and $\nu$ as in Proposition~\ref{prop:Final reduction}. We have 
\begin{align*}
\mu_{\delta,\nu}(\mathcal{C}(Q))
    & \leq \iint_{\substack{(x,t) \in E_{Q,\nu}^* \\ \widetilde{\gamma}_t (x) \in K_{\nu,\sigma}}} |\widetilde{\gamma}_t (x)|_{\mathcal{L}(\C^{n+2})}^2 \frac{\d x \d t}{t} + \sum\nolimits_{k} \mu_{\delta,\nu} (\mathcal{C}_V (Q_k) ) \\
    & \leq T |Q| + \| \mu_{\delta,\nu} \|_{\mathcal{C}_V} \sum\nolimits_{k} |Q_k| \\
    & \leq T |Q| + \|\mu_{\delta,\nu}\|_{\mathcal{C}_V} (1-\tau)|Q|,
\end{align*}
where $\tau\in(0,1)$ and $T\in(0,\infty)$ are the constants from Proposition~\ref{prop:Final reduction} that depend only on $n$, $\kappa$, $K$ and $\upsilon$. It follows that $\| \mu_{\delta,\nu} \|_{\mathcal{C}_V} \leq T/\tau$, so by the monotone convergence theorem
\begin{align*}
\int_0^{ \ell(Q)} \int_Q \mathds{1}_{\{(t,x)\in\R^{n+1}_+:\widetilde{\gamma}_t (x) \in K_{\nu, \sigma}\}} (x,t) |\widetilde{\gamma}_t (x)|_{\mathcal{L}(\C^{n+2})}^2 \d x \frac{\d t}{t} 
= \lim_{\delta \to 0} \mu_{\delta,\nu}(\mathcal{C} (Q))
\leq \frac{T}{\tau} |Q|,
\end{align*}
which proves \eqref{eqn:Carleson measure on cones} and thus completes the proof.
\end{proof}

This concludes the proof of Theorem~\ref{thm:QE} when $V\in B^{q}(\R^n)$ with $q\geq\max\{\tfrac{n}{2},2\}$. The following result completes the proof in the remaining case when $V \in L^{n/2} (\R^n)$ with sufficiently small norm.

\begin{prop}\label{prop:VLpCMEpropaux}
If $n\geq 5$ and $\|V\|_{n/2}<\varepsilon_n$, where $\varepsilon_n\in(0,1)$ is from Lemma~\ref{lem:Riesz Transform Estimates with small norm}, then
\[
\int_0^{\infty} \int_{\R^n} |A_t u(x)|^2 |\widetilde{\gamma}_t(x)|_{\mathcal{L}(\C^{n+2})}^2 \d x\frac{\d t}{t} 
\leq \|u\|_2^2
\]
for all $u \in L^2(\R^n;\C^{n+2})$, where the implicit constant depends only on $n$, $\kappa$ and $K$.
\end{prop}

\begin{proof}
This will follow, as in the proof of Proposition~\ref{prop:CMEprop}, from the Carleson measure estimate 
\begin{equation}\label{eq:LpC}
\sup_{Q \in \Delta} \int_0^{ \ell(Q)} \dashint_Q |\widetilde{\gamma}_t(x)|_{\mathcal{L}(\C^{n+2})}^2 \d x\frac{\d t}{t}
\lesssim 1.
\end{equation}
We prove this by modifying the proof of Proposition~\ref{prop:CMEpropaux} as follows. First, Lemma \ref{lem:properties of gamma-tilde} shows that 
$\norm{\mu_{\delta,\nu}}{{\mathcal{C}}}{} \coloneqq \sup_{Q \in \Delta} \mu_{\delta,\nu} (\mathcal{C}(Q)) / |Q| < \infty$. The cancellation from~\eqref{eqn:ILEL} in Lemma~\ref{lem:Interpolation Lemma}, which holds for all dyadic cubes and not just those in $\Delta^V$, then shows that the properties of the test functions in Lemma~\ref{lem:Properties of Test Functions} now hold whenever $Q\in\Delta$.

The stopping-time constructions in Lemma~\ref{lem:Stopping Time} can then made for each $Q$ in $\Delta$ with the scale restrictions $\ell(Q_k) \leq {c_0} \ell(Q)$ and $\ell(S) \leq {c_0} \ell(Q)$ removed. This, in turn, allows for the construction of a countable collection $\{Q_k\}$ in $\Delta$ satisfying \eqref{eq:coneE*est}. In modifying the proof of Proposition~\ref{prop:Final reduction}, estimate~\eqref{eq:clcCM} is avoided. Moreover, the proof of Proposition~\ref{prop:VLpPPR} explains how in this context the estimates in~\eqref{eq:P_new Estimate}, \eqref{eq:P_3 Estimate} and Lemma~\ref{lem:Poincare Estimate} hold, whilst the estimate in Lemma~\ref{lem:Schur Estimate} improves to~\eqref{eq:SchRep}. We then obtain \eqref{eq:LpC} as in Proposition~\ref{prop:CMEpropaux}, noting that the implicit constants in each of the estimates now depend only on $n$, $\kappa$ and $K$, as required.
\end{proof}


\subsection{Kato Boundary Estimates}\label{ssec:KBE}


As a corollary of Theorem~\ref{thm:QE}, we obtain new Kato-type square root results for Schr\"{o}dinger operators $b\mathscr{H}_{A,a,V} = b(-\div A \nabla + aV)$ acting on the domain boundary $\partial\R^{n+1}_+\cong\R^n$. These results were obtained by Gesztesy--Hofmann--Nichols in \cite[Theorem~4.2]{GHN16} for the case when $b\equiv1$, $V\in L^{n/2}(\R^n)$ with sufficiently small norm and $n\geq 3$. Those results were extended by Bailey in \cite[Theorem 5.2]{Bailey18} to allow for $V\in B^2 (\R^n)$ when $n\in\N$ (cf. \cite[Theorem~5.1]{Bailey18}) and $V\in L^{n/2} (\R^n)$ when $n \geq 5$ (cf. \cite[Proposition 5.3]{Bailey18}). In the latter result, where the small norm requirement is not imposed, the ellipticity assumption is modified to incorporate the potential, as in \eqref{eq:ellcoeffKato} below.

We extend Bailey's results by allowing for bounded elliptic coefficients $b$ satisfying~\eqref{eq:b} when $V \in B^{q}(\R^n)$, $q\geq\max\{\tfrac{n}{2},2\}$ and $n\geq 3$, although we do not recover a result for $V\in B^2 (\R^n)$ and $n\in\N$. This is because our method relies on the mixed Riesz transform bound described in Remark~\ref{rem:Riesz} and the comparability in Lemma~\ref{lem:Small Cube Depth Bound}. However, the setup in \eqref{eqn:definition of D and B in Kato proof} below shows that the Kato estimate when $b\equiv1$ follows from the case $B_{\perp\perp}=1$, $B_{\perp\parallel}=0$ and $B_{\parallel\perp}=0$ in Theorem~\ref{thm:QE}, for which the reduction to a Carleson measure estimate in Proposition~\ref{prop:Principle Part Approximation} could possibly be rearranged to avoid the mixed Riesz transform bound. We shall not pursue this here, as our treatment of boundary value problems in Section~\ref{sec:BV} requires $B$ to have the full component structure in~\eqref{eq:bound on R(D)}.
These considerations are not relevant when $V\in B^2 (\R^n)$ and $n\in\{1,2\}$, however, and that case will be treated in a forthcoming paper by Dumont--Morris.

We also recover the analogous results when $V\in L^{n/2} (\R^n)$ and $n \geq 5$ by using a scaling argument to reduce matters to the quadratic estimates obtained for potentials with sufficiently small norm in Theorem~\ref{thm:QE}. The implicit constants in the resulting Kato estimates then depend naturally on the size of the norm $\|V\|_{n/2}$. We do not recover the known results when $b\equiv1$ and $n\in\{3,4\}$, however, because of the second-order Riesz transform bounds used to prove the coercivity in Lemma~\ref{lem:Riesz Transform Estimates with small norm}. These will be considered in a forthcoming paper by Gesztesy--Hofmann--Morris--Nichols.

To state the results, suppose that there exist constants $0<\lambda\leq\Lambda<\infty$ such that the (boundary) coefficients $A \in L^{\infty} (\R^n ; \mathcal{L} (\C^n))$ and $a \in L^{\infty} (\R^n ; \mathcal{L} (\C))$ satisfy the bound
\begin{equation}\label{eq:bddcoeffKato}
\max\left\{\|A\|_{L^{\infty} (\R^n; \mathcal{L} (\C^n))},
\|a\|_{L^{\infty}(\R^n)}\right\} \leq \Lambda,
\end{equation}
as well as the G{\aa}rding-type ellipticity
\begin{equation}\label{eq:ellcoeffKato}
\Re \int_{\R^n} ( (A(x) \nabla f(x), \nabla f(x)) + a(x) V(x) |f(x)|^2) \d x
    \geq \lambda \int_{\R^n} (|\nabla f(x)|^2 + |V(x)| |f(x)|^2)\d x
\end{equation}
for all $f\in C_c^\infty(\R^n)$ (and hence $f\in \Wv^{1,2}(\R^n)$ by Lemma~\ref{lem:testdense}). Next, define the sesquilinear form $\mathfrak{h}_{A,a,V}:\Wv^{1,2} (\R^n)\times\Wv^{1,2} (\R^n)\to\C$ in $L^2(\R^n)$ by 
\[
    \mathfrak{h}_{A,a,V} (f,g) \coloneqq \langle A\nabla f, \nabla g \rangle + \langle a Vf,g\rangle
    = \langle \begin{bmatrix} A & 0 \\ 0 & ae^{i\arg V}|V|^{1/2} \end{bmatrix} \gradV f, \gradV g\rangle
\]
for all $f,\, g \in \Wv^{1,2} (\R^n)$.  This form is closed and densely defined by Lemma~\ref{lem:testdense}, as well as continuous and accretive (see \cite[Definition~1.4]{Ouhabaz2005} for precise definitions), since 
\[
|\mathfrak{h}_{A,a,V} (f,g)| \leq \Lambda \|\gradIV f \|_2 \|\gradIV g \|_2
\quad\text{ and }\quad
\Re \mathfrak{h}_{A,a,V} (f,f) \geq \lambda \|\gradV f\|_2^2
\]
for all $f,\, g \in \Wv^{1,2} (\R^n)$.

The (densely defined) maximal accretive operator  $\mathscr{H}_{A,a,V}:\D(\mathscr{H}_{A,a,V})\subseteq L^2(\R^n)\to L^2(\R^n)$, formally $\mathscr{H}_{A,a,V} = -\div A \nabla + aV$, is then given by 
\[
\mathfrak{h}_{A,a,V} (f,g) = \langle \mathscr{H}_{A,a,V}  f,g\rangle
\]
for all $f\in \D(\mathscr{H}_{A,a,V} )=\{f\in\Wv^{1,2} (\R^n) : A \nabla f + ae^{i\arg V}|V|^{1/2}f \in \D({\gradV}^*)\}$ (see, for instance, \cite[Sections~1.2.3 and 1.3.3]{Ouhabaz2005}). This operator has a unique maximal accretive square root $\mathscr{H}_{A,a,V}^{1/2}$ (see, for instance,  \cite[Theorem~V.3.35]{Kato95}), which can be defined equivalently using the $\mathcal{F}$-functional calculus in Theorem~\ref{thm:FfcDef}, since $\mathscr{H}_{A,a,V}$ is an injective sectorial operator on $L^2(\R^n)$. Furthermore, if $b\in L^\infty(\R^n)$ satisfies
\begin{equation}\label{eq:b}
\|b\|_{L^\infty(\R^n)} \leq \Lambda
\quad\text{ and }\quad
\Re b(x) \geq \lambda
\end{equation}
for almost every $x\in \R^n$, then $b\mathscr{H}_{A,a,V}= -b\div A \nabla + abV$ is also injective and sectorial with square root $(b\mathscr{H}_{A,a,V})^{1/2}$.

We can now state our Kato-type result for potentials $V\in B^{q}(\R^n)$ with $q\geq\max\{\tfrac{n}{2},2\}$.

\begin{cor} \label{cor:Kato for RH} 
If $n\geq 3$, $q\geq\max\{\tfrac{n}{2},2\}$ and $V \in B^{q}(\R^n)$ with $\llbracket V\rrbracket_{q}\leq \upsilon<\infty$  whilst the coefficients $(A,a,b)$ satisfy \eqref{eq:bddcoeffKato}, \eqref{eq:ellcoeffKato} and \eqref{eq:b} with constants $0<\lambda\leq\Lambda<\infty$, then
\[
\D((b\mathscr{H}_{A,a,V})^{1/2}) = \Wv^{1,2}(\R^n)
\quad\text{ and }\quad
\|(b\mathscr{H}_{A,a,V})^{1/2} f \|_2 \eqsim \|\nabla f\|_2 + \|\V f \|_2
\]
for all $f \in \Wv^{1,2} (\R^n)$, where the implicit constants depend only on $n$, $\lambda$, $\Lambda$ and $\upsilon$.
\end{cor}

\begin{proof}
The quadratic estimates for $DB^*=(BD)^*$ and $BD$ in Theorem~\ref{thm:QE} hold  when
\begin{equation}\label{eqn:definition of D and B in Kato proof}
\nabla_\mu = \begin{bmatrix} \nabla \\ \V \end{bmatrix},
\quad
D = \begin{bmatrix}
0 & -(\nabla_\mu)^* \\
-\nabla_\mu & 0 \\
\end{bmatrix}
\quad\text{ and }\quad
B =
\begin{bmatrix}
b & 0 & 0 \\
0 & A & 0 \\
0 & 0 & a
\end{bmatrix},    
\end{equation}
since $B$ satisfies \eqref{eq:bound on R(D)} and \eqref{eq:elliptic on R(D)} with $\|B\|_{\infty} \leq \Lambda$ and $\kappa(B) \geq \lambda$. Therefore, the restriction of $BD$ to $\overline{\Ran(BD)}$, which we shall denote here as $BD$, has a bounded $H^\infty(S_{\omega(B)})$-functional calculus on $\overline{\Ran(BD)}$ by Remark~\ref{rem:injpart} and Theorem~\ref{thm:Quadratic estimates and functional calculus}.

Now choose $
\mu\in(\omega(B),\frac{\pi}{2})$, depending only on $
\lambda$ and $\Lambda$, and define $\sgn(z)\coloneqq \sqrt{z^2}/z$ for all $z\in S_\mu^o$, so $\|\sgn(BD)u\|_2 \lesssim \|u\|_2$ for all $u\in\overline{\Ran(D)}$, where the implicit constant depends only on $n$, $\lambda$, $\Lambda$ and $\upsilon$. We then have $\sqrt{(BD)^2}=\sgn(BD)BD$ and $BD=\sgn(BD)\sqrt{(BD)^2}$ with $\D(\sqrt{(BD)^2})=\D(BD)$ by Theorem~\ref{thm:FfcDef}. These identities follow as in~\eqref{eq:T[T]} below, since $\sqrt{(BD)^2}=[BD]$ can be understood from an abstract composition rule for the  $\mathcal{F}$-functional calculus (see \cite[Theorem~2.4.2]{Haase06} or \cite[Theorem~ 3.2.20]{EgertPhD}). Therefore, since $f\in\D((b\mathscr{H}_{A,a,V})^{1/2})$ if and only if  $u=(f,0)\in\D(\sqrt{(BD)^2})$, we have $\D((b\mathscr{H}_{A,a,V})^{1/2})=\D(\gradV)=\Wv^{1,2}(\R^n)$. Finally, as such $u=(f,0)\in \overline{\Ran(D)}$ by Lemma~\ref{lem:RanD}, the bound and identities for $\sgn(BD)$ imply that
\[
    \|(b\mathscr{H}_{A,a,V})^{1/2} f \|
    = \|\sqrt{(BD)^2} u \|_2
    \eqsim \|BD u\|_2
    = \| \gradV f\|_2,
\]
where the implicit constants depend only on $n$, $\lambda$, $\Lambda$ and $\upsilon$, as required.
\end{proof}

We now use a scaling argument to obtain a Kato-type result for potentials $V \in L^{n/2} (\R^n)$ with arbitrary norm. The key idea is to factor out the size of the norm via a suitable coefficient matrix $B$ in conjunction with a first-order operator $D$ for which the quadratic estimates for potentials with sufficiently small norm in Theorem~\ref{thm:QE} hold.

\begin{cor} \label{cor:Kato for Ln/2}
If $n\geq 5$ and $V \in L^{n/2} (\R^n)$ with $\| V\|_{n/2}\leq \upsilon < \infty$ whilst the coefficients $(A,a,b)$ satisfy \eqref{eq:bddcoeffKato}, \eqref{eq:ellcoeffKato} and \eqref{eq:b} with constants $0<\lambda\leq\Lambda<\infty$, then \[
\D((b\mathscr{H}_{A,a,V})^{1/2}) = W^{1,2}(\R^n)
\quad\text{ and }\quad
\|(b\mathscr{H}_{A,a,V})^{1/2} u \|_2 \eqsim \|\nabla u\|_2
\]
for all $u \in W^{1,2} (\R^n)$, where the implicit constants depend only on $n$, $\lambda$, $\Lambda$ and $\upsilon$.
\end{cor}

\begin{proof}
Suppose that $\varepsilon_n\in(0,1)$ is from Lemma~\ref{lem:Riesz Transform Estimates with small norm} so that the results in Theorem~\ref{thm:QE} hold. Now consider $\widetilde{V} (x) \coloneqq \tfrac{\varepsilon_n}{2} \|V\|_{n/2}^{-1}|V(x)|$ and $\widetilde{a}(x) \coloneqq  \frac{2}{\varepsilon_n}\|V\|_{n/2}a(x)e^{i \arg(V(x))}$ for all $x\in\R^n$, so  $\|\widetilde{V}\|_{n/2}<\varepsilon_n$ and $\widetilde{a}\, \widetilde{V} = a V$. The quadratic estimates in Theorem~\ref{thm:QE} hold when $D$ and $B$ are defined as in \eqref{eqn:definition of D and B in Kato proof} but with $V$ and $a$ replaced by $\widetilde{V}$ and $\widetilde{a}$, 
since in that case $B$ satisfies \eqref{eq:bound on R(D)} and \eqref{eq:elliptic on R(D)} with $\|B\|_{\infty} \leq \max\{1,\frac{2}{\varepsilon_n}\|V\|_{n/2}\}\Lambda$ and $\kappa(B) \geq \lambda$. The strategy used to prove Corollary~\ref{cor:Kato for RH} then shows that $\D((b\mathscr{H}_{A,a,V})^{1/2})=\D((b\mathscr{H}_{A,\widetilde{a},\widetilde{V}}^{1/2}))=\D((\nabla,|\widetilde{V}|^{1/2}))=W^{1,2}(\R^n)$ with
\[
    \|(b\mathscr{H}_{A,a,V})^{1/2} u \|_2
    = \|(b\mathscr{H}_{A,\widetilde{a},\widetilde{V}})^{1/2} u \|_2
    \eqsim \|\nabla u\|_2 + \||\widetilde{V}|^{1/2} u \|_2
    \eqsim \|\nabla u\|_2,
\]
for all $u \in W^{1,2} (\R^n)$, where we used the Sobolev inequality~\eqref{eq:Sobolev} to obtain the final estimate, and the implicit constants depend only on $n$, $\lambda$, $\Lambda$ and $\upsilon$.
\end{proof}


\subsection{Analytic Dependence and Lipschitz Estimates}\label{ssec:andep}


Here we show that the functional calculus bounds implied by the quadratic estimates in Theorem~\ref{thm:QE} for operators $DB$ depend analytically on perturbations of the coefficients $B$ with respect to the $L^{\infty}$-norm. The result is used to prove the well-posedness for Hermitian coefficients in 
Theorem~\ref{thm:Second-Order WP}, which relies on the perturbation of boundary isomorphisms in Theorem~\ref{thm:absper} via a method of continuity in Proposition~\ref{prop:Self-Adjoint Isomorphisms}. It is also used to prove the perturbation of well-posedness in Theorem~\ref{thm:Second-Order WP pert}. We follow the approach of Axelsson--Keith--McIntosh in \cite[Section 6]{AKMc06}, beginning with the following lemma for self-adjoint operators $D$, in which we carefully trace the dependency of the constants to suit our purposes.

\begin{lem} \label{lem:Analytic dependence}
Suppose that $D$ is a self-adjoint operator on $L^2 (\R^n ; \C^N)$. If $U \subseteq \C$ is open and $B \colon U \to L^{\infty} (\R^{n} ; \mathcal{L} (\C^N))$ is holomorphic with $\sup_{z \in U} \| B(z)\|_{\infty} \leq K_0 < \infty$ and 
\[
0 < \kappa_0 \leq \inf_{z\in U}\kappa(B(z))
= \inf_{z\in U} \inf_{v \in \Ran(D)\setminus\{0\}} \frac{\Re \langle B(z) v , v \rangle}{\|v\|_2^2},
\]
then $z \mapsto (I + \zeta DB(z))^{-1}u$ is holomorphic on $U$ whenever $\zeta\in \C \setminus S_{\mu}^o$,  $\mu\in(\cos^{-1}(\kappa_0/K_0),\tfrac{\pi}{2}]$ and $u\in L^2(\R^n;\C^N)$. Moreover, if  $\mu\in(\cos^{-1}(\kappa_0/K_0),\tfrac{\pi}{2}]$ and
\begin{equation}\label{eq:bfcpert}
\sup_{z\in U} \sup_{f\in H^\infty(S_\mu^o)\setminus\{0\}} \sup_{v \in \Ran(D)\setminus\{0\}} \frac{\|f(DB(z))v\|_2}{\|f\|_\infty\|v\|_2} < \infty,
\end{equation}
then $z \mapsto f (DB(z))u$ is holomorphic on $U$ whenever $f\in H^\infty(S_\mu^o)$ and $u\in\overline{\Ran(D)}$.
\end{lem}

\begin{proof}
Suppose that $w,\, z \in U$, let $B_w=B(w)$ and $B_z=B(z)$, and consider $\zeta \in \C \setminus S_{\mu}^o$ with $\mu\in(\cos^{-1}(\kappa_0/K_0),\tfrac{\pi}{2}]$. If $u\in L^2(\R^n;\C^N)$, then the second resolvent identity shows that
\[
(I + \bar{\zeta} B_z^* D)^{-1}u - (I + \bar{\zeta} B_w^* D)^{-1}u = -(I + \bar{\zeta} B_z^* D)^{-1} \bar{\zeta} \left( B_z^* - B_w^*\right) D (I + \bar{\zeta} B_w^* D)^{-1}u.
\]
This identity relies on the fact that the adjoint operators preserve the domain of $D$ in the sense that $\D (B_z^* D) = \D(D) = \D (B_w^* D)$. We then combine the resolvent bounds for $B_z^* D$ and $B_w^* D$ from \eqref{eq:DBresbdd} with the accretivity of $B_w^*$ on $\Ran(D)$ to obtain
\begin{align*}
    \| (I + \bar{\zeta} B_z^* D)^{-1} u &- (I + \bar{\zeta} B_w^* D)^{-1} u \|_2 \\
    & \lesssim \|B_z^* - B_w^*\|_{\infty} \| \bar{\zeta} (B_w^*)^{-1} B_w^* D (I + \bar{\zeta} B_w^* D)^{-1} u\|_2, \\
    & \lesssim \|B_z - B_w\|_{\infty} \| \bar{\zeta} B_w^* D (I + \bar{\zeta} B_w^* D)^{-1} u\|_2, \\
    & \lesssim \|B_z - B_w\|_{\infty} \|u\|_2,
\end{align*}
where the implicit constants depend only on $\kappa_0$ and $K_0$. It follows that
\[
    \lim_{w \to z} \frac{\| (I + \bar{\zeta} B_z^* D)^{-1}u - (I + \bar{\zeta} B_w^* D)^{-1}u\|_2}{|z-w|} \lesssim \lim_{w \to z} \frac{\| B_z - B_w\|_{\infty}}{|z-w|}\|u\|_2 = 0,
\]
since $B$ is holomorphic, hence $z \mapsto (I + \bar\zeta B_z^*D)^{-1}u$ and in-turn the adjoint $z \mapsto (I + \zeta DB_z)^{-1}u$ are holomorphic on $U$.

To complete the proof, recall from beneath \eqref{eq:DBresbdd} that $\overline{\Ran(DB_z)}=\overline{\Ran(D)}$, so the injective part (see Remark~\ref{rem:injpart}) of $DB_z$ is the restriction of $DB_z$ to $\overline{\Ran(D)}$. Now let $DB_z$ denote this restriction. If $\psi \in \Psi (S_{\mu}^o)$, then we can prove that $z\mapsto \psi(DB_z)$ is holomorphic on $U$ by using Riemann sums to approximate the Cauchy integral representation $\psi (DB_z) = \frac{1}{2\pi i}\int_{+\partial S_\theta^0} \psi (\zeta) (\zeta - DB_z)^{-1} \d \zeta$ from \eqref{eq:CIF} and applying the result just proved for resolvents. To conclude, suppose that $\eqref{eq:bfcpert}$ holds and $f \in H^{\infty} (S_{\mu}^o)$. In that case,  we can choose a uniformly bounded sequence of functions $\psi_n$ in $\Psi (S_{\mu}^o)$ that converges uniformly on compact sets to $f$ and apply the convergence lemma from Theorem~\ref{thm:FfcDef} to deduce that $f(DB_z) u = \lim_{n \to \infty} \psi_n (DB_z) u$ whenever $u\in\overline{\Ran(D)}$, so the result follows.
\end{proof}

We now prove the analytic dependence of the functional calculus bounds for the operators $DB$ from Theorem~\ref{thm:QE}.

\begin{thm} \label{thm:Lipschitz Estimates}
Suppose that $D$ and $B$ satisfy either of the hypotheses in Theorem~\ref{thm:QE}. If $\delta \in (0,\kappa)$, $\widetilde{B} \in L^{\infty} (\R^n ; \mathcal{L} (\C^{n+2}))$
with the component structure in \eqref{eq:bound on R(D)} and $\|\widetilde{B} - B\|_{\infty} < \delta$, then $D\widetilde{B}$ has a bounded $H^\infty(S_{\omega})$-functional calculus, where $\omega = \cos^{-1}((\kappa-\delta)/(K+\delta)) \in [\omega(B), \tfrac{\pi}{2})$, and
\[
    \| f (D\widetilde{B}) u - f(DB) u\|_2 \lesssim \|\widetilde{B} - B\|_{\infty} \|f\|_{\infty} \|u\|_2
\]
for all $f \in H^{\infty} (S_{\mu}^o)$, $\mu\in(\omega,\tfrac{\pi}{2}]$ and $u\in \overline{\Ran(D)}$, where the implicit constant depends only on $n$, $\kappa-\delta$, $K+\delta$, $\upsilon$ and $\mu$.
\end{thm}

\begin{proof}
Let $U=\{ z \in \C \colon |z| < 1 \}$ and define $B_\delta \colon U \to L^{\infty} (\R^n ; \mathcal{L} (\C^{n+2}))$ by 
\[
    B_\delta(z) 
    \coloneqq B + z \delta \|\widetilde{B} - B\|_{\infty}^{-1} (\widetilde{B} - B)
\]
for all $z\in U$. Observe that $B_\delta$ is holomorphic with $\|B_\delta(z)\|_\infty \leq \|B\|_\infty + \delta < K+\delta$ and 
\[
    \Re \langle B_\delta(z) Du , Du \rangle
    = \Re \langle B Du , Du \rangle  - \Re \langle z\delta \|\widetilde{B} - B\|_{\infty}^{-1} ({\widetilde{B} - B}) Du , Du \rangle 
    \geq (\kappa - \delta ) \|Du\|_2^2
\]
for all $u\in\D(D)$ and $z\in U$.
It then follows from Theorem~\ref{thm:QE} and Theorem~\ref{thm:Quadratic estimates and functional calculus} that $DB_\delta(z)$ has a bounded $H^\infty(S_{\omega(B_\delta(z))})$-functional calculus, where $\omega(B_\delta(z)) \leq \cos^{-1}((\kappa-\delta)/(K+\delta)) < \tfrac{\pi}{2}$, and 
\[
\|f(DB_\delta(z)) u\|_2 \leq C_\mu \|f\|_{\infty} \|u\|_2
\]
for all $f \in H^{\infty} (S_{\mu}^o)$, $\mu > \omega(B_\delta(z))$, $u\in \overline{\Ran(D)}$ and $z\in v$, where $C_\mu$ depends only $n$, $\kappa-\delta$, $K+\delta$, $\upsilon$ and $\mu$. Note that $\omega(B_\delta(0))=\omega(B)\leq \cos^{-1}(\kappa/K)$.

Suppose that $f \in H^{\infty} (S_{\mu}^o)$, $\mu\in(\omega,\tfrac{\pi}{2}]$ and $u\in \overline{\Ran(D)}$ to  define $G \colon U \to L^2 (\R^n;\C^{n+2})$ by 
\[
    G (z) \coloneqq \frac{f(DB_\delta(z))u - f(DB)u}{2C_\mu \|f\|_{\infty} \|u\|_2}
\]
for all $z\in U$. This is a holomorphic function, since $z \mapsto f(D B_\delta(z))u$ is holomorphic on $U$ by Lemma~\ref{lem:Analytic dependence}, whilst the functional calculus bounds for $DB_\delta(z)$ imply that
\[
    \|G (z)\|_2 \leq \frac{1}{2 C_\mu \|f\|_{\infty} \|u\|_2} (\|f(DB_\delta(z)) u\|_2 + \|  f(DB) u \|_2) \leq 1.
\]
The mapping $z\mapsto \langle G (z), \varphi \rangle$ is therefore holomorphic on $U$, and also equal to 0 when $z=0$, for all $\varphi \in L^2 (\R^n ; \C^{n+2})$, whilst $|\langle G (z), \varphi \rangle| \leq \|G (z)\|_2 \|\varphi\|_2 \leq 1$ whenever $\|\varphi\|_2 = 1$. The Schwarz lemma then implies that
\[
    \|G (z) \|_2
    = \sup_{\|\varphi\|_2 = 1} |\langle G (z), \varphi \rangle|
    \leq |z|
\]
for all $z\in U$. Thus, choosing  $z = \|\widetilde{B} - B \|_{\infty} / \delta \in U$ so that $B_\delta(z)=\widetilde{B}$, we obtain
\[
    \|f(D\widetilde{B}) u - f(DB) u \|_2
    \leq 2 C_\mu \|\widetilde{B} - B \|_{\infty}  \|f\|_{\infty} \|u\|_2,
\]
as required.
\end{proof}


\section{Initial Value Problems for First-Order Systems}\label{sec:BV}


We ultimately consider solutions $F:\R_+ \to L^2(\R^n;\C^{n+2})$ of Cauchy--Riemann type systems
\begin{equation} \label{eqn:First-Order Equation}
    \p F(t) + DB F(t) = 0
\end{equation}
adapted to the first-order operators $DB$ from Theorem~\ref{thm:QE} acting on $t$ in $\R_+$. These are shown to characterise a class of weak solutions for second-order equations $\HAaV  u = 0$ on $\R^{n+1}_+$ in Proposition~\ref{prop:Reduction to first-order}. The main benefit then comes from obtaining semigroup representations for solutions to initial value problems for these first-order systems in Theorem ~\ref{Thm:First order solutions are semigroups}. The relevant semigroups are generated by the action of $DB$ on certain spectral subspaces, which are also the trace spaces for the initial data. Well-posedness for the associated second-order boundary value problems is then equivalent to certain  mappings on the boundary $\partial\R^{n+1}_+\cong\R^n$ being isomorphisms, which is shown to be the case for block and Hermitian coefficients in Section~\ref{sec:ADD}. The more general equivalence, which requires the non-tangential maximal function bounds in Section~\ref{sec:NT}, is postponed to Section~\ref{Sec:Return to the Second-Order Equation}.

We adopt the convention for functions $\phi:\R^{n+1}_+ \to \C^{n+2}$ whereby $\phi(t):\R^{n} \to \C^{n+2}$ is defined by $(\phi(t))(x) \coloneqq \phi(t,x)$ for all $x\in\R^n$ and $t \in \R_+$. We shall write that \textit{$F$ is a weak solution of $\p F + DB F = 0$ in $\R_+$}, or simply  \textit{$\p F + DB F = 0$ in $\R_+$}, if $F \in L^2_{\Loc} (\R_+ ; L^2 (\R^{n} ; \C^{n+2}))$ and
\begin{equation}\label{eq:weak1st}
\int_0^{\infty} \langle F(t), \p \varphi(t) \rangle  \d t
= \int_0^{\infty} \langle BF(t), D \varphi(t) \rangle \d t
\end{equation}
for all $\varphi \in L^2(\mathbb{R}_+;\D(D)) \cap \test(\R_+ ; L^2 (\R^{n} ; \C^{n+2}))$ with the graph norm on $\D(D)$ given by~\eqref{graph norm}.

The space of test functions specified above is strictly larger than the space $\test(\R^{n+1}_+; \C^{n+2})$ used in the distributional sense considered by Auscher--Axelsson in~\cite[Proposition~4.1]{AA11}. The tangential smoothness for test functions has been weakened to the requirement that $\varphi(t)\in \D(D)$ for all $t\in\supp{\varphi}\subseteq[a,b]\subset\mathbb{R}_+$ with $\int_0^\infty \|\varphi(t)\|_{\D(D)}^2 \d t
= \int_a^b (\|\varphi(t)\|_2^2 + \|D\varphi(t)\|_2^2) \d t < \infty$. This still allows us to establish the equivalence with weak solutions $\HAaV  u = 0$ in Proposition~\ref{prop:Reduction to first-order}. It does not seem possible, however, to obtain the semigroup characterisation for weak solutions $\p F + DB F = 0$ in Theorem~\ref{Thm:First order solutions are semigroups}, and in particular Lemma~\ref{lem:weak derivatives are 0}, by considering only $\test$ test functions, as ultimately it is not clear whether such functions are dense in $\D(\nabla_\mu^*)$. Instead, we appeal to the abstract approach taken by Auscher--Axelsson in \cite[Proposition~2.2]{AA2011Remarks} to establish such results, as our test functions more closely resemble the type they considered at (7) in \cite{AA2011Remarks}.


\subsection{Equivalent First-Order Systems}


Throughout this section, we suppose that $n\geq3$ and $V\in L^1_{\Loc}(\R^n)$ with bounded elliptic coefficients $(A,a)$ and $\cA=\cA_{A,a,V}$ as in~\eqref{eq:AcA}--\eqref{eq:bddellA} with constants $0<\lambda\leq\Lambda<\infty$. The operator $D$ is defined as in~\eqref{eq:Ddef} and we will now construct the required coefficient operator $B$ satisfying \eqref{eq:bound on R(D)}--\eqref{eq:elliptic on R(D)}. We adapt the approach initiated by Auscher--Axelsson--McIntosh in~ \cite[Proposition~3.2]{AAMc10-2} and refined by Auscher--Axelsson in~\cite[Proposition~4.1]{AA11} by considering the bounded operators $\cA$, $\ola{\cA}$ and $\ula{\cA}$ defined pointwise as multiplication operators on $L^2(\R^n;\C^{n+2})$ by
\[
\cA  \coloneqq  \begin{bmatrix}
\App & \Apv & 0 \\
\Avp & \Avv & 0 \\
0 & 0 & a e^{i\arg{V}}
\end{bmatrix},\
\ola{\cA} \coloneqq 
\begin{bmatrix}
\App & \Apv & 0 \\
0 & I & 0 \\
0 & 0 & I
\end{bmatrix}
\text{ and }\ 
\ula{\cA} \coloneqq
\begin{bmatrix}
I & 0 & 0 \\
\Avp & \Avv & 0 \\
0 & 0 & a e^{i\arg{V}}
\end{bmatrix}.
\]
The operator $\ola{\cA}$ is injective, since $\Re\App\geq\lambda$ almost everywhere on $\R^n$ by \eqref{eq:ellimp}, with
\begin{equation}\label{eq:Ahatinv}
\ola{\cA}^{-1} = 
\begin{bmatrix}
\App^{-1} & -\App^{-1} \Apv & 0 \\
0 & I & 0 \\
0 & 0 & I
\end{bmatrix}.
\end{equation}
We prove below that the operator $\widehat{\mathcal{A}}$ defined by
\begin{equation}\label{eq:Ahatdef}
\widehat{\cA} \coloneqq  \ula{\cA} \ola{\cA}^{-1} = 
\begin{bmatrix}
\App^{-1} & - \App^{-1} \Apv & 0 \\
\Avp \App^{-1} & \Avv - \Avp \App^{-1} \Apv & 0 \\
0 & 0 & a e^{i\arg{V}}
\end{bmatrix}
\end{equation}
inherits a bound and ellipticity from $\cA$. Moreover, we will see in Proposition~\ref{prop:Reduction to first-order} how solutions of the Schr\"odinger equation~\eqref{eqn:Schrodinger Equation} can be characterised by the Cauchy--Riemann type system~ \eqref{eqn:First-Order Equation} for the operator $DB$ with coefficients $B = \widehat{\cA}_{A,a,V}$.

\begin{prop}\label{prop:bddellAhat} If $n\geq3$, $V\in L^1_{\Loc}(\R^n)$ and $\cA$ satisfies the bound and ellipticity in \eqref{eq:bddellA} with constants $0<\lambda\leq \Lambda<\infty$, then 
\[
\|\widehat{\cA}\|_\infty \leq \frac{\max\{\Lambda^2,1\}}{\lambda}
\quad\text{ and }\quad
\Re \langle\widehat{\cA}u,u\rangle \geq \frac{\lambda}{\Lambda} \|u\|_2^2
\]
for all $u\in\overline{\Ran(D)}$, whilst $\hhat{\cA} = \cA$.
\end{prop}

\begin{proof}
The definition $\widehat{\cA}\coloneqq  \ula{\cA}\ola{\cA}^{-1}$ implies the bound for $\|\widehat{\cA}\|_\infty$ and a computation reveals that $\hhat{\cA} = \cA$. If $u$ is in $\overline{\Ran(D)}$, then Lemma~\ref{lem:RanD} shows that $v = \left[\App^{-1} (u_{\perp} - \Apv u_{\parallel}), u_{\parallel}, u_\oo\right]$ is in $\overline{\Ran(D)}$
with $\ola{\cA} v =u$, so $\ola{\cA} \colon \overline{\Ran(D)} \to \overline{\Ran(D)}$ is a bijection. Moreover, since $\widehat{\cA} \ola{\cA}=\ula{\cA}$, we have
\begin{align*}
\Re \langle \widehat{\cA} u , u \rangle 
& = \Re \langle \ula{\cA} v , \ola{\cA} v \rangle \\
& = \Re \left \langle 
\begin{bmatrix}
    v_{\perp} \\
    \Avp v_{\perp} + \Avv v_{\parallel} \\
    a e^{i\arg{V}} v_\oo
\end{bmatrix},
\begin{bmatrix}
    \App v_{\perp} + \Apv v_{\parallel} \\
    v_{\parallel} \\
    v_\oo
\end{bmatrix}
\right\rangle \\
&= \Re \left\langle 
\begin{bmatrix}
	\App v_{\perp} + \Apv v_{\parallel} \\
    \Avp v_{\perp} + \Avv v_{\parallel} \\
    a e^{i\arg{V}} v_\oo
\end{bmatrix},
\begin{bmatrix}
	v_{\perp} \\
    v_{\parallel} \\
    v_\oo
\end{bmatrix}
\right\rangle \\
&=\Re \langle \cA v , v \rangle,
\end{align*}
whence \eqref{eq:ellimp} implies that
$
\Re \langle \widehat{\cA} u , u \rangle
\geq \lambda \|v\|_2^2 
= \lambda \|\ola{\cA}^{-1} u\|_2^2
\geq (\lambda/\Lambda) \|u\|_2^2,
$
as required.
\end{proof}

The above construction shows that $B = \widehat{\cA}_{A,a,V}$ satisfies the bound, component structure and strict accretivity on $\Ran(D)$ stated in \eqref{eq:bound on R(D)} and \eqref{eq:elliptic on R(D)} with $\|B\|_\infty\leq\max\{\Lambda^2,1\}/\lambda$ and $\kappa(B)\geq\lambda/\Lambda$. The technical lemma below will be used to obtain the aforementioned correspondence between solutions of the Schr\"odinger equation and Cauchy--Riemann type system.

\begin{lem}\label{lem:pusol}
If $n\geq3$, $V\in L^1_{\Loc}(\R^n)$ and $\HAaV  u = 0$ in $\R^{n+1}_+$, then $\HAaV (\p u)=0$ in $\R^{n+1}_+$ with $\nabla_\mu(\p u)=\p(\nabla_\mu u)$ and 
\[
\iint_W |\nabla_\mu(\p u)|^2
\lesssim \frac{1}{ l(W)^2} \iint_{2W} |\nabla u|^2 
\]
for all Whitney cubes $W\subset\R^{n+1}_+$, where the implicit constant depends only on $n$, $\lambda$ and $\Lambda$.
\end{lem}

\begin{proof}
Suppose that $\HAaV  u = 0$ in $\R^{n+1}_+$. Let $W$ denote an arbitrary Whitney cube in $\R^{n+1}_+$, so $W=W(x,t)=(t,2t) \times Q(x,t)$ for some $x \in \R^n$ and $t>0$, where $Q(x,t)$ is the cube of side length $t$ centred at $x$ in $\mathbb{R}^n$. To prove that $\p u$ is in $\mathcal{V}^{1,2}_{\Loc}(\R^{n+1}_+)$, consider the difference quotient $\mathbb{D}_h u(s,y) \coloneqq \tfrac{1}{h}[u(s+h,y)-u(s,y)]$ for all $(s,y)\in W(x,t)$ and $0<|h|< \frac{1}{4}t$.

The $t$-independence of the equation coefficients implies that $\HAaV (\mathbb{D}_h u) = 0$ in $2W$, so we can use Caccioppoli's inequality (see Proposition~\ref{prop:Caccioppoli}) to obtain
\begin{align*}
\iint_{W} |\mathbb{D}_h(\partial_i u)|^2
=\iint_{W} |\partial_i(\mathbb{D}_h u)|^2
\lesssim \frac{1}{ l(W)^2} \iint_{\frac{3}{2}W} |\mathbb{D}_h u|^2
\leq \frac{1}{ l(W)^2} \iint_{2W} |\partial_t u|^2 =:K
\end{align*}
for all $0<|h|< \frac{1}{4}t$ and $i\in\{0,\ldots,n\}$, where the constants depend only on $\lambda$, $\Lambda$ and $n$. The final bound above also holds uniformly with respect to $h$ because $u$ is in $W^{1,2}_{\Loc}(\R^{n+1}_+)$ and $2W=(\frac{t}{2},\frac{5t}{2})\times Q(x,2t)\Subset\R^{n+1}_+$ (see \cite[Lemma 7.23]{GT77}). We can then use \cite[Lemma~7.24]{GT77} to deduce that $\partial_i u$ is weakly differentiable in the $t$-direction on $W$ with $\|\partial_t(\partial_i u)\|_{L^2(W)}^2\lesssim K$ for all $i\in\{0,\ldots,n\}$. 
The $t$-independence of the potential allows us to  likewise obtain
\begin{align*}
\iint_{W} |\mathbb{D}_h (V^{1/2}u)|^2
=\iint_{W} |V||\mathbb{D}_h u|^2
\lesssim \frac{1}{ l(W)^2} \iint_{\frac{3}{2}W} |\mathbb{D}_h u|^2
\leq \frac{1}{ l(W)^2} \iint_{2W} |\p u|^2
= K
\end{align*}
for all $0<|h|< \frac{1}{4}t$, where the implicit constant depends only on $\lambda$, $\Lambda$ and $n$, hence $V^{1/2}u$ is weakly differentiable in the $t$-direction on $W$ with $\|\partial_t(|V|^{1/2}u)\|_{L^2(W)}^2\lesssim K$.

The equality $\partial_i(\partial_t u)=\partial_t(\partial_iu)$ for weak derivatives is now immediate for all $i\in\{0,\ldots,n\}$,  whilst $|V|^{1/2}(\p u)=\p(|V|^{1/2}u)$ follows from the $t$-independence of the potential. This shows that $\gradV(\p u)=\p(\gradV u)$ and $\p u\in\mathcal{V}^{1,2}_{\Loc}(\R^{n+1}_+)$, with the required estimate on Whitney cubes, from which it is straightforward to verify that $\HAaV (\p u)=0$ in $\R^{n+1}_+$.
\end{proof}

We now make precise the correspondence between weak solutions of the second-order equation as defined in \eqref{eq:weak2nd} and weak solutions of the first-order system as defined in \eqref{eq:weak1st}. The level of detail included in the proof obtained for parabolic equations by  Auscher--Egert--Nystr\"{o}m in  \cite[Theorem~2.2]{AEN16-1} is especially helpful here, since we adapt it using Lemma~\ref{lem:pusol} to account for the potential and to allow for the larger class of test functions associated with \eqref{eq:weak1st}. Note that 
$\gradAV u = (\partial_{\nu_A} u, \gradVp u) = ((A \nabla u)_\perp,\gradx u, \V u)$ for all $u\in\mathcal{V}^{1,2}_{\Loc}(\R^{n+1}_+)$, whilst Lemma~\ref{lem:NT Control Implies Second-Order Has First-Order Solution} shows that the control of $\gradAV u$ required in (1) below holds when ${N}_* (\gradIV u) \in L^2 (\R^n)$.

\begin{prop} \label{prop:Reduction to first-order}
If $n\geq3$ and $V\in L^1_{\Loc}(\R^n)$, then there is the following correspondence between weak solutions of the second-order equation $\HAaV  u = 0$ and weak solutions of the first-order system $\p F + DB F = 0$ when $B = \widehat{\cA}_{A,a,V}$:
\begin{enumerate}
\item If $\HAaV  u = 0$ in $\R^{n+1}_+$ and $\gradAV u \in L^2_{\Loc} (\R_+ ; L^2 (\R^n ; \C^{n+2}))$, then $F \coloneqq \gradAV u$ is in $L^2_{\Loc} (\R_+ ; \Hil)$ and $\p F + DB F = 0$ in $\R_+$;
\item If $\p F + DB F = 0$ in $\R_+$ and $F \in L^2_{\Loc} (\R_+ ; \Hil)$, then there exists $u$ in $\Wv^{1,2}_{\Loc}(\R^{n+1}_+)$ such that $F = \gradAV u$ and $\HAaV  u = 0$ in $\R_+^{n+1}$.
\end{enumerate}
\end{prop}

\begin{proof}
We first prove that (1) holds. Suppose that $\HAaV  u = 0$ in $\R^{n+1}_+$ and that $F\coloneqq \gradAV u$ is in $L^2_{\Loc} (\R_+ ; L^2 (\R^n ; \C^{n+2}))$. In that case, Lemma~\ref{lem:RanD} implies that $F (t) \in \overline{\Ran(D)}$ for almost every $t\in\mathbb{R}_+$, so $F \in L^2_{\Loc} (\R_+ ; \Hil)$.

Now consider an arbitrary function $\varphi$ in $L^2(\mathbb{R}_+;\D(D)) \cap \test(\R_+ ; L^2 (\R^{n} ; \C^{n+2}))$ as required by \eqref{eq:weak1st}. Observe that $\varphi$ is supported in $[a,b]\times\R^n$, for some $0<a<b<\infty$, and that $\varphi_\perp(t) \in \D(\gradVp) = \mathcal{V}^{1,2}(\R^n)$ and $(\p \varphi_\perp)(t) \in L^2(\R^n)$ for almost every $t\in\mathbb{R}_+$, with 
\[
\int_0^\infty \|\gradVp\varphi_\perp(t)\|_{L^2(\R^n;\C^{n+1})}^2 \d t
+ \int_0^\infty
\|\partial_t\varphi_\perp(t)\|_{L^2(\R^n)}^2 \d t < \infty,
\]
so $\varphi_\perp\in \mathcal{V}^{1,2}(\R^{n+1}_+)$. The nature of the support of $\varphi_\perp$ then ensures that its extension by zero to the full space $\R^{n+1}$ is in $\mathcal{V}^{1,2}(\R^{n+1})$, and we know from Lemma~\ref{lem:testdense} that $\test(\R^{n+1})$ is dense in $\mathcal{V}^{1,2}(\R^{n+1})$. Moreover, an inspection of that proof shows that the mollifiers used therein can be chosen here to obtain a sequence of functions in $\test([\tfrac{1}{2}a,2b]\times\mathbb{R}^n)$ that converges to $\varphi_\perp$ in $\mathcal{V}^{1,2}(\R^{n+1}_+)$. This provides a sequence of test functions which can be substituted into \eqref{eq:weak2nd} and combined with the fact that $\HAaV u = 0$ in $\R^{n+1}_+$ to deduce that
\[
\int_0^{\infty} \int_{\R^n} \left(A \nabla_{t,x} u \cdot \overline{\nabla_{t,x} \varphi_\perp} + a V u \overline{\varphi_\perp}\right) \d x \d t = 0,
\]
where the required convergence is guaranteed because $\gradAV u$, and hence by ellipticity $\gradV u$, are in $L^2_{\Loc} (\R_+ ; L^2 (\R^n ; \C^{n+2}))$. We rearrange this identity to obtain
\begin{align*}
\int_0^{\infty} \langle F_{\perp} , \p \varphi_{\perp} \rangle
& = \int_0^{\infty} \langle (A \nabla_{t,x} u)_{\perp} , \p \varphi_{\perp} \rangle \d t \\
& = - \int_0^{\infty} \langle (A \nabla_{t,x} u)_{\parallel} , \gradx \varphi_\perp \rangle \d t - \int_0^{\infty} \langle a e^{i\arg{V}} \V u , \V \varphi_\perp \rangle \d t \\
& = \int_0^{\infty} \langle (B F)_{\parallel} , (D \varphi)_{\parallel} \rangle \d t + \int_0^{\infty} \langle (BF)_\oo , (D \varphi)_\oo \rangle \d t.
\end{align*}

Next, since $(\varphi_{\parallel}, \varphi_\oo) \in W^{1,2}(\R_+ ; L^2 (\R^{n} ; \C^{n+1}))$ with support in $[a,b]\times\R^n$, a standard regularisation (cf. Lemma~\ref{lem:testdense} in the case $V\equiv 0$) gives a sequence of functions in  $\test([\tfrac{1}{2}a,2b]\times\mathbb{R}^n)$ that converges to $(\varphi_{\parallel}, \varphi_\oo)$ in $W^{1,2}(\R_+,L^2 (\R^n ; \C^{n+1}))$. This provides a sequence of test functions which combined with Lemma~\ref{lem:pusol} justifies integrating-by-parts, with respect to $t$, to obtain
\begin{align*}
\int_0^{\infty} \langle
\begin{bmatrix}
F_{\parallel} \\
F_\oo
\end{bmatrix}
, \begin{bmatrix}
\p \varphi_{\parallel} \\
\p \varphi_\oo
\end{bmatrix}
\rangle \d t 
= \int_0^{\infty} \langle \gradVp  u , \p \begin{bmatrix}
\varphi_{\parallel} \\
\varphi_\oo\end{bmatrix} \rangle \d t 
= -\int_0^{\infty} \langle \p (\gradVp u) ,  \begin{bmatrix}
\varphi_{\parallel} \\
\varphi_\oo\end{bmatrix} \rangle \d t,
\end{align*}
where the required convergence is guaranteed because $\gradVp u$ is in $L^2_{\Loc} (\R_+ ; L^2 (\R^n ; \C^{n+1}))$, and hence $\p(\gradVp u)=\gradVp(\p u)$ is in $L^2_{\Loc} (\R_+ ; L^2 (\R^n ; \C^{n+1}))$ with 
\[
\int_{\tfrac{1}{2}a}^{2b} \|\gradVp(\p u)\|_{L^2 (\R^n ; \C^{n+1})}^2 \d t
\lesssim \frac{1}{a^2} \int_{\tfrac{1}{8}a}^{8b} \|\nabla u\|_{L^2 (\R^n ; \C^{n+1})}^2 \d t
<\infty
\]
by Lemma~\ref{lem:pusol}. Finally, since $(\varphi_{\parallel}(t), \varphi_\oo(t))
\in\D(\gradV^*)$, and algebra shows that $(B F)_{\perp}(t) = (\p u)(t)$, for almost every $t\in\R_+$, we obtain
\begin{align*}
\int_0^{\infty} \langle
\begin{bmatrix}
F_{\parallel} \\
F_\oo
\end{bmatrix}
, \begin{bmatrix}
\p \varphi_{\parallel} \\
\p \varphi_\oo
\end{bmatrix}
\rangle \d t 
&= -\int_0^{\infty} \langle \gradVp (\p u) , \begin{bmatrix}
\varphi_{\parallel} \\
\varphi_\oo\end{bmatrix} \rangle \d t \\
& = \int_0^{\infty} \langle \p u , -(\gradVp)^* \begin{bmatrix}
\varphi_{\parallel} \\
\varphi_\oo\end{bmatrix} \rangle \d t \\
& = \int_0^{\infty} \langle (B F)_{\perp} , (D \varphi)_{\perp} \rangle \d t.
\end{align*}
Altogether, we have $
\int_0^{\infty} \langle F , \p \varphi \rangle \d t = \int_0^{\infty} \langle B F , D \varphi \rangle \d t$, so $\p F + DB F = 0$ in $\R_+$, as required.

We now prove that (2) holds. Suppose that $\p F + DB F = 0$  in $\R_+$ and that $F \in L^2_{\Loc} (\R_+ ; \Hil)$. If $t \in \R_+$, then Lemma~\ref{lem:RanD} and \eqref{eq:dotVinj} show that there exists $g_t$ in $\Wvo^{1,2} (\R^n) \subseteq L^2_{\Loc}(\R^n)$ such that
\begin{equation}\label{eq:Fg}
 \begin{bmatrix} F_{\parallel} (t,x) \\ F_\oo (t,x) \end{bmatrix}
 = \begin{bmatrix} \gradx g_t (x) \\ \V g_t (x) \end{bmatrix}
\end{equation}
for all $x\in\R^n$. We set $g(t,x) \coloneqq g_t (x)$ for all $(t,x)\in\R^{n+1}_+$. Now consider $c_0\in(0,\infty)$, to be chosen later, and define $u$ in $L^2_{\Loc}(\R^{n+1}_+;\C^{n+2})$, since $F \in L^2_{\Loc} (\R_+ ; \Hil)$ and $g_{c_0}\in L^2_{\Loc}(\R^n)$, by
\begin{equation}\label{eq:udef}
u (t,x) \coloneqq \int_{c_0}^t (BF)_{\perp} (s,x) \d s + g(c_0,x)
\end{equation}
for all $(t,x)\in\R^{n+1}_+$ (as usual $\int_{c_0}^{c_0}\coloneqq 0$ and $\int_{c_0}^t\coloneqq -\int_{t}^{c_0}$ when $t\in(0,c_0)$). For each $x \in \R^n$, Lebesgue's differentiation theorem guarantees that the function $t\mapsto u(t,x)$ is differentiable almost everywhere on $\R_+$ with $(\p u)(t,x) = (B F)_{\perp} (t , x)$ for almost every $t\in\R_+$.

Now consider test functions $\psi$ in $\mathcal{C}_c^{\infty} (\R_+)$,  $\eta$ in $\mathcal{C}_c^{\infty} (\R^n ; \C^{n+1})$ and $\varphi$ in $\mathcal{C}_c^{\infty} (\R^{n+1}_+ ; \C^{n+2})$ given by $\varphi (t,x) \coloneqq (0 , \psi (t) \eta (x))$ for all $(t,x)\in\R^{n+1}_+$. Observe that 
\[
\int_0^{\infty} \langle F (t) , (\p \varphi) (t) \rangle \d t = \int_0^{\infty} \int_{\R^n} \begin{bmatrix} F_{\parallel} (t,x) \\ F_\oo (t,x) \end{bmatrix} \cdot \overline{\eta (x) (\p \psi)(t)} \d x \d t.
\]
Meanwhile, since $\p u = (B F)_{\perp}$, we have
\begin{align*}
    \int_0^{\infty} \langle (B F) (t) , (D \varphi) (t) \rangle \d t 
    & = -\int_0^{\infty} \int_{\R^n} (B F)_{\perp} (t,x) \overline{\psi (t) (\gradV^* \eta) (x)} \d x \d t \\
    & = -\int_{\R^n} \left( \int_0^{\infty} (\p u) (t,x) \overline{\psi (t)} \d t \right) \overline{(\gradV^* \eta) (x)} \d x \\ 
    & = \int_{\R^n} \left( \int_0^{\infty} u (t,x) \overline{(\p \psi) (t)} \d t \right) \overline{(\gradV^* \eta) (x)} \d x.
\end{align*}
Therefore, since $\p F + DB F = 0$  in $\R_+$, we deduce that
\begin{align*}
    \int_0^{\infty} \left( \int_{\R^n}  \begin{bmatrix} F_{\parallel} (t,x) \\ F_\oo (t,x) \end{bmatrix} \cdot \overline{\eta (x)} \d x \right) \overline{(\p \psi)(t)} \d t
    = -\int_0^{\infty} \left( \int_{\R^n} (BF)_{\perp} (t,x) \overline{\gradV^* \eta (x)} \d x \right) \overline{\psi (t)} \d t
\end{align*}
for all $\psi \in \mathcal{C}_c^{\infty} (\R_+)$. This shows that the function $\displaystyle t\mapsto \int_{\R^n} \begin{bmatrix} F_{\parallel} (t,x) \\ F_\oo (t,x) \end{bmatrix} \cdot \overline{\eta (x)} \d x$ is weakly differentiable on $\R_+$ with
\[
\p \left( \int_{\R^n}  \begin{bmatrix} F_{\parallel} (t,x) \\ F_\oo (t,x) \end{bmatrix} \cdot \overline{\eta (x)} \d x \right) = \int_{\R^n} (BF)_{\perp} (t,x) \overline{\gradV^* \eta (x)} \d x
\]
for all $t\in\R_+$. Furthermore, we deduce that
\[
\int_0^{\infty} \left( \int_{\R^n}  \begin{bmatrix} F_{\parallel} (t,x) \\ F_\oo (t,x) \end{bmatrix} \cdot \overline{\eta (x)}- u (t,x) \overline{(\gradV^* \eta) (x)} \d x \right) \overline{(\p \psi) (t)} \d t = 0
\]
for all $\psi \in \mathcal{C}_c^{\infty} (\R_+)$, so integrating by parts shows that there exists $C_0\in\R$ such that
\begin{align*}
C_0 &= \int_{\R^n} \left( \begin{bmatrix} F_{\parallel} (t,x) \\ F_\oo (t,x) \end{bmatrix} \cdot \overline{\eta (x)} - u (t,x) \overline{(\gradV^* \eta) (x)}\right) \d x \\
&= \int_{\R^n} \left(  \begin{bmatrix} F_{\parallel} (t,x) \\ F_\oo (t,x) \end{bmatrix} \cdot \overline{\eta (x)} - \begin{bmatrix} F_{\parallel} (c_0,x) \\ F_\oo (c_0,x) \end{bmatrix} \cdot \overline{\eta (x)} - \left( \int_{c_0}^t (BF)_{\perp} (s,x) \d s \right) \overline{(\gradV^* \eta) (x)}\right) \d x
\end{align*}
for almost every $t\in\R_+$, where we used \eqref{eq:Fg}-\eqref{eq:udef} to obtain the second equality.

The function $\displaystyle t\mapsto \int_{\R^n} \begin{bmatrix} F_{\parallel} (t,x) \\ F_\oo (t,x) \end{bmatrix} \cdot \overline{\eta (x)} \d x$ is continuous almost everywhere on $\R_+$, since any weakly differentiable function is equal almost everywhere to a locally absolutely continuous function (see, for instance, \cite[Section~7.3]{GT77}). Therefore, we now choose a point $c_0\in\R_+$ at which this function is continuous. Meanwhile, the function $t \mapsto \int_{c_0}^t \int_{\R^n}  (BF)_{\perp} (s,x) \overline{(\gradV^* \eta) (x)} \d x \d s$ is continuous on $\R_+$, since $F \in L^2_{\Loc} (\R_+ ; \Hil)$. Therefore, for each $\varepsilon >0$, there exists $\delta \in (0 , c_0)$ such that 
\[
   \left| \dashint_{c_0 - \delta}^{c_0 + \delta} \left( \int_{\R^n}  \begin{bmatrix} F_{\parallel} (t,x) \\ F_\oo (t,x) \end{bmatrix} \cdot \overline{\eta (x)} \d x \right) \d t - \int_{\R^n} \begin{bmatrix} F_{\parallel} (c_0,x) \\ F_\oo (c_0,x) \end{bmatrix} \cdot \overline{\eta (x)} \d x \right| < \varepsilon
\]
and
\[
   \left| \dashint_{c_0 - \delta}^{c_0 + \delta} \int_{c_0}^t \int_{\R^n} (BF)_{\perp} (s,x) \overline{(\gradV^* \eta) (x)} \d x \d s \d t \right| < \varepsilon.
\]
This allows us to deduce that $C_0 = 0$, since $|C_0| = |\dashint_{c_0 - \delta}^{c_0 + \delta} C_0 \d s| < 2\varepsilon$.

This proves that $(F_{\parallel}, F_\mu) = \gradVp u$, and as $(BF)_{\perp} = \p u$, we have $F_{\perp} = (A\nabla u)_\perp = \partial_{\nu_A} u$ by \eqref{eq:Ahatdef}, so $F=\gradAV u$. This in turn shows that $\gradV u \in L^2_{\Loc}(\R_+;L^2(\R^n;\C^{n+2}))$, so $u\in \Wv^{1,2}_{\Loc}(\R^{n+1}_+)$. Now consider test functions $\psi$ in $\mathcal{C}_c^{\infty} (\R^{n+1}_+)$ and $\varphi \coloneqq  (\psi,0)$ in $\mathcal{C}_c^{\infty} (\R^{n+1}_+ ; \C^{n+2})$. Observe that, since $\int_0^{\infty} \langle F , \p \varphi \rangle \d t = \int_0^{\infty} \langle B F , D \varphi \rangle \d t$, we have
\begin{align*}
    \int_0^{\infty} \int_{\R^n} (A\nabla u)_\perp \, \overline{\p \psi} \d x \d t & 
    = - \int_0^{\infty} \int_{\R^n} ((B F)_{\parallel} \cdot \overline{\gradx \psi} + (BF)_\oo \V\overline{\psi}) \d x \d t \\
    & = - \int_0^{\infty} \int_{\R^n} ((A \nabla u )_{\parallel} \cdot \overline{\gradx \psi} + a V u \overline{\psi} ) \d x \d t,
\end{align*}
where $(B F)_{\parallel} = (A \nabla u)_{\parallel}$ and $(BF)_\oo = a e^{i\arg V} \V u$ by \eqref{eq:Ahatdef}. Together, we have $\HAaV  u = 0$, so the proof is complete.
\end{proof}


\subsection{Semigroup Solutions}\label{ssect:GWP}


Throughout this section, suppose that $n\geq3$ and $V\in L^1_{\Loc}(\R^n)$. The operator $DB$ is defined as in~\eqref{eq:Ddef} with $B$ satisfying the bound (but not necessarily the component structure) in~\eqref{eq:bound on R(D)} and the accretivity in~\eqref{eq:elliptic on R(D)} with angle of accretivity $\omega(B)\in[0,\frac{\pi}{2})$. We rely on the holomorphic functional calculus for bisectorial operators outlined in Appendix~\ref{sec:app}. The theory of analytic semigroups in Proposition~\ref{prop:SGfromFC} allows us to generate solutions to the Cauchy--Riemann type system~\eqref{eqn:First-Order Equation} for the operator $DB$. Theorem~\ref{Thm:First order solutions are semigroups} will show that when $DB$ has a bounded $H^{\infty} (S_{\omega(B)})$-functional calculus, there is a Hardy-type splitting of $\overline{\Ran(D)}$ into spectral subspaces which characterise the trace spaces for initial value problems given by~\eqref{eqn:First-Order Equation}.

We start by defining the following holomorphic functions on $S_{\frac{\pi}{2}}^o$ (see also $[\cdot]$ in~\eqref{eq:[]def}):
\[
\chi^{\pm} (z) \coloneqq
\begin{cases}
1, \quad & \text{if} \, \pm \Re (z) > 0 \\
0, \quad & \text{if} \, \pm \Re(z) < 0
\end{cases},
\quad
\sgn(z) \coloneqq \chi^+(z) - \chi^-(z)
\quad\text{and}\quad
[z] \coloneqq z \sgn(z).
\]
Now consider the Hilbert space $\Hil \coloneqq \overline{\Ran(D)}=\overline{\Ran(DB)}$ and let $DB|_\Hil$ denote the injective part of $DB$ (see Remark~\ref{rem:injpart}) whereby
\[
DB|_\Hil: \D(DB|_\Hil) \subseteq \Hil \to \Hil
\quad\text{ and }\quad
DB|_\Hil u \coloneqq  DBu
\]
for all $u\in \D(DB|_\Hil)\coloneqq \D(DB)\cap\Hil$. The adjoint operator $(DB|_\Hil)^*$ is  defined with respect to $\mathcal{H}=\overline{\Ran(D)}$, which has orthogonal complement $\Hil^\perp = N(D)$ in $L^2(\R^n;\C^{n+2})$, hence
\begin{equation}\label{eq:adwP}
(DB|_\Hil)^*u=\mathbb{P}_{\Hil}B^*Du
\end{equation}
for all $u\in\D(D) \cap \Hil = \D(B^*D) \cap \Hil = \D((DB|_\Hil)^*)$, as $\langle u, DB v \rangle =\langle u, DB\mathbb{P}_\Hil v \rangle = \langle \mathbb{P}_{\Hil}B^*D u, v \rangle$ when $v\in\D(DB) \cap \Hil$. In what follows, $DB$ will  denote $DB|_\Hil$ and thus $(DB)^*=\mathbb{P}_{\Hil}B^*D|_\Hil$.

Now suppose that $DB\coloneqq DB|_\Hil$ has a bounded $H^{\infty}(S_{\omega(B)})$-functional calculus and let $T$ denote either $DB$ or $(DB)^*$. The operators $\chi^{\pm}(T)$ and $\sgn(T)$ then belong to $\mathcal{L}(\Hil)$, whilst Proposition~\ref{prop:SGfromFC} shows that $[T]$ is a sectorial operator of type $S_{\omega(B)+}$ on $\Hil$, and (2) in Theorem~\ref{thm:FfcDef} implies that $\sgn(T)T\subseteq[T]=T\sgn(T)$ with
\begin{equation}\label{eq:T[T]}
\D(T) = \D(\sgn(T)T) = \D([T])\cap \D(T) = \D(\sgn(T)[T]) = \D([T]). 
\end{equation}
It follows that $\sgn(T)T=[T]=T\sgn(T)$ with
\begin{equation}\label{eq:[DB]DB}
[T] \chi^{\pm}(T) f 
= T (\chi^+(T)-\chi^-(T))\chi^{\pm}(T) f 
= \pm T \chi^{\pm}(T) f
\end{equation}
for all $f \in \D(T) = \D([T])$.

We can then define the Hardy-type spectral subspaces $\Hil^{\pm}_{DB} \coloneqq \{ \chi^{\pm}(DB) f : f \in \Hil \}$ to obtain the topological direct sum decomposition
\begin{equation}\label{eq:tds}
\Hil = \Hil^+_{DB} \oplus \Hil^-_{DB},
\end{equation}
since $f = \chi^+(DB) f + \chi^-(DB) f$ with 
$\norm{f}{2}{} \eqsim \norm{\chi^+(DB) f}{2}{} + \norm{\chi^-(DB) f}{2}{}$ for all $f\in \Hil$. The operators $\chi^{\pm}(DB)$ are thus known as the Hardy-type projections for $DB$. There is an analogous decomposition for $(DB)^*$ but this is not needed for our purposes.

We collate below the essential properties of the solutions to~\eqref{eqn:First-Order Equation} provided by the analytic semigroup of operators $e^{-t [DB]}\coloneqq (e^{-t[\cdot]})(DB)$, which are defined for all $t\in[0,\infty)$ using the $\mathcal{F}$-functional calculus for $DB$ as in Proposition~\ref{prop:SGfromFC}.

\begin{prop} \label{Prop:Semigroup solves first order equation}
Suppose that $DB\coloneqq DB|_\Hil$ has a bounded $H^{\infty} (S_{\omega(B)})$-functional calculus. If $f \in \Hil^+_{DB}$ and $F(t)=e^{-t [DB]}f$ for all $t\in\R_+$, then $F \in L^2_{\Loc}(\R_+;\Hil^+_{DB}) \cap \mathcal{C}^{\infty} (\R_+;\Hil)$ with $\p F + DB F = 0$ in $\R_+$, $\lim_{t \to 0^+} \|F(t)-f\|_2=0$ and $\lim_{t \to \infty} \|F(t)\|_2 = 0$, as well as
\begin{equation}\label{eq:L2W}
    \lim_{t \to 0^+} \dashint_t^{2t} \|F (s) - f \|_2^2 \d s
    = 0
    \quad\text{ and }\quad
    \lim_{t \to \infty} \dashint_t^{2t} \| F (s) \|_2^2 \d s
    = 0.
\end{equation}
Moreover, it holds that
\[
\|f\|_2^2 \leq \sup_{t>0} \dashint_t^{2t} \| F (s) \|_2^2 \d s \leq \sup_{t>0} \|F (t)\|_2^2 \leq C \|f\|_2^2,
\]
where $C\coloneqq \sup_{t>0}\|e^{-t[DB]}\|_{\mathcal{L}(\Hil)}^2 < \infty$.
\end{prop}

\begin{proof}
Theorem~\ref{thm:FfcDef} shows that $e^{-t [DB]}\chi^+(DB)=\chi^+(DB)e^{-t [DB]}$, since $\chi^+(DB)$ and $e^{-t [DB]}$ are in $\mathcal{L}(\Hil)$, so Proposition~\ref{prop:SGfromFC} implies that $e^{-t [DB]}f = e^{-t [DB]}\chi^+(DB)f = \chi^+(DB) e^{-t [DB]}f$ belongs to $\Hil_{DB}^+\cap\D([DB]) = \Hil_{DB}^+\cap\D(DB)$ and $\mathcal{C}^{\infty}(\R_+;\Hil)$ with
\[
 \p (e^{-t[DB]}f)
=-[DB] e^{-t [DB]}f
=- DB e^{-t [DB]}f,   
\]
where we used \eqref{eq:T[T]} and \eqref{eq:[DB]DB} for $DB$. This proves that  $F \in L^2_{\Loc}(\R_+,\Hil_{DB}^+)$ and $\p F + DB F = 0$ on $\R_+$, whilst $\lim_{t \to 0^+} \|F (t)-f\|_2=0$ and $\lim_{t \to \infty} \|F(t)\|_2 = 0$ by Proposition~\ref{prop:SGfromFC}, which also implies \eqref{eq:L2W}. Moreover, the $L^2$-convergence implies that 
\[
\|f\|_2^2 \leq \sup_{t > 0} \dashint_t^{2t} \| F(s) \|_2^2 \d s \leq \sup_{t > 0} \|F (t)\|_2^2,
\]
as $\|f\|_2^2 \leq \dashint_\delta^{2\delta} \| f - F(s) \|_2^2 \d s + \sup_{t>0} \dashint_t^{2t} \| F(s) \|_2^2 \d s$ for all $\delta>0$. Finally, as $DB$ has a bounded $H^{\infty} (S_{\omega(B)})$-functional calculus, for each $\mu\in(\omega(B),\tfrac{\pi}{2})$, there exists $C_\mu\in(0,\infty)$ such that 
\[
C\coloneqq \sup_{t > 0} \|e^{-t [DB]}\|_{\mathcal{L}(\Hil)}
\leq C_\mu\sup\{|e^{-t z}| : z\in S_{\mu+}^o,\, t\in\R_+ \}
= C_\mu < \infty,
\]
which completes the proof.
\end{proof}

The following lemma extends the result obtained by Auscher--Axelsson in \cite[Proposition~4.4]{AA11} for the case $V\equiv0$. We need to avoid the mollification argument used therein, however, because it is not clear if $\mathcal{C}^\infty_c(\R^n;\C^{n+1})$ is dense in $\D(\gradV^*)$. This is achieved by requiring weak solutions to satisfy \eqref{eq:weak1st} for the larger class of test functions in $L^2(\mathbb{R}_+;\D(D)) \cap \test(\R_+ ; L^2 (\R^{n} ; \C^{n+2}))$. 

\begin{lem} \label{lem:weak derivatives are 0}
Suppose that $DB\coloneqq DB|_\Hil$ has a bounded $H^{\infty} (S_{\omega(B)})$-functional calculus. If $t\in\R_+$ and $F \in L^2_{\Loc} (\R_+ ; \Hil)$ with $\p F + DB F = 0$ in $\R_+$, then 
\[
    \int_0^t \eta_+' (s) e^{-(t-s) [DB]} \chi^+(DB) F(s) \d s = 0 = \int_t^{\infty} \eta_-' (s) e^{-(s-t) [DB]} \chi^-(DB) F(s) \d s
\]
for any $[0,\infty)$-valued functions $\eta_+\in\mathcal{C}_c^{\infty}((0,t))$ and $\eta_-\in\mathcal{C}_c^{\infty}((t,\infty))$ with derivatives $\eta_\pm'$.
\end{lem}

\begin{proof}
Suppose that $\psi \in \Hil$ and $t\in\R_+$. Now $\varphi(s)\coloneqq 0$ for all $s \geq t$ and 
\[
\varphi(s) \coloneqq \eta_+ (s) \left( e^{-(t-s)[DB]}\chi^+(DB) \right)^* \psi
=\eta_+ (s) e^{-(t-s) [(DB)^*]} \chi^+ ((DB)^*) \psi
\]
for all $s\in(0,t)$. It is important to recall that here the operator adjoints are with respect to the Hilbert space $\mathcal{H}=\overline{\Ran(D)}$, which has orthogonal complement $\Hil^\perp = N(D)$ in $L^2(\R^n;\C^{n+2})$, and that $(DB)^*=\mathbb{P}_{\Hil}B^*D|_\Hil$ by \eqref{eq:adwP}. Moreover, the equality above follows from the properties of the functional calculus for $(DB)^*$ from Theorem~\ref{thm:FfcDef} and~\eqref{eq:adjFC}. These considerations and the semigroup properties in Proposition~\ref{prop:SGfromFC} for $(DB)^*$ also ensure that
\[
\varphi\in L^2(\mathbb{R}_+;\D(D)) \cap \test(\R_+ ; L^2 (\R^{n} ; \C^{n+2})),
\]
and since $F \in L^2_{\Loc} (\R_+ ; \Hil)$ satisfies $\p F + DB F = 0$ in $\R_+$, we have 
\[
    \int_0^t \langle F(s), \partial_s \varphi(s)  \rangle \d s
    = \int_0^t \langle BF(s), D \varphi(s) \rangle \d s
    = \int_0^t \langle F(s), \mathbb{P}_\Hil B^*D \varphi(s) \rangle \d s.
\]
The semigroup properties in Proposition~\ref{prop:SGfromFC} and \eqref{eq:[DB]DB} for $(DB)^*=\mathbb{P}_{\Hil}B^*D|_\Hil$ also show that
\begin{align*}
    \int_0^t & \langle F(s) , \partial_s \varphi(s) \rangle \d s \\
    & = \int_0^t \eta_+' (s) \left\langle F(s) ,\left( e^{-(t-s) DB} \chi^+(DB) \right)^* \psi \right\rangle \d s \\
    & \qquad \qquad + \int_0^t \eta_+ (s) \left\langle F(s) , \partial_s \left( e^{-(t-s) [(DB)^*]} \chi^+ ((DB)^*) \psi \right) \right\rangle \d s \\
    & = \int_0^t  \eta_+' (s) \langle e^{-(t-s) DB} \chi^+(DB) F(s) , \psi \rangle \d s \\
    & \qquad \qquad + \int_0^t \eta_+ (s) \langle F(s) , \mathbb{P}_{\Hil}B^*D e^{-(t-s) [(DB)^*]} \chi^+ ((DB)^*) \psi \rangle \d s \\
    & = \int_0^t  \eta_+' (s) \langle e^{-(t-s) DB} \chi^+(DB) F(s) , \psi \rangle \d s + \int_0^t \langle F(s) , \mathbb{P}_\Hil B^*D \varphi(s) \rangle \d s.
\end{align*}
Therefore, by Fubini's Theorem and rearranging above, we conclude that
\[ 
    \left\langle \int_0^t \eta_+' (s) e^{-(t-s) [DB]} \chi^+(DB) F(s) \d s , \psi \right\rangle = \int_0^t  \eta_+' (s) \langle e^{-(t-s) DB} \chi^+(DB) F(s) , \psi \rangle \d s 
    = 0
\]
for all $\psi \in \mathcal{H}$, and since $\Hil^\perp \cap \Hil = N(D) \cap \overline{\Ran(D)} = \{0\}$, it follows that
\[
\int_0^t \eta_+' (s) e^{-(t-s) [DB]} \chi^+(DB) F(s) \d s = 0.
\]
Using the test function $\widetilde{\varphi}(s) \coloneqq \eta_- (s) \left( e^{-(s-t)[DB]} \chi^-(DB) \right)^* \psi$, a similar argument shows that 
\[
    \int_t^{\infty} \eta_-' (s) e^{-(s-t) [DB]} \chi^-(DB) F(s) \d s = 0.
\]
This completes the proof.
\end{proof}

We now prove that the Hardy-type spaces for $DB$ in \eqref{eq:tds} characterise the trace spaces for initial value problems given by~\eqref{eqn:First-Order Equation}. The preceding lemma makes this a straightforward application of the abstract framework developed by Auscher--Axelsson in~\cite{AA2011Remarks,AA11}.

\begin{thm} \label{Thm:First order solutions are semigroups}
Suppose that $DB\coloneqq DB|_\Hil$ has a bounded $H^{\infty} (S_{\omega(B)})$-functional calculus. If $F\in L_{\Loc}^2 (\R_+;\Hil)$ with $\p F + DB F = 0$ in $\R_+$ and 
$
    \sup_{t > 0} \dashint_t^{2t} \| F (s) \|_2^2 \d s < \infty,
$
then there exists $f \in \Hil_{DB}^+$ such that $\lim_{t \to 0^+} \|F (t)-f\|_2=0$ and $F (t) = e^{-t [DB]} f$.
\end{thm}

\begin{proof}
For each $\varepsilon > 0$, we construct functions $\eta_{\varepsilon}^{\pm}$ as in~\cite[Section~6]{AA11}. Let $\eta^0 \colon [0,\infty) \to [0,1]$ denote a smooth function that is supported in $[1,\infty)$ and satisfies $\eta^0 (t) = 1$ for all $t \in (2,\infty)$, set $\eta_{\varepsilon} (t) \coloneqq \eta^0 (t/\varepsilon) (1 - \eta^0 (2 \varepsilon t))$ and
\[
\eta_{\varepsilon}^{\pm} (s,t) \coloneqq \eta^0 \left( \pm (t - s)\varepsilon \right) \eta_{\varepsilon} (t) \eta_{\varepsilon} (s)
\]
for all $s,\,t\in(0,\infty)$. The functions  $\eta_{\varepsilon}^+$ are compactly supported in $\{(s,t) \in \R^2 : 0 < s < t < \infty\}$ and $\lim_{\varepsilon\to0^+} \eta_{\varepsilon}^+=\mathds{1}_{\{ (s,t) \in \R^2 : 0 < s < t < \infty \}}$.  Similarly, the functions $\eta_{\varepsilon}^-$ are compactly supported in $\{(s,t) \in \R^2 : 0 < t < s < \infty \}$ and $\lim_{\varepsilon\to0^+} \eta_{\varepsilon}^-=\mathds{1}_{\{ (s,t) \in \R^2 : 0 < t < s < \infty \}}$. If $t\in\R_+$ and $F \in L^2_{\Loc} (\R_+ ; \Hil^+_{DB})$ with $\p F + DB F = 0$ in $\R_+$, then Lemma~\ref{lem:weak derivatives are 0} shows that
\[
-\int_0^t (\p[s] \eta_{\varepsilon}^+) (t,s) e^{-(t-s) [DB]} \chi^+(DB) F(s) \d s + \int_t^{\infty} (\p[s] \eta_{\varepsilon}^-) (t,s) e^{-(s-t) [DB]} \chi^-(DB) F(s) \d s = 0.
\]
The result then follows as in the proof of \cite[Theorem 8.2(i)]{AA11} with $\mathcal{E}\equiv 0$ (cf. ~\cite[Proposition~2.2]{AA2011Remarks} with $\beta=0$, $f\equiv0$ and $A=\pm[DB]$).
\end{proof}


\subsection{Boundary Isomorphisms} \label{sec:ADD}


Throughout this section, we suppose that $D$ and $B$ satisfy either of the hypotheses in Theorem~\ref{thm:QE}, so the injective part $DB\coloneqq DB|_\Hil$ has a bounded $H^{\infty} (S_{\omega(B)})$-functional calculus on $\mathcal{H}=\overline{\Ran(D)}$ by Theorem~\ref{thm:Quadratic estimates and functional calculus}. We can then use Lemma~\ref{lem:RanD} and the topological decomposition $\Hil = \Hil^+_{DB} \oplus \Hil^-_{DB}$ from \eqref{eq:tds} to define boundary mappings $\Phi_N \colon \Hil_{DB}^+ \to L^2 (\R^n)$ and $\Phi_R \colon \Hil_{DB}^+ \to \{ \gradV g : g \in \Wvo^{1,2} (\R^n) \}$ by
\begin{equation} \label{eqn:linear mappings}
\Phi_N (f) \coloneqq  f_{\perp}
\quad\text{ and }\quad
\Phi_R (f) \coloneqq  (f_{\parallel}, f_\oo )
\end{equation}
for all $f\in \Hil_{DB}^+$. We will see in Section~\ref{Sec:Return to the Second-Order Equation} that the Neumann $\Neu$ and Regularity $\Reg$ problems are well-posed precisely when these mappings are isomorphisms. For now, we combine the analytic perturbation result in Theorem~\ref{thm:Lipschitz Estimates} with a method of continuity to show that such properties are stable under small $L^{\infty}$-perturbations of the coefficients $B$. We then use this result to prove that the boundary mappings are isomorphisms when the coefficients $A$ are Hermitian and the product $aV$ is real-valued in Proposition~\ref{prop:Self-Adjoint Isomorphisms}. We use the analogous result for block coefficients in Proposition~\ref{prop:Block-Type Isomorphisms}, which does not rely on perturbation and even allows for complex-valued potentials, as the basis for the perturbation in the Hermitian case. The proof proceeds via Rellich-type estimates in both cases.

\begin{thm}\label{thm:absper}
Suppose that $D$ and $B$ satisfy either of the hypotheses in Theorem~\ref{thm:QE}. If $\widetilde{B}$ is in $L^{\infty} (\R^n ; \mathcal{L} (\C^{n+2}))$ with the component structure in \eqref{eq:bound on R(D)}, then the following properties hold:
\begin{enumerate}
\item If $\Phi_N \colon \Hil_{DB}^+ \to L^2 (\R^n)$ is an isomorphism, then there exists $\varepsilon\in(0,\kappa)$, depending only on $n$, $\kappa$, $K$, $\upsilon$ and the constant
\[
\|(\Phi_N)^{-1}\| \coloneqq  \inf\{ C \in (0,\infty): \|u\|_2 \leq C \|u_\perp\|_2 \text{ for all } u\in \Hil_{DB}^+\},
\]
such that $\Phi_N \colon \Hil_{D\widetilde{B}}^+ \to L^2 (\R^n)$ is an isomorphism whenever $\|\widetilde{B} - B\|_{\infty} < \varepsilon$.
\item If $\Phi_R \colon \Hil_{DB}^+  \to \{ \gradV g : g \in \Wvo^{1,2} (\R^n) \}$ is an isomorphism, then there exists $\varepsilon\in(0,\kappa)$, depending only on $n$, $\kappa$, $K$,  $\upsilon$ and the constant
\[
\|(\Phi_R)^{-1}\|\coloneqq \inf\{ C \in (0,\infty): \|u\|_2 \leq C \|(u_\|,u_\mu)\|_2 \text{ for all } u\in \Hil_{DB}^+\},
\]
such that $\Phi_R:\! \Hil_{D\widetilde{B}}^+ \to \{ \gradV g:\! g \!\in\! \Wvo^{1,2} (\R^n) \}$ is an isomorphism whenever $\|\widetilde{B} - B\|_{\infty} < \varepsilon$.
\end{enumerate}
Observe that $D\widetilde{B}$ has a bounded $H^\infty(S_{\omega})$-functional calculus whenever $\|\widetilde{B} - B\|_{\infty} < \varepsilon<\kappa$, where $\omega = \cos^{-1}((\kappa-\varepsilon)/(K+\varepsilon)) \in [\omega(B), \tfrac{\pi}{2})$, by Theorem~\ref{thm:Lipschitz Estimates}, so $\Hil_{D\widetilde{B}}^+$ is defined as in \eqref{eq:tds}.
\end{thm}

\begin{proof}
We first suppose that $\|\widetilde{B} -  B\|_{\infty} < \varepsilon\leq\tfrac{1}{2}\kappa$, where $\varepsilon$ will be chosen later. If $E^+\coloneqq \chi^+(DB)$ and  $\widetilde{E}^+\coloneqq \chi^+(D\widetilde{B})$, then the analytic perturbation result in Theorem~\ref{thm:Lipschitz Estimates}, applied with  $\delta=\tfrac{1}{2}\kappa$ and $\mu= \cos^{-1}(\tfrac{1}{4}\kappa/K)$, shows that there exists $C_0\in(0,\infty)$ such that
\begin{equation}\label{eq:mc0N}
\|\widetilde{E}^+ u - E^+ u\|_2=
\|\chi^+(D\widetilde{B}) u - \chi^+(DB) u\|_2
\leq C_0 \|\widetilde{B} - B\|_{\infty} \|u\|_2
< \varepsilon C_0 \|u\|_2
\end{equation}
for all $u\in\Hil$, as $\|\widetilde{B} - B\|_{\infty} <\varepsilon \leq\tfrac{1}{2}\kappa=\delta<\kappa$ and $\cos^{-1}((\kappa-\delta)/(K+\delta))<\mu<\tfrac{\pi}{2}$. Meanwhile, the functional calculus bounds for $D\widetilde{B}$ provide $C_1\in(0,\infty)$ such that
\begin{equation}\label{eq:mcmuN}
\|\widetilde{E}^+u\|_2 =
\|\chi^+(D\widetilde{B})u\|_2
\leq C_1 \|u\|_2
\end{equation}
for all $u\in\Hil$. Moreover, the constants $C_0$ and $C_1$ depend only on $n$,  $\kappa$, $K$ and  $\upsilon$, as the choices for $\delta$ and $\mu$ depend only on $\kappa$ and $K$.

Next, observe that if $\varepsilon C_0C_1<1$, then $\widetilde{E}^+ \colon \Hil_{DB}^+ \to \Hil_{D\widetilde{B}}^+$ is surjective. This is because \eqref{eq:mcmuN} and \eqref{eq:mc0N} imply that $\| \widetilde{E}^+(\widetilde{E}^+ - E^+) u \|_2 < \varepsilon C_0 C_1 \|u\|_2 < \|u\|_2$
for all $u\in\Hil$, so a Neumann series shows that $I - \widetilde{E}^+(\widetilde{E}^+ - E^+):\Hil \to\Hil$ is bijective, and as
$\widetilde{E}^+ E^+ = \widetilde{E}^+ (I - \widetilde{E}^+ (\widetilde{E}^+ - E^+))$, we obtain $\widetilde{E}^+ E^+ (I - \widetilde{E}^+ (\widetilde{E}^+ - E^+))^{-1}u = \widetilde{E}^+u = u$
for all $u\in \Hil_{D\widetilde{B}}^+$, which proves the claimed surjectivity. This mapping is actually bijective but we don't use this fact.

Now assume that $\Phi_N \colon \Hil_{DB}^+ \to L^2 (\R^n)$ is an isomorphism, so $C_N\coloneqq \|(\Phi_N)^{-1}\|\in(0,\infty)$ with
\begin{equation}\label{eq:mc1N}
\|(\Phi_N)^{-1}f\|_2 \leq C_N \|f\|_2
\quad\text{ and }\quad
\|u\|_2=\|(\Phi_N)^{-1}(u_\perp)\|_2 \leq C_N\|u_\perp\|_2
\end{equation}
for all $f\in L^2(\R^n)$ and $u\in \Hil_{DB}^+$. We see from \eqref{eq:mc0N} and \eqref{eq:mc1N} that
\[
\|u_\perp\|_2
=\|(E^+u)_\perp\|_2
\leq \|(E^+u)_\perp-(\widetilde{E}^+u)_\perp\|_2 + \|(\widetilde{E}^+u)_\perp\|_2
\leq \varepsilon C_0 C_N \|u_\perp\|_2 + \|(\widetilde{E}^+u)_\perp\|_2
\]
for all $u\in \Hil_{DB}^+$. We now choose $\varepsilon=\tfrac{1}{2}\min\{\kappa,(C_0C_1)^{-1},(C_0C_N)^{-1}\}$, so then $\varepsilon C_0C_N \leq \tfrac{1}{2}$ and $(1-\varepsilon C_0C_N)^{-1} \leq 2$, and the preceding estimates imply that
\begin{equation}\label{eq:mcF}
\|u\|_2
\leq C_N \|u_\perp\|_2
\leq 2 C_N \|(\widetilde{E}^+u)_{\perp}\|_2
= 2 C_N \|\Phi_N\widetilde{E}^+u\|_2
\end{equation}
for all $u\in \Hil_{DB}^+$.

We now use a method of continuity (see, for instance, \cite[Theorem~5.2]{GT77}) to prove that $\Phi_N \colon \Hil_{D\widetilde{B}}^+ \to L^2 (\R^n)$ is a bijection. If $v \in \Hil_{D\widetilde{B}}^+$ and $\Phi_N v = 0$, then the surjectivity of $\widetilde{E}^+ \colon \Hil_{DB}^+ \to \Hil_{D\widetilde{B}}^+$ above gives $u\in \Hil_{DB}^+$ such that $v=\widetilde{E}^+u$, so $\Phi_N \widetilde{E}^+ u = 0$ and \eqref{eq:mcF} implies that $u = 0$, hence $v = 0$ and $\Phi_N \colon \Hil_{D\widetilde{B}}^+ \to L^2 (\R^n)$ is injective. Meanwhile, if $f\in L^2(\R^n)$, then $T_f:\Hil_{DB}^+ \to \Hil_{DB}^+$ given by $T_fu\coloneqq (\Phi_N)^{-1}(f + (E^+u)_\perp-(\widetilde{E}^+u)_\perp)$ for all $u\in \Hil_{DB}^+$ is a contraction mapping, since $\|T_f(u-v)\|\leq \varepsilon C_0C_N \|u-v\|_2$ for all $u,\, v \in \Hil_{DB}^+$ by \eqref{eq:mc0N} and \eqref{eq:mc1N}, whilst $\varepsilon C_0C_N<1$. Therefore, there exists $u\in \Hil_{DB}^+$ such that $T_fu=u$, whence $(E^+ u)_{\perp} = u_{\perp} = f + (E^+ u)_{\perp}  - (\widetilde{E}^+ u)_{\perp}$, so $f=(\widetilde{E}^+u)_\perp$ and $\Phi_N \colon \Hil_{D\widetilde{B}}^+ \to L^2 (\R^n)$ is surjective.

This completes the proof of (1). An analogous argument proves (2) by replacing \eqref{eq:mc1N} with
\[
\|(\Phi_R)^{-1}(\gradV g)\|_2 \leq C_R \|\gradV g\|_2
\quad\text{ and }\quad
\|u\|_2=\|(\Phi_R)^{-1}((u_\|,u_\mu))\|_2 \leq C_R\|(u_\|,u_\mu)\|_2
\]
for all $g\in \Wvo^{1,2} (\R^n)$ and $u\in \Hil_{DB}^+$, when $\Phi_R \colon \Hil_{DB}^+ \to \{ \gradV g : g \in \Wvo^{1,2} (\R^n) \}$ is an isomorphism, where $C_R\coloneqq \|(\Phi_R)^{-1}\|\in(0,\infty)$.
\end{proof}

\subsubsection{Block Coefficients} \label{sec:well-posedness of Block Type Matrices}

We now prove that the boundary mappings $\Phi_N$ and $\Phi_R$ from \eqref{eqn:linear mappings} are isomorphisms when $A$ is block, in which case
\[
    \cA = \begin{bmatrix}
        \App & 0 & 0 \\
        0 & \Avv & 0 \\
        0 & 0 & a e^{i\arg V}
    \end{bmatrix}
    \quad \text{ and } \quad
    B = \widehat{\cA} = \begin{bmatrix}
        \App^{-1} & 0 & 0 \\
        0 & \Avv & 0 \\
        0 & 0 & a e^{i\arg V}
    \end{bmatrix}.
\]
We follow the strategy used by Auscher--Axelsson--McIntosh in \cite[Section~4.2]{AAMc10-2} to establish the Rellich-type estimates below.

\begin{prop} \label{prop:Block-Type Isomorphisms}
If $A$ is block, then $\|f_{\perp}\|_2 \eqsim \| (f_{\parallel} , f_\oo) \|_2$ for all $f\in \Hil_{DB}^+$, where the implicit constants depend only on $n$, $\lambda$,  $\Lambda$ and $\upsilon$. Moreover, the mappings $\Phi_N \colon \Hil_{DB}^+ \to L^2 (\R^n)$ and $\Phi_R \colon \Hil_{DB}^+ \to \{ \gradV g : g \in \Wvo^{1,2} (\R^n) \}$ are isomorphisms.
\end{prop}

\begin{proof}
The multiplication operators $N,\, \mathbb{P}_{\perp},\, \mathbb{P}_{\|,\oo} \colon \Hil \to \Hil$ are defined by the  matrices
\[
    N \coloneqq \begin{bmatrix}
        -1 & 0 & 0\\
        0 & I & 0 \\
        0 & 0 & 1
    \end{bmatrix},
    \quad
    \mathbb{P}_{\perp} \coloneqq \begin{bmatrix}
        1 & 0 & 0\\
        0 & 0 & 0 \\
        0 & 0 & 0
    \end{bmatrix}
    = \tfrac{1}{2} (I - N)
    \quad\text{and}\quad
    \mathbb{P}_{\|,\oo} \coloneqq \begin{bmatrix}
        0 & 0 & 0 \\
        0 & I & 0 \\
        0 & 0 & 1
    \end{bmatrix}
    = \tfrac{1}{2} (I + N).
\]
The block structure implies that $NDBN = -DB$, whilst $N^{-1} = N \in \mathcal{L}(H)$, so we have
\begin{equation} \label{eqn:Symmetry of E_DB when A is block}
    N \sgn(DB) = N \sgn(DB) NN = \sgn(NDBN) N = \sgn(-DB) N = - \sgn(DB) N
\end{equation}
by \eqref{eq:simFC}. If $f \in \Hil_{DB}^+$, then $f=\sgn(DB)f$ and $f=N f+2(f_\perp,0)$, so \eqref{eqn:Symmetry of E_DB when A is block} shows that
\begin{align*}
    2f &= \sgn(DB)f + N f+2(f_\perp,0) \\
    &= \sgn(DB)N f + N \sgn(DB)f + 2(\sgn(DB)+I)(f_\perp,0) \\
    &=2(\sgn(DB)+I)(f_\perp,0)
\end{align*}
so $\|f\|_2\lesssim\|f_\perp\|_2$. The estimate $\|f\|_2\lesssim\|(f_\|,f_\oo)\|_2$ follows similarly using $f=-N f+2(0,f_\|,f_\oo)$.

To complete the proof, it suffices to prove that $\mathbb{P}_{\|,\oo} \colon \Hil_{DB}^+ \to \mathbb{P}_{\|,\oo} \Hil$ and $\mathbb{P}_{\perp}\colon \Hil_{DB}^+ \to \mathbb{P}_{\perp} \Hil$ are isomorphisms, since $\mathbb{P}_{\|,\oo} \Hil = \{(0,\gradV g) : g \in \dot{\mathcal{V}}^{1,2} (\R^n)\}$ and $\mathbb{P}_{\perp}\Hil = \{(f,0) : f\in L^2(\R^n)\}$ by Lemma~\ref{lem:RanD}. The estimates just proved show that these mappings are injective. Meanwhile, if $f \in \mathbb{P}_{\|,\oo} \Hil$, then $Nf = f$, so $(I+N)f=2f$ and $(I-N)f=0$, and  \eqref{eqn:Symmetry of E_DB when A is block} shows that
\[
f = \tfrac{1}{2}\sgn(DB)(I-N)f + \tfrac{1}{2}(I+N)f 
= \tfrac{1}{2}(I+N)(\sgn(DB)+ I)f
= \mathbb{P}_{\|,\oo} (2 \chi^+(DB) f),
\]
where we used $2 \chi^+(DB) = (\chi^+(DB)-\chi^-(DB))+(\chi^+(DB)+\chi^-(DB))$ to obtain the final equality, hence $\mathbb{P}_{\|,\oo} \colon \Hil_{DB}^+ \to \mathbb{P}_{\|,\oo} \Hil$ is surjective. The proof that $\mathbb{P}_{\perp}\colon \Hil_{DB}^+ \to \mathbb{P}_{\perp} \Hil$ is 
surjective follows analogously.
\end{proof}


\subsubsection{Hermitian Coefficients}


We now prove that the boundary mappings $\Phi_N$ and $\Phi_R$ from \eqref{eqn:linear mappings} are isomorphisms when $A$ is Hermitian and $aV$ is real-valued, in which case $aV=(aV)^*$,
\[
\mathcal{A}
= \begin{bmatrix}
A_{\perp \perp} & A_{\perp \parallel} & 0 \\
A_{\parallel \perp} & A_{\parallel \parallel} & 0 \\
0 & 0 & a e^{i\arg V}
\end{bmatrix}
= \begin{bmatrix}
A_{\perp \perp}^* & A_{\parallel \perp}^* & 0 \\
A_{\perp \parallel}^* & A_{\parallel \parallel}^* & 0 \\
0 & 0 & a^*e^{-i\arg V}
\end{bmatrix}
= \mathcal{A}^*
\quad \text{ and } \quad
B = \widehat{\cA}
\]
as in~\eqref{eq:Ahatdef}. We follow the strategy used by  Auscher--Axelsson--McIntosh in \cite[Section~4.1]{AAMc10-2}. The Rellich-type estimates below show that these mappings are injective.

\begin{prop} \label{prop:self adjoint injective}
If $A$ is Hermitian and $aV$ is real-valued, then  $\|f_{\perp}\|_2 \eqsim \| (f_{\parallel} , f_\oo) \|_2$ for all $f\in \Hil_{DB}^+$, where the implicit constants depend only on $\lambda$ and $\Lambda$.
\end{prop}

\begin{proof}
Suppose that $f \in \Hil_{DB}^+$ and define $F(t)=e^{-t [DB]}f$ for all $t\in\R_+$. Let $N$ denote the operator from the proof of Proposition~\ref{prop:Block-Type Isomorphisms}. The Hermitian structure and \eqref{eq:Ahatdef} imply that $(\widehat{\mathcal{A}}\,)^* = B^* = N B N$, whilst $N^2=I$ and $ND=-DN$, so  Proposition~\ref{Prop:Semigroup solves first order equation} provides the identity
\begin{align*}
    \int_0^{\infty} \p \langle N B F (t) , F (t) \rangle \d t & = \int_0^{\infty} ( \langle N B \p F (t) , F (t) \rangle + \langle N B F (t) , \p F (t) \rangle ) \d t \\
	& = -\int_0^{\infty} ( \langle N B DB F (t) , F (t) \rangle + \langle N B F (t) , DB F (t) \rangle ) \d t \\
    & = -\int_0^{\infty} ( \langle (N B N) N DB F (t) , F (t) \rangle + \langle D N B F (t) , B F (t) \rangle ) \d t \\
    & = -\int_0^{\infty} \langle (N D + D N) B F (t) , B F (t) \rangle \d t \\
    & = 0,
\end{align*}
where we used that $\p F =-DB F$ to obtain the second equality, which implies that $F\in\D(DB)$ and thus $F\in\D(DNB)$, as required for the third equality. Meanwhile, the Fundamental Theorem of Calculus and the $L^2$-convergence results in Proposition~\ref{Prop:Semigroup solves first order equation} imply that
\[
    \int_0^{\infty} \p \langle N B F (t) , F (t) \rangle \d t = \lim_{t \to \infty} \langle N B F (t) , F (t) \rangle - \lim_{t\to 0} \langle N B F (t) , F(t) \rangle = - \langle N B f , f \rangle,
\]
hence $\langle N B f , f \rangle = 0$. This shows that $\langle (B f)_{\perp} , f_{\perp} \rangle = \langle (B f)_{\parallel} , f_{\parallel} \rangle + \langle (B f)_\oo , f_\oo \rangle$, thus
\[
    \kappa(B) \norm{f}{2}{2} \leq \Re \langle B f , f \rangle 
    = 2 \Re \langle (B f)_{\perp} , f_{\perp} \rangle 
    \leq 2 \|B\|_\infty \|f\|_2 \norm{f_\perp}{2}{}
\]
and
\[
     \kappa(B) \norm{f}{2}{2} \leq \Re \langle B f , f \rangle
    = 2 \Re \left( \langle (B f)_{\parallel} , f_{\parallel} \rangle + \langle (B f)_\oo , f_\oo \rangle \right)
    \leq 2 \|B\|_\infty \|f\|_2 \|(f_{\parallel},f_\oo)\|_2.
\]
The result follows, since $\kappa(B)$ and $\|B\|_\infty$ depend only on $\lambda$ and $\Lambda$ by  Proposition~\ref{prop:bddellAhat}.
\end{proof}
 
The surjectivity of the boundary mappings for Hermitian coefficients is now established using the method of continuity in Theorem~\ref{thm:absper}.

\begin{prop} \label{prop:Self-Adjoint Isomorphisms}
If $A$ is Hermitian and $aV$ is real-valued, then $\Phi_N \colon \Hil_{DB}^+ \to L^2 (\R^n)$ and $\Phi_R \colon \Hil_{DB}^+ \to \{ \gradV g : g \in \Wvo^{1,2} (\R^n) \}$ are isomorphisms.
\end{prop}

\begin{proof}
Suppose that $\sigma,\, \tau \in [0,1]$ and consider the Hermitian matrix $\mathcal{A}_{\tau} \coloneqq \tau \mathcal{A} + (1 - \tau) I$, which satisfies $ \|\mathcal{A}_{\tau}\|_{\infty}\leq \max\{\Lambda,1\}
$ and $\Re \langle \mathcal{A}_{\tau} v , v \rangle \geq \min \{\lambda,1\} \norm{v}{2}{2}$
for all $v\in\Ran(D)$ by \eqref{eq:bddellA} and Lemma~\ref{lem:RanD}. We set $B_\tau \coloneqq  \widehat{\cA}_{\tau}$, as in \eqref{eq:Ahatdef}, and note that it satisfies \eqref{eq:bound on R(D)}--\eqref{eq:elliptic on R(D)} with $\|B_\tau\|_\infty\leq\max\{\Lambda^2,1\}/\min \{\lambda,1\}$ and $\kappa(B_\tau)\geq\min\{\lambda,1\}/\max\{\Lambda,1\}$.

Now consider $\Hil_{\tau}^+ \coloneqq \{ \chi^+ (DB_{\tau}) f : f \in \Hil \}$ and the mappings $\Phi_N^{\tau} \colon \Hil_{\tau}^+ \to L^2 (\R^n)$ and $\Phi_R^{\tau} \colon \Hil_{\tau}^+ \to  \{ \gradV g : g \in \Wvo^{1,2} (\R^n) \}$ given by
$\Phi_N^{\tau} (f) \coloneqq f_{\perp}$ and $\Phi_R^{\tau} (f) \coloneqq (f_{\parallel}, f_\oo )$
for all $f\in \Hil_{\tau}^+$. Observe that, since $\mathcal{A}_{\tau}$ is Hermitian, the Rellich estimates in Proposition \ref{prop:self adjoint injective} show that there exist $C_N\in(0,\infty)$ and $C_R\in(0,\infty)$, depending only on $\lambda$ and $\Lambda$, such that $\|u\|_2 \leq C_N \|u_\perp\|_2$ and $\|u\|_2 \leq C_R \|(u_\|,u_\mu)\|_2$ for all $u\in \Hil_{\tau}^+$.

The above bounds imply that there exists $C_0\in(0,\infty)$, depending only on $\lambda$ and $\Lambda$, such that
$\|B_\sigma-B_\tau\|_{\infty} \leq C_0 \|\mathcal{A}_\sigma-\mathcal{A}_\tau\|_{\infty} = C_0|\tau-\sigma|\|\mathcal{A}-I\|_\infty$, since \eqref{eq:Ahatinv} shows that
\[
B_\sigma-B_\tau
= \ula{\mathcal{A}}_\sigma\ola{\mathcal{A}}_\sigma^{-1} -  \ula{\mathcal{A}}_\tau\ola{\mathcal{A}}_\tau^{-1} \\
= \ula{\mathcal{A}}_\sigma \ola{\mathcal{A}}_\sigma^{-1} (\ola{\mathcal{A}}_\tau - \ola{\mathcal{A}}_\sigma)\ola{\mathcal{A}}_\tau^{-1} + (\ula{\mathcal{A}}_\sigma - \ula{\mathcal{A}}_\tau) \ola{\mathcal{A}}_\tau^{-1}.
\]
Moreover, by Theorem~\ref{thm:absper}, there exists $\varepsilon\in(0,1)$, depending only on $n$, $\lambda$,  $\Lambda$, $\upsilon$, $C_N$ and $C_R$, such that whenever   $C_0|\tau-\sigma|\|\mathcal{A}-I\|_\infty<\varepsilon$ and both $\Phi_N^{\tau} \colon \Hil_{\tau}^+ \to L^2 (\R^n)$ and $\Phi_R^\tau \colon \Hil_{\tau}^+ \to \{ \gradV g : g \in \Wvo^{1,2} (\R^n) \}$ are isomorphisms, then both $\Phi_N^{\sigma} \colon \Hil_{\sigma}^+ \to L^2 (\R^n)$  and $\Phi_R^\sigma \colon \Hil_{\sigma}^+ \to \{ \gradV g : g \in \Wvo^{1,2} (\R^n) \}$ are isomorphisms. 

Meanwhile, we know from Proposition~\ref{prop:Block-Type Isomorphisms} in the case $\mathcal{A}=I$ that both $\Phi_N^0 \colon \Hil_0^+ \to L^2 (\R^n)$ and $\Phi_R^0 \colon \Hil_0^+ \to \{ \gradV g : g \in \Wvo^{1,2} (\R^n) \}$ are isomorphisms. Therefore, we can iterate the perturbation result above to reach any value of $\tau\in[0,1]$, since at each step the value of $\varepsilon$ will be the same. In particular, after $N$ iterations, where $N\geq 2C_0\|\mathcal{A}-I\|_\infty\varepsilon^{-1}$, we find that $\Phi_N^1 = \Phi_N$ and $\Phi_R^1 = \Phi_R$ are isomorphisms, as required.
\end{proof}


\section{Non-Tangential Maximal Function Bounds}\label{sec:NT}


To prove that the Neumann $\Neu$ and Regularity $\Reg$ problems are well-posed, it remains to establish non-tangential maximal function bounds for weak solutions. This is achieved here via Theorem~\ref{thm:First-Order NT Control}, which provides an equivalence between square function and non-tangential maximal function bounds for semigroup solutions to the Cauchy--Riemann type system~\eqref{eqn:First-Order Equation} for operators $DB$ when $V\in B^{q}(\R^n)$ with $q\geq\max\{\tfrac{n}{2},2\}$. We also establish a Fatou-type result in Proposition~\ref{prop:Convergence on Whitney Averages}, which shows that the Whitney averages of such solutions converge pointwise almost everywhere on $\R^n$ to their initial data. These results rely on the reverse H\"older estimates for solutions obtained below in Proposition~\ref{prop:Reverse Holder of gradients of solutions} and the $L^p$-type off-diagonal bounds for resolvents in Proposition~\ref{prop:Lq off diagonal estimates}.

The method originally developed by Auscher--Axelsson--Hofmann in~\cite[Proposition~2.56]{AAH08} for this purpose when $V\equiv0$ relies crucially on the fact that $\div A \nabla (u-c)=0$ whenever $\div A \nabla u=0$ and $c\in\R$. This is because the reverse H\"{o}lder bounds for $\nabla u=\nabla(u-c)$ then follow from Caccioppoli's inequality and a Sobolev--Poincar\'{e} inequality. More generally, however, we only have $H_{A,a,V}(u-c)=-aVc$ whenever $H_{A,a,V}u=0$ and $c\in\R$. This leads us to derive the Caccioppoli inequality for inhomogeneous Schr\"{o}dinger equations in Proposition~\ref{prop:Caccioppoli}. This is then combined with the Fefferman--Phong inequality~\eqref{eq:FePh} to obtain the reverse H\"{o}lder estimates for $\gradV u$ in Proposition~\ref{prop:Reverse Holder of gradients of solutions}. The detailed treatment for parabolic equations obtained by Auscher--Egert--Nyst\"rom in~\cite[Theorem~2.13]{AEN16-1} was particularly insightful here.


\subsection{Reverse H\"older Estimates}


We consider weak solutions to inhomogeneous Schr\"{o}dinger equations $- \div A \nabla u + aV u = \gradV^* f$ on open sets $\Omega\subset \R^d$ when $d\in\N$, $V \in L^1_{\Loc} (\R^d)$ and $f\in L^2_{\Loc}(\R^d;\C^{d+1})$, where $\gradV = (\partial_1,\ldots,\partial_d,\V)$ as in~\eqref{eq:def_nabla_nu} and $\gradV^*$ has the meaning described in~\eqref{eq:definhomsol} below. The results only require coefficients $A \in L^{\infty} (\R^d ; \mathcal{L} (\C^d))$ and $a \in L^{\infty} (\R^d ; \mathcal{L} (\C))$ with constants $0<\lambda\leq\Lambda<\infty$ that satisfy the bound and G{\aa}rding-type ellipticity
\begin{equation}\label{eq:bddellcoeffRH}
\max\left\{\|A\|_{L^{\infty} (\R^d; \mathcal{L} (\C^d))},
\|a\|_{L^{\infty}(\R^d)}\right\} \leq \Lambda
\text{ and }
\Re \int_{\R^d} ( A \nabla u \cdot \overline{\nabla u} + a V |u|^2) \geq \lambda \int_{\R^d} |\gradV u|^2
\end{equation}
for all $u\in C_c^\infty(\R^d)$ (and hence $u\in \Wv^{1,2}(\R^d)$ by Lemma~\ref{lem:testdense}). In particular, we shall write that 
\begin{equation}\label{eq:definhomsol}
- \div A \nabla u + aV u = \gradV^* f \text{ in } \Omega
\end{equation}
to mean that $u \in \Wv^{1,2}_{\Loc}(\Omega)$ and 
$\int_\Omega (A \nabla u \cdot \overline{\nabla v} + a V u \overline{v}) = \int_\Omega f \cdot \overline{\gradV v}$ for all $v \in \test(\Omega)$. The homogeneous version of the following Caccioppoli inequality (when $f\equiv0$) was used previously in the proof of Lemma~\ref{lem:pusol}.

\begin{prop} \label{prop:Caccioppoli}
Suppose that $d\in\N$, $V \in L^1_{\Loc} (\R^d)$, $(A,a)$ satisfy \eqref{eq:bddellcoeffRH} with $0<\lambda\leq\Lambda<\infty$, and $\alpha>1$. If $f \in L^2_{\Loc}(\R^d;\C^{d+1})$ and $- \div A \nabla u + aV u = \gradV^* f$ in an open set $\Omega\subset\R^d$, then
\[
    \int_{Q} |\nabla u|^2 + \int_{Q} |V| |u|^2 \lesssim \frac{1}{\ell(Q)^2} \int_{\alpha Q} |u|^2 + \int_{\alpha Q} |f|^2
\]
for all cubes $Q\subset\R^d$ such that $\alpha Q \Subset\Omega$, where the implicit constant depends only on $d$, $\lambda$, $\Lambda$ and $\alpha$.
\end{prop}

\begin{proof}
Let $\eta \in \test (\Omega)$ be supported in $\alpha Q\Subset\Omega$ such that $0 \leq \eta(x) \leq 1$ for all $x \in \Omega$ and $\eta (x) = 1$ for all $x \in Q$ whilst $\| \nabla \eta \|_{\infty} \lesssim 1/\ell(Q)$, where the implicit constant depends  on $\alpha>1$. If $- \div A \nabla u + aV u  = \gradV^* f$ in $\Omega$, then the meaning of \eqref{eq:definhomsol} implies that $u \eta^2 \in \Wvc^{1,2} (\Omega)$ and
\begin{equation}\label{eq:soln}
    \int_\Omega (A\nabla u \cdot \overline{\nabla (u \eta^2)} + aV|u|^2\eta^2)= \int_\Omega f \cdot \overline{\gradIV (u \eta^2)},
\end{equation}
since $\test(\Omega)$ is dense in $\Wvc^{1,2}(\Omega)$.

Now consider $\varepsilon \in(0,1)$ to be chosen later. The product rule gives
\[
    \int_Q | \gradV u |^2 \leq \int_{\Omega} | \gradV u |^2 \eta^2 = \int_{\Omega} | \gradV (u\eta) |^2 + \int_\Omega |u|^2 |\nabla \eta|^2
\]
and, when combined with \eqref{eq:soln} and the ellipticity in~\eqref{eq:bddellcoeffRH}, it also gives
\begin{align*}
    \int_\Omega |\gradV (u\eta)|^2
    & \lesssim \left| \int_\Omega (A \nabla (u\eta) \cdot \overline{\nabla (u\eta)} + aV|u|^2\eta^2)\right| \\
    & = \left| \int_\Omega u  A \nabla \eta \cdot \overline{\nabla (u \eta)} + \eta  A \nabla u \cdot \overline{\nabla (u \eta)} + a V |u\eta|^2 \right| \\ 
    & = \left| \int_\Omega |u|^2  A \nabla \eta \cdot \nabla \eta + u \eta A \nabla \eta \cdot \overline{\nabla u} + f \cdot \overline{\gradIV (u \eta^2)} - \overline{u} \eta  A \nabla u \cdot \nabla \eta \right|.
\end{align*}    
Next, we combine the previous two estimates with the bound in~\eqref{eq:bddellcoeffRH} and the $\varepsilon$-version of Young's inequality to obtain
\begin{align*}
    \int_\Omega |\gradV u|^2 \eta^2 
    & \lesssim \int_{\Omega}  |u|^2 |\nabla \eta|^2 + |u| |\eta| |\nabla \eta| |\nabla u| + |f| |\gradIV (u \eta^2)| \\
    & \lesssim \int_\Omega |\nabla u| |\eta \nabla \eta||u| + \int_\Omega |f| \left(|u||\eta \nabla \eta| +  |\nabla u|\eta^2 + |\V u|\eta^2 \right)\\
    &\lesssim \varepsilon \int_\Omega |\nabla u|^2\eta^2 + \left(\frac{1}{\varepsilon} + \varepsilon \right) \int_\Omega |u|^2|\nabla \eta|^2  + \varepsilon \int_\Omega |\V u|^2\eta^2 + \frac{1}{\varepsilon} \int_\Omega |f|^2\eta^2,
\end{align*}
hence
\[
    \int_\Omega |\gradIV u|^2 \eta^2 \lesssim \varepsilon \int_\Omega |\gradIV u|^2 \eta^2 + \frac{1}{\varepsilon} \int_\Omega |u|^2 |\nabla \eta|^2 + \frac{1}{\varepsilon} \int_\Omega |f|^2 \eta^2,
\]
where the implicit constant depends only on $d$, $\lambda$, $\Lambda$ and $\alpha$.

We now choose $\varepsilon \in(0,1)$ sufficiently small, and recall the properties of $\eta$, to obtain
\[
    \int_{Q} |\gradIV u|^2
    \leq \int_\Omega |\gradIV u|^2 \eta^2
    \lesssim \frac{1}{\ell(Q)^2} \int_{\alpha Q} |u|^2 + \int_{\alpha Q} |f|^2,
\]
where the implicit constant depends only on $d$, $\lambda$, $\Lambda$ and $\alpha$, as required.
\end{proof}

We will combine the preceding Caccioppoli inequality with the following reverse H\"{o}lder estimate for solutions to prove Proposition~\ref{prop:Reverse Holder of gradients of solutions} below. This approach is modelled on that developed for parabolic equations by Auscher--Monniaux--Portal in \cite[Corollary~4.2]{AMP15}.

\begin{prop}\label{prop:Reverse Holder}
Suppose that $d\geq 3$, $V \in L^1_{\Loc} (\R^d)$, $(A,a)$ satisfy \eqref{eq:bddellcoeffRH} with $0<\lambda\leq\Lambda<\infty$, and $\alpha>1$. If $\delta\in(0,\infty)$ and $- \div A \nabla u + aV u = 0$ in an open set $\Omega\subset\R^d$, then 
\[
    \left( \dashint_{Q} |u|^{2^*} \right)^{{1}/{2^*}}
    \lesssim \left(\dashint_{\alpha Q} |u|^\delta \right)^{1/\delta}
\]
for all cubes $Q\subset\R^d$ such that $\alpha Q \Subset\Omega$, where $2^*\coloneqq 2d/(d-2)$ denotes the Sobolev exponent for $\R^d$ and the implicit constant depends only on $d$, $\lambda$, $\Lambda$, $\alpha$ and $\delta$.
\end{prop}

\begin{proof}
If $u \in W^{1,2}(Q)$, then using the Sobolev--Poincar\'e inequality (see (7.45) in \cite{GT77}) we have
\begin{align*}
    \left(\dashint_{Q} |u|^{2^*}\right)^{1/2^*}
    \lesssim \left(\dashint_{Q} |u - u_Q|^{2^*}\right)^{1/2^*} + \dashint_Q |u|
    \lesssim \ell(Q)\left(\dashint_{Q} |\nabla u|^2\right)^{1/2} + \left(\dashint_Q |u|^2\right)^{1/2},
\end{align*}
where $u_Q\coloneqq \dashint_Q u$. Therefore, by the Caccioppoli inequality in Proposition~\ref{prop:Caccioppoli} in the case $f\equiv0$, we have the weak reverse H\"older estimate
\[
\left(\dashint_{Q} |u|^{2^*} \right)^{{1}/{2^*}}
\lesssim \left(\dashint_{\alpha Q} |u|^2\right)^{1/2}
\]
for all cubes $Q\subset\R^d$ such that $\alpha Q \Subset\Omega$ for some $\alpha>1$, whenever $- \div A \nabla u + aV u = 0$ in $\Omega$. The self-improvement of the exponent in the right-hand side of such estimates (\cite[Theorem ~2]{IN85}) completes the proof.
\end{proof}
The self-improvement obtained by Iwaniec--Nolder in~ \cite[Theorem~2]{IN85} shows that if $\delta\in(0,\infty)$ and $V \in B^q(\R^d)$ for some $q\in(1,\infty)$ with $\llbracket V\rrbracket_{q}\leq \upsilon < \infty$, then $(\dashint_Q V^q)^{1/q} \lesssim (\dashint_Q V^\delta)^{1/\delta}$ for all cubes $Q \subset \R^d$, where the implicit constant depends only on $\upsilon$ and $\delta$. In particular, we have
\begin{equation} \label{eqn:Changing exponent for FP}
    \dashint_Q V  \eqsim \left(\dashint_Q V^{1/2}\right)^{2}
\end{equation}
for all cubes $Q \subset \R^d$, where the implicit constants depend only on $\upsilon$. We combine this with the Fefferman--Phong inequality ~\eqref{eq:FePh} with $p=1$ to obtain the following result. 

\begin{prop} \label{prop:Reverse Holder of gradients of solutions}
Suppose that  $d\geq 3$, $V \in B^q(\R^d)$ for some $q\in(1,\infty)$ with  $\llbracket V\rrbracket_{q}\leq \upsilon < \infty$, $(A,a)$ satisfy \eqref{eq:bddellcoeffRH} with $0<\lambda\leq\Lambda<\infty$, and $\alpha>1$. If $\delta\in(0,\infty)$ and $- \div A \nabla u + aV u = 0$ in an open set $\Omega\subset\R^d$, then
\[
    \left( \dashint_{Q} | \gradIV u |^2 \right)^{1/2} \lesssim \left( \dashint_{\alpha Q} |\gradIV u |^{\delta} \right)^{1/\delta},
\]
for all cubes $Q\subset\R^d$ such that $\alpha Q \Subset\Omega$, where the implicit constant depends only on $d$, $\lambda$, $\Lambda$, $\upsilon$, $\alpha$ and $\delta$.
\end{prop}

\begin{proof}
It follows from \eqref{eqn:Changing exponent for FP} that $V^{1/2} \in B^{2q}(\R^d)$ with $\llbracket V^{1/2}\rrbracket_{2q}\leq \widetilde{\upsilon} < \infty$, where $\widetilde{\upsilon}$ depends only on $\upsilon$. Now suppose that $- \div A \nabla u + aV u = 0$ in $\Omega$ and let $Q\subset\R^d$ denote a cube such that $\alpha Q \Subset\Omega$ for some $\alpha>1$. We consider the following two cases:

(1) If $\ell(\alpha Q ) \dashint_{\alpha Q} V^{1/2} \geq 1$, then by the Caccioppoli inequality in Proposition~\ref{prop:Caccioppoli} with $f\equiv0$, followed by the reverse H\"older estimate in Proposition~\ref{prop:Reverse Holder} with $\delta=1$, we have 
\begin{align}\label{eq:wrH1}
   \bigg(\dashint_{Q} | \gradIV u |^2\bigg)^{1/2} 
   \lesssim \frac{1}{\ell(Q)}\bigg( \dashint_{\frac{1+\alpha}{2} Q} |u|^2\bigg)^{1/2} 
   \lesssim \frac{1}{\ell(Q)} \dashint_{\alpha Q} |u| 
   \lesssim \dashint_{\alpha Q} |\gradIV u |,
\end{align}
where we used the Fefferman--Phong inequality~\eqref{eq:FePh} with $p=1$ and $V^{1/2} \in B^{2q}(\R^d)$, since $\llbracket V^{1/2}\rrbracket_{2q}\leq \widetilde{\upsilon} < \infty$ and $\widetilde{\upsilon}$ depends only on $\upsilon$, to obtain the final estimate. 

(2) If $\ell(\alpha Q ) \dashint_{\alpha Q} V^{1/2} \leq 1$, then we set $u_{\alpha Q} \coloneqq \dashint_{\alpha Q} u$ and write 
\[
    \dashint_{Q} |\gradIV u|^2
    \lesssim \dashint_{Q} |\gradIV (u - u_{\alpha Q})|^2 + \dashint_{Q} V|u_{\alpha Q}|^2.
\]
Now define $f=(f_1,\ldots,f_{d+1})$ in $L^2_{\Loc}(\R^d;\C^{d+1})$ by setting $f_{d+1} = -aV^{1/2} u_{\alpha Q}$ and $f_j\equiv 0$ when $j\in\{1,\ldots,d\}$. Observe that, since $- \div A \nabla u + aV u  = 0$ in $\Omega$, we have
\[
\int_\Omega \left(A\nabla(u - u_{\alpha Q})\cdot\overline{\nabla v} + aV(u - u_{\alpha Q})\overline{ v}\right) = -\int_\Omega aV u_{\alpha Q}\overline{ v} = \int_\Omega f\cdot\overline{\gradV v}
\]
for all $ v\in\test(\Omega)$, hence
$- \div A \nabla (u - u_{\alpha Q}) + aV (u - u_{\alpha Q}) = \gradV^* f$ in $\Omega$. The inhomogeneous version of Caccioppoli inequality in Proposition~ \ref{prop:Caccioppoli}  thus implies that
\[
    \dashint_{Q} |\gradIV u|^2
    \lesssim \frac{1}{\ell(Q)^2} \dashint_{\alpha Q} |u - u_{\alpha Q}|^2 + \dashint_{\alpha Q} V |u_{\alpha Q}|^2
    \lesssim \left( \dashint_{\alpha Q} | \nabla u |^{2_*} \right)^{{2}/{2_*}} + \dashint_{\alpha Q} V|u_{\alpha Q}|^2,
\]
where we used the Sobolev--Poincar\'e inequality (see (7.45) in \cite{GT77}) in the second estimate with $2_* \coloneqq 2d/(d+2)$. Using \eqref{eqn:Changing exponent for FP} followed by the Fefferman--Phong inequality~\eqref{eq:FePh} with $p=1$ and $V^{1/2} \in B^{2q}(\R^d)$, but now in the case when $\ell(\alpha Q) \dashint_{\alpha Q} V^{1/2} \leq 1$, we have
\begin{align*}
    \left(\dashint_{\alpha Q} V |u_{\alpha Q}|^2\right)^{1/2}
     \leq \left( \dashint_{\alpha Q} V\right)^{1/2} \dashint_{\alpha Q} |u|
    \lesssim \left(\dashint_{\alpha Q} V^{1/2} \right) \dashint_{\alpha Q} |u|
    \lesssim \dashint_{\alpha Q} | \gradV u |.
    \end{align*}
Combining these estimates with Jensen's inequality we get
\begin{equation}\label{eq:wrH2}
    \left(\dashint_{Q} | \gradIV u |^2\right)^{1/2} \lesssim \left( \dashint_{\alpha Q} | \gradIV u |^{2_*} \right)^{{1}/{2_*}},
\end{equation}
since $1<2_*<2$.

The weak reverse H\"older estimate \eqref{eq:wrH2} obtained in case (2) also holds in case (1) by \eqref{eq:wrH1}. The self-improvement of the exponent in the right-hand side of such estimates (\cite[Theorem~2]{IN85}) completes the proof.
\end{proof}


\subsection{Off-Diagonal Estimates}


The next step in proving the non-tangential maximal function bounds is to establish $L^p$-type off-diagonal estimates for $DB$ when $p$ is in an open interval that contains $2$, where $D$ is defined as in~\eqref{eq:Ddef} and $B=\widehat{\cA}_{A,a,V}$ as in~\eqref{eq:Ahatdef} (cf. \eqref{eq:BA5} below) with constants $0<\lambda\leq\Lambda<\infty$. We  adapt the method developed by Auscher--Axelsson in~\cite[Lemma~10.3]{AA11} (cf.~\cite[Lemma~6.1]{ARR15} and \cite[Lemma~2.10]{AEN16-1}) to account for a potential $V$, beginning with some interpolation results for the adapted Sobolev spaces $\Wv^{1,p}(\R^n)$ from Section~\ref{sec:pre}. It will be essential for us to track how the  interpolation constants depend on the reverse H\"{o}lder constant $\llbracket V\rrbracket_q$ when $q\in(1,\infty)$. This is because the interpolation results are ultimately used to prove $L^p$-type resolvent bounds in Lemma~\ref{lem:Resolvent bounds} via a certain scaling argument which preserves the reverse H\"{o}lder constant of the potential.

The following notation is needed to state the proceeding interpolation theory. If $A_0$ and $A_1$ are Banach spaces embedded in a common Hausdorff topological vector space, and $\theta\in(0,1)$, then $[A_0,A_1]_{[\theta]}$ denotes the Banach space given by the complex interpolation functor (see, for instance, \cite[Chapter~4]{BerghLofstrom76}). We write $A\eqsim B$ when $A$ and $B$ are equivalent Banach spaces in the sense that $A=B$ as sets with equivalent norms $\|a\|_{A}\eqsim\|a\|_{B}$ for all $a\in A$. Finally, the dual space of bounded linear functionals on a Banach space $A$ is denoted by $A^\prime$.

\begin{lem}\label{lem:interpolation}
If $n\in\N$, $q\in(1,\infty)$ and $V\in B^q(\R^n)$ with $\llbracket V\rrbracket_{q} \leq \upsilon < \infty$, then there exists $\delta\in(0,1)$, depending only on $q$, such that the adapted Sobolev spaces on $\R^n$ satisfy
\[
[\Wv^{1,p_0},\Wv^{1,p_1}]_{[\theta]}\eqsim\Wv^{1,p}
\qquad\text{and}\qquad
[(\Wv^{1,p_0})^\prime,(\Wv^{1,p_1})^\prime]_{[\theta]}\eqsim(\Wv^{1,p})^\prime
\]
whenever $\theta\in(0,1)$, $p_0,\, p_1\in [2-\delta,2+\delta]$ and $\tfrac{1}{p}=\tfrac{1-\theta}{p_0}+\tfrac{\theta}{p_1}$, where the implicit constants depend only on $n$, $\theta$, $p_0$, $p_1$ and $\upsilon$.
\end{lem}

\begin{proof}
We adopt the strategy outlined in  \cite{badr09}. Indeed, using \cite[Theorem~1.2]{AB07}, we have
\[
\|(-\Delta+V+1)^{1/2}f\|_{p}
\eqsim \|\nabla f\|_p+\|(V+1)^{1/2}f\|_p
\eqsim \|f\|_{\Wv^{1,p}}
\]
for all $f\in C_c^\infty(\R^n)$ and $p\in(1,2q]$, where the implicit constants depend only on $n$, $p$ and $\upsilon$. Moreover, the fractional powers $(-\Delta+V+1)^{1/2}$ and $(-\Delta+V+1)^{-1/2}$, defined as unbounded operators in $L^2$ using (for example) the $\mathcal{F}$-Functional Calculus for the self-adjoint operator $-\Delta+V+1$ in $L^2$, have unique
bounded extensions $S_p\in\mathcal{L}(\mathcal{V}^{1,p},L^p)$ and $R_p\in\mathcal{L}(L^p,\mathcal{V}^{1,p})$, with operator norms depending only on $n$, $p$ and $\upsilon$, and $R_pS_p=I$ on $\mathcal{V}^{1,p}$ for each $p\in(1,2q]$. The extended operators are also compatible across different values of $p$, in the sense that $S_{p_0}=S_{p_1}$ on $\mathcal{V}^{1,p_0}\cap\mathcal{V}^{1,p_1}$ and $R_{p_0}=R_{p_1}$ on $L^{p_0}\cap L^{p_1}$ whenever $p_0,\, p_1\in (1,2q]$, since they are defined from a common dense subspace (such as $\test$).

We now fix  $\delta\in(0,\min\{1,2q-2\})$ and consider $p_0,\, p_1\in [2-\delta,2+\delta]$. The compatibility allows us to define linear operators
$S:\mathcal{V}^{1,p_0}+\mathcal{V}^{1,p_1}\to L^{p_0}+L^{p_1}$ and $
R:L^{p_0}+L^{p_1}\to\mathcal{V}^{1,p_0}+\mathcal{V}^{1,p_1}$ such that $S=S_{p_i}$ on $\mathcal{V}^{1,p_i}$ and $R=R_{p_i}$ on $L^{p_i}$ for each $i\in\{0,1\}$, hence $RS=I$ on  $\mathcal{V}^{1,p_0}+\mathcal{V}^{1,p_1}$. Using that $[L^{p_0},L^{p_1}]_{[\theta]}=L^{p}$ with equal norms (see, for instance,  \cite[Theorem~5.1.1]{BerghLofstrom76}) and the action of the complex interpolation functor on a retract between interpolation couples (see \cite[Theorem~6.4.2]{BerghLofstrom76} or \cite[Section~1.2.4]{Triebel78}), we obtain $[\Wv^{1,p_0},\Wv^{1,p_1}]_{[\theta]}=\Wv^{1,p}$ with equivalent norms, where the norm equivalence depends only on $n$, $\theta$, $p_0$, $p_1$ and $\upsilon$.

The results for the dual spaces then follow from
the duality theorem for complex interpolation (see \cite[Theorem 4.5.1 and Corollary 4.5.2]{BerghLofstrom76}). The required density properties are in Lemma~\ref{lem:testdense}, whilst the fact that $\Wv^{1,p}$ is reflexive when $p\in(1,\infty)$ can be seen by considering its natural embedding into $n+2$ copies of $L^p$, following the proof for Sobolev spaces in \cite[Section~7.5]{GT77}.
\end{proof}

These interpolation results imply  $L^p$-type resolvent bounds for $DB$ when $p$ is close to $2$. The proof below relies on Shneiberg's original stability theorem in~\cite{Sne74}, and the quantitative version obtained by Auscher--Bortz--Egert--Saari in~\cite[Theorem~A.1]{ABES2019} is needed to ensure the uniform dependency, with respect to $t\in\R\setminus\{0\}$, of implicit constants when a certain scaling $V\mapsto V_t$ is applied to the potential.

\begin{lem} \label{lem:Resolvent bounds}
If $n\geq 3$, $q\in(1,\infty)$ and $V\in B^q(\R^n)$ with $\llbracket V\rrbracket_{q} \leq \upsilon < \infty$, then there exists $\varepsilon\in(0,1)$, depending only on $n$, $\lambda$, $\Lambda$, $q$ and $\upsilon$, such that whenever $|p-2|<\varepsilon$ it holds that $\|(I + it DB)^{-1}f\|_p \lesssim \|f\|_p$ for all $f \in L^p (\R^n; \C^{n+2}) \cap L^2 (\R^n; \C^{n+2})$ and $t\in\R$, where the implicit constant depends only on $n$, $\lambda$, $\Lambda$, $p$, $q$ and $\upsilon$.
\end{lem}

\begin{proof}
There exists $\delta\in(0,1)$, depending only on $q$, such that the results in Lemma~\ref{lem:interpolation} hold. Now choose $p_0\in[2-\delta,2)$ and $p_1\in(2,2+\delta]$ such that $p_0',\, p_1'\in[2-\delta,2+\delta]$. These choices depend on $\delta$ and thus can be chosen depending only on $q$. Now suppose that $t\in\R\setminus\{0\}$ and define the potential $V_t(x)\coloneqq t^2V(tx)$ for all $x\in\R^n$. A change of variables shows that  $V_t \in B_q(\R^n)$ with $\llbracket V_t \rrbracket_q = \llbracket V \rrbracket_q$.
Therefore, the Sobolev spaces $\Wv^{1,p}_t(\R^n) \coloneqq  \{f\in L^p(\R^n) : \gradVt f \in L^p(\R^n;\C^{n+1})\}$ adapted to  $\gradVt \coloneqq (\nabla,|V_t|^{1/2})$ are defined as in \eqref{eq:def_nabla_nu} for all $p\in[1,\infty)$, as is the homogeneous space $\Wvo^{1,2}_t(\R^n)$. Moreover, by Lemma~\ref{lem:interpolation}, we have
\begin{equation}\label{eq:interp}
[\Wv_t^{1,p_0},\Wv_t^{1,p_1}]_{[\theta]}\eqsim\Wv_t^{1,p}
\qquad\text{and}\qquad
[(\Wv_t^{1,p_0'})^\prime,(\Wv_t^{1,p_1'})^\prime]_{[\theta]}\eqsim(\Wv_t^{1,p'})^\prime
\end{equation}
whenever $\theta\in(0,1)$ and $\tfrac{1}{p}=\tfrac{1-\theta}{p_0}+\tfrac{\theta}{p_1}$, where the implicit constants depend only on $n$, $\theta$, $q$ and~$\upsilon$. Importantly, these constants do not depend on $t$, since $\llbracket V_t \rrbracket_q = \llbracket V \rrbracket_q \leq \upsilon$.

To facilitate changing variables, define the operator $D_t$, associated with $\gradVt$ as in \eqref{eq:Ddef}, by
\[
    D_t=\begin{bmatrix}
    0 & -(\gradVt)^* \\
    -\gradVt & 0 
    \end{bmatrix}.
\]
Observe that for each $f \in L^2(\R^n; \C^{n+2})$, with $f_t(x)\coloneqq f(tx)$ for all $x\in\R^n$, we have $f\in\D(D)$ if and only if $f_t\in\D(D_t)$; in which case 
\begin{equation}\label{eq:cov}
    tDf(tx)=D_tf_t(x).
\end{equation}
This is immediate when $f\in\test(\R^n)$ and the general result follows from the density of $\test(\R^n)$ in $\D(\gradV)=\Wv^{1,2}(\R^n)$ (see Lemma~\ref{lem:testdense}) and the definition of $\D(\gradV^*)$. Also, recall from \eqref{eq:Ahatdef} that
\begin{equation}\label{eq:BA5}
\mathcal{A} 
=\begin{bmatrix}
\App & \Apv & 0 \\
\Avp & \Avv & 0 \\
0 & 0 & a
\end{bmatrix}
\qquad\text{and}\qquad
B=\widehat{\mathcal{A}}= 
\begin{bmatrix}
\App^{-1} & - \App^{-1} \Apv & 0 \\
\Avp \App^{-1} & \Avv - \Avp \App^{-1} \Apv & 0 \\
0 & 0 & a
\end{bmatrix}
\end{equation}
in order to define $A_t(x)\coloneqq A(tx)$,  $\mathcal{A}_t(x)\coloneqq \mathcal{A}(tx)$ and $B_t(x)\coloneqq B(tx)$
for all $x\in\R^n$.

Using \eqref{eq:cov}, and the density of $\mathcal{C}_c^{\infty}(\R^n)$ in $\Wvo^{1,2}_t(\R^n)$ as in \eqref{eq:bddellA}, we find that
\begin{equation}\label{eq:Atbddell}
\|\cA_t\|_\infty =
\|\cA\|_\infty \leq \Lambda
\quad\text{ and }\quad
\Re \left\langle \cA_t \begin{bmatrix}  f\\ \gradVt g \end{bmatrix} , \begin{bmatrix}  f\\ \gradVt g \end{bmatrix}\right\rangle \geq \lambda (\|f\|_2^2+\|\gradVt g\|_2^2)
\end{equation}
for all $f\in L^2(\R^n)$ and $g \in \Wvo^{1,2}_t(\R^n)$. Proposition~\ref{prop:bddellAhat} then shows that $B_t$ satisfies \eqref{eq:bound on R(D)} and \eqref{eq:elliptic on R(D)} on $\Ran(D_t)$ with $\|B_t\|_\infty\leq\max\{\Lambda^2,1\}/\lambda$ and $\kappa(B_t)\geq\lambda/\Lambda$. This allows us to apply the results from Section~\ref{sec:QE} to the first-order operator $D_tB_t$ with uniform control over the constants $\|B_t\|_\infty$ and $\kappa(B_t)$. This is useful because \eqref{eq:cov} implies the change of variables 
\begin{equation}\label{eq:covfinal}
(I + itD B)^{-1}f(tx)=(I + iD_{t} B_{t})^{-1}f_t(x)
\end{equation}
for all $f \in L^2 (\R^n; \C^{n+2})$ and $x\in\R^n$.

For each $p\in(1,\infty)$, define $L_{t,p} \colon \Wv_t^{1,p}\to (\Wv_t^{1,p'})'$ by
\[
    (L_{t,p} u) (\varphi) \coloneqq \langle \mathcal{A}_t \begin{bmatrix}
    u \\ i \gradVt u
    \end{bmatrix} , \begin{bmatrix}
    \varphi \\ i \gradVt \varphi
    \end{bmatrix} \rangle
\]
for all $u \in \Wv_t^{1,p}$ and $\varphi \in \Wv_t^{1,p'}$. Using \eqref{eq:Atbddell}, we have
\[
    |(L_{t,p} u) (\varphi) | 
    \leq \Lambda \|u\|_{\Wv_t^{1,p}} \|\varphi\|_{\Wv_t^{1,p'}}
    \quad\text{ and }\quad
    |(L_{t,2} v) (v)|
    \geq \Re (L_{t,2} v)(v)
    \geq \lambda \|v\|_{\Wv_t^{1,2}}^2,
\]
so $\|L_{t,p} u \|_{(\Wv_t^{1,p'})'} \leq \Lambda \|u\|_{\Wv_t^{1,p}}$ and $\|L_{t,2} v \|_{(\Wv_t^{1,2})'} \geq \lambda \|v\|_{\Wv_t^{1,2}}$, for all $u \in \Wv_t^{1,p}$, $\varphi \in \Wv_t^{1,p'}$, $v\in \Wv_t^{1,2}$. This shows that $L_{t,2}$ injective, so the Lax--Milgram Lemma (see, for instance, \cite[Lemma~1.3]{Ouhabaz2005}) implies that $L_{t,2}$ is bijective. Therefore, the quantitative version of Shneiberg's stability theorem from \cite[Theorem~A.1]{ABES2019} applied on the complex interpolation scales in \eqref{eq:interp} shows that there exists $\varepsilon\in(0,\delta)$,  depending only on $n$, $\lambda$, $\Lambda$, $q$ and $\upsilon$, such that whenever $|p-2|<\varepsilon$ it holds that $L_{t,p}$ is also a bijection with $(L_{t,p})^{-1}=(L_{t,2})^{-1}$ on $(\Wv_t^{1,p'})'\cap (\Wv_t^{1,2})'$ and
\[
\|L_{t,p} u \|_{[(\Wv_t^{1,p_0'})^\prime,(\Wv_t^{1,p_1'})^\prime]_{[\theta_p]}} \geq \tfrac{1}{5}\lambda\|u\|_{[\Wv_t^{1,p_0},\Wv_t^{1,p_1}]_{[\theta_p]}},
\text{ where } \tfrac{1}{p}=\tfrac{1-\theta_p}{p_0}+\tfrac{\theta_p}{p_1},
\]
for all $u\in\Wv_t^{1,p}$. Moreover, the value of $\theta_p\in (0,1)$ depends ultimately only on $p$ and $q$, so the equivalence in \eqref{eq:interp} implies further that
\begin{equation}\label{eq:Ltbdd}
\|L_{t,p} u \|_{(\Wv_t^{1,p'})'} \gtrsim \|u\|_{\Wv_t^{1,p}}
\end{equation}
for all $u\in\Wv_t^{1,p}$, where the implicit constant depends only on $n$, $\lambda$, $p$, $q$ and $\upsilon$. Importantly, it is here that we rely on $\llbracket V_t \rrbracket_q = \llbracket V \rrbracket_q \leq \upsilon$, since this ensures that the implicit constants associated with the complex interpolation do not depend on $t$, as explained beneath~\eqref{eq:interp}.

Now suppose that $|p-2|<\varepsilon$ and $f \in L^p (\R^n; \C^{n+2}) \cap L^2 (\R^n; \C^{n+2})$. Let $\tilde{f}=(I + iD_t B_t)^{-1}f$,
\[
	g = \begin{bmatrix} (B_t f)_{\perp} \\ f_{\parallel} \\ f_\oo \end{bmatrix}
	\text{ and }
	\tilde{g} = \begin{bmatrix} (B_t \tilde{f})_{\perp} \\ \tilde{f}_{\parallel} \\ \tilde{f}_\oo \end{bmatrix},
	\text{ so then }  
    f = \begin{bmatrix} (\mathcal{A}_t g)_{\perp} \\ g_{\parallel} \\ g_\oo \end{bmatrix}
    \text{ and }
    \tilde{f} = \begin{bmatrix} (\mathcal{A}_t \tilde{g})_{\perp} \\ \tilde{g}_{\parallel} \\ \tilde{g}_\oo \end{bmatrix}.
\]
If $\psi \in \mathcal{C}_c^{\infty} (\R^n ; \C^{n+2})$, then 
\begin{equation} \label{eqn:OD integral equation}
     \langle f , \psi \rangle
     = \langle  (I + i D_tB_t) \tilde{f} , \psi \rangle
     = \langle  \tilde{f} , \psi  \rangle + \langle  B_t \tilde{f} , -i D_t \psi \rangle.
\end{equation}
In particular, if $\varphi \in \mathcal{C}_c^{\infty} (\R^n)$, then setting $\psi = (\varphi,0)$ in \eqref{eqn:OD integral equation}, we obtain
\begin{equation} \label{eqn:OD perp}
    \begin{split}
        \langle  (\mathcal{A}_t g)_{\perp}, \varphi \rangle
        & = \langle  \tilde{f}_{\perp}, \varphi \rangle + \langle  (B_t \tilde{f} )_{\parallel} , i \nabla\varphi \rangle + \langle  ( B_t \tilde{f})_\oo, i |V_t|^{1/2} \varphi \rangle \\ 
	    & = \langle (\mathcal{A}_t \tilde{g} )_{\perp}, \varphi \rangle + \langle  (\mathcal{A}_t \tilde{g} )_{\parallel} , i \nabla\varphi \rangle + \langle  ( \mathcal{A}_t \tilde{g} )_\oo ,i |V_t|^{1/2} \varphi\rangle,
\end{split}
\end{equation}
since $(B_t \tilde{f})_{\parallel} = (\mathcal{A}_t \tilde{g} )_{\parallel}$ and $(B_t \tilde{f})_\oo = (\mathcal{A}_t \tilde{g} )_\oo$. Moreover, if $\varphi \in \mathcal{C}_c^{\infty} (\R^n ; \C^{n+1})$, then setting $\psi = (0,\varphi)$ in \eqref{eqn:OD integral equation}, we obtain
\[
    \langle  
    \begin{bmatrix}g_{\parallel}\\g_\oo\end{bmatrix} , \varphi \rangle
    = \langle  \begin{bmatrix}\tilde{g}_{\parallel} \\ \tilde{g}_\oo\end{bmatrix} , \varphi \rangle + \langle  (B_t \tilde{f})_{\perp} , i (\gradVt)^* \varphi \rangle
    = \langle  \begin{bmatrix}\tilde{g}_{\parallel} \\ \tilde{g}_\oo\end{bmatrix} , \varphi \rangle + \langle  -i \gradVt \tilde{g}_{\perp} , \varphi \rangle,
\]
since $\tilde{f}\in\D(D_t B_t)$, so $B_t\tilde{f}\in\D(D_t)$ with  $\tilde{g}_\perp=(B_t\tilde{f})_\perp\in \D(\gradVt)=\Wv_t^{1,2}(\R^n)$. It follows that
\begin{equation} \label{eqn:OD parallel mu}
i \gradVt \tilde{g}_{\perp} = \begin{bmatrix}
\tilde{g}_{\parallel}-g_{\parallel}\\
\tilde{g}_\oo-g_\oo
\end{bmatrix}.
\end{equation}

Next, define $F_t\colon L^p (\R^n ; \C^{n+2}) \to (\Wv_t^{1,p'} (\R^n))'$, with $\|F_t u \|_{(\Wv_t^{1,p'})'} \leq \Lambda \|u\|_p$, by
\[
    (F_t u) (\varphi) \coloneqq \langle \begin{bmatrix} (A_{\perp \perp})_t & 0 & 0 \\ 0 &-(A_{\parallel \parallel})_t & 0 \\ 0 & 0 &-a_t \end{bmatrix}
    \begin{bmatrix} u_{\perp} \\  u_{\parallel} \\ u_\oo \end{bmatrix}, \begin{bmatrix} \varphi \\ i \gradx \varphi \\ i |V_t|^{1/2} \varphi \end{bmatrix} \rangle
\]
for all $u \in L^p (\R^n ; \C^{n+2})$ and $\varphi \in \Wv_t^{1,p'} (\R^n)$. Applying \eqref{eqn:OD parallel mu} followed by \eqref{eqn:OD perp} we find that
\begin{align*}
    (L_{t,2} \tilde{g}_{\perp}) (\varphi)
    & = \langle \mathcal{A}_t \begin{bmatrix} \tilde{g}_{\perp} \\ i \gradVt \tilde{g}_{\perp} \end{bmatrix} , \begin{bmatrix} \varphi \\ i \gradVt \varphi \end{bmatrix} \rangle \\
    & = \langle \mathcal{A}_t \begin{bmatrix} \tilde{g}_{\perp} \\  \tilde{g}_{\parallel} - g_{\parallel}\\ \tilde{g}_{\oo} - g_{\oo} \end{bmatrix} , \begin{bmatrix} \varphi \\ i \nabla \varphi \\ i |V_t|^{1/2} \varphi \end{bmatrix} \rangle\\
    & = \langle \begin{bmatrix} (\mathcal{A}_tg)_{\perp} \\  0 \\ 0 \end{bmatrix} - \mathcal{A}_t \begin{bmatrix} 0 \\  g_{\parallel}\\ g_{\oo} \end{bmatrix} , \begin{bmatrix} \varphi \\ i \nabla \varphi \\ i |V_t|^{1/2} \varphi \end{bmatrix} \rangle \\
    &= F_tg(\varphi)
\end{align*}
for all $\varphi \in \test(\R^n)$, hence $L_{t,2} \tilde{g}_{\perp}=F_t g$ by density (see Lemma~\ref{lem:testdense}). Now observe that $F_tg$ is in $(\Wv_t^{1,p'}(\R^n))'\cap (\Wv_t^{1,2}(\R^n))'$, since $g\in L^p(\R^n ; \C^{n+2})\cap L^2(\R^n ; \C^{n+2})$, which allows us to write  $\tilde{g}_{\perp} = (L_{t,p})^{-1}(F_t g)=(L_{t,2})^{-1}(F_t g)$ to deduce that $\tilde{g}_{\perp}$ is in $\Wv_t^{1,p}(\R^n)\cap \Wv_t^{1,2}(\R^n)$ with
\[
\|\tilde{g}_{\perp}\|_{\Wv_t^{1,p}}
=\|(L_{t,p})^{-1}(F_t g)\|_{\Wv_t^{1,p}}
\lesssim \|F_t g\|_{(\Wv_t^{1,p'})'}
\leq \Lambda \|g\|_p,
\]
where we used \eqref{eq:Ltbdd} to obtain the first estimate. We combine this with \eqref{eqn:OD parallel mu} to obtain
\[
    \|\tilde{f}\|_p
    \lesssim \|\tilde{g}\|_p 
    \leq \|\tilde{g}_{\perp}\|_p + \|\begin{bmatrix}
    \tilde{g}_{\parallel}-g_{\parallel}\\
    \tilde{g}_\oo-g_\oo
    \end{bmatrix}\|_p + \|\begin{bmatrix}
    g_{\parallel}\\
    g_\oo
    \end{bmatrix}\|_p
    = \|\tilde{g}_{\perp}\|_{\Wv_t^{1,p}} + \|\begin{bmatrix}
    g_{\parallel}\\
    g_\oo
    \end{bmatrix}\|_p
    \lesssim \|g\|_p \lesssim \|f\|_p, 
\]
thus $\|(I + iD_tB_t)^{-1} f\|_p \leq C \|f\|_p$ for some $C\in(0,\infty)$ depending only on $n$, $\lambda$, $\Lambda$, $p$, $q$ and $\upsilon$. This completes the proof, since $f_t \in L^p(\R^n ; \C^{n+2})\cap L^2(\R^n ; \C^{n+2})$, and the change of variables in \eqref{eq:covfinal} finally implies that
\[
\|(I + itD B)^{-1}f\|_p
= t^n\|(I + iD_{t} B_{t})^{-1}f_t\|_p
\leq C t^n\|f_t\|_p
= \|f\|_p,
\]
as required.
\end{proof}

A final interpolation now combines the above resolvent bounds with the off-diagonal bounds in Proposition~\ref{prop:Off diagonal estimates} to deduce $L^p$-type off-diagonal estimates for $DB$ when $p$ is close to $2$.

\begin{prop} \label{prop:Lq off diagonal estimates}
If $n\geq 3$, $q\in(1,\infty)$ and $V\in B^q(\R^n)$ with $\llbracket V\rrbracket_{q} \leq \upsilon < \infty$, then there exists $\delta\in(0,1)$, depending only on $n$, $\lambda$, $\Lambda$, $q$ and $\upsilon$, such that whenever $|p-2|<\delta$ and $M\in(0,\infty)$ it holds that
\[
    \|\mathds{1}_E (I+itDB)^{-1} \mathds{1}_F f\|_p \lesssim \left( 1 + \frac{\dist (E,F)}{t} \right)^{-M} \|\mathds{1}_F f\|_p,
\]
for all $f \in L^p (\R^n; \C^{n+2}) \cap L^2 (\R^n; \C^{n+2})$, $t\in\R$ and measurable sets $E,\, F \subseteq \R^n$, where the implicit constant depends only on $n$, $\lambda$, $\Lambda$, $p$, $q$, $\upsilon$ and $M$.
\end{prop}

\begin{proof}
Using Lemma~\ref{lem:Resolvent bounds}, there exists $\varepsilon\in(0,1)$ such that whenever $|p-2|<\varepsilon$ it holds that
\[
    \|\mathds{1}_E (I+itDB)^{-1} \mathds{1}_F f\|_p \lesssim \|f\|_p,
\]
where the implicit constant depends only on $n$, $\lambda$, $\Lambda$, $p$, $q$ and $\upsilon$. Meanwhile, if $N\in\N$, then the off-diagonal estimates from Proposition~\ref{prop:Off diagonal estimates} show that 
\[
    \|\mathds{1}_E (I+itDB)^{-1} \mathds{1}_F f\|_2 \lesssim \left( 1 + \frac{\dist (E,F)}{t} \right)^{-N} \|f\|_2,
\]
where the implicit constant depends only on $\lambda$, $\Lambda$ and $N$. The result follows for any $\delta\in(0,\varepsilon)$ from the Riesz--Thorin interpolation theorem by choosing $N$ large enough, depending on $M$.
\end{proof}


\subsection{Non-Tangential Estimates} \label{Sec: Non-Tangential Maximal Estimate}


We now combine the weak reverse H\"older estimates for solutions with the $L^p$-type off-diagonal estimates to prove the non-tangential maximal function estimates and Fatou-type result for semigroup solutions of~\eqref{eqn:First-Order Equation} when $V\in B^{q}(\R^n)$ with $q\geq\max\{\tfrac{n}{2},2\}$. The operator $D$ is defined as in~\eqref{eq:Ddef} and $B=\widehat{\cA}_{A,a,V}$ as in~\eqref{eq:Ahatdef} with constants $0<\lambda\leq\Lambda<\infty$. It is helpful to first isolate the following elementary bounds.

\begin{lem} \label{lem:Initial bounds for NT}
If $n\in\N$ and $F \in L^2_{\Loc} (\R^{n+1}_+; \C^{n+2})$, then
\[
    \sup_{t>0} \dashint_t^{2t} \|F (s) \|_2^2 \d s \lesssim \|{N}_*F\|_2^2 \lesssim \int_0^{\infty} \|F (s) \|_2^2 \frac{\d s}{s},
\]
where the implicit constants depend only on $n$.
\end{lem}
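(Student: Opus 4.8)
The plan is to prove the two inequalities separately, both by reduction to the definition of the non-tangential maximal function $\widetilde{N}_*$ via its Whitney-box averages $W(t,x)=[t,2t]\times Q(t,x)$. For the lower bound $\sup_{t>0}\dashint_t^{2t}\|F(s)\|_2^2\,\d s \lesssim \|\widetilde{N}_* F\|_2^2$, the key observation is that for fixed $t>0$ the cubes $\{Q(t,x):x\in\R^n\}$ of side-length $t$ tile $\R^n$ (up to a bounded overlap $C_n$), so that
\[
    \dashint_t^{2t}\int_{\R^n}|F(s,y)|^2\,\d y\,\d s
    = \dashint_t^{2t}\sum_{Q\in\Delta_t}\int_Q |F(s,y)|^2\,\d y\,\d s
    \lesssim \sum_{Q\in\Delta_t}|Q|\left(\dashiint_{[t,2t]\times Q}|F(s,y)|^2\,\d y\,\d s\right).
\]
For each $Q\in\Delta_t$ and each $x\in Q$ one has $Q\subseteq Q(t,x)$ with $|Q(t,x)|\eqsim|Q|$, so the inner Whitney average over $[t,2t]\times Q$ is controlled by $(\widetilde{N}_* F(x))^2$; averaging this bound over $x\in Q$ and summing gives $\lesssim \int_{\R^n}(\widetilde{N}_* F(x))^2\,\d x = \|\widetilde{N}_* F\|_2^2$, uniformly in $t$, whence the supremum is bounded.

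For the upper bound $\|\widetilde{N}_* F\|_2^2 \lesssim \int_0^\infty \|F(s)\|_2^2\,\tfrac{\d s}{s}$, I would first pass from the continuous supremum over $t>0$ in the definition of $\widetilde{N}_*$ to a supremum over dyadic scales $t=2^k$, $k\in\Z$, at the cost of an absolute constant: any Whitney box $W(t,x)$ is contained in a bounded union of dyadic Whitney boxes $W(2^k,y)$. Then I would estimate, for each $x$,
\[
    \left(\widetilde{N}_* F(x)\right)^2 \lesssim \sum_{k\in\Z} \dashiint_{W(2^k,x)}|F(s,y)|^2\,\d y\,\d s
\]
is \emph{not} directly true (the sup is not the sum), so instead I would integrate in $x$ first:
\[
    \|\widetilde{N}_* F\|_2^2 = \int_{\R^n}\sup_{k}\dashiint_{W(2^k,x)}|F|^2 \leq \int_{\R^n}\sum_{k\in\Z}\dashiint_{W(2^k,x)}|F(s,y)|^2\,\d y\,\d s\,\d x,
\]
and then apply Tonelli. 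For fixed $k$, $\int_{\R^n}\dashiint_{W(2^k,x)}|F(s,y)|^2\,\d y\,\d s\,\d x \eqsim 2^{-k}\int_{2^k}^{2^{k+1}}\int_{\R^n}|F(s,y)|^2\,\d y\,\d s$ since $\dashiint_{W(2^k,x)}$ averages over a set of measure $\eqsim 2^{k}\cdot 2^{kn}$ and integrating in $x$ reconstitutes $\int_{\R^n}$ with overlap $\eqsim 1$; the $2^{-k}$ combines with $\d s$ on the interval $[2^k,2^{k+1}]$ to give $\eqsim \int_{2^k}^{2^{k+1}}\|F(s)\|_2^2\,\tfrac{\d s}{s}$. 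Summing over $k\in\Z$ telescopes the dyadic intervals to $(0,\infty)$ and yields $\lesssim \int_0^\infty\|F(s)\|_2^2\,\tfrac{\d s}{s}$.

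The main obstacle is the standard but slightly delicate bookkeeping in the upper bound: one must justify exchanging $\sup_k$ for $\sum_k$ only \emph{after} integrating in $x$ (pointwise the sup could be much smaller than the sum, but in $L^1_x$ the sum is an acceptable majorant), and one must track the overlap constants when writing $\int_{\R^n}\dashiint_{W(2^k,x)}(\cdot)\,\d x$ as a constant multiple of $2^{-k}\int_{2^k}^{2^{k+1}}\int_{\R^n}(\cdot)$, using Fubini and the fact that $\{Q(2^k,x):x\in\R^n\}$ covers $\R^n$ with bounded overlap as $x$ ranges over a fixed cube of side $2^k$. Everything else is elementary; no results beyond the definitions of $\widetilde{N}_*$, $W(t,x)$, and the dyadic grid $\Delta_t$ are needed, and in particular this lemma does not use the potential $V$, ellipticity, or any PDE.
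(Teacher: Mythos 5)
Your upper bound is correct and is essentially the paper's argument in dyadic clothing: the paper also dominates the supremum over $t$ by the integral over all scales pointwise in $x$ (which is legitimate because all the quantities are nonnegative, so $\sup\leq\sum$, or in continuous form $\sup_t\int_t^{2t}\leq\int_0^\infty$ — your worry about exchanging sup and sum is a non-issue) and then applies Tonelli, using that $\mathds{1}_{Q(t,x)}(y)$ is symmetric in $x$ and $y$ so that integrating the Whitney average in $x$ reconstitutes $\|F(s)\|_2^2$ with the correct normalisation $t^{-n}\cdot t^n=1$. Your bookkeeping $\int_{\R^n}\dashiint_{W(2^k,x)}|F|^2\,\d x\eqsim\int_{2^k}^{2^{k+1}}\|F(s)\|_2^2\,\tfrac{\d s}{s}$ is exactly right.

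The lower bound, however, contains a false step. You claim that for $Q\in\Delta_t$ and $x\in Q$ one has $Q\subseteq Q(t,x)$. This cannot hold: $Q(t,x)$ is the cube of side-length $t$ centred at $x$, while $Q$ has side-length $l(Q)\in[t,2t)$, so $|Q(t,x)|\leq|Q|$ and $Q(t,x)$ never contains $Q$ (if $x$ sits near a corner of $Q$, the overlap can be as small as $2^{-n}|Q(t,x)|$). Consequently the average over $[t,2t]\times Q$ is \emph{not} pointwise dominated by $(\widetilde{N}_*F(x))^2$ for every $x\in Q$, and the chain of inequalities breaks at that point. The repair is to not seek a pointwise domination at all but to integrate first: by Tonelli,
\[
\int_{\R^n}\dashiint_{W(t,x)}|F(s,y)|^2\,\d y\,\d s\,\d x
=\frac{1}{t^{n+1}}\int_t^{2t}\int_{\R^n}|F(s,y)|^2\,\bigl|\{x:y\in Q(t,x)\}\bigr|\,\d y\,\d s
=\dashint_t^{2t}\|F(s)\|_2^2\,\d s,
\]
since $\{x:y\in Q(t,x)\}=Q(t,y)$ has measure $t^n$; the left-hand side is trivially at most $\|\widetilde{N}_*F\|_2^2$, and taking the supremum over $t$ finishes it. This Tonelli identity is precisely how the paper argues (with balls in place of cubes), and it is the same mechanism you already use correctly in your upper bound.
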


\begin{proof}
The definition of the non-tangential maximal function in \eqref{eq:N*def} shows that
\[
    |{N}_* F (x)|^2 = \sup_{t>0} \dashint_t^{2t} \dashint_{Q(x,t)} |F(s,y)|^2 \d y \d s \lesssim \int_0^{\infty} \int_{\R^n} \mathds{1}_{Q(x,s)} (y) |F(s,y)|^2 \frac{\d y \d s}{s^{n+1}},
\]
so Tonelli's Theorem implies that
\begin{align*}
    \|{N}_* F \|_2^2  \lesssim \int_0^{\infty} \int_{\R^n}   \left(\frac{1}{s^n}\int_{\R^n}\mathds{1}_{Q(y,s)}(x)\d x\right) |F(s,y)|^2 \d y \frac{\d s}{s}
    \eqsim \int_0^{\infty} \|F (s)\|_2^2 \frac{\d s}{s},
\end{align*}
where the implicit constants depend only on $n$. Meanwhile, for each $t>0$, we have
\[
    |{N}_* F (x)|^2
    \geq \int_{t}^{2 t} \dashint_{Q(x,t)} |F(s,y)|^2 \frac{\d y \d s}{s}
    \eqsim \int_{t}^{2 t} \frac{1}{t^n} \int_{\R^n} \mathds{1}_{Q(x,t)}(y) |F(s,y)|^2 \frac{\d y \d s}{s},
\]
so Tonelli's Theorem implies that
\[
    \|{N}_* F \|_2^2
    \gtrsim \int_{t}^{2 t} \int_{\R^n} \left(\frac{1}{t^n}\int_{\R^n}\mathds{1}_{Q(y,t)}(x)\d x\right) |F(s,y)|^2 \frac{\d y \d s}{s}
    \eqsim \dashint_{t}^{2t} \|F (s) \|_2^2 \frac{\d s}{s},
\]
where the implicit constants depend only on $n$, and the result follows.
\end{proof}

The quadratic estimates in Theorem~\ref{thm:QE} are now used to obtain the non-tangential maximal function bounds following the approach of
Auscher--Axelsson--Hofmann in~\cite[Proposition~2.56]{AAH08}.

\begin{thm} \label{thm:First-Order NT Control}
Suppose that $n\geq 3$, $q\geq\max\{\tfrac{n}{2},2\}$ and $V \in B^{q}(\R^n)$ with $\llbracket V\rrbracket_{q}\leq \upsilon<\infty$. If $f \in \Hil_{DB}^+$ and $F(t) = e^{-t[DB]} f$,
then
\[
    \int_0^{\infty} \|t \partial_t F \|_2^2 \frac{\d t}{t}
    \eqsim \|f\|_2^2
    \eqsim \|{N}_* F\|_2^2,
\]
where the implicit constants depend only on $n$, $\lambda$, $\Lambda$ and $\upsilon$.
\end{thm}

\begin{proof}
We choose $\mu\in(\omega(B),\tfrac{\pi}{2})$, depending only on $\lambda$ and $\Lambda$ by Proposition~\ref{prop:bddellAhat}, so then the semigroup properties in Proposition~\ref{prop:SGfromFC} and the quadratic estimates from Theorem~\ref{thm:QE} and Theorem~\ref{thm:Quadratic estimates and functional calculus} imply that
\begin{align*}
    \int_0^{\infty} \|t \partial_t F \|_2^2 \frac{\d t}{t}
    = \int_0^{\infty} \| t [DB] e^{-t[DB]} f \|_2^2 \frac{\d t}{t}
    = \int_0^{\infty} \| \psi(t DB) f \|_2^2 \frac{\d t}{t}
    \eqsim \|f\|_2^2,
\end{align*}
where $\psi$ in $\Psi(S_{\mu}^o)$ is given by $\psi(z) \coloneqq  [z]e^{-[z]}$ for all $z\in S_{\mu}^o$, and the implicit constants depend only on $n$, $\lambda$, $\Lambda$ and $\upsilon$, since $B=\widehat{A}_{A,a,V}$ (and recall Proposition~\ref{prop:bddellAhat}).

It remains to prove that $\|{N}_* F\|_2^2 \eqsim \|f\|_2^2$. Lemma \ref{lem:Initial bounds for NT} and Proposition \ref{Prop:Semigroup solves first order equation} show that
\[
    \|{N}_* F\|_2^2 \gtrsim \sup_{t>0} \dashint_t^{2t} \| F (s) \|_2^2 \d s \geq \lim_{t \to 0^+} \dashint_t^{2t} \| F (s) \|_2^2 \d s = \|f\|_2^2,
\]
where the implicit constant depends only on $n$. 

To prove the reverse estimate, consider a Whitney cube $W \coloneqq [t , 2t] \times Q(x,t)\subset\R^{n+1}_+$, where $(t,x)\in\R^{n+1}_+$. Proposition~\ref{Prop:Semigroup solves first order equation} shows that $F$ is in $L^2_{\Loc}(\R_+,\Hil)$ and $\p F+DBF=0$ in $\R_+$, so Proposition~\ref{prop:Reduction to first-order} implies that there exists $u$  in $\Wv^{1,2}_{\Loc}(\R^{n+1}_+)$ such that $F = \gradAV u$ and $\HAaV u = 0$ in $\R^{n+1}_+$. Next, since $V\in B^{2}(\R^n)$, we choose $p\in(2-\delta,2)$, depending only on $n$, $\lambda$, $\Lambda$ and $\upsilon$, such that the off-diagonal estimates in Proposition~\ref{prop:Lq off diagonal estimates} hold.

After applying the reverse H\"{o}lder estimate from Proposition~\ref{prop:Reverse Holder of gradients of solutions} in the case $d=n+1$ and on the cube $W$, since $2 W = [t/2,5t/2]\times Q(x,2t) \Subset \R_+^{n+1}$, and recalling \eqref{eq:Ahatinv}, we obtain
\begin{align*}
   \dashiint_{W} |F|^2
     = \dashiint_{W} |\gradAV u|^2 
     \lesssim \dashiint_{W} |\gradIV u|^2  
     \lesssim_p \left( \dashiint_{2 W} |\gradIV u|^{p} \right)^{\frac{2}{p}} 
     \lesssim \left(\dashiint_{2 W} |F |^{p} \right)^{\frac{2}{p}}.
\end{align*}
Note that this application of Proposition~\ref{prop:Reverse Holder of gradients of solutions} is justified because the bound and ellipticity in \eqref{eq:bddellA} on $\R^n$ imply the bound and G{\aa}rding-type ellipticity in \eqref{eq:bddellcoeffRH} for the $t$-independent extensions of $(A,a)$ on $\R^{n+1}$, whilst the $t$-independent extension of $V$ is in $B^{q}(\R^{n+1})$ when $V\in B^{q}(\R^n)$. Now, since $p<2$, we have
\begin{align*}
\left(\dashiint_{2 W} |F|^{p}\right)^{\frac{2}{p}}
    & \lesssim \dashiint_{2 W} | \psi(sDB) f (y)|^{2} \d y \d s  + \left(\dashiint_{2 W} | R_s f (y)|^{p} \d y \d s \right)^{\frac{2}{p}},
\end{align*}
where $R_s \coloneqq  (I + is DB)^{-1}$ and now $\psi(sDB)\coloneqq e^{-s [DB]} - R_s$ can also be defined by the functional calculus using $\psi$ in $\Psi(S_{\mu}^o)$ given by $\psi (z) \coloneqq e^{-[z]} - (1 + i z)^{-1}$ for all $z\in S_{\mu}^o$, hence
\begin{align*}
    \| {N}_* F \|_2^2
    & \lesssim \| {N}_*(\psi(sDB)f)\|_2^2 + \bigg\|\sup_{t>0} \dashiint_{2 W(x,t)} |R_s f (y)|^{p} \d y \d s\bigg\|_{\frac{2}{p}}^{\frac{2}{p}}
    =: I_1 + I_2.
\end{align*}
Lemma \ref{lem:Initial bounds for NT} and the quadratic estimates from Theorem~\ref{thm:QE} and Theorem~\ref{thm:Quadratic estimates and functional calculus} imply that
\[
    I_1 = \| {N}_*(\psi(sDB)f)\|_2^2
    \lesssim \int_0^{\infty} \| \psi (s DB) f \|_2^2 \frac{\d s}{s}
    \lesssim \|f\|_2^2.
\]

To estimate $I_2$, observe that
\[
    \dashiint_{2W(x,t)} |R_s f (y) |^{p} \d y \d s
    \eqsim t^{-n}\dashint_{\frac{t}{2}}^{\frac{5t}{2}} \|\mathds{1}_{Q(x,2t)}R_s f\|_p^p \d s 
\]
If $s\in[t/2,5t/2]$ and $M>np$, then the off-diagonal estimates in Proposition~\ref{prop:Lq off diagonal estimates} show that
\begin{align}\begin{split}\label{eq:NTOD}
    \| \mathds{1}_{Q(x,2t)} R_s f \|_p 
    & \leq \| \mathds{1}_{Q(x,2t)} R_s \mathds{1}_{Q(x,4t)} f \|_p  + \sum_{j=1}^{\infty} \| \mathds{1}_{Q(x,2t)} R_s \mathds{1}_{Q(x,2^{j+2}t)\setminus Q(x,2^{j+1}t)} f \|_p \\
    & \lesssim \sum_{j=0}^{\infty} \left( 1 + \frac{2^{j}t}{s} \right)^{-M} \| \mathds{1}_{2^j Q(x,4t)} f \|_p \\
    & \lesssim \sum_{j=0}^{\infty} 2^{-j(M-np)} t^{\frac{n}{p}}  \bigg(\dashint_{2^{j+2} Q(x,t)} |f|^p\bigg)^{\frac{1}{p}} \\
    & \lesssim t^{\frac{n}{p}} [M_*(|f|^p)(x)]^{\frac{1}{p}},
\end{split}\end{align}
where $M_*$ is the Hardy--Littlewood maximal operator, hence
\begin{align*}
\dashiint_{2W(x,t)} |R_s f (y) |^{p} \d y \d s
\lesssim \dashint_{t/2}^{5t/2} M_* (|f|^p) (x) \d s
= M_* (|f|^p) (x).
\end{align*}
Therefore, since $\frac{2}{p} > 1$, the bounds for the Hardy--Littlewood maximal operator imply that
\[
I_2 = \bigg\|\sup_{t>0} \dashiint_{2 W(x,t)} |R_s f (y)|^{p} \d y \d s\bigg\|_{\frac{2}{p}}^{\frac{2}{p}}
    \lesssim \|M_* (|f|^p)\|_{\frac{2}{p}}^{\frac{2}{p}} 
    \lesssim \|f\|_2^2,
\]
hence $\|{N}_* (F) \|_2 \lesssim I_1 + I_2 \lesssim \|f\|_2$, where an inspection of the preceding estimates shows that the implicit constants depend only on $n$, $\lambda$, $\Lambda$ and $\upsilon$, as required.
\end{proof}

The quadratic estimates in Theorem~\ref{thm:QE} also imply a Fatou-type result following the approach of Auscher--Ros\'{e}n--Rule in~\cite[Theorem~4.9]{ARR15}.

\begin{prop} \label{prop:Convergence on Whitney Averages}
Suppose that $n\geq 3$, $q\geq\max\{\tfrac{n}{2},2\}$ and $V \in B^{q}(\R^n)$ with $\llbracket V\rrbracket_{q}\leq \upsilon<\infty$. If $f \in L^2 (\R^n ; \C^{n+2} )$ and $F(t) = e^{-t [DB]} f$, then
\[
    \lim_{t \to 0^+} \dashiint_{W(x,t)} | F(s,y) - f(x)|^2 \d y \d s = 0
\]
for almost every $x \in \R^n$.
\end{prop}

\begin{proof} 
If $f\in \Nul(DB)$, then $e^{-s [DB]} f = f$ and the result follows by the Lebesgue differentiation theorem, so by \eqref{eq:Hodge} it remains to prove the result on $\Hil=\overline{\Ran(DB)}$. We follow \cite[Lemma~8.12]{AEN16-1} to reduce to a dense subspace $\mathcal{E}$ of $\Hil$. First, since $V\in B^{2}(\R^n)$, there exists $\delta\in(0,1)$ such that the off-diagonal estimates in Proposition~\ref{prop:Lq off diagonal estimates} hold. We choose $p\in(2,2+\delta)$ and claim that
$
\mathcal{E}\coloneqq \{ u \in \Ran(DB) \cap \D(DB) : DB u \in L^p (\R^n ; \C^{n+2}) \}
$
is dense in $\Hil$. To see this, define $T_m \coloneqq im R_{\frac{1}{m}} DB R_m$ for each $m\in\N$, where $R_m \coloneqq (I + i m DB)^{-1}$.

The following two facts imply that $\lim_{m\to\infty} \| (I - T_m) u \|_2 = 0$ for all $u\in \Hil$:

(i) If $u \in \D(DB)$, then $ \| (I - R_{\frac{1}{m}}) u \|_2 = \| \tfrac{i}{m} DB R_{\frac{1}{m}} u \|_2 \lesssim \tfrac{1}{m} \| DB u \|_2$, so a limit argument shows that $\lim_{m\to\infty} \|(I - R_{\frac{1}{m}}) u\|=0$ for all $u\in\overline{\D(DB)}=\Hil$, since $DB$ is densely defined.

(ii) If $u = DB v$ for some $v \in \D (DB)$, then
\[
    \| (I - im DB R_m) u \|_2 = \| R_m u \|_2 = \tfrac{1}{m} \| im DB R_m v\|_2 = \tfrac{1}{m} \| (I - R_m) v \|_2 \lesssim \tfrac{1}{m} \|v\|_2,
\]
so another limit argument shows that $\lim_{m\to\infty} \|(I - im DB R_m) u\|=0$ for all $u\in\overline{\Ran(DB)}=\Hil$.

Meanwhile, for each $u \in \Hil$, there exists $u_m$ in $\test(\R^n;\C^{n+2})$ such that $\lim_{m\to\infty} \|u-u_m\|_2=0$, whilst the $L^2$-resolvent bounds imply that $\sup_{m\in\N}\|T_m\|_{\mathcal{L}(\Hil)}<\infty$, hence
\[
    \|u-T_m u_m \|_2
    \lesssim \|(I-T_m)u\|_2 + \|u - u_m\|_2
\]
and $\lim_{m\to\infty}\|u-T_m u_m \|_2=0$. Observe that $T_m u_m$ is in $\Ran(DB)\cap\D(DB)$, whilst the resolvent bounds in Proposition~\ref{prop:Lq off diagonal estimates} show that
\[
    \| DB (T_m u_m) \|_p =  \|DB R_{\frac{1}{m}} (I - R_m) u_m \|_p = m \| (R_{\frac{1}{m}} - I) (I - R_m)  u_m \|_p \lesssim m \|u_m\|_p < \infty,
\]
so $T_m u_m \in \mathcal{E}$ and the claimed density holds.

We now prove the result when $f \in \mathcal{E}$ by writing
\begin{align*}
     &\dashiint_{W(x,t)} | F(s,y) - f(x)|^2 \d y \d s \\
     &\ \lesssim \dashiint_{W(x,t)} | \psi (s DB)|^2 \d y \d s 
     +\dashiint_{W(x,t)} | (R_s - I)f(y)|^2 \d y \d s 
     +\dashiint_{W(x,t)} | f(y) - f(x)|^2 \d y \d s \\
     &\ =: I_1(t,x) + I_2(t,x) + I_3(t,x),
\end{align*}
where $\psi$ in $\Psi(S_{\mu}^o)$ is given by $\psi (z) \coloneqq e^{-[z]} - (1 + i z)^{-1}$ for all $z\in S_{\mu}^o$ and $\mu\in(\omega(B),\tfrac{\pi}{2})$. The Lebesgue differentiation theorem implies that $\lim_{t \to 0^+} I_3(t,x)$ for almost every $x \in \R^n$.

To estimate $I_1(t,x)$, observe that
\[
    I_1(t,x) \leq \iint_{\R^{n+1}_+} \mathds{1}_{|y-x|<2s}(y) |\psi (s DB) f(y) |^2 \frac{\d y \d s}{s^{n+1}}
    \coloneqq J(x).
\]
The quadratic estimates from Theorem~\ref{thm:QE} and Theorem~\ref{thm:Quadratic estimates and functional calculus} show that
\[
\int_{\R^n} J(x) \d x
\lesssim \int_0^\infty \|\psi (s DB) f \|_2^2 \frac{\d s}{s}
\lesssim \|f\|_2^2.
\]
Therefore, applying Lebesgue's dominated convergence theorem twice, we obtain
\[
0 \leq \int_{\R^n} \lim_{t \to 0^+} I_1(t,x) \d x
  = \lim_{t \to 0^+} \int_{\R^n} I_1(t,x) \d x
  \lesssim \lim_{t \to 0^+} \int_0^{2t} \|\psi (s DB) f \|_2^2 \frac{\d s}{s}
  = 0,
\]
so $\lim_{t \to 0^+} I_1(t,x)=0$ for almost every $x \in \R^n$.

To estimate $I_2(t,x)$, we use the off-diagonal estimates from Proposition~\ref{prop:Lq off diagonal estimates}, as in \eqref{eq:NTOD} but with $p=2$, to obtain 
\begin{align*}
I_2(t,x) \lesssim t^{2-n} \|\mathds{1}_{Q(x,t)} R_s (DBf)\|_2^2
\lesssim t^2 M_*(|DB f|^2)(x),
\end{align*}
where $M_*$ is the Hardy--Littlewood maximal operator. We know that $ M_*(|DB f|^2) \in  L^{\frac{p}{2}} (\R^{n})$, since $p>2$ and $DB f \in L^p (\R^n ; \C^{n+2})$, so $M_*(|DB f|^2)(x) < \infty$ and $\lim_{t \to 0^+} I_3(t,x)=0$ for almost every $x \in \R^n$, so the result is proved when $f\in\mathcal{E}$.

More generally, for $f \in \Hil$, consider $(f_m)_{m\in\N}$ in $\mathcal{E}$ converging to $f$ in $L^2(\R^n)$ and write
\begin{align*}
\dashiint_{W(x,t)} |F(s,y) - f(x)|^2 \d y \d s 
& \lesssim \dashiint_{W(x,t)} | e^{-s [DB]}(f-f_m)(y)|^2 \d y \d s \\
&\quad+\dashiint_{W(x,t)} | (e^{-s [DB]}-I)f_m (y)|^2 \d y \d s \\
&\quad+\dashiint_{W(x,t)} | (f_m -f)(y)|^2 \d y \d s.
\end{align*}
The convergence result just proved on $\mathcal{E}$ shows that
\[
\limsup_{t\to 0^+} \dashiint_{W(x,t)} | F(s,y) - f(x)|^2 \d y \d s
\leq | {N}_* (e^{-(\cdot)[DB]}[f-f_m])(x)|^2 + |M_*([f-f_m]^2)(x)|
\]
for almost every $x \in \R^n$. Therefore, by Chebyshev's inequality, the non-tangential maximal function estimate from Theorem~\ref{thm:First-Order NT Control}, and the weak-type (1,1) estimate for the Hardy--Littlewood maximal operator, we have
\begin{align*}
|\{ x &\in \R^n: \limsup\nolimits_{t\to 0^+} \textstyle{\dashiint_{W(x,t)}} | F(s,y) - f(x)|^2 \d y \d s > \delta\}| \\
&\leq |\{ x \in \R^n: |{N}_*(e^{-(\cdot)[DB]}[f-f_m])(x)|^2 > \delta/2 \}|
+ |\{ x \in \R^n: |M_*([f-f_m]^2)(x)| > \delta/2 \}| \\
&\lesssim \delta^{-1} \|{N}_*(e^{-(\cdot)[DB]}[f-f_m])\|_{L^2(\R^n)}^2 + \delta^{-1} \|f-f_m\|_2^2 \\
&\lesssim \delta^{-1} \|f-f_m\|_2^2
\end{align*}
for all $\delta>0$, so the result follows by choosing $m\in\N$ such that $\|f-f_m\|_2<\delta$.
\end{proof}


\section{Return to the Second-Order Equation} \label{Sec:Return to the Second-Order Equation}\label{sec:con}


The purpose of the section is to make precise the equivalence between well-posedness of the boundary value problems $\Neu$ and $\Reg$ for the Schr\"{o}dinger equation $H_{A,a,V}u=0$ and the property that the boundary trace mappings $\Phi_N$ and $\Phi_R$ for the associated Cauchy--Riemann type initial value problems $\p F + DB F = 0$ are isomorphisms. These ideas, in the case $V\equiv0$, have their genesis in the work of Auscher--Axelsson--McIntosh (see~\cite[Section~4]{AAMc10-2}) and the subsequent development by Auscher--Ros\'{e}n (see~\cite[Section~3.1]{AA11}). We highlight our characterisation for Schr\"{o}dinger equations in Proposition~\ref{prop:WP equivalence} below. The short detailed proof brings together ideas developed throughout the paper and is included for the benefit of readers unfamiliar with the first-order treatment. This also allows us to conclude the proofs of the four theorems stated in the introduction.


\subsection{Equivalences for Well-Posedness}


It is convenient to note here that the non-tangential control ${N}_* (\gradIV u) \in L^2 (\R^n)$ required for solvability of $\Neu$ and $\Reg$ is sufficient to provide the correspondence between first-order solutions and second-order solutions in Proposition~\ref{prop:Reduction to first-order}.

\begin{lem} \label{lem:NT Control Implies Second-Order Has First-Order Solution}
Suppose that $n\in \N$, $V \in L^1_{\Loc}(\R^n)$ and $\ola{A}$ is the injective operator given by~
\eqref{eq:Ahatinv}. If $u \in \Wv^{1,2}_{\Loc}(\R^{n+1}_+)$ and ${N}_* (\gradIV u) \in L^2 (\R^n)$, then $
\gradAV u \in L^2_{\Loc} (\R_+ ; L^2 (\R^n ; \C^{n+2}))$.
\end{lem}

\begin{proof}
If $0<a<b<\infty$ and $l\in\N$ such that $2^l\geq b$, then Lemma \ref{lem:Initial bounds for NT} shows that
\begin{align*}
    \int_a^b \|\gradAV u\|_2^2 \d s
    \leq \sum_{k = 1}^{l} \int_{2^{k-1} a}^{2^{k} a} \| \ola{\cA}\gradIV u \|_2^2 \d s
    \lesssim \sum_{k = 1}^{l} 2^k a \left(\sup_{t>0} \dashint_{t}^{2 t} \|\gradIV u\|_2^2 \d s\right)
    \lesssim \|{N}_* (\gradIV u)\|_2^2,
\end{align*}
so the result follows.
\end{proof}

We now show how well-posedness of boundary value problems for  Schr\"{o}dinger equations manifests in the framework developed here for Cauchy--Riemann type initial value problems.

\begin{prop} \label{prop:WP equivalence}
Suppose that $n\geq 3$, $q\geq\max\{\tfrac{n}{2},2\}$ and $V \in B^{q}(\R^n)$ whilst $(A,a)$ are $t$-independent, bounded and elliptic coefficients satisfying \eqref{eq:bddcoeff} and \eqref{eq:elliptest}. If $D$ is the operator defined by~\eqref{eq:Ddef}, the coefficients  $B=\widehat{\mathcal{A}}_{A,a,V}$ are given by \eqref{eq:Ahatdef}, and the mappings $\Phi_N$ and $\Phi_R$ are given by \eqref{eqn:linear mappings}, then the following properties hold:
\begin{enumerate}
    \item The mapping $\Phi_N \colon \Hil_{DB}^+ \to L^2 (\R^n)$ is an isomorphism if and only if the Neumann problem $\Neu$ is well-posed.
    \item The mapping $\Phi_R \colon \Hil_{DB}^+ \to \{ \gradV g : g \in \Wvo^{1,2} (\R^n)\}$ is an isomorphism if and only if the Regularity problem $\Reg$ is well-posed.
\end{enumerate}
\end{prop}

\begin{proof}
We only prove (2), as (1) follows analogously. The following two properties prove that $\Phi_R \colon \Hil_{DB}^+ \to \{ \gradV g : g \in \Wvo^{1,2} (\R^n) \}$ is surjective if and only if $\Reg$ is solvable:

(a) If $g \in \Wvo^{1,2} (\R^n)$ and $u$ is a solution of $\Reg$ with data $\gradV g$, then Lemma~\ref{lem:NT Control Implies Second-Order Has First-Order Solution}, Proposition~\ref{prop:Reduction to first-order} and Lemma~\ref{lem:Initial bounds for NT}
imply that $F\coloneqq \gradAV u$ is in $L^2_{\Loc}(\R_+,\Hil)$ and $\p F+DBF=0$ in $\R_+$ with
\[
\sup_{t>0} \dashint_t^{2t} \|F (s) \|_2^2 \d s
\lesssim \|{N}_* (F) \|_2^2
=\|{N}_*(\ola{\cA}\gradV u)\|_2
\lesssim \|{N}_*(\gradV u)\|_2
< \infty.
\]
Theorem~\ref{Thm:First order solutions are semigroups} then provides $f \in \Hil_{DB}^+$ such that $F = e^{-t[DB]}f$ and $\lim_{t \to 0^+} \|F(t)-f\|_2=0$, so
\[
\Phi_R(f) = (f_{\parallel}, f_\oo) = \Nlim_{t \to 0^+} (F_{\parallel}(t), F_\oo(t)) =  \Nlim_{t \to 0^+} \gradVp u(t,\cdot) =  \gradV g,
\]
where the Whitney averages of $F$ converge to $f$ almost everywhere on $\R^n$ by Proposition~\ref{prop:Convergence on Whitney Averages} and the limit notation has the meaning given by \eqref{eqn:convergence on Whitney averages}.

(b) If $f\in \Hil_{DB}^+$, $g \in \Wvo^{1,2} (\R^n)$ and $\Phi_R (f) = \gradIV g$, then Proposition~\ref{Prop:Semigroup solves first order equation} shows that $F$ given by $F(t)\coloneqq e^{-t[DB]}f$ is in $L^2_{\Loc}(\R_+,\Hil)$ and $\p F+DBF=0$ in $\R_+$ with ${\lim_{t \to 0^+} \|F(t)-f\|_2=0}$. Proposition~\ref{prop:Reduction to first-order} then implies that there exists $u$ in $\Wv^{1,2}_{\Loc}(\R^{n+1}_+)$ such that $F = \gradAV u$ and $\HAaV u = 0$ in $\R^{n+1}_+$, hence \[
\gradV g = \Phi_R(f) = (f_{\parallel}, f_\oo) = \Nlim_{t \to 0^+} (F_{\parallel}(t), F_\oo(t)) =  \Nlim_{t \to 0^+} \gradVp u(t,\cdot),
\]
where the Whitney averages of $F$ converge to $f$ almost everywhere on $\R^n$ by Proposition~\ref{prop:Convergence on Whitney Averages}. Moreover, using \eqref{eq:Ahatinv} and Theorem~\ref{thm:First-Order NT Control}, we obtain
\[
\|{N}_*(\gradIV u)\|_2 = \|{N}_*(\ola{\cA}^{-1} \gradAV u)\|_2
\lesssim \|{N}_*(F)\|_2 \leq \|f\|_2 < \infty,
\]
so $u$ is a solution of $\Reg$ with data $\gradV g$. 

The following two facts prove that $\Phi_R \colon \Hil_{DB}^+ \to \{ \gradV g : g \in \Wvo^{1,2} (\R^n) \}$ is injective if and only if $\Reg$ is uniquely posed:

(c) If $\gradV g = 0$ in (a), then $\Phi_R(f)=0$. Therefore, if $\Phi_R \colon \Hil_{DB}^+ \to \{ \gradV g : g \in \Wvo^{1,2} (\R^n) \}$ is injective, then $f=0$, hence $\gradAV u = e^{-t[DB]}f=0$ and $\gradV u=0$. In that case, either $V\not\equiv0$ and $u=0$, or $V\equiv 0$ and $u$ is constant, so $\Reg$ is uniquely posed.

(d) If $\Phi_R (f) = 0$ and $\nabla_\mu g=0$ in (b), then $u$ is a solution of $\Reg$ with data $\nabla_\mu g=0$. Therefore, if $\Reg$ is uniquely posed, either $V\not\equiv0$ and $u=0$, or $V\equiv 0$ and $u$ is constant, hence $F=\gradAV u=0$, $f = \lim_{t \to 0^+} F (t) = 0$ and $\Phi_R \colon \Hil_{DB}^+ \to \{ \gradV g : g \in \Wvo^{1,2} (\R^n) \}$ is injective.
\end{proof}


\subsection{Proofs of the Main Theorems}


We conclude by completing the proofs of the four theorems stated in the introduction.

\begin{proof}[Proof of Theorem \ref{thm:Second-Order WP}]
The fact that $\Neu$ and $\Reg$ are well-posed follows immediately from the equivalence in Proposition~\ref{prop:WP equivalence}, given the isomorphisms in Propositions~\ref{prop:Block-Type Isomorphisms} and~\ref{prop:Self-Adjoint Isomorphisms}. Now observe that if $H_{A,a,V}u = 0$ in $\R^{n+1}_+$ with ${N}_* (\gradIV u) \in L^2 (\R^n)$, then proceeding as in the proof of Theorem~\ref{thm:Second-Order Fatou} below, we obtain $f\in \Hil_{DB}^+$ such that $\lim_{t \to 0^+} \|\partial_{\nu_{A}} u(t,\cdot) - f_\perp\|_2 =0$, $\lim_{t \to 0^+} \|\gradVp u (t,\cdot)- (f_\|,f_\oo)\|_2 = 0$
and
\[
    \int_0^{\infty} \|t \partial_t (\gradIV u) \|_2^2 \frac{\d t}{t}
    \eqsim \|{N}_*(\gradIV u)\|_2^2
    \eqsim \|f\|_2^2,
\]
where the implicit constants depend only on $n$, $\lambda$, $\Lambda$ and $\upsilon$. Meanwhile, the Rellich estimates from Propositions~\ref{prop:Block-Type Isomorphisms} and~\ref{prop:self adjoint injective} imply that $\|f\|_2 \eqsim \|f_{\perp}\|_2 \eqsim \|(f_{\parallel}, f_\oo)\|_2$ for all $f\in \Hil_{DB}^+$, where the implicit constants depend only on $n$, $\lambda$, $\Lambda$ and $\upsilon$. The required estimates follow, since if $u$ is the unique solution of $\Neu$ for data $h\in L^2(\R^n)$, then $f_\perp=h$, whilst if $u$ is the unique solution of $\Reg$ for data $g$ in $\Wvo^{1,2}(\R^n)$, then $(f_{\parallel}, f_\oo)=\gradV g$.
\end{proof}

\begin{proof}[Proof of Theorem \ref{thm:Second-Order WP pert}]
Suppose that $A_0$ and $a_0$ satisfy the bound and ellipticity in \eqref{eq:bddcoeff}
and \eqref{eq:elliptest} with constants $0<\lambda_0\leq\Lambda_0<\infty$. Next, consider $\varepsilon \in (0,\lambda_0/2]$ to be chosen later. Suppose that $A \in L^{\infty} (\R^n; \mathcal{L} (\C^{n+1}))$ and $a \in L^{\infty} (\R^n)$ satisfy $\|A - A_0 \|_{\infty} < \varepsilon$ and $\|a - a_0 \|_{\infty} < \varepsilon$, so $A_0$ and $a_0$ satisfy \eqref{eq:bddcoeff}
and \eqref{eq:elliptest} with constants $\lambda_0/2$ and $2\Lambda_0$. Now, recalling \eqref{eq:Ahatdef}, set $\mathcal{A}=\mathcal{A}_{A,a}$,  $\mathcal{A}_0=\mathcal{A}_{A_0,a_0}$, $B=\widehat{\mathcal{A}}$ and $B_0=\widehat{\mathcal{A}}_0$. There exists $C_0\in(0,\infty)$, depending only on $\lambda$ and $\Lambda$, such that
$\|B-B_0\|_{\infty} \leq C_0 \|\mathcal{A}-\mathcal{A}_0\|_{\infty} < \varepsilon C_0$, since \eqref{eq:Ahatinv} implies that
\[
B - B_0 
= \ula{\mathcal{A}}\ola{\mathcal{A}}^{-1} -  \ula{\mathcal{A}}_0\ola{\mathcal{A}}_0^{-1} \\
= \ula{\mathcal{A}} \ola{\mathcal{A}}^{-1} (\ola{\mathcal{A}}_0 - \ola{\mathcal{A}})\ola{\mathcal{A}}_0^{-1} + (\ula{\mathcal{A}} - \ula{\mathcal{A}}_0) \ola{\mathcal{A}}_0^{-1}.
\]
Therefore, the result follows from  Proposition~\ref{prop:WP equivalence} by choosing $\varepsilon$ sufficiently small so that the perturbation properties in Theorem~\ref{thm:absper} hold.
\end{proof}

\begin{proof}[Proof of Theorem \ref{thm:Second-Order Fatou}]
We have $\Hil = \Hil^+\oplus \Hil^-$ with $\Hil^\pm \coloneqq \Hil^\pm_{DB}$ by~\eqref{eq:tds} and Lemma~\ref{lem:RanD}. Now suppose that $H_{A,a,V}u = 0$ in $\R^{n+1}_+$ with ${N}_* (\gradIV u) \in L^2 (\R^n)$. Lemma~\ref{lem:NT Control Implies Second-Order Has First-Order Solution}, Proposition~\ref{prop:Reduction to first-order} and Lemma~\ref{lem:Initial bounds for NT} imply that $F\coloneqq \gradAV u$ is in $L^2_{\Loc}(\R_+,\Hil)$ and $\p F+DBF=0$ in $\R_+$ with
 \[
 \sup_{t>0} \dashint_t^{2t} \|F (s) \|_2^2 \d s
 \lesssim \|{N}_* (F) \|_2^2
 =\|{N}_*(\ola{\cA}\gradV u)\|_2
 \lesssim \|{N}_*(\gradV u)\|_2
 < \infty.
 \]
Theorem~\ref{Thm:First order solutions are semigroups} then provides $f \in \Hil_{DB}^+$ such that $F = e^{-t[DB]}f$, hence
\[
    \int_0^{\infty} \|t \partial_t (\gradIV u) \|_2^2 \frac{\d t}{t}
    \eqsim \int_0^{\infty} \|t \partial_t F \|_2^2 \frac{\d t}{t}
    \eqsim \|f\|_2^2
    \eqsim \|{N}_*(F)\|_2^2
    \eqsim \|{N}_*(\gradIV u)\|_2^2
\]
by Theorem \ref{thm:First-Order NT Control} and \eqref{eq:Ahatinv}, where the implicit constants depend only on $n$, $\lambda$, $\Lambda$ and $\upsilon$. Moreover, for almost every $x \in \R^n$, we have
\[
\lim_{t \to 0^+} \|F (t) - f \|_2 = 0 = \lim_{t \to 0^+} \dashiint_{W(x,t)} | F(s,y) - f(x)|^2 \d y \d s
\]
by Propositions~\ref{Prop:Semigroup solves first order equation} and~ \ref{prop:Convergence on Whitney Averages}. The result follows, since $f = (h,\gradV g)$ for some $h \in L^2 (\R^n)$ and $g \in \Wvo^{1,2} (\R^n)$ by Lemma~\ref{lem:RanD}, whilst $F=(\partial_{\nu_{A}} u,\gradVp u)$.
\end{proof}

\begin{proof}[Proof of Theorem \ref{thm:SFBVP}]
First consider when $\|V\|_{n/2} < \varepsilon_n$, where $\varepsilon_n\in(0,1)$ is from Lemma~\ref{lem:Riesz Transform Estimates with small norm}, so Theorem~\ref{thm:QE} and the results in Section~\ref{sec:ADD} hold with constants depending only on $n$, $\lambda$ and~$\Lambda$. We then follow the proof of Theorem~\ref{thm:Second-Order WP} above, deducing well-posedness of $\SReg$ and $\SNeu$ as in Proposition~\ref{prop:WP equivalence} from the isomorphisms in Propositions~\ref{prop:Block-Type Isomorphisms} and~\ref{prop:Self-Adjoint Isomorphisms}, without requiring the non-tangential maximal function bounds from Section~\ref{sec:NT}. Observe that Proposition~\ref{prop:Self-Adjoint Isomorphisms} allows for when the product $aV$ is real-valued and $A$ is Hermitian. Moreover, the implicit constants in the associated square function bounds \eqref{eq:SFEsol} depend only on $n$, $\lambda$ and $\Lambda$. 

Now consider when $\|V\|_{n/2} \geq \varepsilon_n$ and set  $c_n\coloneqq\tfrac{\varepsilon_n}{2} \|V\|_{n/2}^{-1} \in (0,1)$. The case above provides unique solutions of $\SReg$ and $\SNeu$ for coefficients $(\widetilde{A},\widetilde{a})\coloneqq (c_n A,c_n a)$ and potential $\widetilde{V}\coloneqq c_n V$ with boundary data $\gradV g \in L^2 (\R^n;\C^{n+1})$ and $\widetilde{h}\coloneqq c_n h \in L^2 (\R^n)$, since $\|\widetilde{V}\|_{n/2}<\varepsilon_n$. The implicit constants in the associated square function bounds \eqref{eq:SFEsol} for these solutions will depend only on $n$, $\lambda$, $\Lambda$ and $\upsilon$, since $(\widetilde{A},\widetilde{a})$ satisfy the bound \eqref{eq:bddcoeff} and ellipticity \eqref{eq:elliptest} with constants depending only on $\lambda$, $\Lambda$ and $\upsilon$. These solutions also uniquely solve $\SReg$ and $\SNeu$ for the original coefficients $(A,a)$ and potential $V$ with boundary data $\gradV g$ and $h$, as required.
\end{proof}


\appendix

\section{Holomorphic Functional Calculus for Bisectorial Operators}\label{sec:app}


Here we provide an overview of the holomorphic functional calculus for bisectorial operators and its relationship with analytic semigroups and quadratic estimates that is used throughout the paper. These are included to make the paper self-contained and for the benefit of readers unfamiliar with holomorphic functional calculus. The ideas originate in the pioneering work of McIntosh~\cite{Mc88}. More recent developments and further details can be found in, for instance, the work of Albrecht--Duong--McIntosh~ \cite{ADMc96}, Egert~\cite{EgertPhD} and Haase~\cite{Haase06}.

We start by introducing notation for closed operators and spaces of holomorphic functions. An \textit{operator} $T$ on a Hilbert space $\Hil$ refers to a linear mapping $T:\D (T)\to\Hil$ with a domain $\D (T)$ that is a linear subspace of $\Hil$. We will also consider the range $\Ran(T)\coloneqq\{Tu : u \in \D(T)\}$ and null space $\Nul(T)\coloneqq\{u\in\D(T): Tu = 0\}$ of such an  operator. An operator is \textit{closed} if its graph is closed, or equivalently, if its domain is complete with respect to the \textit{graph norm}
\begin{equation}\label{graph norm}
\|u\|_{\D (T)} \coloneqq \|u\|_{\Hil} + \|Tu\|_{\Hil}
\end{equation}
for all $u\in\D (T)$. The set of all closed operators on $\Hil$ is denoted by $\mathcal{C}(\Hil)$. If $S,T\in\mathcal{C}(\Hil)$, then we write $S\subseteq T$ when $\D(S)\subseteq\D(T)$ and $Su=Tu$ for all $u\in\D(S)$, whilst $S=T$ means that $S\subseteq T$ and $T\subseteq S$. The subspace of all bounded operators, that is, those $T$ in $\mathcal{C}(\Hil)$ with $\D(T)=\Hil$ and finite \textit{operator norm} \[\|T\|\coloneqq\sup\{\|Tu\|_\Hil: u\in\D(T),\, \|u\|_\Hil=1\} <\infty,\] is denoted by $\mathcal{L}(\Hil)$. If $T \in  \mathcal{C}(\Hil)$, then the \textit{resolvent} is the set 
\[
\rho(T)\coloneqq\{\lambda\in \mathbb{C} : (\lambda I - T):\D(T)\to\Hil\text{ is bijective and }(\lambda I - T)^{-1} \in \mathcal{L}(\Hil)\},
\]
whilst the \textit{spectrum} $\sigma (T)$ is the subset of the extended complex plane $\C_\infty\coloneqq\mathbb{C}\cup\{\infty\}$ defined to be $\C\setminus\rho(T)$, when $T\in\mathcal{L}(\Hil)$, and $(\C\setminus\rho(T))\cup\{\infty\}$, when $T\notin\mathcal{L}(\Hil)$.

\subsection{\texorpdfstring{The $\mathcal{F}$-Functional Calculus}{The F-Functional Calculus}}

We now define sectorial and bisectorial operators on a Hilbert space. If $0 \leq \omega < \mu \leq \pi$, then the closed sector $S_{\omega +}$ in $\C_\infty$, and the open sector $S_{\mu+}^o$ in $\C$, are
\begin{align*}
    S_{\omega +} \coloneqq \{ z \in \C : | \arg (z) | \leq \omega \} \cup \{ 0, \infty \} \quad\text{and}\quad
    S_{\mu +}^o  \coloneqq \{ z \in \C : | \arg (z) | < \mu,\ z \neq 0 \}.
\end{align*}
A closed operator $T$ on $\Hil$ is called \textit{sectorial of type $S_{\omega +}$} if $\sigma (T) \subseteq S_{\omega+}$ and, for each $\mu \in (\omega,\pi]$, there exists $C_{\mu} \in (0,\infty)$ such that the uniform resolvent bound $\norm{(\lambda I-T)^{-1}}{}{} \leq C_{\mu} |\lambda|^{-1}$ holds for all $\lambda \in \C \setminus S_{\mu+}$. If $0 \leq \omega < \mu \leq \tfrac{\pi}{2}$, then we can also consider the related bisectors
\[
S_{\omega} \coloneqq S_{\omega+} \cup (- S_{\omega+})
\quad\text{and}\quad
S_{\mu}^o \coloneqq S_{\mu+}^o \cup (- S_{\mu+}^o).
\]
A closed operator $T$ on $\Hil$ is called \textit{bisectorial of type $S_{\omega}$} if $\sigma (T) \subseteq S_{\omega}$ and, for each $\mu \in (\omega,\frac{\pi}{2}]$, there exists $C_{\mu} \in (0,\infty)$ such that $\norm{(\lambda I-T)^{-1}}{}{} \leq C_{\mu} |\lambda|^{-1}$ for all $\lambda \in \C \setminus S_{\mu}$.

The following function spaces are needed to develop the holomorphic functional calculus for bisectorial operators below. If $0<\theta\leq\frac{\pi}{2}$, then $H(S_{\theta}^o)$ denotes the space of $\C$-valued holomorphic functions on $S_{\theta}^o$, and we consider the following subspaces $\Psi(S_{\theta}^o) \subseteq H^{\infty} (S_{\theta}^o) \subseteq \mathcal{F} (S_{\theta}^o) \subseteq H(S_{\theta}^o)$: 
\begin{align*}
    H^{\infty} (S_{\theta}^o) & \coloneqq H(S_{\theta}^o) \cap L^\infty(S_{\theta}^o); \\
    \Psi(S_{\theta}^o) & \coloneqq \{ \psi \in H^{\infty} (S_{\theta}^o) : \exists\, s , C \in (0,\infty), \, | \psi (z) | \leq C |z|^s(1 + |z|^{2s})^{-1} \, \forall z \in S_{\theta}^o \}; \\
    \mathcal{F} (S_{\theta}^o) & \coloneqq \{ f \in H(S_{\theta}^o) : \exists\, s,C \in (0,\infty), \, | f (z) | \leq C ( |z|^s + |z|^{-s}) \, \forall z \in S_{\theta}^o\}.
\end{align*}
If $0<\theta\leq\pi$, then the spaces $\Psi(S_{\theta+}^o) \subseteq H^{\infty} (S_{\theta+}^o) \subseteq \mathcal{F} (S_{\theta+}^o) \subseteq H(S_{\theta+}^o)$ are defined analogously by replacing the bisector $S_{\theta}^o$ with the sector $S_{\theta+}^o$.

The following result is well-known. Although it is not stated explicitly as a stand-alone result in the literature, it can be proved by combining results in \cite[Theorem 2.3]{CDMcY}, \cite[Lecture 2]{ADMc96} and \cite[Section 2]{AMcN97} or \cite[Sections 1.2--1.3, 2.1--2.3, 5.1]{Haase06}.

\begin{thm}\label{thm:FfcDef}
If $0 \leq \omega < \tfrac{\pi}{2}$ and $T$ is an injective bisectorial operator of type $S_{\omega}$ on $\Hil$, then $T$ has dense domain and dense range (written $\overline{D(T)}=\overline{\Ran(T)}=\Hil)$. Moreover, for each $\mu\in(\omega,\tfrac{\pi}{2}]$, there exists a unique mapping from $\mathcal{F}(S_{\mu}^o)$ into $\mathcal{C}(\Hil)$, called \textit{the $ \mathcal{F}(S_{\mu}^o)$-functional calculus for $T$}, and denoted by $f\mapsto f(T)$ for each $f\in\mathcal{F}(S_{\mu}^o)$, with the following properties:
\begin{enumerate}
\item
If $f$ is a rational function that has all of its poles in $\C\setminus S_\mu$, then $f(T)$ is consistent with the usual definition for polynomials and resolvent operators of $T$ in $\mathcal{C}(\Hil)$. Specifically, if $N\in\mathbb{N}$, $\{p_0,p_1,\ldots,p_N\}\subset \C$ and $\lambda \in \C\setminus S_\mu$, with $ p(z)\coloneqq\sum_{i=0}^N p_i z^i$ and $r(z)\coloneqq(\lambda-z)^{-1}$ for all $z\in S_\mu^o$, then $p(T)= (p_0I + p_1 T + \ldots + p_N T^N)$ and $r(T)=(\lambda I-T)^{-1}$;
\item
If $f$ and $g$ are in $\mathcal{F}(S_{\mu}^o)$, then $f(T)+g(T) \subseteq (f+g)(T)$ and $f(T)g(T) \subseteq (fg)(T)$ with $\D(f(T)g(T)) = \D((fg)(T)) \cap \D(g(T))$ (so both relations are equality when $g(T)\in\mathcal{L}(H)$);
\item
If $(f_n)_{n\in\mathbb{N}}$ is a sequence in $H^\infty(S_\mu^o)$ that converges uniformly on compact sets to some $f \in  H^\infty(S_\mu^o)$ with $\textstyle \sup_{n\in\mathbb{N}} \|f_n\|_{\infty} < \infty$ and $\textstyle \sup_n \|f_n(T)\| < \infty$, then $f(T)\in\mathcal{L}(H)$, $\textstyle \lim_{n\to\infty} f_n(T)u = f(T)u$ for all $u\in \Hil$, and  $\textstyle\|f(T)\| \leq \sup_{n\in\mathbb{N}} \|f_n(T)\|$.
\end{enumerate}
If $0 \leq \omega < \pi$ and $T$ is an injective sectorial operator of type $S_{\omega+}$ on $\Hil$, then the analogous results hold for each $\mu\in(\omega,\pi]$, upon replacing $S_{\mu}^o$ with $S_{\mu+}^o$, and the resulting mapping $f\mapsto f(T)$ for each $f\in\mathcal{F}(S_{\mu+}^o)$ is called \textit{the $ \mathcal{F}(S_{\mu+}^o)$-functional calculus for $T$}.
\end{thm}

The above result concerns bisectorial operators that are injective. The following remark shows how to restrict the domain of a general bisectorial operator to define its injective part.

\begin{rem}\label{rem:injpart}
If $T$ is a bisectorial operator of type $S_{\omega}$ on $\mathcal{H}$, which is not necessarily injective, then $\Hil = \overline{\Ran(T)} \oplus \Nul(T)$, where the direct sum may not be orthogonal. In that case, if $\widetilde{T}$ denotes the restriction of $T$ to $\overline{\Ran (T)}$, then $\widetilde{T}$ is an injective bisectorial operator of type $S_{\omega}$ on $\overline{\Ran(T)}$, called the \textit{injective part of $T$}, and $(\lambda I - \widetilde{T})^{-1}u = (\lambda I - T)^{-1}u$ for all $u\in\overline{\Ran(T)}$ and $\lambda \in\sigma(T)\cap\C$ (see, for instance, \cite[Theorem~3.8]{CDMcY} or \cite[Example~3.2.16]{EgertPhD}).
\end{rem}

Now suppose that $T$ is an injective bisectorial operator of type $S_{\omega}$ on $\Hil$ as in Theorem~\ref{thm:FfcDef}. If $\psi \in \Psi (S_{\mu}^o)$, then $\psi(T) \in \mathcal{L}(H)$ and we have the Cauchy integral representation
\begin{equation}\label{eq:CIF}
\psi (T) u = \frac{1}{2 \pi i} \int_{+\partial S_\theta^o} \psi (z) (zI-T)^{-1}u \d z
\end{equation}
for all $u\in \Hil$, where $+\partial S_\theta^o$ is the boundary of $S_\theta^o$ with positive (counterclockwise) orientation for any $\theta \in (\omega,\mu)$. Moreover, if $f \in \mathcal{F} (S_{\mu }^o)$, then $f(T) \in \mathcal{C}(H)$ with
$
f(T)u = (\psi (T))^{-1} (f \psi) (T)u
$
for all $u\in \D(f(T)) = \{u\in\Hil: (f \psi) (T)u \in \Ran(\psi (T))\}$, where $\psi$ is any function in $\Psi (S_{\mu}^o)$ with the property that $\psi(T)$ is injective and $f \psi \in \Psi (S_{\mu}^o)$. The injectivity of $T$ guarantees that such a regularising function $\psi$ exists. For example, if $k\in\mathbb{N}$ and $|f (z)| \lesssim |z|^k + |z|^{-k}$ for all $z\in S^o_\theta$, then we can use $\psi(z) \coloneqq ({z}/(i+z)^{2})^{k+1}$ for all $z\in S^o_\theta$, since $\psi(T)=T^{k+1}(i+T)^{-2(k+1)}$ by Theorem~\ref{thm:FfcDef}, and thus $\psi(T)$ is injective.

In this context, the {\it Hermitian adjoint}  $T^*:\D(T^*)\to\Hil$ is also an injective bisectorial operator of type $S_{\omega}$ on $\Hil$, so there exists a unique $\mathcal{F}(S_{\mu}^o)$-functional calculus for $T^*$ by Theorem~\ref{thm:FfcDef}. Moreover, it can be shown that 
\begin{equation}\label{eq:adjFC}
f(T^*)=(\conj{f}(T))^*
\end{equation}
for all $f\in\mathcal{F}(S_\mu^o)$, where  $\conj{f}(z)\coloneqq\conj{f(\conj{z})}$ for all $z\in S_\mu^o$ (see, for instance, \cite[Corollary 3.7]{CDMcY} or  \cite[Proposition~2.6.3]{Haase06}). It is also useful to note here that if $U\in\mathcal{L}(\Hil)$ is bijective and  $U^{-1}\in\mathcal{L}(\Hil)$, then $U^{-1}TU$ is  an injective bisectorial operator of type $S_{\omega}$ on $\Hil$, and we have the representation
\begin{equation}\label{eq:simFC}
f(U^{-1}TU) = U^{-1} f(T) U
\end{equation}
for all $f \in\mathcal{F}(S_\mu^o)$ (see, for instance, \cite[Proposition~3.2.10]{EgertPhD}).

\subsection{Analytic Semigroups}
Sectorial operators can be characterised as the generators of bounded analytic semigroups, and their holomorphic functional calculus provides a convenient framework to exploit this (see, for instance, \cite[Section~2]{AMcN970} or \cite[Proposition~ 3.4.4]{Haase05}). The related result for bisectorial operators in Proposition~\ref{prop:SGfromFC} below is a principal motivation for the first-order approach to boundary value problems adopted in this paper, as it provides the semigroup representations for solutions.

To state the result, we introduce the holomorphic function $[\cdot]:S_{\frac{\pi}{2}}^o\to\C$ given by
\begin{equation}\label{eq:[]def}
[\cdot](z) \coloneqq [z] 
\coloneqq 
\begin{cases}
+z, \quad & \text{if} \, \Re (z) > 0; \\
-z, \quad & \text{if} \, \Re(z) < 0.
\end{cases}
\end{equation}
Observe that $[z] = \sqrt{z^2}$, using the principal branch of the square root function, whilst $[\cdot]\in \mathcal{F}(S_\theta^o)$ for each $\theta\in(0,\frac{\pi}{2})$, hence $[T]\coloneqq[\cdot](T)$ is a well-defined closed operator whenever $T$ is injective and bisectorial.

The result below is well-known (see, for instance, \cite[Proposition~8.1]{AMcN970}) but a self-contained proof is provided for convenience. The proof that $[T]$ is sectorial also avoids the usual need to prove that $[T]=\sqrt{T^2}$, which requires a composition rule connecting the functional calculus for $T$ with that for $T^2$. Instead, we argue directly, considering the operator $[T]$ using only the functional calculus for $T$.

\begin{prop}\label{prop:SGfromFC}
If $0 \leq \omega < \tfrac{\pi}{2}$ and $T$ is an injective bisectorial operator of type $S_{\omega}$ on $\Hil$, then $[T]\coloneqq [\cdot](T)$ is a sectorial operator of type $S_{\omega+}$ on $\Hil$. Moreover, the family of operators $e^{-\zeta [T]}\coloneqq(e^{-\zeta[\cdot]})(T)$, $\zeta \in S_{(\frac{\pi}{2} - \omega)+}^o\cup\{0\}$, is an analytic semigroup on $\Hil$ with generator $-[T]$, that is, the following properties hold:
\begin{enumerate}
\item If $\zeta_1,\zeta_2\in S_{(\frac{\pi}{2} - \omega) +}^o\cup\{0\}$, then $e^{-\zeta_1 [T]} e^{-\zeta_2 [T]} = e^{-(\zeta_1+\zeta_2) [T]}$;
\item The mapping $\zeta \mapsto e^{-\zeta [T]}$ from $ S_{(\frac{\pi}{2}-\omega)+}^o$ into $\mathcal{L} (\Hil)$ is holomorphic, and
its derivative satisfies $\frac{\partial}{\partial\zeta} (e^{-\zeta [T]}u) = - [T] e^{-\zeta [T]}u$
for all $u\in\Hil$ (so $e^{-\zeta [T]}u \in \D([T])$ for all $u\in\Hil$);
\item If $\theta \in (0,\frac{\pi}{2}-\omega)$, then $\sup \{ \|e^{-\zeta [T]}\| : \zeta \in S_{\theta+}^o \} < \infty$, $\lim_{|\zeta| \to 0,\, \zeta \in S_{\theta+}^o} \|e^{-\zeta [T]} u - u\|_\Hil = 0$ and $\lim_{|\zeta| \to \infty,\, \zeta \in S_{\theta+}^o} \|e^{-\zeta [T]} u\|_\Hil = 0$ for all $u \in \Hil$;
\item If $\lambda\in S_{\frac{\pi}{2}+}^o$, then $\displaystyle (\lambda+[T])^{-1}u=\int_0^\infty e^{-\lambda t} e^{-t[T]}u \d t$ for all $u \in \Hil$.
\end{enumerate}
\end{prop}

\begin{proof}
We first prove that $[T]$ is sectorial. Suppose that $\omega \in[0,\tfrac{\pi}{2})$ and that $\omega<\theta<\mu\leq\pi$. Suppose that $\lambda\in \mathbb{C}\setminus S_{\mu+}$ and define $f_\lambda(z)\coloneqq1/(\lambda-[z])$ for all $z\in S_{\theta}^o$. Observe that $f_\lambda$ is in $H^\infty(S_\theta^o)$, since $\|f_\lambda\|_{L^\infty(S_{\theta}^o)}\leq 1/\dist(\lambda,S_{\theta+}) < \infty$, so the $\mathcal{F} (S_{\theta}^o)$-functional calculus for $T$ allows us to write $
\lambda f_\lambda(T) = \psi_\lambda(T) + R_\lambda(T)$, where 
\[
\psi_\lambda(z)
\coloneqq \frac{\lambda}{\lambda-[z]} - \frac{i|\lambda|}{i|\lambda|-z}
=  \frac{i\lambda z + |\lambda|[z]}{(\lambda-[z])(|\lambda|+iz)}
\quad\text{and}\quad
R_\lambda(z)
\coloneqq\frac{i|\lambda|}{i|\lambda|-z}
\]
for all $z\in S_{\theta}^o$. The resolvent bounds for $T$ imply there exists $C_\mu\in(0,\infty)$ such that 
\[
\|\psi_\lambda(T)\| \lesssim \frac{C_\mu}{\sqrt{1-\cos(\mu-\theta)}}
\quad\text{and}\quad
\|R_\lambda(T)\|
=|i\lambda|\|(i|\lambda|-T)^{-1}\|
\leq C_\mu,
\]
where the implicit constants do not depend on $\lambda$. To see this, observe that $\psi_\lambda \in \Psi(S_{\theta}^o)$ with
\[
|\psi_\lambda(z)| \leq 2\left(\frac{|\lambda z|^{1/2}}{|\lambda-[z]|} \right)
\left( \frac{|\lambda z|^{1/2}}{||\lambda|+iz|}\right)
\lesssim \frac{1}{\sqrt{1-\cos(\mu-\theta)}} \min\left\{\frac{|z|}{|\lambda|},\frac{|\lambda|}{|z|}\right\}^{\tfrac{1}{2}},
\]
since $
|\lambda- [z]|^2 \geq 2  (1-\cos(\mu-\theta)) |\lambda z|$ for all $z \in S_\theta^o$. The resolvent bounds for $T$ then give 
\[
\|\psi_\lambda (T)\|
\lesssim \frac{C_\mu}{\sqrt{1-\cos(\mu-\theta)}} 
\int_{+\partial S_\theta} \min\{|\zeta|,|\zeta|^{-1}\}^{\tfrac{1}{2}}\ \frac{|\d\zeta|}{|\zeta|}
\lesssim \frac{C_\mu}{\sqrt{1-\cos(\mu-\theta)}}
\]
for all $\theta\in(\omega,\mu)$.

We have now shown that $\|f_\lambda (T)\| \lesssim (C_\mu/\sqrt{1-\cos(\mu-\omega)} )|\lambda| ^{-1}$, whilst the $\mathcal{F} (S_{\theta}^o)$-functional calculus for $T$  implies that $f_\lambda(T)=(\lambda - [T])^{-1}$, hence $\lambda\in\rho([T])$. It follows that $\sigma([T])\subseteq S_{\omega+}$, since $\mu\in(\omega,\pi]$ was arbitrary, and thus $[T]$ is sectorial of type $S_{\omega+}$ on $\Hil$. 

To prove the analytic semigroup properties, note that by Theorem~\ref{thm:FfcDef}, for each $\mu\in(\omega,\pi]$, the $\mathcal{F}(S_{\mu+}^o)$-functional calculus for $[T]$ exists. In particular, if $\lambda\in\C\setminus S_{\mu+}$ and $r_\lambda(z)\coloneqq1/(\lambda-z)$ for all $z\in S_{\mu+}^o$, then $r_\lambda([T])=(\lambda - [T])^{-1}$. Therefore, since $f_\lambda = r_\lambda\circ[\cdot]$ in the preceding paragraph, we have actually proven the composition rule  $(r_\lambda\circ[\cdot])(T)=r_\lambda([T])$, relating the functional calculus for $T$ with the functional calculus for $[T]$.

To complete the proof, we can essentially follow the proof of (a)-(d) in
\cite[Proposition~3.4.1]{Haase06}, except the arguments which rely on Cauchy integral representations for the functional calculus in a neighbourhood of the origin must be modified to instead treat the function $z\mapsto e^{-t[z]}$. This can be achieved by using the composition rule just noted to write $(e^{-t[\cdot]})(T) = \psi(T) + (i+[T])^{-1}$, where $\psi(z) \coloneqq e^{-t[z]} - (i+[z])^{-1}$ for all $z\in S_\theta^o$. The Cauchy integral representation for $\psi(T)$ in \eqref{eq:CIF} can then be substituted for those used in the aforementioned reference.
\end{proof}

The following remark characterises the spectrum of $[T]$, although we do not use this result.

\begin{rem}
More generally, it is straightforward to adapt the proof of the Spectral Mapping Theorem obtained by Haase in \cite[Theorem~2.7.8]{Haase06} (cf. \cite[Theorem~6.4]{Haase05}) to bisectorial operators. Therefore, since $[\cdot]$ is holomorphic on any bisector with polynomial limits at 0 and $\infty$ in $\C_\infty$ (equal to $0$ and $\infty$, respectively, in the sense of \cite[Section~2.2]{Haase06}), this shows that
\[
\sigma([T])=[\sigma(T)]
\coloneqq \{[z]: z\in\sigma(T)\},
\]
where $[0]\coloneqq 0$ and $[\infty]\coloneqq \infty$, whenever $T$ is bisectorial.
\end{rem}

\subsection{Quadratic Estimates}
An injective bisectorial operator $T$ of type $S_{\omega}$ is said to have a \textit{bounded $H^{\infty}(S_\omega)$-functional calculus} when there exists $\mu\in(\omega,\tfrac{\pi}{2}]$ and $C\in(0,\infty)$ such that $\|f(T)\| \leq C \|f\|_{\infty}$ for all $f \in H^{\infty} (S_{\mu}^o)$. This property is crucial in the first-order approach to boundary value problems, as it implies that the spectral projections $\chi^\pm(T)$ associated with the sectors $\pm S_{\omega+}$ are bounded (see Section~\ref{ssect:GWP}). The equivalence with \textit{quadratic estimates} below, and many other properties, is well-known (see \cite[Corollary~E and Theorem~F]{ADMc96}).

\begin{thm} \label{thm:Quadratic estimates and functional calculus}
Suppose that $0 \leq \omega < \tfrac{\pi}{2}$ and $T$ is an injective bisectorial operator of type $S_{\omega}$ on $\Hil$. If $\mu\in(\omega,\tfrac{\pi}{2}]$, $\sup_{\lambda\in\C\setminus S_\mu}\norm{\lambda(\lambda I-T)^{-1}}{}{}\leq C_\mu<\infty$ and there exists $C_0\in[1,\infty)$ such that
\[
\int_0^{\infty} \| t T (I + (tT)^2)^{-1} u\|_\Hil^2 \frac{\d t}{t} +\int_0^{\infty} \| t T^* (I + (tT^*)^2)^{-1} u\|_\Hil^2 \frac{\d t}{t} 
\leq C_0 \|u\|_\Hil^2
\ \text{ for all } u \in \Hil,
\]
then there exists $\widetilde{C}_{\mu} \in[1,\infty)$, depending only on $C_0$ and $C_\mu$, such that $\|f(T)\| \leq \widetilde{C}_{\mu} \|f\|_{\infty}$ and
\[
\widetilde{C}_\mu^{-1} \|u\|_\Hil^2
\leq \int_0^{\infty} \|\psi(tT)u \|_\Hil^2 \frac{\d t}{t}
\leq \widetilde{C}_\mu \|u\|_\Hil^2
\ \text{ for all }\ u \in \Hil,
\]
whenever $f \in H^{\infty} (S_{\mu}^o)$, $\psi\in\Psi(S_{\mu}^o)$ and $\psi$ is not identically zero on $S_{\mu+}^o$ nor on $-S_{\mu+}^o$.
\end{thm}


\printbibliography

\end{document}